\newtheorem{theorem}{Theorem}[section]
\newtheorem{proposition}[theorem]{Proposition}
\newtheorem{lemma}[theorem]{Lemma}
\newtheorem{corollary}[theorem]{Corollary}
\theoremstyle{definition}
\newtheorem{definition}[theorem]{Definition}
\newtheorem{notation}[theorem]{Notation}
\theoremstyle{remark}
\newtheorem{remark}[theorem]{Remark}
\newtheorem{example}[theorem]{Example}
\newcommand{\bA}{\mathbf{A}}
\newcommand{\bN}{\mathbf{N}}
\newcommand{\bP}{\mathbf{P}}
\newcommand{\bQ}{\mathbf{Q}}
\newcommand{\bR}{\mathbf{R}}
\newcommand{\bZ}{\mathbf{Z}}
\newcommand{\cF}{\mathcal{F}}
\newcommand{\cS}{\mathcal{S}}
\newcommand{\fa}{\mathfrak{a}}
\newcommand{\fb}{\mathfrak{b}}
\newcommand{\fc}{\mathfrak{c}}
\newcommand{\fd}{\mathfrak{d}}
\newcommand{\fe}{\mathfrak{e}}
\newcommand{\fm}{\mathfrak{m}}
\newcommand{\fn}{\mathfrak{n}}
\newcommand{\fp}{\mathfrak{p}}
\newcommand{\fA}{\mathfrak{A}}
\newcommand{\sI}{\mathscr{I}}
\newcommand{\sJ}{\mathscr{J}}
\newcommand{\sO}{\mathscr{O}}
\newcommand{\sfa}{\mathsf{a}}
\newcommand{\sfb}{\mathsf{b}}
\newcommand{\sfc}{\mathsf{c}}
\newcommand{\sfd}{\mathsf{d}}
\newcommand{\sfe}{\mathsf{e}}
\newcommand{\sfA}{\mathsf{A}}
\newcommand{\sfI}{\mathsf{I}}
\newcommand{\sfJ}{\mathsf{J}}
\newcommand{\sfP}{\mathsf{P}}
\newcommand{\sfQ}{\mathsf{Q}}
\newcommand{\abs}[1]{{|{#1}|}}
\newcommand{\rd}[1]{{\lfloor{#1}\rfloor}}
\newcommand{\bracketsup}[1]{\textup{(}#1\textup{)}}
\DeclareMathOperator{\mld}{mld}
\DeclareMathOperator{\ord}{ord}
\DeclareMathOperator{\Proj}{Proj}
\DeclareMathOperator{\Spec}{Spec}
\DeclareMathOperator{\wt}{wt}
\begin{document}
\title{Minimal log discrepancies on smooth threefolds}

\author{Masayuki Kawakita}
\address{Research Institute for Mathematical Sciences, Kyoto University, Kyoto 606-8502, Japan}
\email{masayuki@kurims.kyoto-u.ac.jp}
\thanks{Partially supported by JSPS Grant-in-Aid for Scientific Research (C) 19K03423.}

\begin{abstract}
We completely prove the ACC for minimal log discrepancies on smooth threefolds. It implies on smooth threefolds the ACC for $a$-lc thresholds, the uniform $\fm$-adic semi-continuity of minimal log discrepancies and the boundedness of the log discrepancy of some divisor that computes the minimal log discrepancy.
\end{abstract}

\maketitle

\section{Introduction}
The minimal log discrepancy, introduced by Shokurov, is a fundamental invariant of singularity in birational geometry. It defines the class of singularities in the minimal model program. In addition, the termination of flips is reduced to two conjectural properties, the ACC and the lower semi-continuity, of minimal log discrepancies \cite{Sh04}. In contrast to the importance, the minimal log discrepancy is considered to be very hard to deal with. In fact, the ACC is still unknown in dimension three and it is one of the most important remaining problems in the minimal model theory of threefolds.

In our preceding paper \cite{K21}, we confirmed on a fixed germ $x\in X$ of a klt variety the equivalence of Shokurov's ACC conjectures for minimal log discrepancies and for $a$-lc thresholds, Musta\c{t}\u{a}'s uniform $\fm$-adic semi-continuity conjecture for minimal log discrepancies and Nakamura's boundedness conjecture for the log discrepancy of some divisor that computes the minimal log discrepancy. The purpose of the present paper is to establish all of them on smooth threefolds.

For an $\bR$-ideal $\fa=\prod_{j=1}^e\fa_j^{r_j}$ on $X$, we write $a_E(X,\fa)$ for the log discrepancy of a divisor $E$ over $X$ with respect to the pair $(X,\fa)$ and write $\mld_x(X,\fa)$ for the minimal log discrepancy of $(X,\fa)$ at $x$. For a subset $I$ of the positive real numbers, we write $\fa\in I$ if the exponents $r_j$ in $\fa$ belong to $I$. ACC stands for the ascending chain condition whilst DCC stands for the descending chain condition.

\begin{theorem}[ACC for minimal log discrepancies]\label{thm:main}
Fix a subset $I$ of the positive real numbers which satisfies the DCC\@. Then the set
\[
\{\mld_x(X,\fa)\mid\textup{$x\in X$ a smooth threefold},\ \textup{$\fa$ an $\bR$-ideal},\ \fa\in I\}
\]
satisfies the ACC\@.
\end{theorem}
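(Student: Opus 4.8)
The plan is to reduce the theorem, by elementary bookkeeping with the descending chain condition, to a \emph{uniform boundedness} statement for divisors that compute the minimal log discrepancy, and then to establish that boundedness by a structural analysis of such divisors which is special to smooth threefolds.

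\emph{The reduction.} Put $r_0:=\min I>0$; this exists because $I$ satisfies the DCC\@. Let $x\in X$ be any smooth threefold, $\fa=\prod_{j=1}^e\fa_j^{r_j}\in I$ with $\mld_x(X,\fa)\ge0$, and discard the trivial factors, so each $\fa_j\subseteq\fm_x$. For the blow-up $E$ of the point $x$, which has $a_E(X,\sO_X)=3$ and $\ord_E(\fa_j)\ge1$, we get $\sum_jr_j\le\ord_E(\fa)\le 3$, hence $e\le 3/r_0$. The point proved separately is the boundedness: \emph{there is $N=N(I)$ such that for any such $x\in X$ and $\fa$ some divisor over $X$ whose center on $X$ is $x$ computes $\mld_x(X,\fa)$ and has log discrepancy $\le N$ over $(X,\sO_X)$.} Granting this, suppose the theorem fails: there are $x_i\in X_i$ and $\fa_i=\prod_j\fa_{ij}^{r_{ij}}\in I$ with $\mld_{x_i}(X_i,\fa_i)$ strictly increasing, necessarily to some $a\in(0,3]$ since the minimal log discrepancy is everywhere $\ge0$ or $-\infty$. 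Choose a computing divisor $E_i$ with $n_i:=a_{E_i}(X_i,\sO_{X_i})\le N$; then $n_i\in\{1,\dots,N\}$, the numbers $n_{ij}:=\ord_{E_i}(\fa_{ij})$ lie in $\{1,\dots,\rd{N/r_0}\}$, there are at most $3/r_0$ factors, and
\[
\mld_{x_i}(X_i,\fa_i)=n_i-\sum_j r_{ij}n_{ij}.
\]
As $i$ varies the right-hand side lies in a finite union of sets $\{\,n-\sum_jr_jn_j\mid r_j\in I\,\}$, indexed by the finitely many bounded integer tuples $(n;n_1,\dots)$; each such set is the image of the Minkowski sum $\sum_jn_j\cdot I$ under $s\mapsto n-s$, and since a finite Minkowski sum of rescalings of a DCC set is again a DCC set, each such set — hence their union — satisfies the ACC\@. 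This contradicts the strict monotonicity of $\mld_{x_i}(X_i,\fa_i)$, proving the theorem.

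\emph{The boundedness, and the main obstacle.} This is the only step that genuinely uses $\dim X=3$, and I expect it to be the bulk of the work. Fix a smooth threefold germ $x\in X$, $\fa\in I$ with $a:=\mld_x(X,\fa)\ge0$, and let $E$ compute it. Replacing each $\fa_j$ by $\fa_j+\fm_x^{\ord_E(\fa_j)}$ changes neither $a_E(X,\cdot)$ nor, the ideals only increasing, the minimal log discrepancy, so we may take $\fa$ to be $\fm_x$-primary. The key input I would use is a classification of the divisors computing the minimal log discrepancy at a closed point of a smooth threefold: such a divisor is obtained from $X$ by a controlled finite sequence of weighted blow-ups along smooth centers — first of $x$, then of a smooth curve or surface in the exceptional locus, and so on — whose number and whose weights can be bounded, hence $a_E(X,\sO_X)$ can be bounded, in terms of $a$ and $r_0$ only. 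The regime in which $a_E(X,\sO_X)$ is large is reached by cutting with a general hyperplane through $x$ and applying precise inversion of adjunction, which converts the configuration into a divisor computing a minimal log discrepancy on a smooth surface — the case already understood — the genericity being used (via generic limits of the ideals $\fa_j$ where needed) to exclude the degenerate configurations in which no hyperplane section detects $E$. Once this classification and the attendant case analysis — organised by the dimension of the center of the first weighted blow-up and by the size of $a$ — are in place, the bookkeeping of the previous paragraph is routine, and the corollaries advertised in the abstract follow by combining the theorem with the equivalences established in \cite{K21}.
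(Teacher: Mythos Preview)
Your reduction from uniform boundedness (essentially Theorem~\ref{thm:nakamura}) to the ACC is correct and is the direction the paper takes: once one has a bound $N=N(I)$ on $a_E(X)$ for some computing divisor, your Minkowski-sum bookkeeping gives the ACC immediately. The paper packages this via the equivalences in Theorem~\ref{thm:equiv} and the \'etale reduction to $o\in\bA^3$ (Remark~\ref{rmk:etale}), but the content is the same.

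The gap is in your sketch of the boundedness itself. You propose to handle large $a_E(X)$ by cutting with a general hyperplane, reducing to the known surface case, and using generic limits ``to exclude the degenerate configurations in which no hyperplane section detects $E$''. But those configurations cannot be excluded: they are the main case. After the reductions of \cite{K21} (recorded here as Theorem~\ref{thm:product} and Remark~\ref{rmk:product}), one is left precisely with pairs $(X,\fa^q)$ that are canonical \emph{of semistable type} (Definition~\ref{dfn:semistable}): every divisor $E$ computing $\mld_x(X,\fa^q)=1$ has $\ord_E\fm=1$. In the generic-limit picture this is exactly the case where $(\hat X,\sfa^q)$ has smallest lc centre a curve (Definition~\ref{dfn:special}, Theorem~\ref{thm:lcslope}). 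For such $E$ a general hyperplane through $x$ misses $c_X(E)$ altogether, so your cut yields no control on $a_E(X)$; and the generic-limit formalism does not dispose of this case---it is what singles it out as the residual one (Theorem~\ref{thm:special}).

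The paper's work lies entirely here, and uses ideas your outline does not anticipate: the notion of \emph{lc slope} for $(\hat X,\sfa^q)$; a result on when the composite of two weighted blow-ups with weights $(w_1,w_2,1)$ and $(v_1,v_2,1)$ is again realised by a single weighted blow-up of $X$ (Theorem~\ref{thm:composite}), which fails in general (Example~\ref{exl:composite}) and needs the slope inequality $\rd{(v_1-1)/v_2}\le(w_1-v_1^2)/w_2$; an induction on the slope $\mu\in\sfQ_n$ of \emph{maximal} admissible weighted blow-ups (Lemma~\ref{lem:slopeQ}); and finally a weighted-homogeneous degeneration combined with precise inversion of adjunction onto the exceptional divisor $F\simeq\bP(w_1,w_2)\times\bA^1$ rather than onto a hyperplane in $X$ (Section~\ref{sct:monomial}). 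Without this machinery the boundedness in the semistable case, and hence the theorem, remains open.
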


\begin{theorem}[ACC for $a$-lc thresholds]\label{thm:alc}
Fix a non-negative real number $a$ and a subset $I$ of the positive real numbers which satisfies the DCC\@. Then the set
\[
\biggl\{t\in\bR\;\biggm|
\begin{array}{l}
\textup{$x\in X$ a smooth threefold},\ \textup{$\fa$, $\fb$ $\bR$-ideals},\\
t\ge0,\ \mld_x(X,\fa\fb^t)=a,\ \fa\in I,\ \fb\in I
\end{array}
\biggr\}
\]
satisfies the ACC\@.
\end{theorem}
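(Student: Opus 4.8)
The plan is to reduce the assertion to a statement about a single germ and then to apply the equivalence of conjectures established in \cite{K21}, the new input being Theorem \ref{thm:main}. Since the minimal log discrepancy of an $\bR$-ideal is invariant under extension of the base field, and every smooth threefold germ at a closed point is formally isomorphic to $o\in\bA^3$, I would fix once and for all an algebraically closed field $\Omega$ of infinite transcendence degree over the prime field and reduce to the case $x=o\in\bA^3_\Omega$ (the germs at non-closed points reduce to smooth varieties of dimension at most two, where the result is already known). After this reduction the set in question becomes $\{t\ge0\mid\mld_o(\bA^3_\Omega,\fa\fb^t)=a,\ \fa,\fb\in I\}$ for the single klt germ $o\in\bA^3_\Omega$, and on such a germ \cite{K21} shows that the ACC for minimal log discrepancies, the ACC for $a$-lc thresholds, the uniform $\fm$-adic semi-continuity of minimal log discrepancies, and Nakamura's boundedness of a divisor computing the minimal log discrepancy are all equivalent. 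Combining this with Theorem \ref{thm:main} gives the theorem.

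Unwinding what \cite{K21} contributes, the implication I need is proved by contradiction using a generic limit. Assume there were $\bR$-ideals $\fa_i,\fb_i\in I$ on $o\in\bA^3_\Omega$ with $\mld_o(\bA^3_\Omega,\fa_i\fb_i^{t_i})=a$ and $t_1<t_2<\cdots$; we may assume $t_i>0$ and $\fb_i\subseteq\fm_o$. Every divisor over $o$ with center $o$ has order at least $\delta:=\inf I>0$ along $\fb_i$, so $t\mapsto\mld_o(\bA^3_\Omega,\fa_i\fb_i^t)$ is strictly decreasing, with slope at most $-\delta$, on its domain; in particular $t_i$ is the $a$-lc threshold of $(\fa_i;\fb_i)$ at $o$, and comparing $\mld$ with the exceptional divisor $E$ of the blow-up of $o$ gives $a\le a_E(\bA^3_\Omega,\fa_i\fb_i^{t_i})\le3-t_i\ord_E(\fb_i)\le3-\delta t_i$, so the $t_i$ are bounded and, after passing to a subsequence, $t_i\nearrow t_\infty<\infty$. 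Using that $I$ satisfies the DCC I would further arrange, passing to a further subsequence, that $\fa_i$ and $\fb_i$ have a fixed number of factors with exponents non-decreasing in $i$; since $t_i$ is non-decreasing and $t_\infty$ is finite, the exponents of $\fa_i\fb_i^t$ then lie, for each fixed $t\le t_\infty$, in a common set satisfying the DCC, so Theorem \ref{thm:main} applies along each such family.

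Finally I would form a generic limit $(\fa_\infty,\fb_\infty)$ of $\{(\fa_i,\fb_i)\}$, again over $o\in\bA^3_\Omega$. For each fixed $t<t_\infty$ one has $t<t_i$ and hence $\mld_o(\bA^3_\Omega,\fa_i\fb_i^t)\ge a$ for $i\gg0$, so semi-continuity along the degeneration gives $\mld_o(\bA^3_\Omega,\fa_\infty\fb_\infty^t)\ge a$ for all $t<t_\infty$, whence the $a$-lc threshold $t^\ast$ of $(\fa_\infty;\fb_\infty)$ at $o$ satisfies $t^\ast\ge t_\infty>t_i$. The contradiction is that $t^\ast$, being the $a$-lc threshold of the honest pair $(\fa_\infty;\fb_\infty)$ obtained as a generic limit, must coincide with $t_i$ for $i\gg0$: here one uses the uniform $\fm$-adic semi-continuity of minimal log discrepancies — equivalently Nakamura's boundedness of a computing divisor — which says that $\mld_o(\bA^3_\Omega,\fa_i\fb_i^t)$, and hence the $a$-lc threshold, is determined by a truncation of the ideals of bounded order, and such truncations stabilise along the generic limit. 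The main obstacle is precisely making this limiting argument rigorous — constructing the generic limit with the correct semi-continuity and, above all, controlling the divisors that compute the minimal log discrepancy so that the stabilisation holds — which is the technical heart of \cite{K21}; the present paper's role is to supply Theorem \ref{thm:main}, which powers the entire equivalence.
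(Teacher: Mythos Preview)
Your approach is essentially the same as the paper's: reduce to the fixed germ $o\in\bA^3$ and invoke the equivalence of the four conjectures on a fixed klt germ from \cite{K21} (stated here as Theorem~\ref{thm:equiv}), with the new input being one of the main theorems of the present paper. The paper's own proof is a two-line citation: Theorem~\ref{thm:limit} $\Rightarrow$ Theorem~\ref{thm:product}, and then \cite[theorem~1.1]{K21} together with Proposition~\ref{prp:HLL} and Remark~\ref{rmk:etale} yield Theorems~\ref{thm:main}--\ref{thm:nakamura} simultaneously. Two minor remarks: the introduction of a large field $\Omega$ is unnecessary, since the paper works throughout over a fixed algebraically closed field $k$ and the reduction to $o\in\bA^3$ is accomplished by the \'etale morphism of Remark~\ref{rmk:etale} rather than by formal isomorphism plus base change; and the paper feeds Theorem~\ref{thm:product} (equivalently Nakamura's boundedness) into the machinery of \cite{K21} rather than Theorem~\ref{thm:main}, though by the equivalence this distinction is immaterial.
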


\begin{theorem}[Uniform $\fm$-adic semi-continuity]\label{thm:adic}
Fix a subset $I$ of the positive real numbers which satisfies the DCC\@. Then there exists a positive integer $l$ depending only on $I$ such that for $\bR$-ideals $\fa=\prod_{j=1}^e\fa_j^{r_j}$ and $\fb=\prod_{j=1}^e\fb_j^{r_j}$ on the germ $x\in X$ of a smooth threefold, if $r_j\in I$ and $\fa_j+\fm^l=\fb_j+\fm^l$ for all $j$, where $\fm$ is the maximal ideal in $\sO_X$ defining $x$, then $\mld_x(X,\fa)=\mld_x(X,\fb)$.
\end{theorem}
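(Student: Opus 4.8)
The plan is to deduce Theorem~\ref{thm:adic} from Theorem~\ref{thm:main} by combining it with the equivalences established in the preceding paper \cite{K21}. I would begin with a reduction to a single germ: for the germ $x\in X$ of a smooth threefold, the completion $\widehat{\sO}_{X,x}$ is a formal power series ring in three variables, and both $\mld_x(X,\fa)$ and the truncation conditions $\fa_j+\fm^l=\fb_j+\fm^l$ depend only on this completion. Hence it suffices to prove the assertion for the single germ $o\in\bA^3$, with $l$ allowed to depend on $I$ alone. Since $\bA^3$ is smooth, hence klt, this germ lies within the scope of \cite{K21}.

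On the germ $o\in\bA^3$, \cite{K21} shows that Shokurov's ACC conjecture for minimal log discrepancies, Shokurov's ACC conjecture for $a$-lc thresholds, Musta\c{t}\u{a}'s uniform $\fm$-adic semi-continuity conjecture and Nakamura's boundedness conjecture for the log discrepancy of some divisor that computes the minimal log discrepancy are mutually equivalent; I would invoke the implication from the first to the third. The hypothesis it requires is precisely that $\{\mld_o(\bA^3,\fa)\mid\fa\in I\}$ satisfies the ACC, which is the case $x=o\in\bA^3$ of Theorem~\ref{thm:main}; indeed this case already coincides with the whole set appearing in Theorem~\ref{thm:main}, since every smooth threefold germ becomes, after completion, the germ $o\in\bA^3$. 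The same equivalences yield Theorem~\ref{thm:alc} at the same time.

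The mechanism behind the relevant implication, going back to Musta\c{t}\u{a} and Nakamura and carried out in \cite{K21}, passes through the boundedness conjecture. The ACC produces, for each DCC set $I$, a constant bounding the log discrepancy $a_E(\bA^3)$ for some divisor $E$ computing $\mld_o(\bA^3,\fb)$, uniformly in $\fb\in I$. Such a bound on $a_E(\bA^3)$ controls the order $\ord_E$ of the maximal ideal, and hence forces $\ord_E(\fa_j)=\ord_E(\fb_j)$ for all $j$ provided $l$ is taken large enough in terms of it; then $a_E(\bA^3,\fa)=a_E(\bA^3,\fb)=\mld_o(\bA^3,\fb)$, so $\mld_o(\bA^3,\fa)\le\mld_o(\bA^3,\fb)$, and the symmetry of the hypothesis in $\fa$ and $\fb$ gives the reverse inequality.

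The genuine difficulty is concentrated in Theorem~\ref{thm:main}, whose proof occupies the body of the paper; granting \cite{K21}, the argument above is a formal deduction. The subtle point internal to the deduction---settled in \cite{K21}---is that $l$ must be chosen uniformly over \emph{all} $\bR$-ideals with exponents in $I$, including those whose exponents lie close to $0$, for which a bound on the combination $\sum_j r_j\ord_E(\fa_j)$ (which is what boundedness of $a_E(\bA^3)$ yields) does not by itself bound the individual orders $\ord_E(\fa_j)$; this is dealt with there by exploiting that $\mld_o$, viewed as a function of the exponent vector, is locally a minimum of finitely many affine-linear functions, so that only finitely many exponent configurations are effectively relevant, together with a perturbation reducing to exponents bounded away from $0$.
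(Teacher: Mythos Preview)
Your approach is correct and essentially matches the paper's own deduction: reduce to a single germ via completion (this is Remark~\ref{rmk:etale}), invoke the equivalence of the four conjectures on a fixed klt germ from \cite[theorem~4.6]{K21} (restated here as Theorem~\ref{thm:equiv}), and then extend from finite $I$ to DCC $I$. The paper packages these steps as: Theorem~\ref{thm:product} $\Rightarrow$ Theorems~\ref{thm:main}--\ref{thm:nakamura} via \cite[theorem~1.1]{K21} and Proposition~\ref{prp:HLL}.

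One correction of attribution and one of substance. The extension from finite $I$ to DCC $I$ in statements (iii) and (iv) is \emph{not} in \cite{K21}; Theorem~\ref{thm:equiv}(iii),(iv) are stated there only for finite $I$. The extension is due to Han--Liu--Luo \cite{HLLaxv} and is reproduced in this paper as Proposition~\ref{prp:HLL}. Secondly, your description of the obstacle is off: in a DCC set $I$ the exponents are automatically bounded below by $\min I>0$, so ``exponents close to $0$'' is not the issue. The actual difficulty is that $I$ may be infinite, so one cannot directly apply the finite-$I$ boundedness; the argument in Proposition~\ref{prp:HLL} first bounds the number of factors and the size of the exponents, then passes to limit exponents along a subsequence and runs an induction on the number of exponents not yet at their limit, using \cite[lemma~4.5(ii)]{K21} to interpolate.
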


\begin{theorem}[Boundedness]\label{thm:nakamura}
Fix a subset $I$ of the positive real numbers which satisfies the DCC\@. Then there exists a positive integer $l$ depending only on $I$ such that for an $\bR$-ideal $\fa$ on the germ $x\in X$ of a smooth threefold, if $\fa\in I$, then there exists a divisor $E$ over $X$ which computes $\mld_x(X,\fa)$ and has $a_E(X)\le l$.
\end{theorem}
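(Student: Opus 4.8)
The plan is to deduce Theorem~\ref{thm:nakamura} from Theorem~\ref{thm:main} by invoking the equivalences established in the preceding paper~\cite{K21}. On a fixed germ $x\in X$ of a klt variety and for $\bR$-ideals with exponents in a prescribed set $I$ satisfying the DCC, \cite{K21} proves the equivalence of the ACC for $\mld_x$, the ACC for $a$-lc thresholds, the uniform $\fm$-adic semi-continuity of $\mld_x$, and Nakamura's boundedness of the log discrepancy of a divisor computing $\mld_x$. So the argument I have in mind has two soft steps: reduce the assertion to the single germ $0\in\bA^3$, and then feed the ACC supplied by Theorem~\ref{thm:main} into that equivalence.

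For the reduction, observe that $\mld_x(X,\fa)$ and the set of log discrepancies $a_E(X)$ of divisors $E$ over $X$ centered at $x$ depend only on the completion $\widehat{\sO}_{X,x}$ together with the ideals involved, and are insensitive to base change of the ground field; since any closed point of a smooth threefold has $\widehat{\sO}_{X,x}\cong k[[t_1,t_2,t_3]]$ (taking $k$ algebraically closed, which is harmless), it suffices to produce, for each $\bR$-ideal $\fa\in I$ on $\bA^3$, a divisor computing $\mld_0(\bA^3,\fa)$ with log discrepancy at most a constant $l=l(I)$. Now Theorem~\ref{thm:main}, being the ACC over all smooth threefolds, in particular gives the ACC for $\{\mld_0(\bA^3,\fa)\mid\fa\in I\}$; plugging this into the cited equivalence yields precisely such a constant and such divisors, and transporting them back through the isomorphism $\widehat{\sO}_{X,x}\cong k[[t_1,t_2,t_3]]$ finishes the proof. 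The constant obtained is uniform over all smooth threefold germs exactly because every such germ has been normalized to $0\in\bA^3$.

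I expect no real obstacle in this deduction; the entire difficulty lies in Theorem~\ref{thm:main}. What does need attention is bookkeeping: one must verify that the form of the ACC hypothesis used in \cite{K21} matches the conclusion of Theorem~\ref{thm:main} verbatim --- the same DCC set $I$, and the same class of $\bR$-ideals $\fa=\prod_{j=1}^e\fa_j^{r_j}$ allowing arbitrarily many factors and arbitrary real exponents --- and that the resulting bound $l$ indeed depends only on $I$, which the normalization to $0\in\bA^3$ secures. The same chain of reasoning, incidentally, produces Theorems~\ref{thm:alc} and~\ref{thm:adic} from Theorem~\ref{thm:main} as well.
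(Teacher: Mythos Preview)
Your approach is close to the paper's, but there is a genuine gap in what you claim the equivalence from \cite{K21} delivers. As recorded here in Theorem~\ref{thm:equiv}, the equivalence proved in \cite[theorem~4.6]{K21} has statements~(i) and~(ii) formulated for a DCC set $I$, but statements~(iii) and~(iv)---the $\fm$-adic semi-continuity and Nakamura's boundedness---are formulated only for a \emph{finite} set $I$. So feeding Theorem~\ref{thm:main} into that equivalence on the germ $0\in\bA^3$ yields the bound $l$ only when $I$ is finite, not for a general DCC set. The passage from finite $I$ to DCC $I$ is not bookkeeping: it requires the argument of Han--Liu--Luo \cite{HLLaxv}, stated and proved here as Proposition~\ref{prp:HLL}, which is a several-step induction on the number of exponents that genuinely vary. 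Once you invoke Proposition~\ref{prp:HLL}, your route works.

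The paper's own deduction is organised slightly differently: rather than deriving Theorem~\ref{thm:nakamura} from Theorem~\ref{thm:main}, it derives all four of Theorems~\ref{thm:main}--\ref{thm:nakamura} simultaneously from the intermediate Theorem~\ref{thm:product} via \cite[theorem~1.1]{K21} together with Proposition~\ref{prp:HLL} and Remark~\ref{rmk:etale}. Since the four statements are equivalent on a fixed germ, the two routes amount to the same thing once Proposition~\ref{prp:HLL} is in hand; your reduction to $0\in\bA^3$ is exactly the content of Remark~\ref{rmk:etale}.
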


The $\fm$-adic semi-continuity conjecture and the boundedness conjecture originally assumed that $I$ is finite. The extension to the case when $I$ satisfies the DCC is due to Han, Liu and Luo \cite{HLLaxv} and it will be explained as Proposition~\ref{prp:HLL}.

Once the theorems are established, it is relatively simple to extend them to the statements on a fixed terminal quotient threefold singularity as in Corollary~\ref{crl:quot}. Here we state a generalisation of Theorem~\ref{thm:main}.

\begin{corollary}\label{crl:main}
Fix a positive integer $r$ and a subset $I$ of the positive real numbers which satisfies the DCC\@. Then the set
\[
\biggl\{\mld_x(X,\fa)\;\biggm|
\begin{array}{l}
\textup{$x\in X$ a terminal quotient threefold singularity}\\
\textup{of index at most $r$},\ \textup{$\fa$ an $\bR$-ideal},\ \fa\in I
\end{array}
\biggr\}
\]
satisfies the ACC\@.
\end{corollary}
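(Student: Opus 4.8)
The plan is to reduce the statement on a terminal quotient threefold singularity to Theorem~\ref{thm:main} by passing to the smooth cover. Let $x\in X$ be a terminal quotient threefold singularity of index $r'\le r$, so that there is a cyclic cover $\pi\colon (y\in Y)\to (x\in X)$ of degree $r'$ with $y\in Y$ a smooth threefold germ, étale in codimension one, and with $X=Y/\mu_{r'}$ for the action of the group $\mu_{r'}$ of $r'$-th roots of unity. Since $\pi$ is étale in codimension one, the ramification formula gives $K_Y=\pi^*K_X$, and for an $\bR$-ideal $\fa$ on $X$ the pullback $\fa\sO_Y$ is an $\bR$-ideal on $Y$ with $\fa\sO_Y\in I$ whenever $\fa\in I$ (the exponents are unchanged). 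The key numerical input is the behaviour of minimal log discrepancies under such a quotient: by the standard comparison for finite morphisms étale in codimension one (as in Koll\'ar--Mori, applied at the finitely many points of $\pi^{-1}(x)$, which here is the single point $y$), one has $\mld_x(X,\fa)=\frac{1}{r'}\,\mld_y(Y,\fa\sO_Y)$. Actually one must be slightly careful: $\mld_y(Y,\fa\sO_Y)$ a priori ranges over all divisors over $Y$, whereas $r'\cdot\mld_x(X,\fa)$ is computed by $\mu_{r'}$-invariant divisors; but the minimum over invariant divisors equals the minimum over all divisors, because the Galois average of a computing divisor does no worse, so the equality stands.

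With this identity in hand, the proof is immediate: suppose for contradiction that the set in the statement fails ACC, so there is a strictly increasing sequence $\mld_{x_i}(X_i,\fa_i)$ with each $x_i\in X_i$ a terminal quotient threefold singularity of index $r'_i\le r$ and $\fa_i\in I$. Passing to covers $\pi_i\colon(y_i\in Y_i)\to(x_i\in X_i)$ as above, we get $\mld_{x_i}(X_i,\fa_i)=\frac{1}{r'_i}\mld_{y_i}(Y_i,\fa_i\sO_{Y_i})$ with each $Y_i$ a smooth threefold and $\fa_i\sO_{Y_i}\in I$. Since $r'_i\in\{1,\dots,r\}$ is bounded, after passing to a subsequence we may assume $r'_i=r'$ is constant; then $\mld_{y_i}(Y_i,\fa_i\sO_{Y_i})=r'\cdot\mld_{x_i}(X_i,\fa_i)$ is a strictly increasing sequence drawn from the set in Theorem~\ref{thm:main}, contradicting its ACC property.

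The main obstacle is not any hard geometry but getting the quotient formula $\mld_x(X,\fa)=\frac{1}{r'}\mld_y(Y,\fa\sO_Y)$ cleanly, including the reduction from all divisors over $Y$ to the $\mu_{r'}$-invariant ones; this is where one must invoke that the extraction $\pi$ is étale in codimension one so that $K_Y=\pi^*K_X$ with no discrepancy correction, and that terminal quotient singularities are precisely cyclic quotients of smooth points (so the cover is smooth, not merely canonical). One should also record, for Corollary~\ref{crl:quot} and the analogous extensions of Theorems~\ref{thm:alc}--\ref{thm:nakamura}, that the same cover simultaneously transports $a$-lc thresholds (rescaling $a$ by $r'$ and noting the exponent $t$ is unchanged), the $\fm$-adic data (the cover pulls the maximal ideal $\fm_x$ back into $\fm_y$, so an $\fm_y^{lr}$-congruence on $Y$ is implied by an $\fm_x^l$-congruence on $X$, absorbing the bounded factor $r$ into the integer $l$), and a computing divisor of bounded log discrepancy (its image downstairs has log discrepancy bounded by the same constant up to the factor $r$); but for the single statement above only the numerical identity and the boundedness of the index are needed.
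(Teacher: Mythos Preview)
The identity $\mld_x(X,\fa)=\frac{1}{r'}\,\mld_y(Y,\fa\sO_Y)$ on which your whole argument rests is false. For a divisor $F$ over $Y$ lying over a divisor $E$ over $X$ with ramification index $e$, the correct relation coming from the Hurwitz formula is $a_F(Y,\fa\sO_Y)=e\cdot a_E(X,\fa)$, and $e$ can be any divisor of $r'$; it equals $r'$ precisely when the Galois group acts trivially on the function field of $F$. Your ``Galois average'' remark conflates a divisor being $G$-invariant with the $G$-action on its function field being trivial. Concretely, take $x\in X$ of type $\frac{1}{3}(1,2,1)$ with $\fa=\sO_X$. Then $\mld_y(Y)=3$, computed by the ordinary blow-up $F\simeq\bP^2$, on which $\bZ_3$ acts faithfully, so $e=1$ and the image divisor $E$ has $a_E(X)=3$; but $\mld_x(X)=1+\tfrac{1}{3}=\tfrac{4}{3}$, realised by Kawamata's weighted blow-up. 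Thus $\mld_x(X)=\tfrac{4}{3}\neq 1=\tfrac{1}{3}\mld_y(Y)$. In general one only has the sandwich $\tfrac{1}{r'}\mld_y(Y,\fa\sO_Y)\le\mld_x(X,\fa)\le\mld_y(Y,\fa\sO_Y)$, which is not enough to transport a strictly increasing sequence from $X$ to $Y$.

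The paper proceeds quite differently: it first proves in Corollary~\ref{crl:quot} that all four statements of Theorem~\ref{thm:equiv} hold on a fixed terminal quotient threefold germ, by induction on the index via the unique Kawamata divisorial contraction of Theorem~\ref{thm:divcont}(\ref{itm:divcont-quot}), which lowers the index and reduces the boundedness of a computing divisor to the inductive hypothesis together with the surface case and Theorem~\ref{thm:lct}. Corollary~\ref{crl:main} then follows because there are only finitely many types of terminal quotient singularities of bounded index, so the ACC on each fixed germ suffices.
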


The ACC for minimal log discrepancies is predicted without restriction to smooth varieties. In dimension three, it has been proved for sequences of minimal log discrepancies with limit at least one \cite{HLLaxv}. In the case when the threefolds have no boundary, it is known for sequences with limit greater than $5/6$ \cite{LLaxv} after the works \cite{Ji21}, \cite{LX21}.

We shall outline the proof. We seek Theorem~\ref{thm:nakamura} amongst the four equivalent statements. Our prior work \cite{K21} reduced it to the case when the boundary splits into a canonical part and the maximal ideal $\fm$ to some power. To be precise, for fixed rational numbers $q$ and $s$, it suffices to bound the log discrepancy of some divisor that computes $\mld_x(X,\fa^q\fm^s)$ for an ideal $\fa$ such that $\mld_x(X,\fa^q)$ equals one. The boundedness has been settled when the log canonical threshold of $\fm$ on $(X,\fa^q)$ is at most one-half. In the remaining case, the pair $(X,\fa^q)$ is canonical of semistable type (Definition~\ref{dfn:semistable}), which means that the maximal ideal has order one along every divisor that computes $\mld_x(X,\fa^q)=1$.

We begin with a study of composition of weighted blow-ups. Let $E\subset Y\to x\in X$ be the weighted blow-up with $\wt(x_1,x_2,x_3)=(w_1,w_2,1)$ for $w_2\le w_1$ which is crepant in the sense that $a_E(X,\fa^q)$ is one. Consider the composite with a weighted blow-up $F\subset Z\to y\in Y$ of similar type with $\wt(y_1,y_2,x_3)=(v_1,v_2,1)$ for $v_2\le v_1$ at a point $y$ outside the strict transform of the divisor defined by $x_3$. In general, the divisor $F$ over $X$ is not obtained by any weighted blow-up of $X$ (Example~\ref{exl:composite}). However, provided that the slope $v_1/v_2$ is almost less than $w_1/w_2$, $F$ is realised by some weighted blow-up (Theorem~\ref{thm:composite}).

The problem is formulated in terms of the generic limit of ideals. The generic limit $\sfa$ of a sequence $\{\fa_i\}_i$ of ideals on $x\in X$ is defined on the spectrum $\hat x\in \hat X$ of the completion of an extension of the local ring $\sO_{X,x}$. After our former work \cite{K15}, it remains to deal with the case when $(\hat X,\sfa^q)$ has the smallest lc centre of dimension one (Theorem~\ref{thm:special}). In this case, a certain weighted blow-up of $\hat X$ with $\wt(x_1,x_2)=(w_1,w_2)$ produces a divisor $\hat F$ such that $a_{\hat F}(\hat X,\sfa^q)$ is zero. It comes from a sequence of crepant weighted blow-ups of $X$ with $\wt(x_{1i},x_{2i},x_{3i})=(iw_1,iw_2,1)$. Conversely if there exists a sequence of crepant weighted blow-ups of $X$ with $\wt(x_1,x_2,x_3)=(w_{1i},w_{2i},1)$ such that $w_{2i}$ diverges to infinity and such that the slope $w_{1i}/w_{2i}$ converges close to $w_1/w_2$, then $a_{\hat F}(\hat X,\sfa^q)$ is zero (Propositions~\ref{prp:down},~\ref{prp:up}).

Given a sequence $\{\fa_i\}_i$ of ideals such that $(X,\fa_i^q)$ is canonical of semistable type, we want to bound the log discrepancy $a_{E_i}(X)$ of some divisor $E_i$ that computes $\mld_x(X,\fa_i^q\fm^s)$. For each $i$, we take a crepant weighted blow-up $F_i\subset B_i\to x\in X$ maximally, with respect to weights $(w_{1i},w_{2i},1)$ for $w_{2i}\le w_{1i}$. The substantial case is when $w_{2i}$ diverges and the slope $w_{1i}/w_{2i}$ converges to a rational number $\mu=w_1/w_2$ in a finite set $\sfQ_n$. We bound the log discrepancy $a_{E_i}(B_i)$ in this case (Lemma~\ref{lem:slopeQ}). Indeed, we can assume the existence of a crepant weighted blow-up of $B_i$ at the centre of $E_i$ with weights $(v_{1i},v_{2i},1)$ such that $v_{1i}/v_{2i}$ converges to a limit $\mu'\in\sfQ_n$. By the maximal choice of $B_i$, it follows from the result on composition of weighted blow-ups that $\mu<\mu'$, and we obtain the bound by induction on $\mu$.

The divisor $F_i$ tends to the divisor $F$ obtained by the weighted blow-up of $X$ with weights $(w_1,w_2)$. Since $a_{E_i}(B_i)$ is bounded, we may truncate $\fa_i$ by the part of higher order along $F$. The truncated general member admits a flat degeneration to a weighted homogeneous function $h_i$ such that $a_F(X,h_i^q)$ is zero. By the lower semi-continuity of minimal log discrepancies on smooth varieties \cite{EMY03}, the boundedness for $(X,\fa_i^q\fm^s)$ is reduced to that for the degenerate pairs $(X,h_i^q\fm^s)$. By precise inversion of adjunction, the latter is derived from the result on the surface $F$ (Section~\ref{sct:monomial}).

\section{Preliminaries}
We shall fix notation and recall basic definitions following our preceding paper \cite{K21}. We work over an algebraically closed field $k$ of characteristic zero. An \textit{algebraic scheme} is a separated scheme of finite type over $\Spec k$. It is a \textit{variety} if it is integral. An ideal sheaf on an algebraic scheme is assumed to be coherent. The germ is considered at a closed point. The natural numbers begin with zero. We set $\bN_+=\bN\setminus\{0\}$.

Let $X$ be an algebraic scheme and let $Z$ be a closed subvariety of $X$. The \textit{order} $\ord_Z\fa$ along $Z$ of an ideal sheaf $\fa$ in $\sO_X$ is the supremum of the integers $\nu$ such that $\fa\sO_{X,\eta}\subset\fp^\nu\sO_{X,\eta}$ for the ideal sheaf $\fp$ in $\sO_X$ defining $Z$ and the generic point $\eta$ of $Z$. For a prime divisor $E$ on a variety $Y$ equipped with a birational morphism to $X$, we write $\ord_E\fa$ for $\ord_E\fa\sO_Y$. For a function $f$ in $\sO_X$, we write $\ord_Zf$ for $\ord_Z(f\sO_X)$. When $X$ is normal, the order $\ord_ZD$ of an effective $\bQ$-Cartier divisor $D$ on $X$ is defined as $r^{-1}\ord_Z\sO_X(-rD)$ by a positive integer $r$ such that $rD$ is Cartier. The notion of $\ord_ZD$ is linearly extended to $\bR$-Cartier $\bR$-divisors.

An $\bR$-\textit{ideal} on an algebraic scheme $X$ is a formal product $\fa=\prod_j\fa_j^{r_j}$ of finitely many ideal sheaves $\fa_j$ in $\sO_X$ with positive real exponents $r_j$. We write $\fa^t=\prod_j\fa_j^{tr_j}$ for a positive real number $t$. The order $\ord_Z\fa$ is defined as $\sum_jr_j\ord_Z\fa_j$. The pull-back of $\fa$ by a morphism $Y\to X$ is $\fa\sO_Y=\prod_j(\fa_j\sO_Y)^{r_j}$. For a subset $I$ of the positive real numbers, we write $\fa\in I$ if all exponents $r_j$ belong to $I$. We say that $\fa$ is invertible if all $\fa_j$ are invertible, and in this case, $\fa$ defines the $\bR$-Cartier $\bR$-divisor $A=\sum_jr_jA_j$ by $\fa_j=\sO_X(-A_j)$. When we work on the germ $x\in X$, we say that the $\bR$-divisor $\sum_jr_j(f_j)$ for general $f_j\in\fa_j$ is defined by a general member of $\fa$. The $\bR$-ideal $\fa$ is said to be $\fm$-primary if all $\fa_j$ are $\fm$-primary, where $\fm$ is the maximal ideal in $\sO_X$ defining $x$.

A \textit{subtriple} $(X,\Delta,\fa)$ consists of a normal variety $X$, an $\bR$-divisor $\Delta$ on $X$ and an $\bR$-ideal $\fa$ on $X$ such that $K_X+\Delta$ is $\bR$-Cartier. It is called a \textit{triple} if $\Delta$ is effective. If $\fa=\sO_X$ or $\Delta=0$, then we simply write $(X,\Delta)$ or $(X,\fa)$ and call it a \textit{\bracketsup{sub}pair}. A divisor \textit{over} $X$ is a prime divisor $E$ on some normal variety $Y$ equipped with a birational morphism $\pi\colon Y\to X$. The closure of the image $\pi(E)$ is called the \textit{centre} of $E$ on $X$ and denoted by $c_X(E)$. Two divisors over $X$ are usually identified if they define the same valuation on the function field of $X$. The \textit{log discrepancy} of $E$ with respect to the subtriple $(X,\Delta,\fa)$ is
\[
a_E(X,\Delta,\fa)=1+\ord_EK_{Y/(X,\Delta)}-\ord_E\fa,
\]
where $K_{Y/(X,\Delta)}=K_Y-\pi^*(K_X+\Delta)$.

Let $\eta$ be a scheme-theoretic point in $X$ with closure $Z=\overline{\{\eta\}}$ in $X$. The \textit{minimal log discrepancy} of $(X,\Delta,\fa)$ at $\eta$ is
\[
\mld_\eta(X,\Delta,\fa)=\inf\{a_E(X,\Delta,\fa)\mid\textrm{$E$ a divisor over $X$},\ c_X(E)=Z\}.
\]
It is a non-negative real number or minus infinity. We say that a divisor $E$ over $X$ \textit{computes} $\mld_\eta(X,\Delta,\fa)$ if $c_X(E)=Z$ and $a_E(X,\Delta,\fa)=\mld_\eta(X,\Delta,\fa)$ (or is negative when $\mld_\eta(X,\Delta,\fa)=-\infty$). We also define the \textit{minimal log discrepancy} $\mld_W(X,\Delta,\fa)$ of $(X,\Delta,\fa)$ in a closed subset $W$ of $X$ as the infimum of $a_E(X,\Delta,\fa)$ for all divisors $E$ over $X$ such that $c_X(E)\subset W$.

The subtriple $(X,\Delta,\fa)$ is said to be \textit{log canonical} (\textit{lc}) (resp.\ \textit{Kawamata log terminal} (\textit{klt})) if $a_E(X,\Delta,\fa)\ge0$ (resp.\ $>0$) for all divisors $E$ over $X$. It is said to be \textit{purely log terminal} (\textit{plt}) (resp.\ \textit{canonical}, \textit{terminal}) if $a_E(X,\Delta,\fa)>0$ (resp.\ $\ge1$, $>1$) for all divisors $E$ exceptional over $X$. When $(X,\Delta,\fa)$ is lc, the centre $c_X(E)$ of a divisor $E$ over $X$ such that $a_E(X,\Delta,\fa)=0$ is called an \textit{lc centre}. On the germ of a variety, an lc centre contained in all lc centres is called the \textit{smallest lc centre}. The \textit{index} of a normal $\bQ$-Gorenstein singularity $x\in X$ is the least positive integer $r$ such that $rK_X$ is Cartier.

A \textit{contraction} is a projective morphism of normal varieties with connected fibres. A \textit{log resolution} of a subtriple $(X,\Delta,\fa)$ is a birational contraction $X'\to X$ from a smooth variety such that the exceptional locus is a divisor, such that $\fa\sO_{X'}$ is invertible and such that the union of the exceptional locus, the support of the strict transform of $\Delta$ and the support of the $\bR$-divisor defined by $\fa\sO_{X'}$ is simple normal crossing. Let $\pi\colon Y\to X$ be a birational morphism of normal varieties. A subtriple $(Y,\Gamma,\fb)$ is said to be \textit{crepant} to $(X,\Delta,\fa)$ if $a_E(Y,\Gamma,\fb)=a_E(X,\Delta,\fa)$ for all divisors $E$ over $Y$. Suppose that the exceptional locus of $\pi$ is a divisor $\sum_iE_i$. The \textit{weak transform} $\fa_Y=\prod_j(\fa_{jY})^{r_j}$ in $Y$ of an $\bR$-ideal $\fa=\prod_j\fa_j^{r_j}$ is defined by
\[
\fa_{jY}=\fa_j\sO_Y(\textstyle\sum_i(\ord_{E_i}\fa_j)E_i)
\]
provided that $\sum_i(\ord_{E_i}\fa_j)E_i$ is Cartier for all $j$. If the weak transform $\fc_{jY}$ in $Y$ of $\fa_j^n$ is defined for all $j$ for a common positive integer $n$, then we call $\fa_Y=\prod_j(\fc_{jY})^{r_j/n}$ a \textit{weak $\bQ$-transform} in $Y$ of $\fa$.

Let $x\in X$ be the germ of a smooth variety. Let $x_1,\ldots,x_c$ be a part of a regular system of parameters in $\sO_{X,x}$ and let $w_1,\ldots,w_c$ be positive integers. The \textit{weighted blow-up} of $X$ with $\wt(x_1,\ldots,x_c)=(w_1,\ldots,w_c)$ is $B=\Proj_X(\bigoplus_{w\in\bN}\sI_w)\to X$ for the ideal $\sI_w$ in $\sO_X$ generated by all monomials $x_1^{s_1}\cdots x_c^{s_c}$ such that $\sum_iw_is_i\ge w$. The description is reduced to the case when $x\in X$ is the germ $o\in\bA^d$ at origin of the affine space with coordinates $x_1,\ldots,x_d$. Suppose that $w_1,\ldots,w_d$ have no common factors. Then the weighted blow-up of $\bA^d$ with $\wt(x_1,\ldots,x_d)=(w_1,\ldots,w_d)$ is covered by the affine charts $U_i=(x_i\neq0)\simeq\bA^d/\bZ_{w_i}(w_1,\ldots,w_{i-1},-1,w_{i+1},\ldots,w_d)$ for $1\le i\le d$.

The notation $\bA^d/\bZ_r(a_1,\ldots,a_d)$ stands for the quotient of $\bA^d$ by the cyclic group $\bZ_r$ whose generator sends $x_i$ to $\zeta^{a_i}x_i$ for a primitive $r$-th root $\zeta$ of unity. We call $x_1,\ldots,x_d$ the \textit{orbifold coordinates} on the quotient. A \textit{cyclic quotient singularity} is a singularity \'etale equivalent to some $o\in A=\bA^d/\bZ_r(a_1,\ldots,a_d)$ and it is said to be of \textit{type} $\frac{1}{r}(a_1,\ldots,a_d)$. The quotient $A$ is the toric variety $T_N(\Delta)$ for the lattice $N=\bZ^d+\bZ v$ with $v=\frac{1}{r}(a_1,\ldots,a_d)$ and the standard fan $\Delta$. For $e=\frac{1}{r}(w_1,\ldots,w_d)\in N$ with all $w_i$ non-negative, we define the \textit{weighted blow-up} of $A$ with $\wt(x_1,\ldots,x_d)=\frac{1}{r}(w_1,\ldots,w_d)$ by adding the ray generated by $e$. The definition makes sense for any cyclic quotient singularity. See \cite[6.38]{KSC04} for details.

\begin{remark}\label{rmk:wbu}
If $y_1,\ldots,y_c$ form a part of a regular system of parameters in $\sO_{X,x}$ for $y_i\in\sI_{w_i}\setminus\sI_{w_i+1}$, then $B$ is the same as the weighted blow-up of $X$ with $\wt(y_1,\ldots,y_c)=(w_1,\ldots,w_c)$.
\end{remark}

We introduce a notation peculiar to this paper.

\begin{notation}\label{ntn:PQ}
We define the set
\[
\sfP=\{(w_1,w_2)\in(\bN_+)^2\mid\textup{$w_1,w_2$ coprime},\ w_2\le w_1\}
\]
and identify $\sfP$ with the set $\sfQ=\{q\in\bQ\mid q\ge1\}$ by the \textit{slope} function $\mu\colon\sfP\to\sfQ$ which sends $(w_1,w_2)$ to $w_1/w_2$. For a positive integer $n$, we define the finite subsets $\sfP_n=\{(w_1,w_2)\in\sfP\mid w_1\le n\}$ and $\sfQ_n=\mu(\sfP_n)$.
\end{notation}

As mentioned at the beginning of the introduction, the four conjectures for minimal log discrepancies are equivalent when the variety is fixed. This was pointed out in \cite{MN18} and proved in \cite[theorem~4.6]{K21}.

\begin{theorem}\label{thm:equiv}
Fix the germ $x\in X$ of a klt variety. The following are equivalent.
\begin{enumerate}
\item
Fix a subset $I$ of the positive real numbers which satisfies the DCC\@. Then the set of $\mld_x(X,\fa)$ for $\bR$-ideals $\fa$ with $\fa\in I$ satisfies the ACC\@.
\item
Fix a non-negative real number $a$ and a subset $I$ of the positive real numbers which satisfies the DCC\@. Then the set of non-negative real numbers $t$ such that $\mld_x(X,\fa\fb^t)=a$ for $\bR$-ideals $\fa$ and $\fb$ with $\fa,\fb\in I$ satisfies the ACC\@.
\item
Fix a finite subset $I$ of the positive real numbers. Then there exists a positive integer $l$ depending only on $X$ and $I$ such that for $\bR$-ideals $\fa=\prod_{j=1}^e\fa_j^{r_j}$ and $\fb=\prod_{j=1}^e\fb_j^{r_j}$ on $X$, if $r_j\in I$ and $\fa_j+\fm^l=\fb_j+\fm^l$ for all $j$, where $\fm$ is the maximal ideal in $\sO_X$ defining $x$, then $\mld_x(X,\fa)=\mld_x(X,\fb)$.
\item
Fix a finite subset $I$ of the positive real numbers. Then there exists a positive integer $l$ depending only on $X$ and $I$ such that for an $\bR$-ideal $\fa$ on $X$, if $\fa\in I$, then there exists a divisor $E$ over $X$ which computes $\mld_x(X,\fa)$ and has $a_E(X)\le l$.
\end{enumerate}
\end{theorem}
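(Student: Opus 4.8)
The plan is to deduce the theorem from the chain of implications (i)$\Leftrightarrow$(ii), (i)$\Rightarrow$(iv), (iv)$\Rightarrow$(iii), (iii)$\Rightarrow$(iv) and (iv)$\Rightarrow$(i), which close up the four equivalences; the real content is the passage from the qualitative ascending chain conditions (i), (ii) to the \emph{uniform} integer $l$ of (iv), the remaining links being essentially bookkeeping. For (i)$\Leftrightarrow$(ii) I would invoke the argument of Musta\c{t}\u{a} and Nakamura \cite{MN18}: the function $t\mapsto\mld_x(X,\fa\fb^t)$ is non-increasing, concave and piecewise linear where finite, so the set in (ii) is its fibre over $a$, governed by the ``$a$-log canonical thresholds''; these are expressed through mlds of auxiliary $\bR$-ideals obtained by absorbing integral powers of $\fb$ (or of $\fm$) into the ideals themselves, which keeps the exponents inside the DCC set $I\cup\{1\}$, and one matches ascending chains on the two sides. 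The reduction of (iii) and (iv) from a DCC set $I$ to a finite one is the separate but routine bookkeeping of Proposition~\ref{prp:HLL}.

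The two ``descending'' implications are soft. For (iv)$\Rightarrow$(iii): if $E$ computes $\mld_x(X,\fa)\ge0$ with $a_E(X)\le l$, then $\ord_E\fm$ and $\ord_E\fa=a_E(X)-\mld_x(X,\fa)$ are bounded in terms of $l$ and the germ, so $\ord_E\fa_j$ is unchanged after replacing $\fa_j$ by $\fa_j+\fm^{l'}$ for a suitable $l'$ (because $\ord_E(\fm^{l'})=l'\ord_E\fm$ then exceeds $\ord_E\fa_j$); the same $E$ computes the mld of the truncation, and symmetry gives (iii). For (iv)$\Rightarrow$(i): on the fixed $\bQ$-Gorenstein germ of index $r$ one has $a_E(X)\in\frac1r\bZ_{>0}$, so a computing divisor with $a_E(X)\le l$ realises one of finitely many values of $a_E(X)$; writing $\mld_x(X,\fa)=a_E(X)-\sum_j r_j\ord_E\fa_j$ with $r_j\in I$ bounded below by a positive constant and $\ord_E\fa_j\in\bN$ (at most boundedly many summands nonzero once the mld is nonnegative), the possible values of $\mld_x(X,\fa)$ lie in (a finite set) $-$ (a DCC set), hence satisfy the ACC. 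In both, the case $\mld_x(X,\fa)=-\infty$ is handled directly, using that then some fixed divisor, such as the blow-up of $x$, already computes it.

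For (iii)$\Rightarrow$(iv) the truncation is exploited the other way round. Given $\fa\in I$ with $I$ finite, (iii) allows $\fa$ to be replaced by $\prod_j(\fa_j+\fm^l)^{r_j}$, whose every factor is $\fm$-primary of colength at most $\dim_k\sO_X/\fm^l$; moreover if $\fa$ has more than $(\dim X)/\min I$ factors then $\ord_{E_0}\fa>\dim X$ for the blow-up $E_0$ of $x$, so $\mld_x(X,\fa)=-\infty$ is computed by $E_0$ with $a_{E_0}(X)=\dim X$ and there is nothing to prove. Hence it suffices to treat $\fm$-primary $\bR$-ideals with boundedly many factors, bounded colengths and exponents in a finite set; such triples form a bounded family, over which a simultaneous log resolution exists after a Noetherian stratification, the mld is computed by one of its finitely many exceptional divisors, and those have bounded log discrepancy.

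This leaves (i)$\Rightarrow$(iv), the heart of the matter, which I would prove by contradiction. If it fails for some finite $I$, there are $\bR$-ideals $\fa_i\in I$ all of whose mld-computing divisors satisfy $a_E(X)>i$; by (i), after passing to a subsequence $\mld_x(X,\fa_i)$ is a constant $m$ (again $m=-\infty$ being excluded directly). Form a generic limit $\sfa$ of $\{\fa_i\}_i$ on the germ $\hat x\in\hat X$, which is \'etale-locally the same klt singularity. The generic-limit calculus of \cite{K15} should give $\mld_{\hat x}(\hat X,\sfa)=m$ together with a divisor $\hat E$ over $\hat X$ that computes it and is obtained by a bounded sequence of blow-ups; performing the corresponding blow-ups on $X$ yields divisors $E_i$ over $X$ with $c_X(E_i)=\{x\}$, with $a_{E_i}(X)=a_{\hat E}(\hat X)$ independent of $i$, and — since $\sfa$ agrees with $\fa_i$ to ever higher order along $\fm$ while $\hat E$ has bounded depth — with $\ord_{E_i}\fa_i=\ord_{\hat E}\sfa$ for $i\gg0$. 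Then $a_{E_i}(X,\fa_i)=a_{\hat E}(\hat X,\sfa)=m=\mld_x(X,\fa_i)$, so $E_i$ computes $\mld_x(X,\fa_i)$ with bounded $a_{E_i}(X)$, contradicting the choice of $\fa_i$. The hard part, and the only place where the structure of the problem rather than formal manipulation is used, is precisely to produce such a \emph{bounded} divisor computing the mld of the fixed limit pair $(\hat X,\sfa)$ — equivalently, to prevent the computing divisors from escaping to infinity in the limit — and to arrange it with the exactness needed for the displayed identities rather than mere asymptotics.
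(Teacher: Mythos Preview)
The paper does not prove this theorem; it quotes it as \cite[theorem~4.6]{K21} (with the observation attributed to \cite{MN18}). So there is no in-paper proof to compare against, and I can only assess your proposal on its merits.

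Your soft implications (iv)$\Rightarrow$(iii), (iii)$\Rightarrow$(iv) and (iv)$\Rightarrow$(i) are fine, and (i)$\Leftrightarrow$(ii) via \cite{MN18} is reasonable. The problem is your argument for (i)$\Rightarrow$(iv). You form the generic limit $\sfa$ of a bad sequence $\{\fa_i\}$ with $\mld_x(X,\fa_i)\equiv m$, and then assert that ``the generic-limit calculus of \cite{K15} should give $\mld_{\hat x}(\hat X,\sfa)=m$''. It does not: one only has $\mld_{\hat x}(\hat X,\sfa)\ge m$ in general (this is the content of Lemma~\ref{lem:resolution}), and equality is precisely what is \emph{not} known---see Remark~\ref{rmk:limit}. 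Establishing that equality for smooth threefolds is the main theorem of the present paper, not a lemma available for free in the equivalence proof.

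You also misidentify the difficulty. You write that the hard part is ``to produce a bounded divisor computing the mld of the fixed limit pair $(\hat X,\sfa)$''. But for a single fixed pair that is trivial: take any log resolution. The divisor $\hat E$ you obtain this way has $a_{\hat E}(\hat X,\sfa)=\mld_{\hat x}(\hat X,\sfa)$, which may be strictly larger than $m$; it then descends to divisors $E_i$ with $a_{E_i}(X,\fa_i)=\mld_{\hat x}(\hat X,\sfa)>m=\mld_x(X,\fa_i)$, so $E_i$ does \emph{not} compute $\mld_x(X,\fa_i)$ and there is no contradiction. Your argument is therefore circular: it reduces (i)$\Rightarrow$(iv) to the generic-limit equality, which is at least as hard as (iv) itself. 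The actual proof of the equivalence in \cite[theorem~4.6]{K21} must close the loop (i)/(ii)$\Rightarrow$(iii)/(iv) by a genuinely elementary argument that does not presuppose this equality; your proposal does not supply one.
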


Following \cite{HLLaxv}, one can extend the third and fourth statements to the case when $I$ is a subset which satisfies the DCC as in Proposition~\ref{prp:HLL}.

\begin{remark}\label{rmk:equiv}
We collect some known results on the four statements above.
\begin{enumerate}
\item\label{itm:equiv-sf}
The statements hold when $X$ is a klt surface \cite{Al93}, \cite{K13}, \cite[theorem~1.3]{MN18}.
\item
The first statement, the ACC for minimal log discrepancies, holds when $I$ is finite \cite{K14}. In this case, the set of the minimal log discrepancies is finite.
\item\label{itm:equiv-nonpos}
The fourth statement, Nakamura's boundedness, holds for $\bR$-ideals $\fa$ such that $\mld_x(X,\fa)$ is not positive \cite[theorem~4.8]{K21}.
\end{enumerate}
\end{remark}

The last item in the remark is a consequence of the $\fm$-adic semi-continuity of lc thresholds, stated as Theorem~\ref{thm:dFEM} in terms of the generic limit of ideals, due to de Fernex, Ein and Musta\c{t}\u{a}. They applied it to the first proof of the ACC for lc thresholds on smooth varieties. Later we shall need the following application of it.

\begin{theorem}[{\cite[proposition~4.12, corollary~4.13]{K21}}, \cite{MN18}]\label{thm:lct}
Let $x\in X$ be the germ of a klt variety and let $\fm$ be the maximal ideal in $\sO_X$ defining $x$. Fix a finite subset $I$ of the positive real numbers. Then there exist a positive real number $t$ and a positive integer $b$ depending only on $X$ and $I$ such that for an $\bR$-ideal $\fa$ on $X$, if $\fa\in I$ and $\mld_x(X,\fa)$ is positive, then $(X,\fa\fm^t)$ is lc and $\ord_E\fm\le b$ for every divisor $E$ over $X$ that computes $\mld_x(X,\fa)$.
\end{theorem}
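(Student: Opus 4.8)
The plan is to produce the integer $b$ from the real number $t$ with essentially no work and to concentrate all effort on producing $t$. Granting that $(X,\fa\fm^t)$ is log canonical, let $E$ compute $\mld_x(X,\fa)$, so that $c_X(E)=x$ and $a_E(X,\fa)=\mld_x(X,\fa)$; since $\ord_E(\fa\fm^t)=\ord_E\fa+t\ord_E\fm$, applying log canonicity to $E$ gives $a_E(X,\fa)-t\ord_E\fm\ge0$. As $a_E(X,\fa)=\mld_x(X,\fa)\le\mld_x(X)=:M$, a positive constant depending only on the klt germ $x\in X$, this forces $\ord_E\fm\le M/t$, so $b=\lceil M/t\rceil$ works. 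It therefore remains to find $t$, that is, a positive lower bound — uniform over all $\bR$-ideals $\fa\in I$ with $\mld_x(X,\fa)>0$ — for the log canonical threshold $\operatorname{lct}_x(X,\fa;\fm)=\sup\{s\ge0\mid(X,\fa\fm^s)\textup{ is log canonical at }x\}$.

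I would argue by contradiction through a generic limit, following essentially \cite{MN18}. If no such $t$ existed, there would be $\bR$-ideals $\fa_i\in I$ with $\mld_x(X,\fa_i)>0$ and $\operatorname{lct}_x(X,\fa_i;\fm)\to0$. Since $I$ is finite, after regrouping the ideals attached to each exponent and passing to a subsequence we may write $\fa_i=\prod_{j=1}^e\fa_{ij}^{r_j}$ with $e$ and $r_1,\dots,r_e\in I$ independent of $i$. Passing to a further subsequence, form the generic limit $\sfa=\prod_{j=1}^e\sfa_j^{r_j}$ of $\{\fa_i\}_i$ on the germ $\hat x\in\hat X$, where $\hat X$ is the spectrum of the completion of a suitable extension of $\sO_{X,x}$, the maximal ideal $\hat\fm=\fm\sO_{\hat X}$ defines $\hat x$, and $\sfa_j$ is the generic limit of $\{\fa_{ij}\}_i$. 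The $\fm$-adic semi-continuity of log canonical thresholds of de Fernex--Ein--Musta\c{t}\u{a}, in the generic-limit form of Theorem~\ref{thm:dFEM} and applied with the fixed $\fm$-primary ideal $\fm$ against the varying boundary $\fa_i$, then gives $\operatorname{lct}_{\hat x}(\hat X,\sfa;\hat\fm)=\lim_i\operatorname{lct}_x(X,\fa_i;\fm)=0$ along the subsequence.

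The contradiction — and the step I expect to be the main obstacle — is to show that the generic limit still has positive minimal log discrepancy. On one hand, $\operatorname{lct}_{\hat x}(\hat X,\sfa;\hat\fm)=0$ forces $\mld_{\hat x}(\hat X,\sfa)\le0$, since the log canonical threshold of the $\fm$-primary ideal $\hat\fm$ against $(\hat X,\sfa)$ being zero means $(\hat X,\sfa)$ fails to be klt at $\hat x$. On the other hand, by \cite{K14} the set of values $\mld_x(X,\fb)$ over all $\bR$-ideals $\fb\in I$ is finite, so its positive members are bounded below by some $\delta>0$; hence $\mld_x(X,\fa_i)\ge\delta$ for every $i$. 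For a divisor $E$ over $\hat X$ with $c_{\hat X}(E)=\hat x$, the generic-limit formalism provides divisors $E_i$ over $X$ with $c_X(E_i)=x$ and $a_{E_i}(X,\fa_i)=a_E(\hat X,\sfa)$ for $i\gg0$, whence $a_E(\hat X,\sfa)\ge\mld_x(X,\fa_i)\ge\delta$; taking the infimum over such $E$ yields $\mld_{\hat x}(\hat X,\sfa)\ge\delta>0$, a contradiction. This produces $t$, and then $b$ as in the first paragraph. The role of \cite{K14} is precisely here: $\mld$ does not degenerate as tamely as $\operatorname{lct}$ under generic limits, and only a uniform lower bound on the $\mld_x(X,\fa_i)$ — which is exactly what the ACC for minimal log discrepancies with a finite coefficient set delivers — survives the passage to $\hat X$. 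The remaining technical points — forming the generic limit of a sequence of $\bR$-ideals with fixed exponents, checking that Theorem~\ref{thm:dFEM} applies to the threshold of the fixed ideal $\fm$ rather than of the $\fa_i$ themselves, and verifying that the descended divisors $E_i$ keep the centre $x$ — are routine within that formalism, and the deduction of $b$ from $t$ is immediate.
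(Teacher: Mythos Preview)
The paper does not supply its own proof of this theorem; it is quoted verbatim from \cite[proposition~4.12, corollary~4.13]{K21} and \cite{MN18}. Your proposal is a correct reconstruction of that argument: reduce to producing $t$, then obtain $t$ by contradiction through a generic limit, using Theorem~\ref{thm:dFEM} to force $\operatorname{lct}_{\hat x}(\hat X,\sfa;\hat\fm)=0$ while the positivity of the $\mld_x(X,\fa_i)$ forces $\mld_{\hat x}(\hat X,\sfa)>0$.

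One remark on your invocation of \cite{K14}. You use it to get a uniform $\delta>0$ with $\mld_x(X,\fa_i)\ge\delta$, so that the divisor-by-divisor descent gives $a_E(\hat X,\sfa)\ge\delta$ for every $E$ and hence $\mld_{\hat x}(\hat X,\sfa)\ge\delta$. This is fine, but it is not strictly needed: the inequality $\mld_{\hat x}(\hat X,\sfa)\ge\mld_x(X,\fa_i)$ already follows from Lemma~\ref{lem:resolution} (see the paragraph following it), after a single replacement of $\cF$ depending only on the exponents $r_1,\dots,r_e$; since any one $\fa_i$ has $\mld_x(X,\fa_i)>0$, this gives $\mld_{\hat x}(\hat X,\sfa)>0$ directly. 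So the finiteness of the set of mlds from \cite{K14} is a convenient shortcut rather than an essential ingredient, and your sentence ``only a uniform lower bound on the $\mld_x(X,\fa_i)$ \ldots\ survives the passage to $\hat X$'' slightly overstates its role. Otherwise the outline matches the cited sources.
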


One of the main results in \cite{K21} is the reduction of the theorems in the introduction to the case when the boundary splits into a canonical part and the maximal ideal to some power. Theorems~\ref{thm:main} to~\ref{thm:nakamura} follow from the next theorem with the aid of Proposition~\ref{prp:HLL} as stated in \cite[theorem~1.1]{K21} and we shall prove it in the present paper.

\begin{theorem}\label{thm:product}
Let $x\in X$ be the germ of a smooth threefold and let $\fm$ be the maximal ideal in $\sO_X$ defining $x$. Fix a positive rational number $q$ and a non-negative rational number $s$. Then there exists a positive integer $l$ depending only on $q$ and $s$ such that for an ideal $\fa$ in $\sO_X$, if $(X,\fa^q)$ is canonical, then there exists a divisor $E$ over $X$ which computes $\mld_x(X,\fa^q\fm^s)$ and has $a_E(X)\le l$.
\end{theorem}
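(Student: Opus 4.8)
The plan is to argue by contradiction along a sequence of ideals and to analyse it through the generic limit, in the spirit of de Fernex, Ein and Musta\c{t}\u{a}. First I would make the standard reductions. If $\mld_x(X,\fa^q\fm^s)$ is not positive, a computing divisor of bounded log discrepancy exists by Remark~\ref{rmk:equiv}\bracketsup{\ref{itm:equiv-nonpos}}, so we may assume it is positive; then by Theorem~\ref{thm:lct} we may further assume that $\ord_E\fm$ is bounded for every divisor $E$ computing it. As explained in the introduction, the reduction in \cite{K21} lets us assume $\mld_x(X,\fa^q)=1$, and the case when the log canonical threshold of $\fm$ on $(X,\fa^q)$ is at most one-half is already settled, so we may assume that $(X,\fa^q)$ is canonical of semistable type in the sense of Definition~\ref{dfn:semistable}, meaning that every divisor computing $\mld_x(X,\fa^q)=1$ has order one along $\fm$. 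Supposing the theorem fails, we fix a sequence $\{\fa_i\}_i$ of ideals with $(X,\fa_i^q)$ canonical of semistable type for which $\mld_x(X,\fa_i^q\fm^s)$ is computed only by divisors $E_i$ with $a_{E_i}(X)\to\infty$, and we pass to a subsequence so that all numerical invariants in play converge.

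Next I would pass to the generic limit $\sfa$ of $\{\fa_i\}_i$ on $\hat x\in\hat X$; it suffices to bound the log discrepancy of a divisor over $\hat X$ computing $\mld_{\hat x}(\hat X,\sfa^q\hat\fm^s)$, since by the generic limit formalism such a divisor is induced by divisors over $X$ computing $\mld_x(X,\fa_i^q\fm^s)$ for $i\gg0$. The pair $(\hat X,\sfa^q)$ is again canonical of semistable type, and by \cite{K15} the cases in which its smallest lc centre has dimension $0$ or $2$ are known; the remaining case, which is Theorem~\ref{thm:special}, is that the smallest lc centre is a smooth curve $C\ni\hat x$. In that case there is a pair $(w_1,w_2)\in\sfP_n$, with $n$ depending only on $q$, such that the weighted blow-up $\hat F$ of $\hat X$ along $C$ with normal weights $(w_1,w_2)$ has $a_{\hat F}(\hat X,\sfa^q)=0$; and by Propositions~\ref{prp:down} and~\ref{prp:up} this is matched, after a further subsequence, by crepant weighted blow-ups of $X$ with $\wt(x_1,x_2,x_3)=(w_{1i},w_{2i},1)$ whose slopes $w_{1i}/w_{2i}$ tend to $\mu=w_1/w_2$.

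The heart of the argument is then the following. For each $i$ I would choose such a crepant weighted blow-up $F_i\subset B_i\to x\in X$ \emph{maximally}, in a sense designed so that $B_i$ does not factor through any further crepant weighted blow-up of comparably small slope, and dispose of the case when $w_{2i}$ remains bounded (there $B_i$ lies in a bounded family and one descends directly). In the substantial case $w_{2i}\to\infty$ one proves Lemma~\ref{lem:slopeQ}, which bounds $a_{E_i}(B_i)$, where $E_i$ is now taken as a divisor over $B_i$ computing the relevant minimal log discrepancy of the crepant pair on $B_i$. One arranges a crepant weighted blow-up of $B_i$ at $c_{B_i}(E_i)$ whose slope tends to some $\mu'\in\sfQ_n$, and — this is in my view the main obstacle of the whole paper — invokes the result on composition of weighted blow-ups, Theorem~\ref{thm:composite}, which is delicate precisely because the composite of two weighted blow-ups of this kind is in general not a weighted blow-up of $X$ (Example~\ref{exl:composite}); combined with the maximal choice of $B_i$ it forces $\mu<\mu'$, so one obtains the bound on $a_{E_i}(B_i)$ by induction on $\mu$ within the finite set $\sfQ_n$.

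Finally, since $a_{E_i}(B_i)$ is bounded, $\ord_{E_i}F_i$ is bounded, so I would truncate $\fa_i$ by the ideal of functions of large order along the limiting divisor $F$, the weighted blow-up with $\wt(x_1,x_2)=(w_1,w_2)$, leaving $\mld_x(X,\fa_i^q\fm^s)$ and its computation by $E_i$ unchanged. A general member of the truncated ideal admits a flat degeneration to its leading weighted-homogeneous part $h_i$, arranged so that $a_F(X,h_i^q)=0$; by the lower semi-continuity of minimal log discrepancies on smooth varieties \cite{EMY03} it then suffices to bound a computing divisor for the degenerate pair $(X,h_i^q\fm^s)$. For the latter I would perform the weighted blow-up $F$, now an lc centre with $a_F(X)=w_1+w_2\le 2n$, and apply precise inversion of adjunction to reduce to the minimal log discrepancy of an adjunction pair on the surface $F$, which is a bundle of weighted projective lines $\bP(w_1,w_2)$ over $C$ with cyclic quotient singularities. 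The required boundedness on this surface follows from the known surface case, Remark~\ref{rmk:equiv}\bracketsup{\ref{itm:equiv-sf}}, in the monomial form developed in Section~\ref{sct:monomial}; transporting the resulting divisor up through the adjunction and then back along the degeneration and the generic limit yields, for infinitely many $i$, a divisor over $X$ computing $\mld_x(X,\fa_i^q\fm^s)$ whose log discrepancy is bounded in terms of $q$ and $s$ alone — contradicting the choice of the sequence and completing the proof.
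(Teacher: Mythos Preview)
Your outline is correct and follows essentially the paper's approach: the reductions to the semistable-type case, the generic-limit framework, the maximal admissible weighted blow-up, Lemma~\ref{lem:slopeQ} via Theorem~\ref{thm:composite} with induction on the slope in $\sfQ_n$, and the weighted-homogeneous degeneration combined with precise inversion of adjunction to the surface $F$ are all as in the paper. A few nontrivial ingredients you gloss over --- notably Theorem~\ref{thm:success}, which is what actually disposes of the bounded-$w_{2i}$ case in Lemma~\ref{lem:admissible} (your ``$B_i$ lies in a bounded family and one descends directly'' is not the mechanism) and is used again inside Lemma~\ref{lem:slopeQ}, and the separate treatment (Theorem~\ref{thm:slopeR}) of slopes converging outside $\sfQ_n$ via the two-lc-slopes argument of Proposition~\ref{prp:lcslopes} --- would need to be supplied, but your architecture is right.
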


\begin{remark}\label{rmk:product}
Our prior work \cite[theorems~1.4,~1.5]{K21} settled the theorem in the following cases.
\begin{enumerate}
\item\label{itm:product-term}
$(X,\fa^q)$ is terminal.
\item
$s$ is zero.
\item\label{itm:product-half}
$\mld_x(X,\fa^q\fm^{1/2})$ is not positive.
\item\label{itm:product-one}
$(X,\fa^q\fm)$ is lc.
\end{enumerate}
\end{remark}

One can assume the ideal $\fa$ to be $\fm$-primary by adding $\fm$ to high power $n$ such that $\mld_x(X,\fa^q\fm^s)=\mld_x(X,(\fa+\fm^n)^q\fm^s)$. This allows the following reduction.

\begin{remark}\label{rmk:etale}
In the study of Theorem~\ref{thm:product} or a statement of the same kind, one can assume $x\in X$ to be the germ $o\in\bA^3$ at origin of the affine space as mentioned in \cite[remark~7.1]{K21}. Indeed, there exists an \'etale morphism from $x\in X$ to $o\in\bA^3$ and every $\fm$-primary ideal $\fa$ in $\sO_X$ is the pull-back of an $\fn$-primary ideal $\fb$ in $\sO_{\bA^3}$ with $\mld_x(X,\fa^q\fm^s)=\mld_o(\bA^3,\fb^q\fn^s)$, where $\fn$ is the maximal ideal in $\sO_{\bA^3}$ defining $o$. As explained at the end of section~2 in \cite{K21}, for an $\bR$-ideal $\fa$ on $X$, $\mld_x(X,\fa)$ equals the minimal log discrepancy $\mld_{\hat x}(\hat X,\fa\sO_{\hat X})$ defined on the spectrum $\hat x\in\hat X$ of the completion of the local ring $\sO_{X,x}$.
\end{remark}

The main ingredient for the reduction to Theorem~\ref{thm:product} is a classification of threefold divisorial contractions quoted below, due to Kawamata and the author. A \textit{divisorial contraction} means a birational contraction between $\bQ$-factorial terminal varieties such that the exceptional locus is a prime divisor. The reader may refer to chapters~3 and~4 in the book \cite{K24} for a systematic classification. It contains a simplified proof \cite[theorem~6.2.5]{K24} of Stepanov's ACC for $1$-lc thresholds on smooth threefolds \cite{St11}, which is an application of Theorem~\ref{thm:divcont}(\ref{itm:divcont-sm}) and used in the reduction. Recall that every quotient terminal threefold singularity is cyclic and it is of type $\frac{1}{r}(w,-w,1)$ with coprime integers $r$ and $w$.

\begin{theorem}\label{thm:divcont}
Let $\pi\colon E\subset Y\to x\in X$ be a threefold divisorial contraction which contracts the divisor $E$ to a closed point $x$.
\begin{enumerate}
\item\label{itm:divcont-quot}
\textup{(\cite{Ka96})}\;
If $x\in X$ is a quotient singularity of type $\frac{1}{r}(w,-w,1)$ with orbifold coordinates $x_1,x_2,x_3$ for $0<w<r$, then $\pi$ is the weighted blow-up with $\wt(x_1,x_2,x_3)=\frac{1}{r}(w,r-w,1)$.
\item\label{itm:divcont-sm}
\textup{(\cite{K01})}\;
If $x\in X$ is a smooth point, then $\pi$ is the weighted blow-up with $\wt(x_1,x_2,x_3)=(w_1,w_2,1)$ for some regular system $x_1,x_2,x_3$ of parameters in $\sO_{X,x}$ and some $(w_1,w_2)\in\sfP$ in Notation~\textup{\ref{ntn:PQ}}.
\end{enumerate}
\end{theorem}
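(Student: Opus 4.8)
The plan is to classify, in each of the two cases, the divisorial valuation $v=\ord_E$ attached to $\pi$, by cutting $\pi$ down to a surface and lifting the description back. A preliminary observation is common to both: since $X$ is $\bQ$-factorial and $E$ is the unique $\pi$-exceptional prime divisor, $\rho(Y/X)=1$, and writing $K_Y=\pi^*K_X+dE$ with $d=a_E(X)-1>0$ (positive because $Y$ is terminal) the divisor $-K_Y\equiv d(-E)$ is $\pi$-ample. Hence $Y=\Proj_X\bigoplus_{m\ge0}\pi_*\sO_Y(-mE)$ with $\pi_*\sO_Y(-mE)=\{f\in\sO_X\mid\ord_Ef\ge m\}$, so $\pi$ is recovered from $v$; it suffices to identify $v$ and then to check that the weighted blow-up with the resulting weights is indeed a divisorial contraction.

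For part (i), let $r$ be the index of $x\in X$. Then $rK_X$ is Cartier, so comparing with the integrality of $\ord_E$ on Weil divisors gives $d\in\frac1r\bN_+$, and the weighted blow-up $g\colon E'\subset X'\to X$ with $\wt(x_1,x_2,x_3)=\frac1r(w,r-w,1)$ in orbifold coordinates has $a_{E'}(X)=1+1/r$. To prove $v=\ord_{E'}$ I would pass to a general member $D\in|-K_X|$ through $x$, which one computes to be a Du Val singularity of type $A_{r-1}$, and let $D_Y\subset Y$ be its strict transform; since $Y$ is terminal, $D_Y$ is normal with Du Val singularities, and writing $\pi^*D=D_Y+cE$ adjunction makes $\pi|_{D_Y}\colon D_Y\to D$ a birational morphism onto $A_{r-1}$ with a single exceptional curve. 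The $\bQ$-factoriality and terminality of $Y$, the $\pi$-ampleness of $-K_Y$, and the classification of partial resolutions of $A_{r-1}$ (obtained by contracting all but one of the chain of $(-2)$-curves) then force $\pi|_{D_Y}$ to be the restriction of $g$; as $v$ is determined by its restriction to a general elephant, $v=\ord_{E'}$ and $Y\cong X'$ over $X$.

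For part (ii), by Remark~\ref{rmk:etale} take $x\in X=o\in\bA^3$ and $v=\ord_E$. The first step, which is the main one, is to show that after a change of the regular system of parameters $x_1,x_2,x_3$ the valuation $v$ is the monomial valuation of some weights $(w_1,w_2,w_3)$, which may be assumed coprime; equivalently, the associated graded ring of $v$ is a weighted polynomial ring in three variables. For this I would cut $\pi$ by a general hyperplane $H\ni o$, so that $H\cong\bA^2$ and the strict transform $H_Y$ is normal with Du Val singularities, and examine $\pi|_{H_Y}\colon H_Y\to H$: the aim is to use $\rho(Y/X)=1$ and the $\pi$-ampleness of $-K_Y$ to show $E\cap H_Y$ is irreducible, so that $\pi|_{H_Y}$ extracts a single divisorial valuation of $\bA^2$, which by the structure theory of plane divisorial valuations is monomial with coprime weights; combining this over varying $H$ with the weight $\ord_E\fm$ in the remaining direction then reconstructs $v$ as a monomial valuation on $\bA^3$. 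In the second step, $Y$ is the weighted blow-up $Y_0$ with weights $(w_1,w_2,w_3)$, and since $Y$ is terminal a Reid--Tai computation on the charts $U_i\simeq\bA^3/\bZ_{w_i}(w_1,\ldots,-1,\ldots,w_3)$ of $Y_0$ shows that terminality forces some weight, say $w_3$, to equal $1$ and then $\gcd(w_1,w_2)=1$; reordering so that $w_2\le w_1$, we get $(w_1,w_2)\in\sfP$ and $\pi$ is the weighted blow-up with $\wt(x_1,x_2,x_3)=(w_1,w_2,1)$.

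The crux, in both parts but decisively in (ii), is to prove that $v=\ord_E$ is monomial, that is, to exclude composite-type valuations as in Example~\ref{exl:composite}. In (i) this is absorbed into the comparison with the explicit geometry of $A_{r-1}$. In (ii) it is the substance of \cite{K01}: establishing the irreducibility of $E\cap H_Y$ and the monomiality of $v$ requires a careful local analysis interweaving the terminality of $Y$, the $\pi$-ampleness of $-K_Y$ and the classification of the Du Val singularities appearing on general hyperplane sections of $Y$. Granted monomiality, the remaining work — the partial-resolution bookkeeping for $A_{r-1}$ in (i) and the Reid--Tai casework in (ii) — is routine.
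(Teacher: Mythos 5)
This theorem is not proved in the paper at all: it is quoted, with part~(i) attributed to Kawamata \cite{Ka96} and part~(ii) to \cite{K01}, whose main theorem it is. So the benchmark here is those papers, and your write-up does not meet it: it is an outline that, by your own closing admission, defers exactly the decisive step (``it is the substance of \cite{K01}'') to the cited source. In particular, for part~(ii) the actual proof in \cite{K01} does not go by cutting with general hyperplanes and appealing to surface valuation theory; it proceeds by a numerical classification of the possible contractions using the singular Riemann--Roch formula on $Y$ (analysis of the basket, of $\dim\pi_*\sO_Y(-mE)/\pi_*\sO_Y(-(m+1)E)$ and of the discrepancy), and only afterwards identifies $\pi$ with a weighted blow-up. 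Your route is therefore not a variant of the known proof with details suppressed; it is a different plan whose key steps are missing.

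Moreover the plan as stated has concrete mathematical gaps. The pivotal claim that an irreducible exceptional curve of $\pi|_{H_Y}$ yields a plane divisorial valuation which ``by the structure theory of plane divisorial valuations is monomial with coprime weights'' is false: divisorial valuations over a smooth surface germ are in general not monomial in any coordinates. For instance, perform the $(2,3)$-weighted blow-up of $o\in\bA^2$ and then blow up a general point of the exceptional curve; the resulting divisor $F$ has $\ord_Fx_1=2$, $\ord_Fx_2=3$, $\ord_F\fm=2$, so the only candidate weights are $(2,3)$, yet $a_F(\bA^2)=6\neq5$, so $F$ is not obtained by any weighted blow-up --- this is the surface analogue of Example~\ref{exl:composite}. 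Excluding such valuations is precisely where the terminality of $Y$ and the extremality of $\pi$ must enter, and you give no argument for it. Even granting monomiality of every hyperplane restriction, the step ``combining this over varying $H$ \ldots reconstructs $v$ as a monomial valuation on $\bA^3$'' is unjustified; recovering a threefold valuation from its hyperplane sections is exactly the hard point, as Example~\ref{exl:composite} warns. Further unproved ingredients: that $H_Y$ (resp.\ $D_Y$ in part~(i)) is normal with Du Val singularities is a general-elephant-type statement, not a formal consequence of terminality of $Y$; the irreducibility of $E\cap H_Y$ is asserted as an ``aim''; and in part~(i) the claim that $v$ is determined by its restriction to a general elephant needs proof. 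The Reid--Tai endgame and the $\Proj$ reconstruction of $\pi$ from $v$ are indeed routine, but they only apply after the monomiality that you have not established.
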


The threefold $Y$ in the theorem has only quotient singularities. In (\ref{itm:divcont-quot}) it has two singularities of types $\frac{1}{w}(-r,r,1)$ and $\frac{1}{r-w}(r,-r,1)$, whilst in (\ref{itm:divcont-sm}) it has two singularities of types $\frac{1}{w_1}(-1,w_2,1)$ and $\frac{1}{w_2}(w_1,-1,1)$.

The object in Theorem~\ref{thm:product} is a pair $(X,\fa^q\fm^s)$ such that $(X,\fa^q)$ is canonical. We may assume that $\fa$ is $\fm$-primary as remarked. By virtue of Remark~\ref{rmk:product}(\ref{itm:product-term})(\ref{itm:product-half}), the theorem has been settled unless $\mld_x(X,\fa^q)=1$ and $\mld_x(X,\fa^q\fm^{1/2})>0$. In the remaining case, every divisor $E$ over $X$ computing $\mld_x(X,\fa^q)=1$ has $\ord_E\fm=1$ since $a_E(X,\fa^q\fm^{1/2})=1-(1/2)\ord_E\fm>0$. For a canonical surface singularity, this amounts to the case when it is Du Val of type $A$. Inspired by the philosophy in \cite{KM92}, we make the following definition.

\begin{definition}\label{dfn:semistable}
Let $x\in X$ be the germ of a smooth threefold and let $\fa$ be an $\bR$-ideal on $X$. We say that the pair $(x\in X,\fa)$ is canonical \textit{of semistable type} if $\mld_x(X,\fa)=1$ and in addition $\ord_E\fm=1$ for every divisor $E$ over $X$ that computes $\mld_x(X,\fa)$, where $\fm$ is the maximal ideal in $\sO_X$ defining $x$.
\end{definition}

\section{Composition of weighted blow-ups}
Let $E\subset Y\to x\in X$ be the weighted blow-up of the germ of a smooth threefold with $\wt(x_1,x_2,x_3)=(w_1,w_2,1)$, which contracts the divisor $E$ to the point $x$, for a pair $(w_1,w_2)$ of positive integers such that $w_2\le w_1$. By Theorem~\ref{thm:divcont}(\ref{itm:divcont-sm}), it is a divisorial contraction if and only if $(w_1,w_2)$ belongs to the set $\sfP$ in Notation~\ref{ntn:PQ}. If $(w_1,w_2)\not\in\sfP$, then $w_1$ and $w_2$ have a common factor and $Y$ is singular along the intersection of $E$ with the strict transform of the divisor on $X$ defined by $x_3$. In this section, we shall discuss the problem of when the divisor obtained by a composition of weighted blow-ups is still obtained by a weighted blow-up.

\begin{theorem}\label{thm:composite}
Let $x\in X$ be the germ of a smooth threefold and let $\fa$ be an $\bR$-ideal on $X$ such that $\mld_x(X,\fa)=1$. Let $E\subset Y\to x\in X$ be the weighted blow-up with $\wt(x_1,x_2,x_3)=(w_1,w_2,1)$ for $w_2\le w_1$ such that $a_E(X,\fa)=1$. Let $y\in Y$ be the point above $x$ in the strict transform of the curve in $X$ defined by $(x_1,x_2)\sO_X$. Let $F\subset Z\to y\in Y$ be the weighted blow-up with $\wt(y_1,y_2,x_3)=(v_1,v_2,1)$ for $v_2\le v_1$ with respect to a regular system $y_1,y_2,x_3$ of parameters in $\sO_{Y,y}$ such that $a_F(Y,\fa_Y)=1$ for the weak transform $\fa_Y$ in the germ $y\in Y$ of $\fa$.

If $\rd{(v_1-1)/v_2}\le(w_1-v_1^2)/w_2$, then the divisor $F$ over $X$ is obtained by the weighted blow-up of $X$ with $\wt(x_1',x_2',x_3)=(w_1+v_1-d,w_2+v_2+d,1)$ for some regular system $x_1',x_2',x_3$ of parameters in $\sO_{X,x}$ and some integer $0\le d\le v_1-v_2$.
\end{theorem}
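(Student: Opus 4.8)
The plan is to work torically. After passing to the completion and using Remark~\ref{rmk:etale}, we may assume $x\in X$ is $o\in\bA^3$ with coordinates $x_1,x_2,x_3$. The weighted blow-up $Y\to X$ with $\wt(x_1,x_2,x_3)=(w_1,w_2,1)$ is a toric morphism: writing $N=\bZ^3$ with the standard basis $e_1,e_2,e_3$, it is obtained by the star subdivision at $e=w_1e_1+w_2e_2+e_3$. The point $y\in Y$ lying above $x$ in the strict transform of $(x_1,x_2)$ is the torus-invariant point of the chart $U_3=(x_3\ne 0)$ corresponding to the cone spanned by $e_1$, $e_2$ and $e$; this point is a (possibly non-isolated) cyclic quotient singularity, and $y_1,y_2,x_3$ being a regular system of parameters at $y$ means the weighted blow-up $Z\to Y$ at $y$ is again toric, the star subdivision at a primitive vector $f=v_1f_1+v_2f_2+f_3$ expressed in the orbifold coordinates at $y$. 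The first step is to convert $f$ back into the coordinates of the original lattice $N$: the cone at $y$ has rays $e_1,e_2,e$, and the orbifold coordinates $y_1,y_2,x_3$ correspond to the dual basis, so $f$ becomes, in $N$, the primitive vector proportional to $v_1 e_1^\vee$-direction... more precisely $F$ corresponds in $N$ to the primitive generator of the ray through $v_1 e_1 + v_2 e_2 + (\text{the coordinate of }f\text{ along }e)\cdot e$ up to the change-of-basis matrix sending $(e_1,e_2,e)$ to the standard basis of the sublattice they span. Carrying this out gives that $F$ is the ray through a vector of the form $(w_1+\alpha, w_2+\beta, 1)$ for suitable non-negative integers, and the crepancy hypotheses $a_E(X,\fa)=a_F(Y,\fa_Y)=1$ pin down how the ``extra'' weight $v_1$ is distributed.

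The second and main step is to show that when $\rd{(v_1-1)/v_2}\le (w_1-v_1^2)/w_2$, the resulting vector $(w_1+v_1-d, w_2+v_2+d, 1)$ is primitive for some integer $0\le d\le v_1-v_2$ \emph{and} that the corresponding weighted blow-up of $\bA^3$ is computed with respect to a genuine regular system of parameters $x_1',x_2',x_3$ — that is, one can choose $x_1'\equiv x_1$, $x_2'\equiv x_2$ modulo higher-order terms (keeping $x_3$ fixed) so that the monomial valuation attached to $(w_1+v_1-d, w_2+v_2+d, 1)$ in the coordinates $x_1',x_2',x_3$ equals the valuation $\ord_F$. This is where Remark~\ref{rmk:wbu} is used: it suffices to exhibit functions $y_i\in\sI_{w_i'}\setminus\sI_{w_i'+1}$ forming part of a regular system, which amounts to checking that the leading terms of the defining equations of the center of $F$ on $Y$, pushed down to $X$, have the predicted weighted orders. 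The inequality $\rd{(v_1-1)/v_2}\le(w_1-v_1^2)/w_2$ is precisely the numerical condition guaranteeing that the ray of $F$ lies in the correct position relative to the cones of the first subdivision so that no intermediate subdivision is forced — equivalently, that $F$ is a lattice point whose star subdivision can be performed directly on the fan of $X$. I expect the verification of this geometric/combinatorial condition, and the bookkeeping of the change of basis between the orbifold lattice at $y$ and the lattice $N$, to be the main obstacle; the role of the hypothesis $v_2\le v_1$ and $w_2\le w_1$ is to ensure the slopes are ordered the right way so that the subdivision at $f$ ``points back toward'' $x$ rather than away from it.

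Finally, I would identify the integer $d$: it measures the deviation of the ray of $F$ from the naive sum $e+f'$ (where $f'$ is $f$ re-expressed in $N$), caused by taking the primitive generator and by the non-triviality of the quotient singularity at $y$. Tracking the congruences modulo $w_2$ and $v_2$ coming from the types $\frac{1}{w_1}(-1,w_2,1)$, $\frac{1}{w_2}(w_1,-1,1)$ of the singularities of $Y$ recorded after Theorem~\ref{thm:divcont}, one sees $d$ is determined modulo something and the bound $0\le d\le v_1-v_2$ follows from choosing the representative in the fundamental domain; the crepancy conditions feed in here to fix the constant term $1$ in the third coordinate and to rule out values outside this range. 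Once the vector and the regular system of parameters are in hand, the statement is immediate from the toric description of weighted blow-ups recalled in the Preliminaries.
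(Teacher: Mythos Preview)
Your proposal has a fundamental gap: you assume the second weighted blow-up $Z\to Y$ is toric relative to the toric structure on $Y$ inherited from $X$, but this is not justified. The statement allows an \emph{arbitrary} regular system $y_1,y_2,x_3$ of parameters in $\sO_{Y,y}$, and the point $y$ (lying in the chart $U_3$ with third weight $1$) is smooth, so $y_1,y_2$ may differ from the natural toric coordinates $z_1=x_1x_3^{-w_1}$, $z_2=x_2x_3^{-w_2}$ by arbitrary higher-order terms. Example~\ref{exl:composite} is precisely an instance where the second blow-up uses non-toric coordinates $(y_1+y_2^2,y_2,x_3)$ and the composite is \emph{not} a weighted blow-up of $X$; your toric argument, taken at face value, would apply equally to that example and give a false conclusion. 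The numerical hypothesis $\rd{(v_1-1)/v_2}\le(w_1-v_1^2)/w_2$ is not a toric fan condition but rather what makes the coordinate-change argument below go through.

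The paper's proof is not toric at all. It works directly with the filtration $\fn(v)=\varphi_*\sO_Z(-vF)$ and the graded pieces $V_v=\fn(v)/\fn(v+1)$, and proceeds by successively modifying $x_1,x_2$ so that $\ord_F x_i$ attains the desired value, then invoking Proposition~\ref{prp:parallel}. The delicate case is when the images of $z_2$ and $x_3^{v_2}$ in $V_{v_2}$ are linearly dependent (Step~3): here one must study the congruence $z_2\equiv q_2(z_1,x_3)$ modulo $\fn(v_1)$, and the key point is to show that the leading part of $q_2$ is \emph{linear} in $z_1$. This is exactly where the $\bR$-ideal $\fa$ enters essentially: one builds an auxiliary weighted blow-up $W\to X$, identifies the centre of $F$ on its exceptional divisor as a curve $C$, and uses $\mld_x(X,\fa)=1$ together with the crepancy of $E$ and $F$ to bound the degree of $C$ and force the exponent $a'$ to equal $1$. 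Your proposal never uses $\fa$ beyond vague remarks about crepancy ``pinning down'' the weights; without this argument there is no mechanism to exclude higher-degree terms in $q_2$, and the inductive coordinate construction in the final step (where the inequality on $\rd{(v_1-1)/v_2}$ is used to ensure certain exponents stay positive) cannot begin.
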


\begin{proof}
We write $\varphi\colon Z\to Y$. First of all, we remark the estimate $w_2+v_1\le w_1+v_2$. Unless $v_1=v_2$, one has $1\le\rd{(v_1-1)/v_2}\le(w_1-v_1^2)/w_2$, from which $w_2+v_1\le(w_1-v_1^2)+v_1<w_1+v_2$.

\textit{Step}~1.
We have a regular system $(z_1,z_2,x_3)=(x_1x_3^{-w_1},x_2x_3^{-w_2},x_3)$ of parameters in $\sO_{Y,y}$. For $v\in\bN$, the ideal sheaf $\fn(v)=\varphi_*\sO(-vF)$ in $\sO_Y$ consists of the functions with order at least $v$ along $F$. The $k$-vector space $V_v=\fn(v)/\fn(v+1)$ has a monomial basis consisting of all $y_1^{s_1}y_2^{s_2}x_3^{s_3}$ such that $v_1s_1+v_2s_2+s_3=v$.

If $v<v_2$, then $V_v$ has a basis $x_3^v$. Hence the function $z_i\in\sO_Y$ for $i=1,2$ is congruent modulo $\fn(v_2)$ to some polynomial $p_i(x_3)$ in $x_3$. Then $x_i'=x_i-x_3^{w_i}p_i$ satisfies $x_i'x_3^{-w_i}=z_i-p_i\in\fn(v_2)$. Replacing $x_i$ by $x_i'$, we may and shall assume that $\ord_Fz_i\ge v_2$ or equivalently $\ord_Fx_i\ge w_i+v_2$.

If $v_2=v_1$, then $\ord_Fx_i\ge w_i+v_i$ for $i=1,2$ and $\ord_Fx_3=1$. We apply Proposition~\ref{prp:parallel} to the weighted blow-up $F'\subset B\to x\in X$ with $\wt(x_1,x_2,x_3)=(\ord_Fx_1,\ord_Fx_2,1)$. Because the log discrepancy $a_{F'}(X)=\ord_Fx_1+\ord_Fx_2+1$ of the exceptional divisor $F'$ is not less than $a_F(X)=w_1+v_1+w_2+v_2+1$, it follows that $F=F'$ as a divisor over $X$ with $\ord_Fx_i=w_i+v_i$ for $i=1,2$. Thus we shall assume that $v_2<v_1$.

\textit{Step}~2.
Since $v_2<v_1$, the vector space $V_{v_2}$, spanned by $x_1,x_2,x_3^{v_2}$, is of dimension two. We shall use the notation $I_v=\{(a,b)\in\bN^2\mid v_2a+b=v\}$.

Suppose that $z_2$ and $x_3^{v_2}$ form a basis of $V_{v_2}$. In this case, by Remark~\ref{rmk:wbu}, $\varphi$ is the same as the weighted blow-up with $\wt(y_1,z_2,x_3)=(v_1,v_2,1)$. If $v<v_1$, then $V_v$ has a basis $\{z_2^ax_3^b\}_{(a,b)\in I_v}$. Hence $z_1$ is congruent modulo $\fn(v_1)$ to some polynomial $q_1(z_2,x_3)=\sum_{v=v_2}^{v_1-1}\sum_{(a,b)\in I_v}c_{ab}z_2^ax_3^b$ for $c_{ab}\in k$. In the expression
\[
z_1-q_1=(x_1-\textstyle\sum c_{ab}x_2^ax_3^{w_1-w_2a+b})x_3^{-w_1},
\]
the exponent $w_1-w_2a+b$ is positive by $a\le\rd{(v_1-1)/v_2}<w_1/w_2$. Replacing $x_1$ by $x_1-\sum c_{ab}x_2^ax_3^{w_1-w_2a+b}$, one has $\ord_Fz_1\ge v_1$ and hence $\ord_Fx_1\ge w_1+v_1$ as well as $\ord_Fx_2\ge w_2+v_2$. Then by Proposition~\ref{prp:parallel}, $F$ is obtained by the weighted blow-up of $X$ with $\wt(x_1,x_2,x_3)=(w_1+v_1,w_2+v_2,1)$.

\textit{Step}~3.
Henceforth we assume, in addition to $v_2<v_1$, that $z_2$ and $x_3^{v_2}$ are linearly dependent in $V_{v_2}$, that is, $z_2$ is congruent to $cx_3^{v_2}$ modulo $\fn(v_2+1)$ for some $c\in k$. Replacing $x_2$ by $x_2-cx_3^{w_2+v_2}$, we assume that $\ord_Fz_2>v_2$. In this case, $\varphi$ is the same as the weighted blow-up with $\wt(y_1,z_1,x_3)=(v_1,v_2,1)$. If $v<v_1$, then $V_v$ has a basis $\{z_1^ax_3^b\}_{(a,b)\in I_v}$. Hence $z_2$ is congruent modulo $\fn(v_1)$ to some $q_2(z_1,x_3)=\sum_{v=v_2+1}^{v_1-1}\sum_{(a,b)\in I_v}c_{ab}z_1^ax_3^b$. Replacing $x_2$ by $x_2-x_3^{w_2}\sum c_{0b}x_3^b$, we may assume that $c_{0b}=0$ for all $b$.

If $q_2=0$, then $\ord_Fz_2\ge v_1$ and Proposition~\ref{prp:parallel} shows that $F$ is obtained by the weighted blow-up of $X$ with $\wt(x_1,x_2,x_3)=(w_1+v_2,w_2+v_1,1)$. We shall assume that $q_2\neq0$. Let $v_2'$ be the minimum of $v$ such that $c_{ab}\neq0$ for some $(a,b)\in I_v$. Let $a'$ be the maximum of $a$ such that $c_{a,v_2'-v_2a}\neq0$. Note that $v_2<v_2'<v_1$ and $1\le a'\le\rd{v_2'/v_2}$.

Consider the weighted blow-up $G\subset W\to x\in X$ with $\wt(x_1,x_2,x_3)=(w_1+v_2,w_2+{v_2'},1)$. The exceptional divisor $G$ is identified with the weighted projective space $\bP(w_1+v_2,w_2+v_2',1)$ with weighted homogeneous coordinates $x_1,x_2,x_3$. Because $q_2'=z_2-\sum_{(a,b)\in I_{v_2'}}c_{ab}z_1^ax_3^b$ belongs to $\fn(v_2'+1)$, the centre $c_W(F)$ of $F$ is contained in the curve $C$ in $G$ defined by the weighted homogeneous polynomial
\[
x_2x_3^{(w_1+v_2)a'-w_2-v_2'}-\sum_{a=1}^{a'}c_{a,v_2'-v_2a}x_1^ax_3^{(w_1+v_2)(a'-a)}=q_2'x_3^{(w_1+v_2)a'-v_2'}
\]
in $x_1,x_2,x_3$. We remark that the exponent $(w_1+v_2)a'-w_2-v_2'$ is positive from the estimate $w_2+v_1\le w_1+v_2$ at the beginning of the proof. If the centre $c_W(F)$ were a point, then it would be contained in some curve in $G$ defined by a linear combination of $x_2$ and $x_3^{w_2+v_2'}$. This means the linear dependence in $V_{v_2'}$ of $z_2$ and $x_3^{v_2'}$, which contradicts $c_{0v_2'}=0$. Thus the centre $c_W(F)$ is the curve $C$.

We claim that $a'=1$. Let $\fa_W$ be a weak $\bQ$-transform in $W$ of $\fa$. If $\ord_C\fa_W$ were less than one, then by \cite[proposition~6]{K17}, the divisor $\Gamma$ obtained at the generic point $\eta$ of $C$ by the blow-up of $W$ along $C$ should compute $\mld_\eta(W,\fa_W)$, in which $1<2-\ord_C\fa_W=a_\Gamma(W,\fa_W)=\mld_\eta(W,\fa_W)\le a_F(W,\fa_W)$. This contradicts
\[
a_F(W,\fa_W)\le a_F(W,\fa_W)+(a_G(X,\fa)-1)\ord_FG=a_F(X,\fa)=1.
\]
Hence $\ord_C\fa_W\ge1$. Since $C$ is of weighted degree $(w_1+v_2)a'$, one obtains
\[
(w_1+v_2)a'\le(w_1+v_2)a'\ord_C\fa_W\le\ord_G\fa
\]
and consequently
\begin{align*}
1\le a_G(X,\fa)=a_G(X)-\ord_G\fa&\le(w_1+v_2+w_2+v_2'+1)-(w_1+v_2)a'\\
&\le(w_1+v_2)(1-a')+w_2+v_1\le(w_1+v_2)(2-a').
\end{align*}
Thus $a'=1$ as claimed.

\textit{Step}~4.
We set $d=v_2'-v_2$ and replace $x_1$ by $c_{1d}x_1$ to make $c_{1d}=1$. Then the curve $C$ in $G$ is given by $x_{1v_2'}=x_1-x_2x_3^{w_1-w_2-d}$, where $w_1-w_2-d$ is positive as remarked. Consider $x_{1v_2'}$ to be a function in $\sO_X$. Then the function $z_{1v_2'}=x_{1v_2'}x_3^{d-w_1}=z_1x_3^d-z_2$ in $\sO_Y$ belongs to $\fn(v_2'+1)$. In particular, $\ord_Fz_2=v_2'$ and $\ord_Fx_2=w_2+v_2'$.

Starting with $x_{1v_2'}$, we shall inductively construct a function $x_{1v}$ in $\sO_X$ for $v_2'<v<v_1$ such that $x_{1v}-x_{1,v-1}$ is a polynomial in $x_2,x_3$ and such that $z_{1v}=x_{1v}x_3^{d-w_1}$ belongs to $\fn(v+1)$. Suppose that $x_{1,v-1}$ has been constructed. The function $z_{1,v-1}=x_{1,v-1}x_3^{d-w_1}\in\fn(v)$ is congruent modulo $\fn(v+1)$ to some polynomial $\sum_{(a,b)\in I_v} c_{ab}z_1^ax_3^b$, where $c_{ab}$ is not the same as that in $q_2$. Then
\[
z_{1,v-1}x_3^l\equiv\textstyle\sum c_{ab}(z_1x_3^d)^ax_3^{b-da+l}\equiv\sum c_{ab}z_2^ax_3^{b-da+l}
\]
modulo $\fn(l+v+1)$ for large $l$, where the latter congruence follows from the containment $z_1x_3^d-z_2\in\fn(v_2'+1)$. Hence
\[
x_{1v}=x_{1,v-1}-\textstyle\sum c_{ab}x_2^ax_3^{w_1-w_2a-d(a+1)+b}=(z_{1,v-1}-\sum c_{ab}z_2^ax_3^{b-da})x_3^{w_1-d}
\]
has order greater than $w_1-d+v$ along $F$. Notice that
\begin{align*}
w_1-w_2a-d(a+1)&>w_1-w_2\Bigl\lfloor\frac{v_1-1}{v_2}\Bigr\rfloor-(v_1-v_2)\Bigl(\Bigl\lfloor\frac{v_1-1}{v_2}\Bigr\rfloor+1\Bigr)\\
&\ge w_1-(w_1-v_1^2)-(v_1-v_2)\cdot\frac{v_1+v_2}{v_2}
=v_1^2\Bigl(1-\frac{1}{v_2}\Bigr)+v_2>0.
\end{align*}
Thus $x_{1v}$ is a desired function.

One has $\ord_Fx_{1,v_1-1}\ge w_1+v_1-d$ and $\ord_Fx_2=w_2+v_2+d$. By Proposition~\ref{prp:parallel}, $F$ is obtained by the weighted blow-up of $X$ with $\wt(x_{1,v_1-1},x_2,x_3)=(w_1+v_1-d,w_2+v_2+d,1)$.
\end{proof}

\begin{remark}\label{rmk:composite}
In the theorem, let $B$ be the weighted blow-up of $X$ with $\wt(x_1',x_2',x_3)=(w_1+v_1-d,w_2+v_2+d,1)$. Then the rational map $Z\dasharrow B$ over $X$ is isomorphic outside the strict transform $E_Z$ in $Z$ of $E$. Indeed, $B$ is the output of the $(K_Z+A_Z+\varepsilon E_Z)\equiv_X\varepsilon E_Z$-MMP over $X$ for the $\bR$-divisor $A_Z$ defined by a general member of a weak $\bQ$-transform in $Z$ of $\fa$ and for a small positive real number $\varepsilon$.
\end{remark}

\begin{proposition}[{\cite[proposition~3.5.3]{K24}}]\label{prp:parallel}
Let $x\in X$ be the germ of a smooth variety and let $F\subset B\to x\in X$ be the weighted blow-up with $\wt(x_1,\ldots,x_d)=(w_1,\ldots,w_d)$ which contracts the divisor $F$ to the point $x$. For a divisor $E$ over $X$ with centre $x$, if the vector $(w_1,\ldots,w_d)$ is parallel to $(\ord_Ex_1,\ldots,\ord_Ex_d)$, then the centre $c_B(E)$ is not contained in the strict transform of the divisor on $X$ defined by the product $x_1\cdots x_d$. In particular $a_F(X)\le a_E(X)$ and the equality holds if and only if $E=F$ as a divisor over $X$.
\end{proposition}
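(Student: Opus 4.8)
The plan is to pass to the affine space and argue on the charts of the weighted blow-up $B$. Write $\pi\colon B\to X$. Since $F$ is contracted to the point $x$ we have $d=\dim X$, and dividing $(w_1,\ldots,w_d)$ by $\gcd(w_1,\ldots,w_d)$ changes neither $F$ nor the parallelism hypothesis, so we may assume $w_1,\ldots,w_d$ are coprime, hence $\ord_Fx_i=w_i$ and $a_F(X)=\sum_iw_i$; as the weighted blow-up and all quantities below are formal-local, we may take $x\in X$ to be the origin of $\bA^d$ with coordinates $x_1,\ldots,x_d$. Write $D_j=\{x_j=0\}$ and let $\tilde D_j$ be its strict transform on $B$, so that $\pi^*D_j=\tilde D_j+w_jF$ and the strict transform of $\{x_1\cdots x_d=0\}$ is $\tilde D_1\cup\cdots\cup\tilde D_d$; fix $\lambda>0$ with $\ord_Ex_i=\lambda w_i$ for all $i$.

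First I would locate $c_B(E)$. Since $c_X(E)=x$ and the fibre of $\pi$ over $x$ is $F$, we have $c_B(E)\subset F$; as the charts $U_i=(x_i\neq0)$ cover $B$, fix $i$ with $c_B(E)\cap U_i\neq\emptyset$. On $U_i$ the strict transform $\tilde D_i$ does not appear, so $\ord_E\tilde D_i=0$, and $\pi^*D_i=\tilde D_i+w_iF$ then gives $\ord_EF=\ord_Ex_i/w_i=\lambda$. For every $j$, the relation $\pi^*D_j=\tilde D_j+w_jF$ now yields $\ord_E\tilde D_j=\ord_Ex_j-w_j\ord_EF=\lambda w_j-w_j\lambda=0$, hence $c_B(E)\not\subset\tilde D_j$. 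Thus $c_B(E)$ is not contained in the strict transform $\tilde D_1\cup\cdots\cup\tilde D_d$ of $\{x_1\cdots x_d=0\}$, which is the first assertion.

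For the statement on log discrepancies I would first note that $B$ is smooth along $c_B(E)$. Indeed $B\setminus F$ is smooth, and $\operatorname{Sing}B\cap F\subset\tilde D_1\cup\cdots\cup\tilde D_d$: on $U_i\simeq\bA^d/\bZ_{w_i}(w_1,\ldots,w_{i-1},-1,w_{i+1},\ldots,w_d)$, a point with $x_i\neq0$, or lying on $F$ but off every $\tilde D_j$ with $j\neq i$, has trivial stabiliser, the latter because $\gcd(w_i,\{w_j\}_{j\neq i})$ divides $\gcd(w_1,\ldots,w_d)=1$. Combined with the first assertion, $c_B(E)$ avoids $\operatorname{Sing}B$. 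Now use the formula $a_E(X)=a_E(B)+(a_F(X)-1)\ord_EF$, where $a_F(X)-1\geq1$ (the case $d=1$ being trivial), $\ord_EF\geq1$ since $c_B(E)\subset F$, and $a_E(B)>0$ since $B$ has quotient, hence klt, singularities. If $\ord_EF\geq2$, then $a_E(X)\geq a_E(B)+2(a_F(X)-1)>(a_F(X)-1)+1=a_F(X)$. If $\ord_EF=1$ and $c_B(E)$ has codimension one in $B$, then $c_B(E)=F$, the only codimension-one subvariety of the fibre over $x$, so $E=F$ and $a_E(X)=a_F(X)$. If $\ord_EF=1$ and $c_B(E)$ has codimension at least two, then smoothness of $B$ along $c_B(E)$ gives $a_E(B)\geq2$, whence $a_E(X)\geq2+(a_F(X)-1)>a_F(X)$. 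Altogether $a_F(X)\leq a_E(X)$, with equality exactly when $E=F$.

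I expect the only genuinely delicate part to be the singularity bookkeeping in the last paragraph: pinning down the chart model of $B$ and the location of $\operatorname{Sing}B$ when the weights $w_i$ are only jointly, not pairwise, coprime, and disposing cleanly of the borderline case $\ord_EF=1$ with $c_B(E)=F$. The computation of $c_B(E)$ in the second paragraph is short once the relations $\pi^*D_j=\tilde D_j+w_jF$ are in hand, and the remainder is the standard discrepancy formula for a single blow-up together with elementary inequalities.
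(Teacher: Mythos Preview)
The paper does not supply its own proof of this proposition; it is quoted with a citation to \cite[proposition~3.5.3]{K24}. Your argument is correct and is essentially the standard one: locate $c_B(E)$ via the pullback relations $\pi^*D_j=\tilde D_j+w_jF$, deduce $\ord_E\tilde D_j=0$ for all $j$ from the parallelism hypothesis, observe that the singular locus of $B$ lies in $\bigcup_j\tilde D_j$ so $B$ is smooth along $c_B(E)$, and finish with the discrepancy formula $a_E(X)=a_E(B)+(a_F(X)-1)\ord_EF$ together with the bound $a_E(B)\ge\operatorname{codim}_Bc_B(E)$ on the smooth locus.

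A few minor remarks on presentation. First, your computation of $\ord_EF=\lambda$ in the second paragraph is a priori only a rational number, since $F$ and $\tilde D_j$ are merely $\bQ$-Cartier on $B$; the integrality of $\ord_EF$ is only secured after you establish smoothness of $B$ along $c_B(E)$ in the third paragraph, so it would read more cleanly to postpone the case split until after that point (which you in fact do). Second, the hypothesis that $F$ is contracted to the point $x$ already forces $d\ge2$, so the parenthetical ``the case $d=1$ being trivial'' is vacuous rather than a genuine side case. Third, in the stabiliser computation it is worth spelling out that the stabiliser of a point on $F\cap U_i$ with all $y_j\neq0$ for $j\neq i$ is the subgroup of $\bZ_{w_i}$ of order $\gcd(w_i,\{w_j\}_{j\neq i})=\gcd(w_1,\ldots,w_d)=1$; you state the conclusion but the intermediate identity $\gcd_{j\neq i}\gcd(w_i,w_j)=\gcd(w_1,\ldots,w_d)$ could be made explicit. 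None of these affect correctness.
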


In general, the divisor obtained by a composition of weighted blow-ups is not necessarily obtained by a weighted blow-up.

\begin{example}\label{exl:composite}
Let $E\subset Y\to x\in X$ be the weighted blow-up with $\wt(x_1,x_2,x_3)=(3,2,1)$. Take the point $y\in Y$ above $x$ in the strict transform of the curve in $X$ defined by $(x_1,x_2)\sO_X$, which is endowed with a regular system $(y_1,y_2,x_3)=(x_1x_3^{-3},x_2x_3^{-2},x_3)$ of parameters in $\sO_{Y,y}$. Let $F\subset Z\to y\in Y$ be the weighted blow-up with $\wt(y_1+y_2^2,y_2,x_3)=(5,2,1)$. Then the order $\ord_Ex_1=7$ is the maximum of $\ord_Ez_1$ for all $z_1\in\fm\setminus\fm^2$, where $\fm$ is the maximal ideal in $\sO_X$ defining $x$. The order $\ord_Ex_2=4$ is the maximum of $\ord_Ez_2$ for all $z_2\in\fm\setminus(x_1\sO_X+\fm^2)$, and $\ord_E\fm=1$. Hence if $F$ were obtained by a weighted blow-up of $X$, then it would be with respect to weights $(7,4,1)$. This contradicts $a_F(X)=13$.
\end{example}

\section{Formulation by the generic limit}\label{sct:limit}
In this section, we recall the formulation by the theory of the generic limit of ideals initiated by de Fernex and Musta\c{t}\u{a} \cite{dFM09}. We follow our style in \cite{K21} and refer the reader to sections~3 and~4 therein.

Let $x\in X$ be the germ of an algebraic scheme and let $\fm$ be the maximal ideal in $\sO_X$ defining $x$. Let $\cS=\{(\fa_{i1},\ldots,\fa_{ie})\}_{i\in\bN_+}$ be an infinite sequence of $e$-tuples of ideals $\fa_{ij}$ in $\sO_X$. There exists a \textit{family} $\cF=(Z_l,(\fa_j(l))_j,N_l,s_l,t_l)_{l\ge l_0}$ \textit{of approximations} of $\cS$ \cite[definition~3.1]{K21}, where $l_0$ is a positive integer. For $1\le j\le e$, an ideal sheaf $\fa_j(l)$ in $\sO_{X\times Z_l}$ containing $\fm^l\sO_{X\times Z_l}$ defines a flat family of ideals in $\sO_X$ parametrised by a variety $Z_l$. A map $s_l\colon N_l\to Z_l(k)$ from an infinite subset of $\bN_+$ to the set of $k$-points in $Z_l$ has dense image and is compatible with a dominant morphism $t_l\colon Z_{l+1}\to Z_l$ in such a manner that
\[
\fa_j(l)\sO_{X\times Z_{l+1}}=\fa_j(l+1)+\fm^l\sO_{X\times Z_{l+1}}
\]
and $t_l\circ s_{l+1}=s_l|_{N_{l+1}}$ with $N_{l+1}\subset N_l$. The ideal $\fa_j(l)_i$ in $\sO_X$ given by the point $s_l(i)\in Z_l$ approximates $\fa_{ij}$ in the sense that $\fa_j(l)_i=\fa_{ij}+\fm^l$. In our argument, $\cF$ is often replaced by a subfamily \cite[definition~3.5]{K21}, in which $\cS$ is replaced by an infinite subsequence.

Define the field $K$ to be the direct limit $\varinjlim_lk(Z_l)$ of the function fields $k(Z_l)$ of $Z_l$ and let $\hat X$ be the spectrum of the completion of the local ring $\sO_{X,x}\otimes_kK$. The \textit{generic limit} of $\cS$ with respect to $\cF$ is the $e$-tuple $(\sfa_1,\ldots,\sfa_e)$ of ideals in $\sO_{\hat X}$ defined by the inverse limit
\[
\sfa_j=\varprojlim_l\fa_j(l)_K/(\fm^l\otimes_kK)
\]
of the ideals $\fa_j(l)_K=\fa_j(l)\otimes_{\sO_{Z_l}}K$ in $\sO_X\otimes_kK$. The $\bR$-ideal $\sfa=\prod_{j=1}^e\sfa_j^{r_j}$ on $\hat X$ is the \textit{generic limit} of the sequence of $\bR$-ideals $\fa_i=\prod_{j=1}^e\fa_{ij}^{r_j}$ on $X$.

The notions of singularities make sense on $\hat X$ as discussed in \cite{dFEM11}, \cite{dFM09}. Let $\hat x$ denote the closed point of $\hat X$ and let $\hat\fm$ denote the maximal ideal in $\sO_{\hat X}$.

\begin{lemma}\label{lem:resolution}
If $X$ is klt, then $\hat X$ is klt, and after replacing $\cF$ by a subfamily, one has
\[
\mld_{\hat x}(\hat X,\textstyle\prod_{j=1}^e(\sfa_j+\hat\fm^l)^{r_j})=\mld_x(X,\prod_{j=1}^e\fa_j(l)_i^{r_j})
\]
for all positive real numbers $r_1,\ldots,r_e$ and all $i\in N_l$ with $l\ge l_0$.
\end{lemma}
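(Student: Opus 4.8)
The statement is a standard consequence of the theory of generic limits, combining the known $\fm$-adic semi-continuity of log canonical thresholds on $\hat X$ (in the form due to de Fernex--Ein--Musta\c t\u a, quoted later as Theorem~\ref{thm:dFEM}) with the flat-family structure built into $\cF$. First I would record the klt-ness of $\hat X$: since $X$ is klt and $\hat X\to X$ is obtained by a faithfully flat base change followed by completion, the discrepancies are unchanged, so $\hat X$ is klt; this is already in \cite{dFEM11}, \cite{dFM09}. Next, fix a log resolution $\pi\colon\hat X'\to\hat X$ of $\hat X$ together with the ideals $\sfa_j+\hat\fm^l$ for the finitely many relevant values of $l$; write $K_{\hat X'/\hat X}=\sum_i a_iE_i$ and let $\fa_j\sO_{\hat X'}=\sO_{\hat X'}(-\sum_i b_{ij}E_i)$ after replacing $\cF$ by a subfamily so that the same combinatorial data $(a_i,b_{ij})$ computes all the relevant minimal log discrepancies. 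The point is that $\mld_{\hat x}(\hat X,\prod_j(\sfa_j+\hat\fm^l)^{r_j})$ is realised on $\hat X'$ as $\min_i\{1+a_i-\sum_j r_j b_{ij}'\}$ over the $E_i$ with $\pi(E_i)=\hat x$, where $b_{ij}'=\min(b_{ij},l\cdot\ord_{E_i}\hat\fm)$ is the order of $\sfa_j+\hat\fm^l$; after a further subfamily reduction I can arrange that this resolution is "defined over $Z_l$'', i.e.\ spreads out to a simultaneous log resolution of the flat family $\fa_j(l)$ over an open subset of $Z_l$ through which $s_l(N_l)$ remains dense.

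**Key steps in order.** (1) Establish $\hat X$ klt. (2) Use Theorem~\ref{thm:dFEM} (the $\fm$-adic semi-continuity of lc thresholds on $\hat X$) together with the approximation property $\sfa_j\equiv\fa_j(l)_K\pmod{\hat\fm^l}$ to identify, for each fixed $l$, the invariant $\mld_{\hat x}(\hat X,\prod_j(\sfa_j+\hat\fm^l)^{r_j})$ with an invariant computed purely from the truncated ideals $\sfa_j+\hat\fm^l=\fa_j(l)_K+\hat\fm^l$, hence from data over the generic point of $Z_l$. (3) Spread out a log resolution over $Z_l$: since $\fa_j(l)$ is a flat family over $Z_l$, a log resolution over the generic point extends to an open dense $U_l\subset Z_l$; replace $N_l$ by $s_l^{-1}(U_l)$, and do this compatibly in $l$ using the morphisms $t_l$ so that we are replacing $\cF$ by a subfamily in the sense of \cite[definition~3.5]{K21}. (4) For $i\in N_l$, the fibre of this spread-out resolution over $s_l(i)$ is a log resolution of $(X,\prod_j\fa_j(l)_i^{r_j})$ with the \emph{same} numerical data $(a_i,b_{ij}')$, because the relevant orders are locally constant in a flat family once the resolution is fixed; hence $\mld_x(X,\prod_j\fa_j(l)_i^{r_j})$ equals the same $\min_i\{1+a_i-\sum_j r_j b_{ij}'\}$. (5) Combining (2) and (4) gives the asserted equality for all $i\in N_l$, all $l\ge l_0$, and all positive reals $r_1,\dots,r_e$; note the right-hand side is independent of $i$, which is exactly the content. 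Finally, observe that only finitely many choices of resolution are needed across the countably many $l$ because of the inverse-system compatibility, so a single diagonal subfamily works.

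**Main obstacle.** The routine parts are the klt statement and the flat spreading-out of resolutions; the genuinely delicate point is step~(2): one must pass from de Fernex--Ein--Musta\c t\u a's $\fm$-adic semi-continuity of \emph{lc thresholds} to a statement about \emph{minimal log discrepancies} of the truncated $\bR$-ideals, uniformly in the exponents $r_j$. The mechanism is that $\mld_{\hat x}$ of a pair with an $\hat\fm$-primary-up-to-truncation $\bR$-ideal depends only on finitely many numerical data attached to a fixed log resolution, and truncation by $\hat\fm^l$ only affects $\ord_{E_i}$ for those $E_i$ with $\ord_{E_i}\hat\fm\cdot(\text{something})$ small; controlling which divisors are affected, and ensuring the same resolution works simultaneously for the generic fibre and for all $s_l(i)$ with $i\in N_l$, is where the subfamily replacement is forced. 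This is precisely the kind of argument carried out in sections~3 and~4 of \cite{K21}, so I would cite those for the technical heart and present here only the assembly.
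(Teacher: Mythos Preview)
Your core argument (steps (3)--(5)) is correct and matches the paper's approach; indeed the paper dispatches the lemma in a single sentence, ``The equality in the lemma is derived at once from the existence of log resolutions.'' But you have misidentified the main obstacle. Step~(2) does not require Theorem~\ref{thm:dFEM} at all: by the very definition of a family of approximations, each $\fa_j(l)$ already contains $\fm^l\sO_{X\times Z_l}$, so $\sfa_j+\hat\fm^l=\fa_j(l)_K\sO_{\hat X}$ on the nose, and the passage from $\hat X$ to the generic point of $X\times Z_l$ is just invariance of mld under completion and extension of the base field (as noted at the end of Remark~\ref{rmk:etale}). There is no semi-continuity issue because the lemma concerns the \emph{truncated} ideals, not $\sfa_j$ itself. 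Once this is seen, spreading out a single log resolution of $(X\times Z_l,\prod_j\fa_j(l))$ over a dense open of $Z_l$ gives identical numerical data on the generic fibre and on every closed fibre through which $s_l$ passes, and the formula $\mld=\min_i\bigl(1+a_i-\sum_j r_j b_{ij}\bigr)$ is then manifestly uniform in the $r_j$. So your ``genuinely delicate point'' is a red herring; the lemma is elementary, and Theorem~\ref{thm:dFEM} is reserved in the paper for the much harder comparison involving the \emph{untruncated} limit $\sfa$.
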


The equality in the lemma is derived at once from the existence of log resolutions. Every divisor $\hat E$ over $\hat X$ with centre $\hat x$ descends to a divisor $E_l$ over $X\times Z_l$ for large $l$, and after a replacement of $\cF$ which depends on $\hat E$, any component $E_i$ of the fibre of $E_l$ at $s_l(i)\in Z_l$ is a divisor satisfying $\ord_{\hat E}\sfa_j=\ord_{E_i}\fa_{ij}$ and $a_{\hat E}(\hat X,\sfa)=a_{E_i}(X,\fa_i)$. The lemma induces the inequality $\mld_{\hat x}(\hat X,\sfa)\ge\mld_x(X,\fa_i)$ after a replacement of $\cF$ which depends on $r_1,\ldots,r_e$. We seek the opposite one.

\begin{lemma}[{\cite[lemma~4.7]{K21}}]\label{lem:limit}
Suppose that $X$ is klt. If the equality $\mld_{\hat x}(\hat X,\sfa)=\mld_x(X,\fa_i)$ holds for all $i\in N_{l_0}$, then there exists a positive rational number $l$ such that for infinitely many indices $i$, there exists a divisor $E_i$ over $X$ which computes $\mld_x(X,\fa_i)$ and has $a_{E_i}(X)=l$.
\end{lemma}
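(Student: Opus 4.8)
The plan is to fix a single divisor over $\hat X$ that computes $\mld_{\hat x}(\hat X,\sfa)$ and to pull it down to $X$ along the family of approximations $\cF$; the rigidity of the descent recalled after Lemma~\ref{lem:resolution} will then force the resulting divisors over $X$ both to compute $\mld_x(X,\fa_i)$ and to have a common log discrepancy over $X$, which is the desired $l$.

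Concretely, I would first note that, since $\hat X$ is klt by Lemma~\ref{lem:resolution}, the minimal log discrepancy $\mld_{\hat x}(\hat X,\sfa)$ is computed by some divisor $\hat E$ over $\hat X$ with $c_{\hat X}(\hat E)=\hat x$ — attainment of the infimum when $\mld_{\hat x}(\hat X,\sfa)\ge 0$, and the single inequality $a_{\hat E}(\hat X,\sfa)<0$ when it is $-\infty$. Put $l=a_{\hat E}(\hat X)$; this is a positive rational number because $\hat X$ is klt, and it is fixed from now on. Next, apply the descent of $\hat E$: for $l$ large it descends to a divisor $E_l$ over $X\times Z_l$, and after replacing $\cF$ by a subfamily depending only on $\hat E$, every component $E_i$ of the fibre of $E_l$ at $s_l(i)$ is a divisor over $X$ with $c_X(E_i)=x$, $\ord_{\hat E}\sfa_j=\ord_{E_i}\fa_{ij}$ for all $j$, and $a_{\hat E}(\hat X,\sfa)=a_{E_i}(X,\fa_i)$; the index set $N_l\subset N_{l_0}$ remains an infinite subset of $\bN_+$.

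It then only remains to read off the two consequences for each $i\in N_l$. Since $\sfa=\prod_j\sfa_j^{r_j}$ and $\fa_i=\prod_j\fa_{ij}^{r_j}$ carry the same exponents, the equalities of orders give $\ord_{\hat E}\sfa=\ord_{E_i}\fa_i$, whence
\[
a_{E_i}(X)=a_{E_i}(X,\fa_i)+\ord_{E_i}\fa_i=a_{\hat E}(\hat X,\sfa)+\ord_{\hat E}\sfa=a_{\hat E}(\hat X)=l.
\]
On the other hand the hypothesis gives $a_{E_i}(X,\fa_i)=a_{\hat E}(\hat X,\sfa)=\mld_{\hat x}(\hat X,\sfa)=\mld_x(X,\fa_i)$ when this value is finite, and $a_{E_i}(X,\fa_i)=a_{\hat E}(\hat X,\sfa)<0$ when $\mld_x(X,\fa_i)=-\infty$; as $c_X(E_i)=x$, in either case $E_i$ computes $\mld_x(X,\fa_i)$. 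Since $N_l$ is infinite, the lemma follows. I expect the only delicate point to be foundational rather than geometric: that the subfamily attached to $\hat E$ may be taken so as to preserve the hypothesis of the lemma (it only selects an infinite subsequence from $\cS$), and that $\mld_{\hat x}(\hat X,\sfa)$ is computed by a genuine single divisor over $\hat X$ — both of which are secured by the formalism of \cite{K21} together with the fact that $\hat X$ is klt.
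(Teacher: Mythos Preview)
Your argument is correct and is precisely the intended one: the paper does not reprove this lemma (it is cited from \cite[lemma~4.7]{K21}), but the paragraph immediately preceding it spells out exactly the descent mechanism you invoke --- choose $\hat E$ computing $\mld_{\hat x}(\hat X,\sfa)$, descend it to $E_l$ over $X\times Z_l$, and take fibres $E_i$ with $\ord_{\hat E}\sfa_j=\ord_{E_i}\fa_{ij}$ and $a_{\hat E}(\hat X,\sfa)=a_{E_i}(X,\fa_i)$. Your two closing observations (that $a_{E_i}(X)=a_{\hat E}(\hat X)$ via the order identities, and that the hypothesis forces $E_i$ to compute $\mld_x(X,\fa_i)$) are exactly what is needed, and the foundational caveats you flag are indeed handled by the formalism of subfamilies in \cite{K21}.
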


\begin{remark}\label{rmk:limit}
By Theorem~\ref{thm:dFEM} below, the equality $\mld_{\hat x}(\hat X,\sfa)=\mld_x(X,\fa_i)$ holds if $\mld_{\hat x}(\hat X,\sfa)$ is not positive. It also holds if $(\hat X,\sfa)$ is klt \cite[theorem~5.1]{K15}. When $X$ is a smooth threefold, we have known the equality unless $(\hat X,\sfa)$ is lc with $\mld_{\hat x}(\hat X,\sfa)\le1$ and has the smallest lc centre of dimension one \cite{K15}.
\end{remark}

The most important achievement at present is the $\fm$-adic semi-continuity of lc thresholds mentioned prior to Theorem~\ref{thm:lct}.

\begin{theorem}[de Fernex--Ein--Musta\c{t}\u{a} \cite{dFEM10}, \cite{dFEM11}]\label{thm:dFEM}
Suppose that $X$ is klt. If $(\hat X,\sfa)$ is lc, then so is $(X,\fa_i)$ for all $i\in N_{l_0}$ after replacement of $\cF$.
\end{theorem}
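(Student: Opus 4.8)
The plan is to derive the statement from the jet-scheme description of log canonicity due to Musta\c{t}\u{a}, together with the effective boundedness results of de Fernex, Ein and Musta\c{t}\u{a} \cite{dFEM10}, \cite{dFEM11} that underlie their proof of the ACC for log canonical thresholds. Recall that for a smooth variety $X$ and a closed point $x$, whether $(X,\prod_j\fa_j^{r_j})$ is log canonical at $x$ is governed by the dimensions of the contact loci $\mathrm{Cont}^{(\ge p_1,\ldots,\ge p_e)}_m(\fa_1,\ldots,\fa_e)$ inside the fibre $J_m(X)_x$ over $x$ of the $m$-th jet scheme $J_m(X)\to X$, as $m$ runs through $\bN$ and each $p_j$ through $\{0,\ldots,m+1\}$; and that an $m$-jet based at $x$ has contact order at most $m+1$ with any ideal, this order depending only on the ideal modulo $\fm^{m+1}$. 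Thus, at each finite jet level, log canonicity is an $\fm$-adically finite condition, and the only real issue is to bound the jet level that must be inspected. For $X$ merely klt one replaces this formula by its counterpart on varieties with klt (in particular quotient) singularities from \cite{dFEM11}; note that $\hat X$ is klt by Lemma~\ref{lem:resolution}, and that the generic-limit formalism of \cite{dFM09} furnishes jet schemes and contact loci on $\hat X$ by descent from families over the parameter varieties $Z_l$.

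Concretely, I would first fix a log resolution of $(\hat X,\sfa)$ and invoke the boundedness of \cite{dFEM10}, \cite{dFEM11} to choose a positive integer $l$ large enough that log canonicity of $(\hat X,\sfa)$ is certified by the dimensions of the finitely many contact loci of $\sfa$ of jet order below $l$, and also large enough that for any $\fa'$ agreeing $\fm$-adically with $\sfa$ to order $l$ the same is true of $(X,\fa')$ — the point being that this level depends only on numerical data (multiplicities and colengths) that is constant along a subfamily. Since $\sfa_j+\hat\fm^l=\fa_j(l)_K$, each of those contact loci of $\sfa$ is the generic fibre of a family of contact loci over $Z_l$ whose fibre over $s_l(i)$ is the contact locus of $\fa_j(l)_i=\fa_{ij}+\fm^l$, and hence, the orders involved being below $l$, the contact locus of $\fa_{ij}$ itself. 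By upper semi-continuity of the dimensions of jet-scheme loci in a family, the dimension over $s_l(i)$ equals the generic dimension on a dense open subset of $Z_l$; imposing these finitely many conditions and passing to a subfamily of $\cF$ so that every $s_l(i)$ lands in the corresponding open, the dimension equalities that certify log canonicity of $(\hat X,\sfa)$ transfer verbatim to $(X,\fa_i)$, whence $(X,\fa_i)$ is log canonical for all $i\in N_{l_0}$. Alternatively one may package this as follows: $(\hat X,\sfa)$ lc forces $(\hat X,\prod_j(\sfa_j+\hat\fm^l)^{r_j})$ lc automatically, the ideals being larger; Lemma~\ref{lem:resolution} then yields $(X,\prod_j(\fa_{ij}+\fm^l)^{r_j})$ lc for $i\in N_l$; and finally the boundedness of \cite{dFEM10}, \cite{dFEM11} applied on $X$ removes the $\fm^l$-truncation once $l$ exceeds the relevant invariant of $\fa_i$, which holds for general $i$.

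The main obstacle is exactly this uniform control of the jet level, namely the effective boundedness theorems of de Fernex, Ein and Musta\c{t}\u{a}: one must know that log canonicity of each pair in sight is detected at a level that does not grow along the approximating sequence, so that only finitely many semi-continuity statements are needed. This finiteness is essential since $k$ may be countable, and an infinite intersection of dense open subsets of some $Z_l$ could then contain no point $s_l(i)$, whereas a bounded number of such conditions, realised by a single passage to a subfamily, is harmless. Granting that boundedness, the remaining ingredients — descent of jet schemes and contact loci to the generic limit, compatibility of contact loci with the $\fm$-adic truncations defining $\sfa$, and the identification of minimal log discrepancies in Lemma~\ref{lem:resolution} — are already in place in the formalism recalled above.
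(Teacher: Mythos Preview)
The paper does not give its own proof of this statement: Theorem~\ref{thm:dFEM} is quoted verbatim as a result of de Fernex, Ein and Musta\c{t}\u{a} with citations to \cite{dFEM10} and \cite{dFEM11}, and no argument is supplied. Your sketch therefore cannot be compared against a proof in the paper, because there is none; it can only be compared against the original sources. As an outline of the dFEM argument your proposal is broadly faithful---the essential mechanism is indeed an effective, $\fm$-adically finite criterion for log canonicity (in \cite{dFEM10} via jet-theoretic/arc-theoretic bounds on smooth varieties, extended in \cite{dFEM11} to bounded singularities), combined with constructibility and semi-continuity over the parameter varieties $Z_l$ so that a single passage to a subfamily suffices. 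The alternative packaging you mention at the end, going through Lemma~\ref{lem:resolution} and then removing the $\fm^l$-truncation, is closer in spirit to how the result is actually used in \cite{K21} and here.

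One caution: your sketch is vague precisely at the point you flag as the main obstacle. The uniform bound on the jet level that certifies log canonicity is not a formal consequence of the existence of a single log resolution of $(\hat X,\sfa)$; in \cite{dFEM10}, \cite{dFEM11} it comes from a genuinely new effective estimate (in the smooth case, a bound in terms of the embedding dimension and the Hilbert--Samuel function, and in the singular case from inversion of adjunction to an ambient smooth variety). Saying that the level ``depends only on numerical data that is constant along a subfamily'' sweeps this under the rug. If you intend this as a proof rather than a pointer to the literature, you would need to state and invoke that effective bound explicitly.
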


Henceforth we assume that $x\in X$ is the germ of a smooth threefold. We fix a positive rational number $q$ and a non-negative rational number $s$. As explained before Definition~\ref{dfn:semistable}, we may assume in Theorem~\ref{thm:product} that $\fa$ is an $\fm$-primary ideal such that $(x\in X,\fa^q)$ is canonical of semistable type. Further by Remark~\ref{rmk:equiv}(\ref{itm:equiv-nonpos}), we may also assume that $\mld_x(X,\fa^q\fm^s)$ is positive. Hence Theorem~\ref{thm:product} is reduced to the next statement.

\begin{theorem}\label{thm:limit}
Let $x\in X$ be the germ of a smooth threefold as above. Let $\cS=\{\fa_i\}_{i\in\bN_+}$ be a sequence of $\fm$-primary ideals in $\sO_X$ such that $(x\in X,\fa_i^q)$ is canonical of semistable type and such that $\mld_x(X,\fa_i^q\fm^s)$ is positive. Then there exists a positive integer $l$ such that for infinitely many $i$, there exists a divisor $E_i$ over $X$ which computes $\mld_x(X,\fa_i^q\fm^s)$ and has $a_{E_i}(X)\le l$.
\end{theorem}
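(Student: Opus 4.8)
The plan is to pass to the generic limit of $\cS$, reduce the assertion to an equality of minimal log discrepancies, and finally to a uniform boundedness statement on a weighted projective surface. Choose a family $\cF$ of approximations of $\cS=\{\fa_i\}_i$ and let $\sfa$ be the generic limit of $\{\fa_i\}_i$ on $\hat X$. By Lemma~\ref{lem:limit} together with Lemma~\ref{lem:resolution}, after replacing $\cF$ by a subfamily and $\cS$ by an infinite subsequence it suffices to prove
\[
\mld_{\hat x}(\hat X,\sfa^q\hat\fm^s)=\mld_x(X,\fa_i^q\fm^s)\qquad\textup{for all }i\in N_{l_0}.
\]
Lemma~\ref{lem:resolution} gives $\mld_{\hat x}(\hat X,\sfa^q)\ge\mld_x(X,\fa_i^q)=1$ and $\mld_{\hat x}(\hat X,\sfa^q\hat\fm^s)\ge\mld_x(X,\fa_i^q\fm^s)>0$. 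By Remark~\ref{rmk:limit} the displayed equality then holds automatically unless $(\hat X,\sfa^q\hat\fm^s)$ is lc with $\mld_{\hat x}(\hat X,\sfa^q\hat\fm^s)\le1$ and has smallest lc centre $C$ of dimension one, the dimension being forced to be one rather than zero by the positivity just noted. In this remaining case Theorem~\ref{thm:special} reduces us to the situation in which there is a weighted blow-up of $\hat X$ along $C$ with $\wt(x_1,x_2)=(w_1,w_2)$, $(w_1,w_2)\in\sfP$, whose exceptional divisor $\hat F$ satisfies $a_{\hat F}(\hat X,\sfa^q)=0$; by Propositions~\ref{prp:down} and~\ref{prp:up} this is reflected on $X$ by the presence of crepant weighted blow-ups with weights $(w_{1i},w_{2i},1)$, $w_{2i}\le w_{1i}$, along which $\fm$ has order one, with $w_{2i}\to\infty$ and $w_{1i}/w_{2i}\to\mu=w_1/w_2$.

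For each $i$ we then choose, maximally — which is possible because $\fa_i$ is $\fm$-primary, so that for fixed $i$ the weight $w_{1i}+w_{2i}=q\ord_{F_i}\fa_i$ of a crepant weighted blow-up of this type is bounded — a weighted blow-up $F_i\subset B_i\to x\in X$ with $\wt(x_{1i},x_{2i},x_{3i})=(w_{1i},w_{2i},1)$, $w_{2i}\le w_{1i}$, crepant to $(X,\fa_i^q)$, with $a_{F_i}(X)=w_{1i}+w_{2i}+1$ as large as possible. After passing to a subsequence, the cases in which $w_{2i}$ stays bounded or $w_{1i}/w_{2i}$ does not tend to a point of $\sfQ_n$ being treated by more direct arguments, the substantial case is $w_{2i}\to\infty$ with $w_{1i}/w_{2i}\to\mu\in\sfQ_n$ for some $n$ that may be taken to depend only on $q$ and $s$. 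Here, writing $E_i$ for a divisor that computes $\mld_x(X,\fa_i^q\fm^s)$, we bound $a_{E_i}(B_i)$: one may assume that $E_i$ is obtained from $B_i$ by a crepant weighted blow-up of the same type at $c_{B_i}(E_i)$, with weights $(v_{1i},v_{2i},1)$, $v_{2i}\le v_{1i}$, whose slope converges to some $\mu'\in\sfQ_n$; were the slope inequality $\rd{(v_{1i}-1)/v_{2i}}\le(w_{1i}-v_{1i}^2)/w_{2i}$ of Theorem~\ref{thm:composite} to hold, the composition of the two weighted blow-ups would be a crepant weighted blow-up of $X$ with weight vector $(w_{1i}+v_{1i}-d,w_{2i}+v_{2i}+d,1)$ of strictly larger total weight, contradicting the maximality of $B_i$; hence $\mu<\mu'$, and the desired bound on $a_{E_i}(B_i)$ follows by induction on the finitely many values of $\mu$ in $\sfQ_n$ (this is Lemma~\ref{lem:slopeQ}).

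Once $a_{E_i}(B_i)$ is bounded, the divisors $F_i$ converge to the weighted blow-up $F$ of $X$ with $\wt(x_1,x_2,x_3)=(w_1,w_2,1)$ for $(w_1,w_2)\in\sfP_n$, so that $a_F(X)=w_1+w_2+1$ is bounded. The boundedness of $a_{E_i}(B_i)$ permits truncating $\fa_i$ by its part of high order along $F$ without changing $\mld_x(X,\fa_i^q\fm^s)$, and a general member of the truncated ideal then admits a flat degeneration to an $F$-weighted-homogeneous function $h_i$ with $a_F(X,h_i^q)=0$. By the lower semi-continuity of minimal log discrepancies on smooth varieties \cite{EMY03}, it suffices to exhibit a divisor of bounded log discrepancy that computes $\mld_x(X,h_i^q\fm^s)$; by precise inversion of adjunction along $F$ this reduces to a statement about a pair on the weighted projective surface $F\simeq\bP(w_1,w_2,1)$, for which the required boundedness is the surface case of Theorem~\ref{thm:equiv} recorded in Remark~\ref{rmk:equiv}(\ref{itm:equiv-sf}). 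Pulling the resulting divisor back to $X$ and using $a_F(X)\le w_1+w_2+1$ gives $a_{E_i}(X)\le l$ with $l$ depending only on $q$ and $s$, as required.

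The step I expect to be the main obstacle is the uniform bound on $a_{E_i}(B_i)$ in the substantial case $w_{2i}\to\infty$, $w_{1i}/w_{2i}\to\mu$. It rests on combining the maximality of $B_i$ with the delicate slope criterion of Theorem~\ref{thm:composite} for when a composition of two weighted blow-ups of decreasing slope is again a single weighted blow-up, so as to force the inequality $\mu<\mu'$ between the two limiting slopes, and on organising the ensuing induction over the finitely many admissible values of $\mu$. Establishing that only finitely many slopes can occur — that $\mu$ lies in $\sfQ_n$ with $n$ controlled by $q$ and $s$ — and matching the family of weighted blow-ups of $X$ with the single weighted blow-up of $\hat X$ along $C$ is the technically heaviest ingredient.
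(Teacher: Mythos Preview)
Your outline follows the paper's strategy closely, but there is a genuine gap at the heart of the argument, namely in your bound on $a_{E_i}(B_i)$. You write that ``one may assume that $E_i$ is obtained from $B_i$ by a crepant weighted blow-up of the same type at $c_{B_i}(E_i)$, with weights $(v_{1i},v_{2i},1)$''. This is not justified, and the paper never makes such a reduction: a divisor computing $\mld_x(X,\fa_i^q\fm^s)$ need not arise from any weighted blow-up of $B_i$, and even if it did, Theorem~\ref{thm:composite} only applies when the second blow-up is centred at the specific point $b_i$ on the strict transform of the curve $(x_1,x_2)$, not at an arbitrary centre. The paper's actual argument (Lemma~\ref{lem:slopeQ}) is considerably more involved. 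It replaces $\fa_{iB}$ and $\fm\sO_{B_i}$ by $\fn_i$-primary truncations $\fb_i,\fc_i$ using Theorem~\ref{thm:lct}, passes to a generic limit on $B_i\simeq\bA^3$, applies Theorem~\ref{thm:meta} and Lemma~\ref{lem:admissible} together with the inductive hypothesis to produce a crepant weighted blow-up $G_i'\subset B_i'\to B_i$ centred at $b_i$ with \emph{fixed} weights $(v_1,v_2,1)\in\sfP_n$, and then invokes Theorem~\ref{thm:success} to conclude that either $\mld_x(X,\fa_i^q\fm^s)$ already equals the generic-limit value (whence Lemma~\ref{lem:limit} finishes) or it equals $\mld_{b_i'}(B_i',\fa_{iB'}^q(\fm\sO_{B_i'})^s)$. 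Only the latter makes $B_i'\simeq Z_i\to X$ \emph{admissible} in the paper's sense---which includes the condition $\mld_x(X,\fa_i^q\fm^s)=\mld_b(B,\fa_B^q(\fm\sO_B)^s)$, not merely crepancy---and this is what contradicts maximality via Theorem~\ref{thm:composite}. Your notion of maximality (``$a_{F_i}(X)$ as large as possible'') is not the paper's, and without the admissibility condition one cannot even arrange $c_{B_i}(E_i)=b_i$.

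Two smaller points. The limiting divisor $F$ in Section~\ref{sct:monomial} comes from the two-variable weighted blow-up with $\wt(x_1,x_2)=(w_1,w_2)$ along the curve $C$, so $F$ is a ruled surface over $C$, not $\bP(w_1,w_2,1)$. And the surface case (Remark~\ref{rmk:equiv}(\ref{itm:equiv-sf})) is not used to pull a divisor back to $X$; it is used to establish the equality $\mld_x(X,h(l)_i^q\fm^s)=\mld_x(X,h_i^q\fm^s)$ in Lemma~\ref{lem:gh}, which together with Lemma~\ref{lem:fg} and lower semi-continuity gives $\mld_{\hat x}(\hat X,\sfa^q\hat\fm^s)\le\mld_x(X,\fa_i^q\fm^s)$, after which Lemma~\ref{lem:limit} supplies the bound $l$.
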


In a slightly general setting, let $\cS=\{\fa_i\}_{i\in\bN_+}$ be a sequence of ideals in $\sO_X$ such that $(X,\fa_i^q)$ is canonical. We construct a generic limit $\sfa$ of $\cS$ defined on $\hat x\in\hat X$. It follows from Lemma~\ref{lem:resolution} that $\mld_{\hat x}(\hat X,\sfa^q)\ge\mld_x(X,\fa_i^q)\ge1$ and in particular $(\hat X,\sfa^q)$ is lc. By Lemma~\ref{lem:limit} and Remark~\ref{rmk:limit}, the existence of the bound $l$ is known except the case when $(\hat X,\sfa^q)$ has the smallest lc centre of dimension one with $\mld_{\hat x}(\hat X,\sfa^q)\le1$. In the remaining case, $\mld_{\hat x}(\hat X,\sfa^q)$ equals one. Thus we have the following.

\begin{theorem}\label{thm:special}
Theorem~\textup{\ref{thm:limit}} holds unless $(\hat X,\sfa^q)$ is special as below for any generic limit $\sfa$ of $\cS$.
\end{theorem}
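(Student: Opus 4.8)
The plan is to run the generic-limit machinery of Section~\ref{sct:limit} on $\cS$ and to see that, as soon as $(\hat X,\sfa^q)$ avoids the special situation described below, the divisor and bound demanded by Theorem~\ref{thm:limit} are already supplied by the results in place. So I argue by contraposition: suppose the conclusion of Theorem~\ref{thm:limit} fails, that is, for each positive integer $l$ all but finitely many $i$ admit no divisor that computes $\mld_x(X,\fa_i^q\fm^s)$ and has log discrepancy at most $l$. I take a family $\cF=(Z_l,\ldots)_{l\ge l_0}$ of approximations of the sequence of pairs $(\fa_i,\fm)$ of ideals; since $\fm$ is a fixed ideal its generic limit is $\hat\fm$, so the generic limit $\bR$-ideal is $\sfa^q\hat\fm^s$ on $\hat x\in\hat X$ for a generic limit $\sfa$ of $\cS$. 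Throughout I may replace $\cF$ by a subfamily, which amounts to passing to an infinite subsequence of $\cS$.

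By Lemma~\ref{lem:resolution}, after a replacement one has $\mld_{\hat x}(\hat X,\sfa^q)\ge\mld_x(X,\fa_i^q)=1$ and $\mld_{\hat x}(\hat X,\sfa^q\hat\fm^s)\ge\mld_x(X,\fa_i^q\fm^s)>0$, so $(\hat X,\sfa^q\hat\fm^s)$, hence a fortiori $(\hat X,\sfa^q)$, is lc. If the equality $\mld_{\hat x}(\hat X,\sfa^q\hat\fm^s)=\mld_x(X,\fa_i^q\fm^s)$ held for every $i\in N_{l_0}$, then Lemma~\ref{lem:limit} would furnish a positive rational number $l$ and infinitely many $i$ with a divisor $E_i$ computing $\mld_x(X,\fa_i^q\fm^s)$ and having $a_{E_i}(X)=l$, contrary to the assumed failure (round $l$ up for the integer required by Theorem~\ref{thm:limit}). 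So the equality fails for some $i$, and by Remark~\ref{rmk:limit}, which builds on \cite{dFEM10},~\cite{dFEM11} and~\cite{K15}, this is possible only if $(\hat X,\sfa^q\hat\fm^s)$ is lc with $\mld_{\hat x}(\hat X,\sfa^q\hat\fm^s)\le1$ and with a smallest lc centre $\hat Z$ of dimension one — a curve, not the point $\hat x$, since $\mld_{\hat x}(\hat X,\sfa^q\hat\fm^s)>0$.

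It then remains, and this I expect to be the most delicate step, to pass this structure from $(\hat X,\sfa^q\hat\fm^s)$ to $(\hat X,\sfa^q)$ and to recognise the outcome as the special situation below. As $\hat Z$ passes through $\hat x$ but is not contained in $V(\hat\fm)=\{\hat x\}$, any divisor $\hat F$ over $\hat X$ with $c_{\hat X}(\hat F)=\hat Z$ has $\ord_{\hat F}\hat\fm=0$; taking $\hat F$ to be an lc place of $(\hat X,\sfa^q\hat\fm^s)$ over $\hat Z$ then gives $a_{\hat F}(\hat X,\sfa^q)=a_{\hat F}(\hat X,\sfa^q\hat\fm^s)=0$, so $\hat Z$ is an lc centre of $(\hat X,\sfa^q)$ as well, and since $\mld_{\hat x}(\hat X,\sfa^q)\ge1$ the point $\hat x$ is not an lc centre, whence $\hat Z$ is the smallest lc centre of $(\hat X,\sfa^q)$. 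As $\mld_\eta(\hat X,\sfa^q)=0$ at the generic point $\eta$ of $\hat Z$, the standard inequality $\mld_{\hat x}(\hat X,\sfa^q)\le\mld_\eta(\hat X,\sfa^q)+\dim\hat Z=1$ forces $\mld_{\hat x}(\hat X,\sfa^q)=1$. Finally, any divisor $\hat E$ computing $\mld_{\hat x}(\hat X,\sfa^q)=1$ descends, with the customary subfamily bookkeeping, to divisors $E_i$ over $X$ with $a_{E_i}(X,\fa_i^q)=1$, which then compute $\mld_x(X,\fa_i^q)=1$; the semistable-type hypothesis on $(X,\fa_i^q)$ gives $\ord_{E_i}\fm=1$, and hence $\ord_{\hat E}\hat\fm=1$. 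Thus $(\hat X,\sfa^q)$ is lc, has $\mld_{\hat x}(\hat X,\sfa^q)=1$ with $\hat\fm$ of order one along every divisor computing it, and has a one-dimensional smallest lc centre — exactly the special situation to be isolated below, whose more concrete shape (an lc place over $\hat Z$ realised by a weighted blow-up with $\wt(x_1,x_2)=(w_1,w_2)$) will be extracted in the subsequent sections.
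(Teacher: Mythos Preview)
Your approach is the same as the paper's: the paper's proof is just the short paragraph preceding Theorem~\ref{thm:special}, which invokes Lemma~\ref{lem:limit} and Remark~\ref{rmk:limit} and observes that the exceptional configuration forces $(\hat X,\sfa^q)$ to be special. You have unpacked the transfer from $(\hat X,\sfa^q\hat\fm^s)$ to $(\hat X,\sfa^q)$ that the paper leaves implicit, and your argument is essentially correct.

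There is one small gap. In step~3 you write ``since $\mld_{\hat x}(\hat X,\sfa^q)\ge1$ the point $\hat x$ is not an lc centre, whence $\hat Z$ is the smallest lc centre of $(\hat X,\sfa^q)$''. The ``whence'' is not justified: ruling out $\hat x$ as an lc centre does not by itself exclude other one-dimensional lc centres of $(\hat X,\sfa^q)$ not containing $\hat Z$. The fix is immediate: any lc centre $W$ of $(\hat X,\sfa^q)$ comes from a divisor $E$ with $a_E(\hat X,\sfa^q)=0$, hence $a_E(\hat X,\sfa^q\hat\fm^s)\le0$, and log canonicity forces equality; thus $W$ is an lc centre of $(\hat X,\sfa^q\hat\fm^s)$ and so contains $\hat Z$. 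Also, your final sentence proves more than is needed: Definition~\ref{dfn:special} does not require the semistable-type condition $\ord_{\hat E}\hat\fm=1$, so that part of the argument (and the anticipation of a weighted blow-up description) is extraneous here.
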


\begin{definition}\label{dfn:special}
The pair $(\hat X,\sfa^q)$ is said to be \textit{special} if it is lc with $\mld_{\hat x}(\hat X,\sfa^q)=1$ and has the smallest lc centre of dimension one.
\end{definition}

In the next section, we shall interpret the above notion more explicitly. We have another sufficient condition for the boundedness.

\begin{lemma}\label{lem:lct1}
Theorem~\textup{\ref{thm:limit}} holds if $(\hat X,\sfa^q\hat\fm)$ is lc for a generic limit $\sfa$ of $\cS$.
\end{lemma}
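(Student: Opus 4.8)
The plan is to exploit the hypothesis that $(\hat X,\sfa^q\hat\fm)$ is lc in order to apply the $\fm$-adic semi-continuity of lc thresholds, Theorem~\ref{thm:dFEM}, and thereby transport the log canonicity down to the members $\fa_i$ of the sequence. First I would observe that $\sfa^q\hat\fm$ is, up to reindexing, again of the shape $\prod_j\sfb_j^{r_j}$ to which the generic-limit machinery applies; indeed one may regard $\hat\fm$ as the generic limit of the constant sequence $\{\fm\}_i$, since $\fm+\fm^l$ is eventually $\fm$, so that $\sfa^q\hat\fm$ is the generic limit of $\{\fa_i^q\fm\}_i$ with respect to the same family $\cF$ (after passing to a subfamily if necessary). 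Applying Theorem~\ref{thm:dFEM} to this combined $\bR$-ideal then gives, after a replacement of $\cF$, that $(X,\fa_i^q\fm)$ is lc for all $i\in N_{l_0}$.

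Once $(X,\fa_i^q\fm)$ is lc for infinitely many $i$, the conclusion is immediate from what has already been recorded. By Remark~\ref{rmk:product}(\ref{itm:product-one}), Theorem~\ref{thm:product}—and hence the bound asserted in Theorem~\ref{thm:limit}, which is the $\fm$-primary, canonical-of-semistable-type, $\mld>0$ instance of it—has already been settled in the case when $(X,\fa_i^q\fm)$ is lc. So for each such $i$ there is a divisor $E_i$ over $X$ computing $\mld_x(X,\fa_i^q\fm^s)$ with $a_{E_i}(X)\le l$ for a uniform $l$ depending only on $q$ and $s$. Since we only need the conclusion for infinitely many $i$, restricting to the infinite index set $N_{l_0}$ (or the subsequence surviving the replacement of $\cF$) suffices.

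The main point to be careful about is the first step: that the lc property of $(\hat X,\sfa^q\hat\fm)$ descends, via Theorem~\ref{thm:dFEM}, not merely to $(X,\fa_i^q\fm)$ but to the \emph{actual} ideals $\fa_i$ rather than to their truncations $\fa_i+\fm^l$. This is where one must invoke that $\fa_i$ is $\fm$-primary: choosing the truncation level $l$ large enough that $\mld_x(X,\fa_i^q\fm^s)=\mld_x(X,(\fa_i+\fm^l)^q\fm^s)$ and that $(\fa_i+\fm^l)^q\fm=\fa_i^q\fm$ near $x$ identifies the truncated pair with the genuine one, as was already used to reduce Theorem~\ref{thm:product} to the $\fm$-primary case. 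With that identification the chain $\text{(lc on }\hat X) \Rightarrow \text{(lc on }X) \Rightarrow \text{Remark~\ref{rmk:product}(iv)}$ closes, so no genuinely new difficulty arises; the lemma is essentially a bookkeeping step that feeds the already-resolved case (iv) of Remark~\ref{rmk:product}.
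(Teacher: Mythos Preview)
Your proposal is correct and follows essentially the same route as the paper: apply Theorem~\ref{thm:dFEM} to descend the log canonicity of $(\hat X,\sfa^q\hat\fm)$ to $(X,\fa_i^q\fm)$ for infinitely many $i$, and then invoke Remark~\ref{rmk:product}(\ref{itm:product-one}). The paper's proof is just these two sentences; your additional discussion about $\hat\fm$ being the generic limit of $\{\fm\}_i$ and about truncations versus the genuine $\fa_i$ is correct elaboration but not strictly needed, since in the setting of Theorem~\ref{thm:limit} the $\fa_i$ are already assumed $\fm$-primary.
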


\begin{proof}
By Theorem~\ref{thm:dFEM}, $(X,\fa_i^q\fm)$ is lc for infinitely many $i$. Apply Remark~\ref{rmk:product}(\ref{itm:product-one}).
\end{proof}

In our prior work \cite{K21}, we provided a meta theorem which connects a statement involving the maximal ideal $\fm$ to that involving $\fm$-primary ideals. We shall use the following reformulation of it.

\begin{theorem}\label{thm:meta}
Let $x\in X$ be the germ of a smooth threefold as above. Fix a positive integer $b$ in addition to $q$ and $s$. Let $\cS=\{(\fa_i,\fb_i)\}_{i\in\bN_+}$ be a sequence of pairs of $\fm$-primary ideals in $\sO_X$ such that $(X,\fa_i^q)$ is canonical and such that $\fb_i$ contains $\fm^b$. Suppose that there exists a positive integer $l'$ such that for all $i$ and all integers $0\le b'\le b$, there exists a divisor $E_{ib'}$ over $X$ which computes $\mld_x(X,\fa_i^q\fm^{sb'})$ and has $a_{E_{ib'}}(X)\le l'$. Then there exists a positive integer $l$ such that for infinitely many $i$, there exists a divisor $E_i$ over $X$ which computes $\mld_x(X,\fa_i^q\fb_i^s)$ and has $a_{E_i}(X)\le l$.
\end{theorem}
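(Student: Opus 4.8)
The statement is a reformulation of the meta theorem of \cite{K21}, so one option is to deduce it directly from that result by matching the hypotheses; I sketch instead the argument via generic limits that underlies it. First I reduce: by Remark~\ref{rmk:etale} I may take $x\in X$ to be the germ $o\in\bA^3$, and by Remark~\ref{rmk:equiv}(\ref{itm:equiv-nonpos}) the conclusion is already known for those $i$ with $\mld_x(X,\fa_i^q\fb_i^s)\le0$, so I discard them and assume this invariant is positive for every $i$. Arguing by contradiction, if no uniform bound $l$ exists then, after passing to an infinite subsequence, for every $i$ every divisor over $X$ computing $\mld_x(X,\fa_i^q\fb_i^s)$ has log discrepancy greater than $i$; recall these log discrepancies are positive integers on a smooth variety, so a single bound $l$ would contradict this for all $i\ge l$.

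Next I pass to the generic limit. I take a family $\cF$ of approximations of $\cS=\{(\fa_i,\fb_i)\}_i$ and form the generic limits $\sfa$ and $\sfb$ on $\hat x\in\hat X$; since $\fm^b\subseteq\fb_i$ for every $i$ and this is a closed condition on the parameter varieties, one has $\hat\fm^b\subseteq\sfb$, and for each integer $b'$ the $\bR$-ideal $\sfa^q\hat\fm^{sb'}$ is the generic limit of $\{\fa_i^q\fm^{sb'}\}_i$ with respect to the same family. Because $a_{E_{ib'}}(X)\le l'$ uniformly, after finitely many further shrinkings of $\cF$ the divisors $E_{ib'}$, for each fixed $b'\in\{0,\dots,b\}$, are the fibres of a single divisor $\hat E_{b'}$ over $\hat X$ with $a_{\hat E_{b'}}(\hat X)\le l'$; moreover $a_{\hat E_{b'}}(\hat X,\sfa^q\hat\fm^{sb'})=a_{E_{ib'}}(X,\fa_i^q\fm^{sb'})=\mld_x(X,\fa_i^q\fm^{sb'})\le\mld_{\hat x}(\hat X,\sfa^q\hat\fm^{sb'})$ by Lemma~\ref{lem:resolution}, which forces equality throughout, so $\hat E_{b'}$ computes $\mld_{\hat x}(\hat X,\sfa^q\hat\fm^{sb'})$. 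It then suffices to produce one divisor $\hat E$ over $\hat X$ that computes $\mld_{\hat x}(\hat X,\sfa^q\sfb^s)$ and has $a_{\hat E}(\hat X)$ bounded by a function of $q,s,b,l'$ alone: descending $\hat E$ yields, for infinitely many $i$, a divisor $E_i$ over $X$ with $a_{E_i}(X,\fa_i^q\fb_i^s)=\mld_{\hat x}(\hat X,\sfa^q\sfb^s)$, whence $\mld_x(X,\fa_i^q\fb_i^s)=\mld_{\hat x}(\hat X,\sfa^q\sfb^s)$ by Lemma~\ref{lem:resolution} and $E_i$ computes it with bounded log discrepancy, contradicting the first paragraph.

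Producing $\hat E$ is the crux, and it is exactly here that the meta theorem of \cite{K21} is used. The controlling fact is that $\sfb$ is squeezed between $\hat\fm^b$ and $\sO_{\hat X}$: every divisor $E$ over $\hat X$ satisfies $0\le\ord_E\sfb\le b\,\ord_E\hat\fm$, so $\mld_{\hat x}(\hat X,\sfa^q\hat\fm^{sb})\le\mld_{\hat x}(\hat X,\sfa^q\sfb^s)\le\mld_{\hat x}(\hat X,\sfa^q)$, and for a divisor nearly minimizing the $\sfb$-pair the integer $b'=\lceil\ord_E\sfb/\ord_E\hat\fm\rceil$ lies in $\{0,\dots,b\}$ and places that divisor adjacent to the monomial pair $(\hat X,\sfa^q\hat\fm^{sb'})$. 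The plan is to pass to a model $\hat X'\to\hat X$ on which $\sfb$ is invertible and run the relative MMP as in Remark~\ref{rmk:composite}, extracting $\hat E$ from the bounded computing divisor $\hat E_{b'}$ of the adjacent monomial pair by a controlled further weighted blow-up governed by $\sfb$. The main obstacle is precisely this last comparison: in general $\mld_{\hat x}(\hat X,\sfa^q\sfb^s)$ is \emph{not} equal to any monomial value $\mld_{\hat x}(\hat X,\sfa^q\hat\fm^{sb'})$, so one cannot take $\hat E=\hat E_{b'}$ outright; one must show that $\sfb$ differs from $\hat\fm^{b'}$ only by data that a bounded modification of $\hat E_{b'}$ absorbs, and it is for running this absorption as an induction on $\ord_E\sfb$ relative to $\ord_E\hat\fm$ that one needs the bound $l'$ simultaneously for all intermediate exponents $b'=0,\dots,b$ and not merely for $b'=b$.
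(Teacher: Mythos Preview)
Your setup in the first two paragraphs is broadly on the right track, though the claim that the divisors $E_{ib'}$ for varying $i$ literally assemble as fibres of a single $\hat E_{b'}$ is not how the argument goes; what one actually uses is that the bound $a_{E_{ib'}}(X)\le l'$ forces the equality $\mld_{\hat x}(\hat X,\sfa^q\hat\fm^{sb'})=\mld_x(X,\fa_i^q\fm^{sb'})$ after shrinking $\cF$, which is the content of the observation before Lemma~\ref{lem:limit}. That said, the conclusion of your second paragraph is correct: everything reduces to controlling $\mld_{\hat x}(\hat X,\sfa^q\sfb^s)$.

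The genuine gap is your third paragraph. You assert that in general $\mld_{\hat x}(\hat X,\sfa^q\sfb^s)$ is \emph{not} equal to any monomial value $\mld_{\hat x}(\hat X,\sfa^q\hat\fm^{sb'})$, and therefore turn to an ad hoc MMP-based ``absorption'' scheme (invoking Remark~\ref{rmk:composite}, which concerns an entirely unrelated situation). In fact the opposite is true and is exactly the content of steps~4 and~5 of the proof of \cite[theorem~7.3]{K21}: for a suitable $0\le b'\le b$ one has both
\[
\mld_x(X,\fa_i^q\fm^{sb'})\le\mld_x(X,\fa_i^q\fb_i^s)
\qquad\text{and}\qquad
\mld_{\hat x}(\hat X,\sfa^q\hat\fm^{sb'})=\mld_{\hat x}(\hat X,\sfa^q\sfb^s).
\]
Once you have these two facts, the proof is immediate: the equality on $\hat X$ together with the equality $\mld_{\hat x}(\hat X,\sfa^q\hat\fm^{sb'})=\mld_x(X,\fa_i^q\fm^{sb'})$ from the bound $l'$ gives $\mld_{\hat x}(\hat X,\sfa^q\sfb^s)\le\mld_x(X,\fa_i^q\fb_i^s)$, and since the reverse inequality always holds (discussion before Lemma~\ref{lem:limit}), Lemma~\ref{lem:limit} supplies $l$. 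No MMP, no model on which $\sfb$ is invertible, and no induction on $\ord_E\sfb$ is needed. Your proposed route does not supply the missing comparison and, as written, is not a proof.
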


\begin{proof}
This has been implicitly obtained in the proof of \cite[theorem~7.3]{K21}. In steps~4 and~5 there, we obtained $\mld_x(X,\fa_i^q\fm^{sb'})\le\mld_x(X,\fa_i^q\fb_i^s)$ and $\mld_{\hat x}(\hat X,\sfa^q\hat\fm^{sb'})=\mld_{\hat x}(\hat X,\sfa^q\sfb^s)$ for some $b'$, where $(\sfa,\sfb)$ is a generic limit of $\cS$ defined on $\hat x\in\hat X$. As observed in step~5, the existence of $l'$ implies that $\mld_{\hat x}(\hat X,\sfa^q\hat\fm^{sb'})=\mld_x(X,\fa_i^q\fm^{sb'})$ after replacement of the family of approximations, from which one has $\mld_{\hat x}(\hat X,\sfa^q\sfb^s)\le\mld_x(X,\fa_i^q\fb_i^s)$. Then the existence of $l$ follows from Lemma~\ref{lem:limit} and the discussion prior to that lemma.
\end{proof}

We shall need an extension of the argument in \cite[subsection~5.2]{K15}.

\begin{theorem}\label{thm:success}
Let $x\in X$, $q$, $s$, $b$ and $\cS=\{(\fa_i,\fb_i)\}_i$ be as in Theorem~\textup{\ref{thm:meta}} but not necessarily assume the existence of $l'$. Fix a regular system $x_1,x_2,x_3$ of parameters in $\sO_{X,x}$ and let $C$ denote the curve in $X$ defined by $(x_1,x_2)\sO_X$. Let $(\sfa,\sfb)$ be a generic limit of $\cS$ defined on $\hat x\in\hat X$. Suppose that $(\hat X,\sfa^q)$ is special and that $\hat C=C\times_X\hat X$ is the smallest lc centre of $(\hat X,\sfa^q)$. Consider the weighted blow-up $F\subset B\to x\in X$ with $\wt(x_1,x_2,x_3)=(w_1,w_2,1)$ and let $b\in B$ denote the point above $x$ in the strict transform $C_B$ of $C$. Suppose that for all $i$, $(b\in B,\fa_{iB}^q)$ is crepant to $(x\in X,\fa_i^q)$ for the weak transform $\fa_{iB}$ in $b\in B$ of $\fa_i$. Then after replacement of the family of approximations, for all $i$ the minimal log discrepancy $\mld_x(X,\fa_i^q\fb_i^s)$ equals $\mld_{\hat x}(\hat X,\sfa^q\sfb^s)$ or $\mld_b(B,\fa_{iB}^q(\fb_i\sO_B)^s)$.
\end{theorem}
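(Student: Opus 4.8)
The plan is to bound $\mld_x(X,\fa_i^q\fb_i^s)$ above and below by the two quantities on the right, so that it equals their minimum and hence one of them. First I would record the two upper bounds. That $\mld_x(X,\fa_i^q\fb_i^s)\le\mld_{\hat x}(\hat X,\sfa^q\sfb^s)$ for all $i$, after a replacement of the family of approximations depending only on $q$ and $s$, is the general limit inequality coming from Lemma~\ref{lem:resolution} and the discussion preceding Lemma~\ref{lem:limit}. For the bound by $\mld_b$ I would first extend the crepancy hypothesis to the product with $\fb_i$: since $\fb_i\sO_B$ is the total transform, $\ord_E(\fb_i\sO_B)=\ord_E\fb_i$ for every divisor $E$ over $B$, whence $a_E(B,\fa_{iB}^q(\fb_i\sO_B)^s)=a_E(B,\fa_{iB}^q)-s\ord_E\fb_i=a_E(X,\fa_i^q\fb_i^s)$, that is, $(b\in B,\fa_{iB}^q(\fb_i\sO_B)^s)$ is crepant to $(x\in X,\fa_i^q\fb_i^s)$. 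As $c_X(E)=x$ is equivalent to $c_B(E)$ being contained in the exceptional divisor $F$ of $\pi\colon B\to X$, crepancy gives $\mld_x(X,\fa_i^q\fb_i^s)=\mld_F(B,\fa_{iB}^q(\fb_i\sO_B)^s)\le\mld_b(B,\fa_{iB}^q(\fb_i\sO_B)^s)$. It then remains to prove $\mld_x(X,\fa_i^q\fb_i^s)\ge\min\{\mld_{\hat x}(\hat X,\sfa^q\sfb^s),\mld_b(B,\fa_{iB}^q(\fb_i\sO_B)^s)\}$.

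Next I would blow up the generic limit. The crepancy hypothesis forces $\ord_F\fa_i$ to equal $(w_1+w_2)/q$ independently of $i$, because $a_F(X,\fa_i^q)=w_1+w_2+1-q\ord_F\fa_i=1$; so the weak transform $\fa_{iB}$ is taken uniformly in $i$, and after a compatible replacement the generic limit of $\{(\fa_{iB},\fb_i\sO_B)\}_i$ is $(\sfa_{\hat B},\sfb\sO_{\hat B})$ on $\hat B=B\times_X\hat X$, where $\sfa_{\hat B}$ is the weak transform of $\sfa$. Since $\ord_{\hat F}\sfa=(w_1+w_2)/q$ as well, crepancy is a codimension one condition that descends to the limit, so $(\hat B,\sfa_{\hat B}^q(\sfb\sO_{\hat B})^s)$ is crepant to $(\hat X,\sfa^q\sfb^s)$ and $\mld_{\hat x}(\hat X,\sfa^q\sfb^s)=\mld_{\hat F}(\hat B,\sfa_{\hat B}^q(\sfb\sO_{\hat B})^s)$.

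For the lower bound I would argue by a dichotomy. Let $E$ over $X$ compute $\mld_x(X,\fa_i^q\fb_i^s)$, so $c_B(E)\subset F$. If $c_B(E)=\{b\}$, then by crepancy $\mld_b(B,\fa_{iB}^q(\fb_i\sO_B)^s)\le a_E(B,\fa_{iB}^q(\fb_i\sO_B)^s)=\mld_x(X,\fa_i^q\fb_i^s)$ and we are done. Otherwise $c_B(E)$ contains a closed point of $B$ distinct from $b$; choosing such points $z_i\in c_B(E_i)$ coherently in $i$ and using the lower semi-continuity of minimal log discrepancies, we obtain a point $\hat z\neq\hat b$ of $\hat B$ lying over $\hat x$ with $\mld_{\hat z}(\hat B,\sfa_{\hat B}^q(\sfb\sO_{\hat B})^s)\le\mld_x(X,\fa_i^q\fb_i^s)$, \emph{provided} the generic limit faithfully computes the minimal log discrepancy at $\hat z$. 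To verify the proviso, note that by Remark~\ref{rmk:limit}, applied on $\hat B$ and extended to the quotient singularities of $B$, it can only fail if the localisation of $(\hat B,\sfa_{\hat B}^q(\sfb\sO_{\hat B})^s)$ at $\hat z$ is log canonical with minimal log discrepancy at most one and has a one-dimensional smallest lc centre. Such an lc centre, being crepant to one of $(\hat X,\sfa^q\sfb^s)$, has image on $\hat X$ either a curve $\Gamma$ or the point $\hat x$. In the first case $\Gamma$ is a one-dimensional lc centre of $(\hat X,\sfa^q\sfb^s)$, hence contains the smallest lc centre $\hat C$ of $(\hat X,\sfa^q)$ — which remains an lc centre of the product because $\sfb$ is $\hat\fm$-primary and so does not vanish along $\hat C$ — and therefore $\Gamma=\hat C$; but then the lc centre on $\hat B$ is the strict transform $\hat C_{\hat B}$, which meets $\hat F$ only at $\hat b$ and misses $\hat z$, a contradiction. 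In the second case $\mld_{\hat x}(\hat X,\sfa^q\sfb^s)=0$, and then $\mld_{\hat x}(\hat X,\sfa^q\sfb^s)=\mld_x(X,\fa_i^q\fb_i^s)$ holds anyway by Remark~\ref{rmk:limit}, the value being not positive. So the proviso holds, and $\mld_{\hat x}(\hat X,\sfa^q\sfb^s)=\mld_{\hat F}(\hat B,\sfa_{\hat B}^q(\sfb\sO_{\hat B})^s)\le\mld_{\hat z}(\hat B,\sfa_{\hat B}^q(\sfb\sO_{\hat B})^s)\le\mld_x(X,\fa_i^q\fb_i^s)$. In either branch $\mld_x(X,\fa_i^q\fb_i^s)\ge\min\{\mld_{\hat x}(\hat X,\sfa^q\sfb^s),\mld_b(B,\fa_{iB}^q(\fb_i\sO_B)^s)\}$, which with the upper bounds forces equality, hence the conclusion.

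The step I expect to be the main obstacle is the lc-centre analysis just sketched: showing that the obstruction of Remark~\ref{rmk:limit} — a log canonical pair with small minimal log discrepancy and a one-dimensional smallest lc centre — cannot recur anywhere on $\hat B$ except at $\hat b$ once the crepant weighted blow-up $B$ has absorbed the curve $\hat C$. The adjacent technical points that need care are constructing the generic limit on $\hat B$ compatibly with the given one on $\hat X$, tracking the contribution of $\sfb$ (which vanishes along $\hat F$ but not along $\hat C_{\hat B}$), handling the quotient singularities of $B$ so that the comparison between a pair and its generic limit is available off $\hat b$, and making the replacements of the family uniform enough to cover every point $\hat z$ that can occur.
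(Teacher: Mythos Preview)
Your overall strategy — bounding $\mld_x$ above by both target quantities and below by their minimum — is sound and matches the paper's logic. The upper bounds are fine. The gap is in the lower bound when $c_B(E_i)\neq b$.

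You want to produce a point $\hat z\neq\hat b$ on $\hat B$ and invoke Remark~\ref{rmk:limit} (that is, \cite{K15}) at $\hat z$ to obtain $\mld_{\hat z}(\hat B,\sfa_{\hat B}^q(\sfb\sO_{\hat B})^s)=\mld_{z_i}(B,\fa_{iB}^q(\fb_i\sO_B)^s)$. But the generic-limit machinery, including the equality criterion of \cite{K15}, is set up at a \emph{fixed} germ: one fixes a closed point, takes a sequence of ideals there, and compares the mld at that point to the mld at the closed point of the completion. Here the centres $z_i\in c_B(E_i)$ move with $i$; ``choosing $z_i$ coherently'' does not give you a fixed germ on $B$ at which to run the construction, nor a well-defined closed point $\hat z\in\hat B$ to which Remark~\ref{rmk:limit} applies. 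Even if you pass to a subsequence so that the $z_i$ limit to a $k$-point $z\in F$, the generic limit of $\{(\fa_{iB},\fb_i\sO_B)\}$ at $z$ is not $\sfa_{\hat B}$ at some $\hat z$: it lives on yet another completion, and you would still need to relate its mld to $\mld_{\hat F}(\hat B,\ldots)$. Moreover, the result of \cite{K15} you cite is stated for smooth threefolds; its extension to the quotient singularities of $B$ is not on record. Your lc-centre analysis is plausible but becomes moot once the framework is missing.

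The paper avoids this entirely. Instead of comparing minimal log discrepancies at variable points, it proves directly that $\mld_{\hat x}(\hat X,\sfa^q\sfb^s)\le a_E(X,\fa_i^q\fb_i^s)$ for \emph{every} divisor $E$ with $c_X(E)=x$ and $c_B(E)\neq b$. The method is to take a log resolution $\hat Y\to\hat B$ of the generic limit pair, descend it to $Y_l\to B\times Z_l$, and analyse the centre of $E$ on the fibre $Y_{lz}$: away from the ``bad'' locus one uses log canonicity of $(Y_{lz},G_{lz},(\fa'_{il})^q)$ established by a noetherian-induction argument parallel to \cite[lemma~5.5]{K15}; on the bad locus one introduces an auxiliary ideal $\sfd$ and applies a relative effective $\fm$-adic semi-continuity of lc thresholds (Lemma~\ref{lem:success}) to get the required log canonicity of $(B,\fa_{iB}^q(\fb_i\sO_B)^s(\fd_{lz}\sO_B)^t)$ outside $C_B$. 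This is exactly the ``main obstacle'' you anticipated, and it is not resolved by an appeal to Remark~\ref{rmk:limit} but by redoing, in families over $Z_l$, the kind of explicit divisorial bookkeeping in \cite[subsection~5.2]{K15}.
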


\begin{proof}
We abuse the same symbol $b$ for the bound of $\fb_i$ and for the point in $B$.

\textit{Step}~1.
Replacing $q$ and $\fa_i$ by $q/w_1w_2$ and $\fa_i^{w_1w_2}$ respectively, we define the weak transform $\fa_{iB}$ globally in $B$. We note that $(B,\fa_{iB}^q(\fb_i\sO_B)^s)$ is crepant to $(X,\fa_i^q\fb_i^s)$. Replacing $\fa_i$ by $\fa_i^{nq}$ for a positive integer $n$ such that $nq$ is an integer, we shall assume that $q=1/n$.

It follows from $\fm^b\subset\fb_i$ that $\hat\fm^b\subset\sfb$. The generic limit $(\sfa,\sfb)$ is constructed from a family $\cF=(Z_l,(\fa(l),\fb(l)),N_l,s_l,t_l)_{l\ge l_0}$ of approximations of $\cS$. We shall assume that $b\le l_0$ by replacing $l_0$ by a greater integer. Then for all $i\in N_l$, the ideal $\fb(l)_i$ in $\sO_X$ given by the point $s_l(i)\in Z_l$ is described as $\fb(l)_i=\fb_i+\fm^l=\fb_i$. By Remark~\ref{rmk:limit}, we may assume that $\mld_{\hat x}(\hat X,\sfa^q\sfb^s)$ is positive, in which $\hat C$ is still the smallest lc centre of $(\hat X,\sfa^q\sfb^s)$. By Theorem~\ref{thm:dFEM}, $(X,\fa_i^q\fb_i^s)$ is lc for all $i$ after replacement of $\cF$.

Replacing $\cF$, we shall prove the inequality
\[
\mld_{\hat x}(\hat X,\sfa^q\sfb^s)\le a_E(X,\fa_i^q\fb_i^s)
\]
for all $i\in N_{l_0}$ and all divisors $E$ over $X$ that have $c_X(E)=x$ and $c_B(E)\neq b$. This implies the theorem. Indeed, for any partition $\bN_+=N_1\sqcup N_2$, $\cF$ is by definition a family of approximations of the subsequence of $\cS$ indexed by $N_1$ or by $N_2$. Thus we may assume that $\mld_x(X,\fa_i^q\fb_i^s)<\mld_b(B,\fa_{iB}^q(\fb_i\sO_B)^s)$ for all $i$, in which $\mld_x(X,\fa_i^q\fb_i^s)$ is computed by some divisor $E_i$ such that $c_B(E_i)\neq b$. Then $\mld_{\hat x}(\hat X,\sfa^q\sfb^s)\le a_{E_i}(X,\fa_i^q\fb_i^s)=\mld_x(X,\fa_i^q\fb_i^s)$ from the above inequality for $E_i$ and it is actually an equality for all $i$ after replacement of $\cF$ as explained prior to Lemma~\ref{lem:limit}.

\textit{Step}~2.
Let $\hat F\subset\hat B=B\times_X\hat X\to\hat x\in\hat X$ denote the base change of the weighted blow-up $F\subset B\to x\in X$. Then $a_{\hat F}(\hat X,\sfa^q)=a_F(X,\fa_i^q)=1$ and hence $(\hat B,\sfa_B^q)$ is crepant to $(\hat X,\sfa^q)$ for the weak transform $\sfa_B$ of $\sfa$. The sum $\hat S$ of two-dimensional lc centres of $(\hat X,\sfa^q)$ has at most two prime components (it may be empty), because it contains the smallest lc centre $\hat C$. The triple $(\hat X,\hat S,\sfc^q)$ is crepant to $(\hat X,\sfa^q)$ for the ideal $\sfc=\sfa\sO_{\hat X}(n\hat S)$, and $(\hat B,\hat S_B,\sfc_B^q)$ is crepant to $(\hat B,\sfa_B^q)$ for the strict transform $\hat S_B$ of $\hat S$ and the ideal sheaf $\sfc_B=\sfa_B\sO_{\hat B}(n\hat S_B)$.

The triple $(\hat B,\hat S_B,\sfc_B^q)$ is plt outside the strict transform $\hat C_B$ of $\hat C$. In particular, $\hat S_B$ is normal outside $\hat C_B$ and there exists a log resolution $\hat Y\to\hat B$ of $(\hat B,\hat F,\sfb\sO_{\hat B})$ which is isomorphic outside $\hat F$ and which is a log resolution of $(\hat B,\hat S_B+\hat F)$ outside $\hat C_B$. Since $\mld_{\hat F}(\hat B,\sfa_B^q(\sfb\sO_{\hat B})^s)=\mld_{\hat x}(\hat X,\sfa^q\sfb^s)$ is positive, the curve $\hat C_B$ and the components of $\hat S_B$ exhaust all lc centres of $(\hat B,\sfa_B^q(\sfb\sO_{\hat B})^s)$. Hence one can fix a positive rational number $t$ such that $(\hat B,\sfa_B^q(\sfb\sO_{\hat B})^s(\sfc\sO_{\hat B})^t)$ is lc outside $\hat C_B$.

\textit{Step}~3.
The rest of the proof is very similar to the proof of \cite[theorem~5.3]{K15}. We shall write it down for the sake of completion. Let $\hat G$ be the exceptional locus of $\hat Y\to \hat X$ and express it as a sum $\sum_j\hat G_j$ of prime divisors. Let $\hat L$ be the restriction to $\hat G$ of the strict transform in $\hat Y$ of $\hat S_B$ and let $\hat M$ be the restriction to $\hat G$ of the non-trivial locus of the weak transform $\sfc_Y$ in $\hat Y$ of $\sfc$. Let $\hat N$ be the inverse image in $\hat Y$ of $\hat C_B$. For the same reason as for the inequality~(13) in \cite{K15}, we may assume that $\hat M\setminus\hat N$ is contained in the union of the divisors $\hat G_j$ such that $t\ord_{\hat G_j}\sfc\ge1$.

After replacement of $\cF$, $\hat Y\to\hat B$ is the base change of a resolution $Y_l\to B\times Z_l$ for all $l\ge l_0$. Shrinking $Z_l$, we may assume that $\hat G_j$, $\hat L$ and $\hat M$ are the base changes of flat families $G_{jl}$, $L_l$ and $M_l$ in $Y_l$ over $Z_l$. The locus $\hat N$ is the base change of the inverse image $N_l$ in $Y_l$ of $C_B\times Z_l$. We may assume that $G_l=\sum_jG_{jl}$ is a simple normal crossing divisor such that every stratum and its restriction to $L_l\setminus N_l$ are smooth over $Z_l$. We may assume that $\fb(l)\sO_{Y_l}$ is invertible. We may further assume that for each $j$, $\ord_{G_{jl,z}}\fa(l)_z$ and $\ord_{G_{jl,z}}\fb(l)_z$ are constant and less than $l$ on $Z_l$, where we express the fibre at $z\in Z_l$ by adding the subscript $z$ such as $G_{jl,z}=G_{jl}\times_{Z_l}z$.

By the maximum $c$ of $\ord_{\hat G_j}\sfc$ for all $j$, we define the $\hat\fm$-primary ideal $\sfd=\sfc+\hat\fm^c$ in $\sO_{\hat X}$. Then $\ord_{\hat G_j}\sfd=\ord_{\hat G_j}\sfc$ and $(\hat B,\sfa_B^q(\sfb\sO_{\hat B})^s(\sfd\sO_{\hat B})^t)$ is lc outside $\hat C_B$. Since $\sfd$ is defined over the function field $k(Z_l)$ of some $Z_l$, after replacement of $\cF$, $\sfd$ is the base change of an ideal sheaf $\fd_l$ in $\sO_{X\times Z_l}$ and $\ord_{G_{jl,z}}\fd_{lz}$ is constant on $Z_l$.

\textit{Step}~4.
The proof of the inequality $\mld_{\hat x}(\hat X,\sfa^q\sfb^s)\le a_E(X,\fa_i^q\fb_i^s)$ will be divided according to whether the centre $c_{Y_{lz}}(E)$ lies on $M_{lz}$ or not, where we write $z=s_l(i)$ for brevity. Notice that $c_{Y_{lz}}(E)\not\subset N_{lz}$ as $c_B(E)\neq b$.

Let $\sfa_Y$ be the weak transform in $\hat Y$ of $\sfa$, let $\fa_Y(l)$ be that in $Y_l$ of $\fa(l)$ and let $\fa_{il}'$ be that in $Y_{lz}$ of $\fa_i$. Note that the fibre $Y_{lz}$ at $z=s_l(i)$ is independent of $l$ as far as $i\in N_l$. One attains the following equalities for all $i$ and $l$ similarly to \cite[lemma~5.4]{K15}.
\begin{itemize}
\item
$\fa_Y(l)_K\sO_{\hat Y}=\sfa_Y+\sfI_l$ for some $\sfI_l\subset\sO_{\hat Y}(-(n+1)\hat G)$.
\item
$\fa_Y(l)_z\sO_{Y_{lz}}=\fa_{il}'+\sI_{il}$ for some $\sI_{il}\subset\sO_{Y_{lz}}(-(n+1)G_{lz})$.
\end{itemize}
Since $\sfa_Y$ restricted to $\hat G$ is trivial outside $\hat L\cup\hat M$, from these equalities one derives that $\fa_{il}'$ is trivial outside $L_{lz}\cup M_{lz}$ after shrinking $Z_l$. One further attains the log canonicity of $(Y_{lz},G_{lz},(\fa'_{il})^q)$ outside $M_{lz}\cup N_{lz}$ by the exactly same argument as for \cite[lemma~5.5]{K15}, which is based on noetherian induction and is substantially the same as the argument for Lemma~\ref{lem:success} below. Take the $\bQ$-divisor $\Gamma_l=\sum_j(1-a_{\hat G_j}(\hat X,\sfa^q\sfb^s))G_{jl}$ so that the subtriple $(Y_{lz},\Gamma_{lz},(\fa_{il}')^q)$ is crepant to $(X,\fa_i^q\fb_i^s)$. Notice that $\fb_i\sO_{Y_{lz}}=(\fb(l)\sO_{Y_l})_z$ is invertible.

Suppose that $c_{Y_{lz}}(E)\not\subset M_{lz}\cup N_{lz}$. In this case, we choose any divisor $G_{jl,z}$ that contains $c_{Y_{lz}}(E)$. The above log canonicity of $(Y_{lz},G_{lz},(\fa'_{il})^q)$ implies that
\[
\ord_E(G_{lz}-\Gamma_{lz})\le a_E(Y_{lz},\Gamma_{lz},(\fa_{il}')^q).
\]
The right-hand side equals $a_E(X,\fa_i^q\fb_i^s)$, whilst the left-hand side is greater than or equal to $\ord_{G_{jl,z}}(G_{lz}-\Gamma_{lz})=a_{\hat G_j}(\hat X,\sfa^q\sfb^s)$. Thus $\mld_{\hat x}(\hat X,\sfa^q\sfb^s)\le a_{\hat G_j}(\hat X,\sfa^q\sfb^s)\le a_E(X,\fa_i^q\fb_i^s)$.

Suppose that $c_{Y_{lz}}(E)\subset M_{lz}$ but $c_{Y_{lz}}(E)\not\subset N_{lz}$. In this case, $c_{Y_{lz}}(E)$ lies on some divisor $G_{jl,z}$ that has $\ord_{G_{jl,z}}\fd_{lz}=\ord_{\hat G_j}\sfd=\ord_{\hat G_j}\sfc\ge t^{-1}$. Then
\[
\mld_{\hat x}(\hat X,\sfa^q\sfb^s)\le1\le t\ord_{G_{jl,z}}\fd_{lz}\le t\ord_E\fd_{lz}\le a_E(X,\fa_i^q\fb_i^s),
\]
where the last inequality follows from the log canonicity in Lemma~\ref{lem:success}.
\end{proof}

\begin{lemma}\label{lem:success}
After replacement of $\cF$, $(B,\fa_{iB}^q(\fb_i\sO_B)^s(\fd_{lz}\sO_B)^t)$ is lc outside $C_B$ for all $i\in I_l$ with $z=s_l(i)\in Z_l$.
\end{lemma}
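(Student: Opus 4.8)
The plan is to descend the log canonicity of $(\hat B,\sfa_B^q(\sfb\sO_{\hat B})^s(\sfd\sO_{\hat B})^t)$ outside $\hat C_B$, which is obtained in the third step of the proof of Theorem~\ref{thm:success}, to the fibres of the relevant family over $Z_l$, and then to replace $\cF$ by the subfamily over the dense open locus of $Z_l$ on which the conclusion holds. Since $s_l$ has dense image, infinitely many indices survive, so after re-indexing the assertion holds for every remaining $i$. The underlying principle, exactly as in \cite[lemma~5.5]{K15}, is that log canonicity outside a fixed closed subset is a constructible condition in a family, hence propagates from the generic point of $Z_l$ --- where the generic limit lives --- to a dense open subset.

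For the setup I would reuse the data already arranged in the proof of Theorem~\ref{thm:success} before this lemma is invoked: the log resolution $Y_l\to B\times Z_l$ whose base change is $\hat Y\to\hat B$, the simple normal crossing family $G_l=\sum_jG_{jl}$ with strata smooth over $Z_l$ away from the inverse image of $C_B$, the invertibility of $\fb(l)\sO_{Y_l}$, and the constancy on $Z_l$ of $\ord_{G_{jl,z}}\fa(l)_z$, $\ord_{G_{jl,z}}\fb(l)_z$ and $\ord_{G_{jl,z}}\fd_{lz}$, the first two being below $l$. After shrinking $Z_l$ so that the relevant part of $Y_l\to B\times Z_l$ is fibrewise a resolution in the orbifold sense, the relative log discrepancies $a_{G_{jl,z}}(B)$ are likewise constant and equal to $a_{\hat G_j}(\hat B)$. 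Since $\sfa_B+\hat\fm^l$, $\sfb\sO_{\hat B}+\hat\fm^l$ and $\sfd$ are, over $K\supset k(Z_l)$, the completions of base changes of the families $\fa(l)$, $\fb(l)$ and $\fd_l$ on $B\times Z_l$ --- the weak transform being defined globally after the rescaling of the first step --- and since truncation by $\hat\fm^l$ leaves the orders along the $\hat G_j$ unchanged once $l$ exceeds the bounded discrepancies involved ($\ord_F\fa_i$ itself being pinned down by $a_F(X,\fa_i^q)=1$), log canonicity of $(\hat B,\ldots)$ outside $\hat C_B$ is equivalent, log canonicity being insensitive to field extension and to completion, to log canonicity outside $C_B$ of the fibre of the corresponding family on $B\times Z_l$ over the generic point of $Z_l$. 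For $z=s_l(i)$ that fibre is, up to the same harmless truncation, the triple $(B,\fa_{iB}^q(\fb_i\sO_B)^s(\fd_{lz}\sO_B)^t)$.

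The noetherian induction on $Z_l$ then completes the argument: over a dense open subset of each irreducible closed subset of $Z_l$ one spreads out a log resolution of the restricted family, including a principalisation of $\fd_l$, which shows both that the set of $z$ for which the fibre is lc outside $C_B$ is constructible and that it contains the generic point of $Z_l$; hence it contains a dense open subset, and one replaces $\cF$ accordingly. The step I expect to be the main obstacle, and the reason a single spreading-out is insufficient, is the factor $(\fd_{lz}\sO_B)^t$: the ideal $\sfd$, like $\sfc$, is not principalised by the fixed resolution $Y_l\to B\times Z_l$ --- its non-trivial locus being exactly $M_l$ --- so log canonicity of the whole triple cannot be read off the $G_{jl,z}$ alone, and principalising $\fd_l$ requires further blow-ups that spread out only over a smaller dense open subset, whose iteration is the induction. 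Apart from this, the work is the bookkeeping of matching the orders $\ord_{G_{jl,z}}\fa_{iB}$, $\ord_{G_{jl,z}}(\fb_i\sO_B)$, $\ord_{G_{jl,z}}(\fd_{lz}\sO_B)$ and $a_{G_{jl,z}}(B)$ with their counterparts over $\hat B$ --- the weak transforms contributing only fixed multiples of $F$ --- which, as asserted in the proof of Theorem~\ref{thm:success}, proceeds word for word as in \cite[lemma~5.5]{K15}.
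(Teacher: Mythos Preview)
Your proposal has a genuine gap in the passage from the family ideal to the actual ideal. The triple whose log canonicity you spread out from the generic point of $Z_l$ involves the weak transform $\fa_B(l)_z$ of the approximated ideal $\fa(l)_z=\fa_i+\fm^l$, not the weak transform $\fa_{iB}$ of $\fa_i$ itself. Since $\fa_{iB}\subset\fa_B(l)_z$, log canonicity of the triple with $\fa_B(l)_z$ does \emph{not} imply log canonicity with $\fa_{iB}$: the inequality of orders goes the wrong way. Your claim that the truncation is ``harmless'' because it leaves the orders along the $\hat G_j$ unchanged only addresses the divisors of the \emph{fixed} resolution $\hat Y\to\hat B$; but, as you yourself note, that resolution does not principalise $\sfd$ (nor, more to the point, $\sfa_B$ along the locus $\hat M$), so checking log canonicity requires further divisors, and along those the error $\sJ_{il}$ is not controlled by a fixed $l$. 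When your noetherian induction on $Z_l$ principalises $\fd_l$ by further blow-ups, the new exceptional divisors may have $\ord\sfa_B$ arbitrarily large relative to any $l$ chosen beforehand, and then the fibre of your spread-out resolution is simply not a log resolution of the triple with $\fa_{iB}$.

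The paper handles exactly this point, and explicitly flags that ``unlike the proof of \cite[lemma~5.5]{K15}, we do not fix $l$'', contrary to your claim that the argument proceeds word for word. Rather than inducting on $Z_l$, the paper stratifies the non-trivial locus $\hat\Delta\subset\hat F$: for each curve $\hat D\subset\hat\Delta$ it produces a single divisor $\hat Q$ computing $\mld_{\hat\eta}(\hat B,\sfa_B^q\sfe\hat\fp^e)=0$, descends it to the family, and then invokes the relative effective $\fm$-adic semi-continuity of lc thresholds \cite[theorem~3.11]{K21}. That theorem is the missing ingredient: once the threshold is witnessed by a divisor of bounded order, one may enlarge $l$ (by replacing $\cF$) so that the approximation $\fa_B(l)_z$ genuinely agrees with $\fa_{iB}$ to the required depth along $Q_{lz}$, yielding log canonicity of $(B,\fa_{iB}^q\fe_{lz}\fp_{lz}^e)$ about $D_{lz}$ minus a finite set $P_{Dl,z}$. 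One then repeats the argument on the remaining finite set of points in $\hat\Delta\setminus\hat C_B$ --- so the induction is on strata of $\hat\Delta$, not on closed subsets of $Z_l$.
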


\begin{proof}
We write $\sfe=(\sfb\sO_{\hat B})^s(\sfd\sO_{\hat B})^t$ and $\fe_l=(\fb(l)\sO_{B\times Z_l})^s(\fd_l\sO_{B\times Z_l})^t$ for brevity. Note that $\fe_{lz}=(\fb_i\sO_B)^s(\fd_{lz}\sO_B)^t$. After replacement of $\cF$, the restriction $\hat\Delta$ to $\hat F$ of the non-trivial locus of the weak $\bQ$-transform in $\hat B$ of $\sfa\sfb\sfd$ is the base change of the non-trivial locus $\Delta_l$ of the weak $\bQ$-transform in $B\times Z_l$ of $\fa(l)\fb(l)\fd_l$, and $\Delta_l$ is flat over $Z_l$. It is obvious that $(B,\fa_{iB}^q\fe_{lz})$ is lc outside $\Delta_{lz}$.

For each curve $\hat D$ in $\hat\Delta$ and the ideal sheaf $\hat\fp$ in $\sO_{\hat B}$ defining $\hat D$, there exists a non-negative rational number $e$ such that $\mld_{\hat\eta}(\hat B,\sfa_B^q\sfe\hat\fp^e)=0$ at the generic point $\hat\eta$ of $\hat D$ and it is computed by some divisor $\hat Q$ over $\hat B$. The curve $\hat D$ comes from a component $D_l$ of $\Delta_l$, and we may assume that $\hat Q$ is the base change of a divisor $Q_l$ over $B\times Z_l$. Let $\fa_B(l)$ be the weak transform in $B\times Z_l$ of $\fa(l)$. As above, for a fixed integer $n_Q$ greater than $w_1w_2\ord_{\hat Q}\sfa_B$, one attains
\begin{itemize}
\item
$\fa_B(l)_K\sO_{\hat B}=\sfa_B+\sfJ_l$ for some $\sfJ_l\subset\sO_{\hat Y}(-n_Q\hat F)$,
\item
$\fa_B(l)_z\sO_B=\fa_{iB}+\sJ_{il}$ for some $\sJ_{il}\subset\sO_B(-n_QF)$
\end{itemize}
for all $i$ and $l$. Unlike the proof of \cite[lemma~5.5]{K15}, we do not fix $l$. After replacement of $\cF$, for the generic point $\eta_z$ of every component of $D_{lz}$ with $z\in Z_l$, a component of $Q_{lz}$ computes $\mld_{\eta_z}(B,\fa_B(l)_z^q\fe_{lz}\fp_{lz}^e)=0$ for the ideal sheaf $\fp_l$ in $\sO_{B\times Z_l}$ defining $D_l$.

By a relative version \cite[theorem~3.11]{K21} of the effective $\fm$-adic semi-continuity of lc thresholds, there exists a finite subset $\hat P_D$ of $\hat D$ such that after replacement of $\cF$, $\hat P_D$ is the base change of $P_{Dl}\subset D_l$ and $(B,\fa_{iB}^q\fe_{lz}\fp_{lz}^e)$ is lc about $D_{lz}\setminus P_{Dl,z}$ for all $i$ and $l$. Applying the same argument to the finite set $(\hat\Delta\setminus\hat C_B)\setminus\bigcup_{\hat D}(\hat D\setminus\hat P_D)$, one attains the required log canonicity of $(B,\fa_{iB}^q\fe_{lz})$ outside $C_B$.
\end{proof}

\section{Log canonical slopes}
Let $x\in X$ be the germ of a smooth threefold and let $\fm$ be the maximal ideal in $\sO_X$ defining $x$. Fix a positive rational number $q$. In this section, $\cS=\{\fa_i\}_{i\in\bN_+}$ is a sequence of ideals in $\sO_X$ such that $(X,\fa_i^q)$ is canonical. We construct a generic limit $\sfa$ of $\cS$ on $\hat x\in\hat X$. The pair $(\hat X,\sfa^q)$ is lc and has $\mld_{\hat x}(\hat X,\sfa^q)\ge1$ as noted before Theorem~\ref{thm:special}. Recall Notation~\ref{ntn:PQ}.

\begin{definition}
For $(w_1,w_2)\in\sfP$ and $\mu=w_1/w_2\in\sfQ$, we say that $(\hat X,\sfa^q)$ admits a \textit{log canonical \bracketsup{lc} slope} $\mu$ with respect to a regular system $x_1,x_2,x_3$ of parameters in $\sO_{\hat X}$ if the divisor $\hat F$ obtained by the weighted blow-up of $\hat X$ with $\wt(x_1,x_2)=(w_1,w_2)$ has $a_{\hat F}(\hat X,\sfa^q)=0$ or equivalently $\ord_{\hat F}\sfa^q=w_1+w_2$. We just say that $(\hat X,\sfa^q)$ admits an \textit{lc slope} $\mu$ if so does it with respect to some regular system of parameters in $\sO_{\hat X}$.
\end{definition}

Suppose that $(\hat X,\sfa^q)$ admits an lc slope with respect to $x_1,x_2,x_3$ as in the definition. Let $\hat F\subset\hat Y\to\hat x\in\hat X$ denote the weighted blow-up and let $\hat C$ denote the centre $c_{\hat X}(\hat F)$ of $\hat F$, namely the curve defined by $(x_1,x_2)\sO_{\hat X}$. Then for the fibre $\hat f$ of $\hat F\to\hat C$ at $\hat x$, the divisor $\hat\Gamma$ obtained at the generic point of $\hat f$ by the blow-up of $\hat Y$ along $\hat f$ has $a_{\hat\Gamma}(\hat X,\sfa^q)\le a_{\hat\Gamma}(\hat Y,\hat F)=1$. Hence the inequality $\mld_{\hat x}(\hat X,\sfa^q)\ge1$ is an equality and $(\hat X,\sfa^q)$ is special as in Definition~\ref{dfn:special} with the smallest lc centre $\hat C$. In fact, the converse is also true.

\begin{theorem}\label{thm:lcslope}
The pair $(\hat X,\sfa^q)$ is special if and only if it admits an lc slope.
\end{theorem}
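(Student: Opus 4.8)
The ``if'' direction is exactly what is established in the paragraph preceding the theorem, so it remains to prove the converse. Assume that $(\hat X,\sfa^q)$ is special, with smallest lc centre $\hat C$ of dimension one. Being the minimal lc centre of an lc pair, $\hat C$ is normal, hence a smooth curve, so we may fix a regular system of parameters $x_1,x_2,x_3$ in $\sO_{\hat X}$ with $\hat C$ defined by $(x_1,x_2)\sO_{\hat X}$. For coprime $(w_1,w_2)\in\sfP$ the weighted blow-up $\hat F$ of $\hat X$ with $\wt(x_1,x_2)=(w_1,w_2)$ is a weighted blow-up of the smooth curve $\hat C$, so $a_{\hat F}(\hat X)=w_1+w_2$, and the value $\ord_{\hat F}\sfa^q$ is determined at the generic point $\hat\eta$ of $\hat C$: it equals the value, on the ideal $\sfa^qR$, of the monomial valuation with $\wt(x_1,x_2)=(w_1,w_2)$, where $R=\sO_{\hat X,\hat\eta}$ is a two-dimensional regular local ring with maximal ideal $(x_1,x_2)R$. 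Since $\hat C$ is an lc centre, $\mld_{\hat\eta}(\hat X,\sfa^q)=0$. Thus it suffices to produce a regular system of parameters of $R$ — which extends to one of $\sO_{\hat X}$ still defining $\hat C$ — together with coprime $w_2\le w_1$ so that the corresponding monomial valuation computes $\mld_{\hat\eta}(\hat X,\sfa^q)=0$.

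The plan for that is to reduce to the combinatorics of a Newton polygon on the smooth surface germ $\Spec R$. First, as in Step~2 of the proof of Theorem~\ref{thm:success}, I split off the divisorial part of $\sfa$ along the two-dimensional lc centres $\hat S$ of $(\hat X,\sfa^q)$ — there are at most two and each contains $\hat C$ — replacing $\sfa^q$ by a crepant triple $(\hat X,\hat S,\sfc^q)$ with $\sfc=\sfa\sO_{\hat X}(n\hat S)$, so that $\sfc^qR$ is primary to the maximal ideal of $R$ and $(\Spec R,\hat S|_{\hat\eta},\sfc^qR)$ is lc with closed point an lc centre. Next I invoke the structure of minimal log discrepancies on a smooth surface, together with the crucial fact that $\sfa$ is a generic limit: the ideal $\sfc R$ is then non-degenerate with respect to its Newton polygon after a suitable formal change of the regular parameters of $R$, and in these parameters the components of $\hat S|_{\hat\eta}$ become coordinate axes. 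For such a pair the minimal log discrepancy at the closed point is read off the Newton polygon: the point obtained from $(1,1)$ by subtracting the exponent vector of $\hat S|_{\hat\eta}$ lies on the boundary of the Newton polygon of $\sfc R$, and any primitive inner normal $(w_1,w_2)$ there — after interchanging the two parameters, if necessary, so that $w_2\le w_1$ — defines a weighted blow-up $\hat F$ of $\hat X$ along $\hat C$ with $\ord_{\hat F}\hat S+\ord_{\hat F}\sfc^q=w_1+w_2=a_{\hat F}(\hat X)$. By crepancy $a_{\hat F}(\hat X,\sfa^q)=a_{\hat F}(\hat X,\hat S,\sfc^q)=0$, so $(\hat X,\sfa^q)$ admits the lc slope $\mu=w_1/w_2$.

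The main obstacle is concentrated in that surface step: one must show that the minimal log discrepancy $0$ of the plane pair at $\hat\eta$ is attained by a monomial valuation, i.e.\ that $\sfc$ can be monomialised along $\hat C$ by a formal coordinate change with the boundary $\hat S$ simultaneously moved onto coordinate axes. This is where the generic-limit hypothesis is genuinely needed, since for an arbitrary ideal the divisor computing $\mld=0$ on a smooth surface need not be monomial in any coordinates. An alternative route to the same conclusion is precise inversion of adjunction along the smooth curve $\hat C$: from $\mld_{\hat x}(\hat X,\sfa^q)=\mld_{\hat x}(\hat C,\mathrm{Diff}_{\hat C}(\sfa^q))=1$ on the smooth curve $\hat C$ one deduces that $\mathrm{Diff}_{\hat C}(\sfa^q)$ vanishes near $\hat x$, and the transverse singularity of $(\hat X,\sfa^q)$ along $\hat C$ is then classified directly. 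The remaining points — that changing the regular parameters of $R$ is compatible with the choice $\hat C=V(x_1,x_2)$ in $\sO_{\hat X}$, that $\ord_{\hat F}\sfa^q$ is indeed computed at the generic point of $\hat C$, and the crepant passage between $\sfa$ and $(\hat X,\hat S,\sfc^q)$ — are routine.
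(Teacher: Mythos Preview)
Your reduction to the two-dimensional local ring $R=\sO_{\hat X,\hat\eta}$ at the generic point of the smallest lc centre $\hat C$ is the right move, and it is exactly what the paper does implicitly. The discrepancy is in the surface step itself.

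You assert that ``for an arbitrary ideal the divisor computing $\mld=0$ on a smooth surface need not be monomial in any coordinates'' and that ``the generic-limit hypothesis is genuinely needed'' to monomialise $\sfc R$. This is a misconception. On a smooth surface germ, \emph{every} divisor computing the minimal log discrepancy of an lc pair is obtained by a weighted blow-up for a suitable regular system of parameters; this is the classification result the paper invokes as \cite[theorem~6.2]{K21} (itself an application of the surface analysis in \cite{K17}). Consequently the only-if direction holds for any $\bR$-ideal $\sfa$ on $\hat X$ with $(\hat X,\sfa^q)$ special, and the fact that $\sfa$ arises as a generic limit is irrelevant here. Your appeal to Newton non-degeneracy of generic limits is not justified --- nothing in the construction of a generic limit forces non-degeneracy --- and would in any case be unnecessary once the general surface classification is available.

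Your alternative route through adjunction to $\hat C$ is also incomplete as written: the vanishing of the different on the curve $\hat C$ only says that the transverse pair along $\hat C$ has $\mld=0$ at the generic point, which brings you right back to the same surface problem you have not resolved.

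In short, the architecture of your argument matches the paper's, but the decisive input --- the surface classification \cite[theorem~6.2]{K21} --- is precisely the lemma you are missing, and you have replaced it with an incorrect belief about what makes the problem hard.
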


\begin{proof}
We have observed the if part. The only-if part immediately follows from the application \cite[theorem~6.2]{K21} of a classification of divisors over a smooth surface computing the minimal log discrepancy.
\end{proof}

Let $n$ be a positive integer such that $nq$ is an integer. One can choose the slope $\mu$ from the finite set $\sfQ_n$.

\begin{lemma}\label{lem:lcslope}
If $(\hat X,\sfa^q)$ admits an lc slope with respect to a regular system $x_1,x_2,x_3$ of parameters in $\sO_{\hat X}$, then it also admits an lc slope belonging to $\sfQ_n$ with respect to the same system $x_1,x_2,x_3$.
\end{lemma}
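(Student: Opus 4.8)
The plan is to localize at the generic point of $\hat C$ and argue on a Newton polyhedron. Let $\xi$ be the generic point of the curve $\hat C=\{x_1=x_2=0\}$, so that $\sO_{\hat X,\xi}$ is a two-dimensional regular local ring with regular parameters the images of $x_1,x_2$; passing to its completion, let $\Delta\subset\bR_{\ge0}^2$ be the Newton polyhedron of $\sfa$ in the coordinates $x_1,x_2$, a convex polyhedron whose vertices lie in $\bN^2$. For coprime positive integers $w_1'\ge w_2'$, the weighted blow-up $\hat F'$ of $\hat X$ with $\wt(x_1,x_2)=(w_1',w_2')$ satisfies $\ord_{\hat F'}\sfa=\min_{(a,b)\in\Delta}(w_1'a+w_2'b)$ and $a_{\hat F'}(\hat X)=w_1'+w_2'$, so $a_{\hat F'}(\hat X,\sfa^q)=0$ if and only if the point $(1/q,1/q)$ lies on the supporting line of $\Delta$ with coefficients $(w_1',w_2')$. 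Since $(\hat X,\sfa^q)$ is log canonical, the inequality $a_{\hat F'}(\hat X,\sfa^q)\ge0$ for all positive weights forces $(1/q,1/q)\in\Delta$; hence the hypothesis that $(\hat X,\sfa^q)$ admits an lc slope with respect to $x_1,x_2,x_3$ means precisely that $(1/q,1/q)\in\partial\Delta$, and the lc slopes with respect to $x_1,x_2,x_3$ are exactly the slopes $\ge1$ of the primitive supporting directions of $\Delta$ at the point $(1/q,1/q)$.

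Write $q=p_1/p_2$ in lowest terms with $p_2\in\bN_+$; since $nq\in\bN$ we have $p_2\mid n$, so it suffices to exhibit an lc slope whose numerator is at most $p_2$. Consider the lower-left boundary path $P_0,\dots,P_r$ of $\Delta$ (with $a$-coordinates increasing and $b$-coordinates decreasing along it). If $r=0$, then $\Delta$ is a translate of the orthant, $(1/q,1/q)$ is its unique vertex, and every slope, in particular $1$, is an lc slope. If $r\ge1$ and $(1/q,1/q)$ is not $P_r$, there is a unique edge $e$ of $\Delta$ containing $(1/q,1/q)$ whose lower endpoint $Q=(a_Q,b_Q)\in\bN^2$ satisfies $b_Q<1/q$, necessarily with $a_Q>1/q$; let $(c,d)$ be its primitive direction, so $(1/q,1/q)$ lies on $\{ca+db=k\}$ with $k=ca_Q+db_Q$, whence $q=(c+d)/k$. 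Put $g=\gcd(c+d,\,a_Q-b_Q)$. From $k=(c+d)b_Q+c(a_Q-b_Q)$ and $\gcd(c,c+d)=1$ one gets $\gcd(c+d,k)=g$, so $p_2=k/g$ and
\[
p_2=\frac{c+d}{g}\,b_Q+c\cdot\frac{a_Q-b_Q}{g}.
\]
Here $b_Q\ge0$, and $a_Q-b_Q$ is a positive multiple of $g$, so $(a_Q-b_Q)/g\ge1$ and therefore $c\le p_2\le n$. If $c\ge d$, then $(c,d)\in\sfP_n$ and the slope $c/d\in\sfQ_n$ is an lc slope with respect to $x_1,x_2,x_3$, as desired. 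If instead $c<d$, the cone of supporting directions of $\Delta$ at $(1/q,1/q)$ contains the ray of slope $c/d<1$ together with a ray of slope $\ge1$ provided by the hypothesis, hence it contains the direction $(1,1)$; thus the weighted blow-up with $\wt(x_1,x_2)=(1,1)$ is crepant, and $1\in\sfQ_1\subseteq\sfQ_n$ is an lc slope.

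It remains to treat the case $r\ge1$ and $(1/q,1/q)=P_r$, the vertex of least $b$-coordinate. There the positive-slope supporting directions of $\Delta$ at $(1/q,1/q)$ are exactly those of slope in $(0,c_1/d_1]$, where $(c_1,d_1)$ is the primitive direction of the edge $P_{r-1}P_r$; since some lc slope is $\ge1$, we have $c_1/d_1\ge1$, so once more $(1,1)$ is a supporting direction and slope $1$ is an lc slope. I expect the only genuine content to be the displayed identity together with the inequality $c\le p_2$; the remaining effort is bookkeeping — fixing the orientation conventions relating weight vectors to supporting lines of $\Delta$, distinguishing the position of $(1/q,1/q)$ (interior of an edge versus a vertex, and $P_r$ versus the rest), and arranging the fallback to slope $1$ when the extreme supporting direction has slope below $1$ — and the arithmetic itself is elementary.
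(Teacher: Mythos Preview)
Your argument is correct and takes a genuinely different route from the paper's.

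The paper argues valuation-theoretically and iteratively: given an lc slope $(w_1,w_2)\in\sfP$ with $w_1>n$, it produces via B\'ezout a pair $(v_1,v_2)\in\sfP$ with $w_1v_2-w_2v_1=1$ and $0<v_1<w_1$, and then uses the componentwise inequality $(v_1/w_1)(w_1,w_2)\le(v_1,v_2)$ to get $\ord_{\hat G}\sfa^q>v_1+v_2-1/n$; discreteness ($\ord_{\hat G}\sfa^q\in n^{-1}\bZ$) and log canonicity force $a_{\hat G}(\hat X,\sfa^q)=0$, and one repeats until the first weight drops to at most $n$. Your approach instead packages the problem into the Newton polyhedron $\Delta$ of $\sfa$ in the $(x_1,x_2)$-plane over the generic point of $\hat C$, identifies lc slopes with supporting directions of $\Delta$ at $(1/q,1/q)$, and then reads off directly from the edge through $(1/q,1/q)$ a supporting direction $(c,d)$ with $c\le p_2$ via the identity $p_2=\tfrac{c+d}{g}b_Q+c\cdot\tfrac{a_Q-b_Q}{g}$. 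The paper's proof is shorter and avoids setting up $\Delta$; yours is non-iterative, gives the sharper bound $c\le p_2$ (with $p_2\mid n$), and makes transparent why convexity alone forces an lc slope equal to $1$ in the residual cases.
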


\begin{proof}
Suppose that $(\hat X,\sfa^q)$ admits an lc slope $w_1/w_2$ with respect to $x_1,x_2,x_3$ for $(w_1,w_2)\in\sfP$. There exists a pair $(v_1,v_2)$ of coprime integers such that $w_1v_2-w_2v_1=1$. Since every $(v_1',v_2')\in(v_1,v_2)+\bZ(w_1,w_2)$ satisfies $w_1v_2'-w_2v_1'=1$, we may choose it so that $0\le v_1<w_1$. Suppose that $(w_1,w_2)\not\in\sfP_n$. Then $n<w_1$ and $v_1\neq0$, in which $(v_1,v_2)$ belongs to $\sfP$. It suffices to show that $\cS$ admits an lc slope $v_1/v_2$ with respect to $x_1,x_2,x_3$.

The divisor $\hat F$ obtained by the weighted blow-up of $\hat X$ with $\wt(x_1,x_2)=(w_1,w_2)$ has $a_{\hat F}(\hat X,\sfa^q)=0$. Let $\hat G$ be the divisor obtained by the weighted blow-up of $\hat X$ with $\wt(x_1,x_2)=(v_1,v_2)$. From the componentwise inequality $v_1w_1^{-1}(w_1,w_2)=(v_1,v_2-1/w_1)\le(v_1,v_2)$, one derives the inequality $v_1w_1^{-1}\ord_{\hat F}\le\ord_{\hat G}$ of order functions. Hence
\[
\ord_{\hat G}\sfa^q\ge\frac{v_1}{w_1}\ord_{\hat F}\sfa^q=\frac{v_1}{w_1}(w_1+w_2)=v_1+v_2-\frac{1}{w_1}>v_1+v_2-\frac{1}{n}.
\]
It follows that $\ord_{\hat G}\sfa^q\in n^{-1}\bZ$ satisfies $\ord_{\hat G}\sfa^q\ge v_1+v_2$, that is, $a_{\hat G}(\hat X,\sfa^q)\le0$. Since $(\hat X,\sfa^q)$ is lc, $a_{\hat G}(\hat X,\sfa^q)$ is zero.
\end{proof}

The next proposition will be used in combination with Lemma~\ref{lem:lct1}.

\begin{proposition}\label{prp:lcslopes}
If $(\hat X,\sfa^q)$ admits two different lc slopes with respect to the same regular system of parameters in $\sO_{\hat X}$, then $(\hat X,\sfa^q\hat\fm)$ is lc.
\end{proposition}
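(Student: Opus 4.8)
The plan is to reduce the assertion to an explicit toric pair. First fix the common regular system $x_1,x_2,x_3$ and let $\hat C$ be the curve defined by $(x_1,x_2)\sO_{\hat X}$, which is the common centre of the two weighted blow-ups; write $(w_1,w_2)$ and $(u_1,u_2)$ in $\sfP$ for the two weight vectors realising the two lc slopes, so that both weighted blow-ups are crepant over $(\hat X,\sfa^q)$. For positive reals $a,b$ let $\hat F_{a,b}$ denote the divisor obtained by the weighted blow-up of $\hat X$ with $\wt(x_1,x_2)=(a,b)$. Then $(a,b)\mapsto\ord_{\hat F_{a,b}}\sfa^q$ is concave and positively homogeneous, so $(a,b)\mapsto a+b-\ord_{\hat F_{a,b}}\sfa^q$ is convex, non-negative by the log canonicity of $(\hat X,\sfa^q)$, and zero at $(w_1,w_2)$ and $(u_1,u_2)$. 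Hence it vanishes on the whole segment joining these two vectors, so every $\hat F_{a,b}$ with $(a,b)$ on this segment is crepant over $(\hat X,\sfa^q)$ with centre $\hat C$.

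The crux will be to deduce from this crepant segment that, after replacing $x_1,x_2$ by suitable generators $f_1,f_2$ of the ideal $(x_1,x_2)\sO_{\hat X}$ of $\hat C$ (which changes neither $\hat C$, nor the maximal ideal $\hat\fm=(f_1,f_2,x_3)\sO_{\hat X}$, nor the fact that $f_1,f_2,x_3$ is a regular system), the $\bR$-ideal $\sfa^q$ dominates the monomial $\bR$-ideal generated by $f_1f_2$ near $\hat x$, in the sense that $\ord_E\sfa^q\le\ord_E f_1+\ord_E f_2$ for every divisor $E$ over $\hat X$. I would first examine $\sfa$ at the generic point $\eta$ of $\hat C$: the crepant segment, being a non-degenerate segment in the normal fan of the Newton polygon of $\sfa$ in $x_1,x_2$ over $\sO_{\hat X,\eta}$, must lie in the closure of a single maximal cone, whose vertex $P$ therefore satisfies $\ord_{\hat F_{a,b}}\sfa^q=a+b$ identically near $P$; this forces $P=(1/q,1/q)$, so that the monomial $x_1^{1/q}x_2^{1/q}$ lies, up to units, in the integral closure of $\sfa^q$ along $\hat C$. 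One then has to propagate this from $\eta$ to the closed point $\hat x$, adjusting the generators of the ideal of $\hat C$ so as to absorb lower‑order terms involving $x_3$. Here I would use that the weighted blow-up of $\hat X$ at $\hat x$ with $\wt(x_1,x_2,x_3)=(a,b,1)$ is still crepant over $(\hat X,\sfa^q)$ for $(a,b)$ in the interior of the segment — the inequality $\ge$ coming from the crepant blow-up of $\hat C$, the inequality $\le$ from $\mld_{\hat x}(\hat X,\sfa^q)=1$, which holds since $(\hat X,\sfa^q)$ is special — together with the fact that $\hat C$ is the smallest lc centre, which rules out any surplus order in the $x_3$-direction at $\hat x$. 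This is exactly where two \emph{distinct} lc slopes enters: it is the non-degeneracy of the crepant segment that pins the Newton vertex onto the diagonal, and a single lc slope would not suffice.

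Granting the middle step, the conclusion is routine: for every divisor $E$ over $\hat X$ one gets $a_E(\hat X,\sfa^q\hat\fm)\ge a_E(\hat X,(f_1f_2)\hat\fm)$, and the pair $(\hat X,(f_1f_2)\hat\fm)$, equal to $(\hat X,\operatorname{div}f_1+\operatorname{div}f_2,(f_1,f_2,x_3))$, is toric in the coordinates $f_1,f_2,x_3$; its log discrepancy at the monomial valuation of weights $(a,b,c)$ equals $(a+b+c)-(a+b)-\min(a,b,c)=c-\min(a,b,c)\ge 0$, so it is lc, and hence so is $(\hat X,\sfa^q\hat\fm)$. I expect essentially all the difficulty to be concentrated in the middle paragraph: the analysis of $\sfa$ near $\hat C$ up to a change of generators of the ideal of $\hat C$, and the control of its behaviour in the $x_3$-direction at the closed point; by contrast the initial convexity argument and the final toric computation should be straightforward.
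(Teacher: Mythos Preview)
Your opening convexity argument, showing that every $\hat F_{a,b}$ on the segment joining $(w_1,w_2)$ to $(u_1,u_2)$ is crepant, is correct and is also how the paper begins. Your closing toric computation is also fine: $(\hat X,\operatorname{div}f_1+\operatorname{div}f_2,\hat\fm)$ is lc because $\ord_E\hat\fm\le\ord_Ex_3$ reduces it to the log smooth pair $(\hat X,\operatorname{div}f_1+\operatorname{div}f_2+\operatorname{div}x_3)$.

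The trouble is your middle step, and it is a genuine gap rather than a routine difficulty. Your Newton-polygon argument at the generic point $\eta$ of $\hat C$ is valid and indeed pins the vertex to $(1/q,1/q)$; but this only tells you that some element of $\sfa$ has leading term $u(x_3)\,x_1^{1/q}x_2^{1/q}$ with $u$ a unit at $\eta$, and nothing prevents $u$ from vanishing at $\hat x$. Your proposal to ``absorb lower-order terms involving $x_3$'' into new generators $f_1,f_2$ of $(x_1,x_2)$ is not justified: a unit in $K((x_3))$ cannot in general be absorbed into a change of coordinates of the ideal $(x_1,x_2)$. Knowing that the single divisor $E_{a,b,1}$ (the $(a,b,1)$-weighted blow-up of $\hat x$) is crepant for each $(a,b)$ on the segment controls only these specific valuations, not an arbitrary $E$; it does not by itself force $\ord_E\sfa^q\le\ord_Ef_1+\ord_Ef_2$ globally. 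In short, you have not explained how the hypothesis that $\hat C$ is the \emph{smallest} lc centre and $\mld_{\hat x}=1$ rules out the ``surplus order in the $x_3$-direction'', and I do not see a direct mechanism for it along the lines you sketch.

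The paper avoids this global comparison entirely. It picks \emph{one} interior point $(w_{13},w_{23})$ of the segment, blows up $\hat C$ with these weights to get $\hat F\subset\hat Y$, and works by adjunction on the surface $\hat F$. The two endpoints $(w_{11},w_{21})$ and $(w_{12},w_{22})$ now play a concrete role: the divisors $\hat F_1,\hat F_2$ they define have centres $\hat D_1,\hat D_2\subset\hat F$ (the restrictions of the strict transforms of $\{x_1=0\}$ and $\{x_2=0\}$), and since $a_{\hat F_j}(\hat X,\sfa^q)=0$ these curves are lc centres of $(\hat Y,\hat F,\sfa_Y^q)$. Precise inversion of adjunction (Lemma~\ref{lem:pia}(\ref{itm:pia-klt})) then forces the coefficients of $\hat D_1,\hat D_2$ in the adjoint divisor on $\hat F$ to be exactly $1$, so $(K_{\hat Y}+\hat F+q\hat A_Y)|_{\hat F}=K_{\hat F}+\hat D_1+\hat D_2+\hat L$ with $\hat L\ge0$. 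The hypothesis $\mld_{\hat x}(\hat X,\sfa^q)=1$ translates into $\ord_{\hat f}\sfa_Y=0$, so $\hat f\not\subset\hat L$; then the numerical identity $(\hat L\cdot\hat f)=-(K_{\hat F}+\hat D_1+\hat D_2)\cdot\hat f=0$ forces $\hat L=0$. Finally Lemma~\ref{lem:pia}(\ref{itm:pia-3fold}) gives $\mld_{\hat x}(\hat X,\sfa^q\hat\fm)=\mld_{\hat f}(\hat F,\hat D_1+\hat D_2+\hat f)=0$. The whole argument lives on the single surface $\hat F$ and never needs the valuation-by-valuation comparison you are aiming for.
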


\begin{proof}
Take two different elements $(w_{1i},w_{2i})\in\sfP$ for $i=1,2$ such that $(\hat X,\sfa^q)$ admits an lc slope $w_{1i}/w_{2i}$ with respect to the same system $x_1,x_2,x_3$. Make an element $(w_{13},w_{23})=p_1(w_{11},w_{21})+p_2(w_{12},w_{22})$ in $\sfP$ by positive rational numbers $p_1$ and $p_2$. Then for the divisor $\hat F_i$ obtained by the weighted blow-up $\hat Y_i\to \hat X$ with $\wt(x_1,x_2)=(w_{1i},w_{2i})$, one has
\[
\ord_{\hat F_3}\sfa^q\ge p_1\ord_{\hat F_1}\sfa^q+p_2\ord_{\hat F_2}\sfa^q=p_1(w_{11}+w_{21})+p_2(w_{12}+w_{22})=w_{13}+w_{23},
\]
that is, $a_{\hat F_3}(\hat X,\sfa^q)\le0$. Since $(\hat X,\sfa^q)$ is lc, $a_{\hat F_3}(\hat X,\sfa^q)$ is zero.

We write $\hat Y=\hat Y_3$ and $\hat F=\hat F_3$ for brevity. The triple $(\hat Y,\hat F,\sfa_Y^q)$ is crepant to $(\hat X,\sfa^q)$ for a weak $\bQ$-transform $\sfa_Y$ of $\sfa$. For $j=1,2$, let $\hat T_j$ be the strict transform in $\hat Y$ of the divisor on $\hat X$ defined by $x_j$. The support $\hat D_j$ of $\hat T_j\cap\hat F$ is the centre $c_{\hat Y}(\hat F_j)$ of $\hat F_j$ and hence it is an lc centre of $(\hat Y,\hat F,\sfa_Y^q)$. Since $\mld_{\hat x}(\hat X,\sfa^q)=1$, the order $\ord_{\hat f}\sfa_Y$ along the fibre $\hat f$ of $\hat F\to\hat X$ at $\hat x$ is zero.

Let $\hat\Delta=(1-w_{23}^{-1})\hat D_1+(1-w_{13}^{-1})\hat D_2$ be the different on $\hat F$ defined by the adjunction $(K_{\hat Y}+\hat F)|_{\hat F}=K_{\hat F}+\hat\Delta$ as in \cite[example~2.4]{K21}. By adjunction, $\mld_{\hat f}(\hat F,\hat\Delta,(\sfa_Y\sO_{\hat F})^q)$ is at least $\mld_{\hat x}(\hat X,\sfa^q)=1$ and equals $a_{\hat f}(\hat F,\hat\Delta,(\sfa_Y\sO_{\hat F})^q)=1$. Further $\hat D_j$ is an lc centre of $(\hat F,\hat\Delta,(\sfa_Y\sO_{\hat F})^q)$. Indeed, if not, then there would exist a positive rational number $t$ such that $(\hat F,\hat\Delta+t\hat D_j,(\sfa_Y\sO_{\hat F})^q)$ is lc. From the precise inversion of adjunction in Lemma~\ref{lem:pia}(\ref{itm:pia-klt}), one deduces that $(\hat Y,\hat F+tw_{3-j,3}\hat T_j,\sfa_Y^q)$ is lc. This is absurd because $\hat D_j$ is an lc centre of $(\hat Y,\hat F,\sfa_Y^q)$.

Thus one can write $(K_{\hat Y}+\hat F+q\hat A_Y)|_{\hat F}=K_{\hat F}+\hat D_1+\hat D_2+\hat L$ by the $\bQ$-divisor $\hat A_Y$ defined by a general member of $\sfa_Y$ and an effective $\bQ$-divisor $\hat L$ on $\hat F$. Since $\ord_{\hat f}\sfa_Y$ is zero, $\hat f$ does not appear in $\hat L$. Since $(\hat L\cdot\hat f)=-((K_{\hat F}+\hat D_1+\hat D_2)\cdot\hat f)=0$, $\hat L$ is zero. Thus by Lemma~\ref{lem:pia}(\ref{itm:pia-3fold}), $\mld_{\hat x}(\hat X,\sfa^q\hat\fm)=\mld_{\hat f}(\hat F,\hat D_1+\hat D_2+\hat f)=0$, which completes the proposition.
\end{proof}

We quote the precise inversion of adjunction over complete local rings since it will be used again. For a pair $(X,S+B)$ such that $S$ is reduced and has no common components with the support of $B$, the \textit{adjunction} $\nu^*((K_X+S+B)|_S)=K_{S^\nu}+B_{S^\nu}$ holds on the normalisation $\nu\colon S^\nu\to S$ of $S$ by an effective $\bR$-divisor $B_{S^\nu}$ called the \textit{different} on $S^\nu$ of $B$.

\begin{lemma}\label{lem:pia}
\begin{enumerate}
\item\label{itm:pia-klt}
\textup{(\cite[lemma~7.6]{K21})}\;
Let $x\in X$ be the germ of a klt variety and let $S$ be a prime divisor on $X$ such that $(X,S)$ is plt. Let $\Delta$ be the different on $S$ defined by $(K_X+S)|_S=K_S+\Delta$. Let $\hat x\in\hat X$ be the spectrum of the completion of the local ring $\sO_{X,x}$ and set $\hat S=S\times_X\hat X$ and $\hat\Delta=\Delta\times_X\hat X$. Let $\sfa$ be an $\bR$-ideal on $\hat X$. If $\mld_{\hat x}(\hat X,\hat S,\sfa)\le1$, then $\mld_{\hat x}(\hat X,\hat S,\sfa)=\mld_{\hat x}(\hat S,\hat\Delta,\sfa\sO_{\hat S})$.
\item\label{itm:pia-3fold}
\textup{(\cite[proposition~7.7]{K21})}\;
Let $\hat x\in\hat X=\Spec K[[x_1,x_2,x_3]]$ over a field $K$ of characteristic zero and let $\sfa$ be an $\bR$-ideal on $\hat X$. Let $\hat Y\to\hat X$ be the weighted blow-up with $\wt(x_1,x_2)=w$ for some $w\in\sfP$, let $\hat E$ be the exceptional divisor and let $\hat f$ be the fibre of $\hat E\to\hat X$ at $\hat x$. If $a_{\hat E}(\hat X,\sfa)=0$, then $\mld_{\hat x}(\hat X,\sfa)=\mld_{\hat f}(\hat E,\hat\Delta,\sfa_Y\sO_{\hat E})$ for the different $\hat\Delta$ on $\hat E$ defined by $(K_{\hat Y}+\hat E)|_{\hat E}=K_{\hat E}+\hat\Delta$ and for a weak $\bQ$-transform $\sfa_Y$ in $\hat Y$ of $\sfa$.
\end{enumerate}
\end{lemma}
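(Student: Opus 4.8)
Since both items are quoted verbatim from \cite{K21}, the plan is, strictly speaking, just to cite those references; what follows only sketches the shape of the argument behind them. Each assertion is an instance of \emph{precise inversion of adjunction} for the minimal log discrepancy along a prime divisor, and in both cases the argument splits into two inequalities. The \emph{adjunction} inequality --- ambient minimal log discrepancy $\le$ that of the boundary --- is the elementary half: given a divisor $\hat F$ over the boundary divisor with the prescribed centre, one realises $\hat F$ on the strict transform of that divisor in a sufficiently high model of the ambient space, then blows up along $\hat F$ to obtain a divisor over the ambient space whose log discrepancy with respect to the ambient triple equals $a_{\hat F}$ of the restricted triple, and passes to the infimum. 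The \emph{inversion} inequality, the reverse one, is the real content, and it is where the hypothesis that the ambient minimal log discrepancy be at most $1$ is needed.

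For part~(\ref{itm:pia-klt}), the plt hypothesis on $(X,S)$ makes $S$ normal and $(S,\Delta)$ klt, and this persists after completion, so $\mld_{\hat x}(\hat S,\hat\Delta,\sfa\sO_{\hat S})$ is defined and the adjunction inequality applies as above. For the inversion inequality one takes a divisor $\hat E$ over $\hat X$ with $c_{\hat X}(\hat E)=\hat x$ and $a_{\hat E}(\hat X,\hat S,\sfa)$ as close as one wishes to $\mld_{\hat x}(\hat X,\hat S,\sfa)\le1$. If $\hat E=\hat S$, its restriction to $\hat S$ (the fibre over $\hat x$) is the required divisor over $\hat S$. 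If $\hat E\ne\hat S$, one feeds $\hat E$ into the inversion-of-adjunction machinery over the complete local ring $\sO_{\hat X,\hat x}$ --- connectedness of the non-klt locus together with the lower semi-continuity of minimal log discrepancies, both available because $\hat X$ is klt --- to extract a divisor over $\hat S$ of log discrepancy at most $a_{\hat E}(\hat X,\hat S,\sfa)$; the bound $\le1$ is precisely what confines the argument to the range in which this extraction is legitimate.

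For part~(\ref{itm:pia-3fold}), let $\hat Y\to\hat X$ be the weighted blow-up along the curve $(x_1,x_2)\sO_{\hat X}$ with $\wt(x_1,x_2)=w\in\sfP$. Then $\hat Y$ has only cyclic quotient singularities, hence is klt; the exceptional divisor $\hat E$ is normal (it is a weighted projective space bundle over a smooth curve); and $(\hat Y,\hat E)$ is plt, its different $\hat\Delta$ being supported on the two curves cut out on $\hat E$ by the strict transforms of $\{x_1=0\}$ and $\{x_2=0\}$, with coefficients $1-w_1^{-1}$ and $1-w_2^{-1}$ computed exactly as in the proof of Proposition~\ref{prp:lcslopes}. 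The hypothesis $a_{\hat E}(\hat X,\sfa)=0$ makes $(\hat Y,\hat E,\sfa_Y)$ crepant to $(\hat X,\sfa)$ for the weak $\bQ$-transform $\sfa_Y$, and the fibre $\hat f$ of $\hat E\to\hat X$ over $\hat x$ is precisely the fibre of $\hat Y\to\hat X$ over $\hat x$; hence divisors over $\hat X$ with centre $\hat x$ are exactly divisors over $\hat Y$ with centre contained in $\hat f$, and crepancy yields $\mld_{\hat x}(\hat X,\sfa)=\mld_{\hat f}(\hat Y,\hat E,\sfa_Y)$. This number is bounded above by the log discrepancy of the divisor obtained by blowing up $\hat f$ at its generic point, where $\hat Y$ is smooth and $\hat E$ a smooth Cartier divisor, so it is $\le1$; running the argument of part~(\ref{itm:pia-klt}) about the one-dimensional centre $\hat f$ in place of a closed point then gives $\mld_{\hat f}(\hat Y,\hat E,\sfa_Y)=\mld_{\hat f}(\hat E,\hat\Delta,\sfa_Y\sO_{\hat E})$, which is the claim.

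The main obstacle is, as always for precise inversion of adjunction, the inversion inequality of part~(\ref{itm:pia-klt}): bounding the ambient minimal log discrepancy from below by that of the boundary. Without the hypothesis $\mld\le1$ this is Shokurov's precise inversion-of-adjunction conjecture, open in general, and both the $\le1$ assumption and the passage to the complete local ring are essential in \cite{K21}. By comparison the adjunction inequalities, the verification that $(\hat Y,\hat E)$ is plt, and the computation of the different in part~(\ref{itm:pia-3fold}) are routine.
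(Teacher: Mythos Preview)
Your proposal is correct and matches the paper's treatment: the paper gives no proof for this lemma, simply citing \cite[lemma~7.6, proposition~7.7]{K21}, exactly as you note at the outset. Your sketch of the underlying arguments---adjunction as the elementary direction, inversion via connectedness under the $\le1$ hypothesis, and the reduction of~(\ref{itm:pia-3fold}) to~(\ref{itm:pia-klt}) applied to the plt pair $(\hat Y,\hat E)$---accurately reflects the shape of those cited proofs. One cosmetic slip: in your description of the different $\hat\Delta$, the coefficients are interchanged; the curve cut out by the strict transform of $\{x_j=0\}$ carries coefficient $1-w_{3-j}^{-1}$ (compare the formula $\hat\Delta=(1-w_2^{-1})\hat D_1+(1-w_1^{-1})\hat D_2$ in the proof of Proposition~\ref{prp:lcslopes}), but this does not affect the argument.
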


We construct a sequence of weighted blow-ups of $X$ from a weighted blow-up of $\hat X$ and vice versa.

\begin{proposition}\label{prp:down}
If $(\hat X,\sfa^q)$ admits an lc slope $w_1/w_2$ for $(w_1,w_2)\in\sfP$, then after replacement of $\cS$ by a subsequence, for all $i$ there exists a regular system $x_{1i},x_{2i},x_{3i}$ of parameters in $\sO_{X,x}$ such that $a_{F_i}(X,\fa_i^q)=1$ for the divisor $F_i$ obtained by the weighted blow-up of $X$ with $\wt(x_{1i},x_{2i},x_{3i})=(iw_1,iw_2,1)$.
\end{proposition}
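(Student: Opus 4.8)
The plan is to realise each $F_i$ as a descent, through the generic limit, of a weighted blow-up of $\hat X$. Fix a regular system $x_1,x_2,x_3$ of parameters in $\sO_{\hat X}$ for which $(\hat X,\sfa^q)$ admits the lc slope $w_1/w_2$, so that the divisor $\hat F$ given by the weighted blow-up with $\wt(x_1,x_2)=(w_1,w_2)$ has $\ord_{\hat F}\sfa=m$ with $qm=w_1+w_2$. For each $i\ge1$ let $\hat F_i$ be the divisor obtained by the weighted blow-up of $\hat X$ at $\hat x$ with $\wt(x_1,x_2,x_3)=(iw_1,iw_2,1)$; these weights are coprime, so $\hat F_i$ is a prime divisor over $\hat X$ with $c_{\hat X}(\hat F_i)=\hat x$ and $a_{\hat F_i}(\hat X)=iw_1+iw_2+1$.

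The first step is to prove $a_{\hat F_i}(\hat X,\sfa^q)=1$ for every $i$. Passing from $\hat F$ to $\hat F_i$ only adds the weight $1$ on $x_3$, so on monomials $\ord_{\hat F_i}\ge i\ord_{\hat F}$; hence $\ord_{\hat F_i}\sfa\ge im$ and $a_{\hat F_i}(\hat X,\sfa^q)\le(iw_1+iw_2+1)-qim=1$. Taking $i=1$ and using $c_{\hat X}(\hat F_1)=\hat x$ together with $\mld_{\hat x}(\hat X,\sfa^q)=1$, valid since $(\hat X,\sfa^q)$ is special by Theorem~\ref{thm:lcslope}, yields $a_{\hat F_1}(\hat X,\sfa^q)=1$, that is $\ord_{\hat F_1}\sfa=m$. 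Hence some $f\in\sfa$ has $\ord_{\hat F_1}f=m$; comparing this with $\ord_{\hat F}f\ge m$ forces a monomial of $f$ involving only $x_1,x_2$ to have $(w_1,w_2)$-weight $m$, and such a monomial has $\ord_{\hat F_i}=im$. Therefore $\ord_{\hat F_i}\sfa=im$ and $a_{\hat F_i}(\hat X,\sfa^q)=1$ for all $i$.

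The second step is the descent. By Remark~\ref{rmk:wbu} the weighted blow-up $\hat F_i$ is unchanged if $x_1,x_2,x_3$ are replaced by their classes modulo $\hat\fm^N$ for $N$ large depending on $i$; these classes are defined over $k(Z_l)$ for $l$ large and spread out over an open subset of $X\times Z_l$, and forming the resulting family of weighted blow-ups with weights $(iw_1,iw_2,1)$ and passing to fibres, the properties of the generic limit recalled after Lemma~\ref{lem:resolution} give, after replacing $\cF$ by a subfamily depending on $\hat F_i$, an infinite set $M_i\subset\bN_+$ such that for every $j\in M_i$ the ideal $\fa_j$ admits a divisor $G_{i,j}$ over $X$, namely the weighted blow-up of $X$ at $x$ with weights $(iw_1,iw_2,1)$ on some regular system of parameters in $\sO_{X,x}$, with $\ord_{G_{i,j}}\fa_j=\ord_{\hat F_i}\sfa=im$ and $a_{G_{i,j}}(X)=a_{\hat F_i}(\hat X)$; thus $a_{G_{i,j}}(X,\fa_j^q)=a_{\hat F_i}(\hat X,\sfa^q)=1$.

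Finally, as each $M_i$ is infinite, choose $j_1<j_2<\cdots$ with $j_i\in M_i$ and replace $\cS$ by the subsequence $\{\fa_{j_i}\}_i$; after relabelling, its $i$-th term $\fa_{j_i}$ carries the divisor $G_{i,j_i}$ with weights $(iw_1,iw_2,1)$ and $a_{G_{i,j_i}}(X,\fa_{j_i}^q)=1$, as required. I expect the main point needing care to be the second step: checking that the generic limit turns the weighted blow-up $\hat F_i$ into a genuine weighted blow-up of $X$ along a regular system of parameters in $\sO_{X,x}$, and that the order of $\fa_j$ along it is the constant $im$ for a cofinal set of indices. This is the standard behaviour recorded after Lemma~\ref{lem:resolution}, but it must be invoked for the whole countable family $\{\hat F_i\}_i$, one divisor at a time, which is what forces the passage to a subsequence of $\cS$.
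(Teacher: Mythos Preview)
Your proposal is correct and follows essentially the same route as the paper: show $a_{\hat F_i}(\hat X,\sfa^q)=1$ on the generic limit, descend each $\hat F_i$ through the family of approximations to a weighted blow-up of $X$ with the same weights and the same order on $\fa_j$, then diagonalise over $i$. Two minor remarks. First, your monomial argument for the equality $\ord_{\hat F_i}\sfa=im$ is correct but unnecessary: since $c_{\hat X}(\hat F_i)=\hat x$ for every $i$, the inequality $a_{\hat F_i}(\hat X,\sfa^q)\ge\mld_{\hat x}(\hat X,\sfa^q)=1$ already gives equality for all $i$ at once, exactly as you did for $i=1$; the paper proceeds this way. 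Second, in the descent step the phrase ``classes modulo $\hat\fm^N$'' should be read as lifting to actual elements of $\sO_{X,x}\otimes_kK$ (polynomial representatives suffice) before spreading out over $Z_l$; the paper makes this explicit via Remark~\ref{rmk:wbu} by choosing $x_{1j},x_{2j},x_{3j}\in\sO_{X,x}\otimes_kK$ directly.
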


\begin{proof}
Fix an index $j\in\bN_+$. We shall find, for infinitely many $i$, a regular system $x_{1i},x_{2i},x_{3i}$ of parameters in $\sO_{X,x}$ such that $a_{F_{ji}}(X,\fa_i^q)=1$ for the divisor $F_{ji}$ obtained by the weighted blow-up of $X$ with $\wt(x_{1i},x_{2i},x_{3i})=(jw_1,jw_2,1)$. Then a required subsequence can be constructed inductively.

We use the argument in step~3 of the proof of \cite[theorem~7.3]{K21}. The generic limit $\sfa$ is constructed on the spectrum of the completion of the local ring $\sO_{X,x}\otimes_kK$ from a family $\cF=(Z_l,\fa(l),N_l,s_l,t_l)_{l\ge l_0}$ of approximations of $\cS$. Let $x_1,x_2,x_3$ be the regular system of parameters in $\sO_{\hat X}$ with respect to which $(\hat X,\sfa^q)$ admits an lc slope $w_1/w_2$. Let $\hat f_j\colon\hat Y_j\to\hat X$ be the weighted blow-up with $\wt(x_1,x_2,x_3)=(jw_1,jw_2,1)$ and let $\hat F_j$ be the exceptional divisor. By Remark~\ref{rmk:wbu}, there exists a regular system $x_{1j},x_{2j},x_{3j}$ of parameters in $\sO_{X,x}\otimes_kK$ such that $\hat f_j$ is also the weighted blow-up with $\wt(x_{1j},x_{2j},x_{3j})=(jw_1,jw_2,1)$ with $x_{ij}$ regarded as elements in $\sO_{\hat X}$.

After replacement of $\cF$ by a subfamily, $\hat f_j$ descends to a projective morphism $f_{jl}\colon Y_{jl}\to X\times Z_l$ from a klt variety for all $l\ge l_0$. One can assume that $x_{1j},x_{2j},x_{3j}$ come from $\sO_{X,x}\otimes_k\sO_{Z_l}$ and that for all $i\in N_l$, their fibres $x_{1j,i},x_{2j,i},x_{3j,i}$ at $s_l(i)\in Z_l$ form a regular system of parameters in $\sO_{X,x}$. The exceptional locus of $f_{jl}$ is a $\bQ$-Cartier prime divisor $F_{jl}$. The fibre $f_{ji}\colon Y_{ji}\to X$ of $f_{jl}$ at $s_l(i)$ is the weighted blow-up of $X$ with $\wt(x_{1j,i},x_{2j,i},x_{3j,i})=(jw_1,jw_2,1)$ and $F_{ji}=F_{jl}\times_{Y_{jl}}Y_{ji}$ is the exceptional divisor of $f_{ji}$. One may assume that $\ord_{F_{ji}}\fa_i=\ord_{\hat F_j}\sfa$ for all $i\in N_l$.

Let $\hat F$ be the divisor obtained by the weighted blow-up of $\hat X$ with $\wt(x_1,x_2)=(w_1,w_2)$. It follows from the componentwise inequality $(jw_1,jw_2,1)\ge j(w_1,w_2,0)$ that $\ord_{\hat F_j}\sfa^q\ge j\ord_{\hat F}\sfa^q=j(w_1+w_2)$. Hence $a_{\hat F_j}(\hat X,\sfa^q)\le1$. Since $\mld_{\hat x}(\hat X,\sfa^q)=1$, one obtains $a_{F_{ji}}(X,\fa_i^q)=a_{\hat F_j}(\hat X,\sfa^q)=1$.
\end{proof}

\begin{proposition}\label{prp:up}
Fix a regular system $x_1,x_2,x_3$ of parameters in $\sO_{X,x}$. Suppose that for all $i$, there exists a pair $(w_{1i},w_{2i})$ of positive integers with $w_{2i}\le w_{1i}$ such that the divisor $F_i$ obtained by the weighted blow-up of $X$ with $\wt(x_1,x_2,x_3)=(w_{1i},w_{2i},1)$ has $a_{F_i}(X,\fa_i^q)=1$. Suppose that $\lim_{i\to\infty}w_{2i}=\infty$ and that $\mu_i=w_{1i}/w_{2i}\in\sfQ$ has a limit $\mu=\lim_{i\to\infty}\mu_i$ in $\bR$. Let $(w_1,w_2)$ be an element in $\sfP$. If either
\begin{enumerate}
\item
$\mu\in\sfQ$ and $\mu=w_1/w_2$, or
\item
$\mu\not\in\sfQ$ and $\abs{\mu-w_1/w_2}<\varepsilon$ for a small positive real number $\varepsilon$ depending only on $q$ and $\mu$,
\end{enumerate}
then after replacement of $\cS$ by a subsequence, $(\hat X,\sfa^q)$ admits an lc slope $w_1/w_2$ with respect to $x_1,x_2,x_3$ regarded as elements in $\sO_{\hat X}$.
\end{proposition}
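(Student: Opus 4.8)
The aim is to show that the divisor $\hat F$ obtained by the weighted blow-up of $\hat X$ with $\wt(x_1,x_2)=(w_1,w_2)$ satisfies $a_{\hat F}(\hat X,\sfa^q)=0$, that is, $\ord_{\hat F}\sfa^q=w_1+w_2$. One inequality is immediate: since $(\hat X,\sfa^q)$ is lc and $a_{\hat F}(\hat X)=w_1+w_2$, we have $a_{\hat F}(\hat X,\sfa^q)\ge0$, so $\ord_{\hat F}\sfa\le(w_1+w_2)/q$. The whole content lies in the reverse inequality, which I would reduce to a statement about monomials: \emph{for every $f\in\sfa$ and every monomial $x_1^ax_2^bx_3^c$ occurring with nonzero coefficient in $f$, one has $a\mu+b\ge(\mu+1)/q$.} Indeed $\ord_{\hat F}$ is the monomial valuation with weights $(w_1,w_2,0)$ on $x_1,x_2,x_3$, so $\ord_{\hat F}f=\min\{aw_1+bw_2\mid x_1^ax_2^bx_3^c\text{ occurs in }f\}$. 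In case~(i), where $\mu=w_1/w_2$, the monomial statement gives $aw_1+bw_2=w_2(a\mu+b)\ge w_2(\mu+1)/q=(w_1+w_2)/q$ for each such monomial, hence $\ord_{\hat F}f\ge(w_1+w_2)/q$, and letting $f$ run over a finite generating set of $\sfa$ gives $\ord_{\hat F}\sfa\ge(w_1+w_2)/q$.

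To prove the monomial statement I would exploit the crepancy of $F_i$. From $a_{F_i}(X,\fa_i^q)=1$ and $a_{F_i}(X)=w_{1i}+w_{2i}+1$ one gets $\ord_{F_i}\fa_i=(w_{1i}+w_{2i})/q$. Fix $f\in\sfa$ and a monomial $m_0=x_1^ax_2^bx_3^c$ occurring in $f$, and choose $l\ge l_0$ with $l>a+b+c$ so large that the finitely many coefficients of $f$ of degree less than $l$ lie in $k(Z_l)$. Spreading $f$ modulo $\fm^l$ out over $Z_l$ and using flatness of the family of approximations, which makes $\fa(l)$ modulo $\fm^l$ locally free over $Z_l$, its specialization at $z$ lies in $\fa(l)_z/\fm^l$ for all $z$ in a dense open subset; since $\fa(l)_{s_l(i)}=\fa_i+\fm^l$ and $s_l(N_l)$ is dense, for infinitely many $i\in N_l$ there is $g_i\in\fa_i$ with $g_i\equiv f|_{s_l(i)}\pmod{\fm^l}$, and one may assume the coefficient of $m_0$ in $f|_{s_l(i)}$ nonzero, whence $m_0$ occurs in $g_i$ because $l>a+b+c$. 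Since $\ord_{F_i}$ is the monomial valuation with weights $(w_{1i},w_{2i},1)$,
\[
\frac{w_{1i}+w_{2i}}{q}=\ord_{F_i}\fa_i\le\ord_{F_i}g_i\le\ord_{F_i}m_0=aw_{1i}+bw_{2i}+c.
\]
Dividing by $w_{2i}$ and letting $i\to\infty$ along these indices, where $w_{2i}\to\infty$ and $\mu_i\to\mu$, gives $(\mu+1)/q\le a\mu+b$.

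For case~(ii) I would perturb. The affine function $g(\lambda)=(a-q^{-1})\lambda+(b-q^{-1})$ is non-negative at $\lambda=\mu$ by the monomial statement; if $g(\mu)=0$ and $a\ne q^{-1}$, then $\mu=(1-bq)/(aq-1)$ would lie in $\sfQ$, contrary to hypothesis, so either $a=q^{-1}$ and $g$ is the constant $b-q^{-1}\ge0$, or $g(\mu)>0$. Since $w_1/w_2\ge1$, the inequality $g(w_1/w_2)<0$ forces $a+b<q^{-1}(\mu+2)$ once $\abs{\mu-w_1/w_2}<1$, so only finitely many pairs $(a,b)$ are relevant. Taking $\varepsilon>0$ less than $1$ and than the distance from $\mu$ to the zero of each of the finitely many relevant non-constant functions $g$ — a bound depending only on $q$ and $\mu$ — we obtain $g(w_1/w_2)\ge0$, that is, $aw_1+bw_2\ge(w_1+w_2)/q$, for every monomial occurring in an element of $\sfa$ whenever $\abs{\mu-w_1/w_2}<\varepsilon$, hence $\ord_{\hat F}\sfa\ge(w_1+w_2)/q$ again. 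Together with the easy inequality, in either case $a_{\hat F}(\hat X,\sfa^q)=0$, so $(\hat X,\sfa^q)$ admits the lc slope $w_1/w_2$ with respect to $x_1,x_2,x_3$.

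The step needing the most care, which I expect to be the main obstacle, is the spreading-out: making rigorous that a monomial occurring with nonzero coefficient in a power series from the generic limit $\sfa$ — whose coefficients lie in the large field $K=\varinjlim k(Z_l)$ — genuinely occurs in elements of $\fa_i$ for infinitely many $i$ with $\mu_i\to\mu$. This rests on the flatness of $\fa(l)$ modulo $\fm^l\sO_{X\times Z_l}$ over $Z_l$ together with the density of the maps $s_l$, possibly after replacing $\cF$ by a subfamily, and it must be done for a finite generating set of $\sfa$ at once.
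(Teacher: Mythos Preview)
Your argument is correct, and its overall architecture matches the paper's: both reduce to the monomial inequality $a\mu+b\ge(\mu+1)/q$ for every monomial $x_1^ax_2^bx_3^c$ appearing in an element of $\sfa$, and both handle case~(ii) by the same perturbation (the equality $q(\mu a+b)=\mu+1$ with $\mu$ irrational forces $a=b=q^{-1}$, so the inequality is strict for the finitely many remaining relevant $(a,b)$ and survives a small change of slope).

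The difference is in how the monomial inequality is obtained. The paper first passes to a subsequence so that $\mu_i$ is monotone, introduces for each $i$ the real-valued monomial order $\ord_i$ with $\wt(x_1,x_2,x_3)=(i\mu,i,1)$, and compares $\ord_i$ with $\ord_{F_j}$ for large $j$ to get $\ord_i\fa_j^q\ge i(\mu+1)-\textrm{(small)}$; discreteness of $q(\mu\bN+\bN)$ upgrades this to $\ord_i\fa_j^q\ge i(\mu+1)$, and since containment in a monomial ideal is a closed condition satisfied on a dense set of fibres, it passes to the generic fibre, yielding $\ord_i\sfa^q\ge i(\mu+1)$. Letting $i\to\infty$ then forces $\ord_\infty\sfa^q\ge\mu+1$ for the order with $\wt(x_1,x_2)=(\mu,1)$. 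Your route is more direct: you spread a fixed $f\in\sfa$ (modulo $\fm^l$) out over an open in $Z_l$, specialise to infinitely many $s_l(i)$ where the prescribed monomial survives, and read off the inequality from $\ord_{F_i}\fa_i=(w_{1i}+w_{2i})/q$ by dividing by $w_{2i}$ and taking the limit. Your approach avoids the monotonicity reduction and the auxiliary orders $\ord_i$; the paper's approach keeps the passage from $\fa_j$ to $\sfa$ at the level of ideals rather than individual elements, which is why it can be stated tersely. Your concern about the spreading-out step is well placed but not fatal: local freeness of $\fa(l)/\fm^l\sO_{X\times Z_l}$ over $Z_l$ lets you extend $\bar f$ to a section over a dense open, the coefficient of $m_0$ is a nonzero function on $Z_l$, and the density of $s_l(N_l)$ together with $N_l$ being infinite guarantees infinitely many indices $i$ where everything works, along which $w_{2i}\to\infty$ and $\mu_i\to\mu$ automatically.
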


\begin{proof}
Note that $\mu\in\sfQ$ if and only if $\mu\in\bQ$. Passing to a subsequence, we may assume that $\mu_i$ form a non-decreasing sequence or a non-increasing one. For each $i$, we consider the real-valued weighted order $\ord_i$ on $\sO_X$ and $\sO_{\hat X}$ with respect to $\wt(x_1,x_2,x_3)=(i\mu,i,1)$. If $\mu_i$ is non-decreasing (resp.\ non-increasing), then $\ord_i\ge iw_{2j}^{-1}\ord_{F_j}$ (resp.\ $\ord_i\ge i\mu w_{1j}^{-1}\ord_{F_j}$) as far as $i\le w_{2j}$. Hence with $\ord_{F_j}\fa_j^q=w_{1j}+w_{2j}$, one has for large $j$,
\[
\ord_i\fa_j^q\ge
\begin{cases}
\displaystyle\frac{i}{w_{2j}}(w_{1j}+w_{2j})=i(\mu+1)-i(\mu-\mu_j)&\textrm{for $\mu_i$ non-decreasing,}\\
\displaystyle\frac{i\mu}{w_{1j}}(w_{1j}+w_{2j})=i(\mu+1)-i\Bigl(1-\frac{\mu}{\mu_j}\Bigr)&\textrm{for $\mu_i$ non-increasing.}
\end{cases}
\]

Take a small positive real number $\delta_i$ such that no element $m$ in the discrete set $q(\mu\bN+\bN)$ satisfies $i(\mu+1)-\delta_i<m<i(\mu+1)$. Then as far as $i(\mu-\mu_j)<\delta_i$ (resp.\ $i(1-\mu/\mu_j)<\delta_i$), the above inequality means that $\ord_i\fa_j^q\ge i(\mu+1)$. This implies that $\ord_i\sfa^q\ge i(\mu+1)$.

Take the weighted order $\ord_\infty$ with respect to $\wt(x_1,x_2)=(\mu,1)$ and set $S=\{(s_1,s_2)\in\bN^2\mid\mu s_1+s_2=\ord_\infty\sfa\}$. We express the general member $a$ of $\sfa$ as
\[
a=\sum_{(s_1,s_2)\in S}v_{s_1s_2}(x_3)x_1^{s_1}x_2^{s_2}+h
\]
for $v_{s_1s_2}\in K[[x_3]]$ and $h\in K[[x_1,x_2,x_3]]$ with $\ord_\infty h>\ord_\infty\sfa$, where $\sO_{\hat X}=K[[x_1,x_2,x_3]]$. Take the minimum $c$ of the orders of $v_{s_1s_2}$ for $(s_1,s_2)\in S$. Then as far as $(\ord_\infty h-\ord_\infty\sfa)i$ is greater than $c$, one has $\ord_ia=\ord_i(\sum_{(s_1,s_2)\in S}v_{s_1s_2}x_1^{s_1}x_2^{s_2})=i\ord_\infty\sfa+c$ and hence $\ord_i\sfa^q=i\ord_\infty\sfa^q+qc$. Combining it with the inequality $\ord_i\sfa^q\ge i(\mu+1)$ above, one obtains
\[
i\ord_\infty\sfa^q+qc\ge i(\mu+1)
\]
for all large $i$. Thus $\ord_\infty\sfa^q\ge\mu+1$ and $a$ is contained in the ideal $\sfI$ in $\sO_{\hat X}$ generated by all monomials $x_1^{s_1}x_2^{s_2}$ such that $q(\mu s_1+s_2)\ge\mu+1$.

If $\mu$ is irrational, then the equality $q(\mu s_1+s_2)=\mu+1$ holds only if $(s_1,s_2)=(q^{-1},q^{-1})$. In this case, if $w_1/w_2$ is sufficiently close to $\mu$, then the ideal $\sfI$ is the same as that generated by all monomials $x_1^{s_1}x_2^{s_2}$ such that $q((w_1/w_2)s_1+s_2)\ge w_1/w_2+1$. No matter whether $\mu$ is rational, the divisor $\hat F$ obtained by the weighted blow-up of $\hat X$ with $\wt(x_1,x_2)=(w_1,w_2)$ has $\ord_{\hat F}\sfa^q\ge w_1+w_2$, that is, $a_{\hat F}(\hat X,\sfa^q)\le0$. Since $(\hat X,\sfa^q)$ is lc, $a_{\hat F}(\hat X,\sfa^q)$ is zero.
\end{proof}

\section{Admissible sequences}
In this section, we shall furnish an inductive framework by the lc slope for the proof of Theorem~\ref{thm:limit}. As before, let $x\in X$ be the germ of a smooth threefold and let $\fm$ be the maximal ideal in $\sO_X$ defining $x$. Fix a positive rational number $q$ and a non-negative rational number $s$. We also fix a positive integer $n$ such that $nq$ is an integer.

\begin{definition}
Let $\fa$ be an $\fm$-primary ideal in $\sO_X$ such that $(x\in X,\fa^q)$ is canonical of semistable type and such that $\mld_x(X,\fa^q\fm^s)$ is positive. An \textit{admissible} weighted blow-up $b\in F\subset B\to x\in X$ for $(X,\fa^q\fm^s)$ is the weighted blow-up of $X$ with $\wt(x_1,x_2,x_3)=(w_1,w_2,1)$ for some regular system $x_1,x_2,x_3$ of parameters in $\sO_{X,x}$ and for some pair $(w_1,w_2)$ of positive integers with $1\le w_1/w_2\le n$, endowed with the exceptional divisor $F$ and the point $b$ above $x$ in the strict transform of the curve in $X$ defined by $(x_1,x_2)\sO_X$, such that for the weak transform $\fa_B$ in $b\in B$ of $\fa$,
\begin{itemize}
\item
$(b\in B,\fa_B^q)$ is crepant to $(x\in X,\fa^q)$ and
\item
$\mld_x(X,\fa^q\fm^s)=\mld_b(B,\fa_B^q(\fm\sO_B)^s)$.
\end{itemize}
An admissible weighted blow-up $b\in F\subset B\to x\in X$ is said to be \textit{maximal} if there exists no admissible weighted blow-up $b'\in F'\subset B'\to x\in X$ such that $c_B(F')=b$.
\end{definition}

\begin{remark}\label{rmk:admissible}
Let $b\in F\subset B\to x\in X$ and $b'\in F'\subset B'\to x\in X$ be admissible weighted blow-ups for $(X,\fa^q\fm^s)$. If $c_B(F')=b$, then the strict inequality $a_F(X)<a_{F'}(X)$ of log discrepancies holds. Because $a_F(X)$ is bounded from above by the maximum of $a_G(X)$ for the finitely many divisors $G$ over $X$ that compute $\mld_x(X,\fa^q\fm^s)>0$, the existence of an admissible weighted blow-up for $(X,\fa^q\fm^s)$ guarantees that of a maximal one.
\end{remark}

Let $\cS=\{\fa_i\}_{i\in\bN_+}$ be a sequence of $\fm$-primary ideals in $\sO_X$ such that $(x\in X,\fa_i^q)$ is canonical of semistable type and such that $\mld_x(X,\fa_i^q\fm^s)$ is positive.

\begin{definition}
We say that $\{\fa_i^q\fm^s\}_{i\in\bN_+}$ is an \textit{admissible} sequence of \textit{slope} $\mu$ if for all $i$ there exists a maximal admissible weighted blow-up for $(X,\fa_i^q\fm^s)$, with respect to weights $(w_{1i},w_{2i},1)$, such that $\lim_{i\to\infty}w_{2i}=\infty$ and such that $\mu_i=w_{1i}/w_{2i}$ satisfies $1\le\mu_i\le n$ and has a limit $\mu=\lim_{i\to\infty}\mu_i$ in $\bR$. Note that $1\le\mu\le n$.
\end{definition}

\begin{lemma}\label{lem:admissible}
Given a sequence $\cS=\{\fa_i\}_{i\in\bN_+}$ as above, after passing to a subsequence, either one attains a positive integer $l$ such that for all $i$ there exists a divisor $E_i$ over $X$ which computes $\mld_x(X,\fa_i^q\fm^s)$ and has $a_{E_i}(X)\le l$, or one attains an admissible sequence $\{\fa_i^q\fm^s\}_{i\in\bN_+}$ of some slope.
\end{lemma}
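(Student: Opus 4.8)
The plan is to feed $\cS$ into the generic limit machinery and to read off the dichotomy from an lc slope of the limit. Let $\sfa$ be a generic limit of $\cS$ on $\hat x\in\hat X$, built from a family $\cF$ of approximations. Since each $(X,\fa_i^q)$ is canonical, $(\hat X,\sfa^q)$ is lc with $\mld_{\hat x}(\hat X,\sfa^q)\ge1$ (Lemma~\ref{lem:resolution}). If $(\hat X,\sfa^q)$ is not special then Theorem~\ref{thm:special} already gives the first alternative, and if $(\hat X,\sfa^q\hat\fm)$ is lc then so does Lemma~\ref{lem:lct1}; hence we may assume $(\hat X,\sfa^q)$ is special and $(\hat X,\sfa^q\hat\fm)$ is not lc. Then by Theorem~\ref{thm:lcslope} and Lemma~\ref{lem:lcslope}, $(\hat X,\sfa^q)$ admits an lc slope $w_1/w_2\in\sfQ_n$ with respect to a regular system $y_1,y_2,y_3$ of parameters of $\sO_{\hat X}$, and its smallest lc centre is the curve $\hat C=(y_1,y_2)\sO_{\hat X}$.

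Next I would straighten $\hat C$ to a coordinate curve of $X$. After the reduction of Remark~\ref{rmk:etale} to $x\in X=o\in\bA^3$, truncate $y_1,y_2,y_3$ modulo a high power of $\hat\fm$ and descend the truncations to a large $Z_l$; after shrinking $Z_l$ they come from polynomials over $\sO_{Z_l}$ whose linear parts form an invertible matrix, hence define an automorphism of the germ $X\times Z_l$ over $Z_l$ carrying a fixed regular system $x_1,x_2,x_3$ of parameters of $\sO_{X,x}$ onto them. Replacing $\fa_i$ by its pull-back under the induced automorphism $\tau_i$ of the germ $x\in X$ — which changes neither $\mld_x(X,\fa_i^q\fm^s)$ nor canonicity of semistable type, since $\tau_i$ fixes $x$ and $\fm$ — and replacing $\cF$ accordingly, we may assume that $\hat C=C\times_X\hat X$ for $C=(x_1,x_2)\sO_X$ and that the lc slope is taken with respect to $x_1,x_2,x_3$.

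Now fix $j\in\bN_+$ and let $G_j\subset B_j\to x\in X$ be the weighted blow-up with $\wt(x_1,x_2,x_3)=(jw_1,jw_2,1)$, with $b_j\in B_j$ the point above $x$ on the strict transform of $C$ and $\hat G_j$ the corresponding weighted blow-up of $\hat X$. From the componentwise inequality $(jw_1,jw_2,1)\ge j(w_1,w_2,0)$ one gets $\ord_{\hat G_j}\sfa^q\ge j(w_1+w_2)$, so $a_{\hat G_j}(\hat X,\sfa^q)\le1$, which together with $\mld_{\hat x}(\hat X,\sfa^q)=1$ forces equality; by the argument of Proposition~\ref{prp:down} this descends, after passing to a subsequence, to $a_{G_j}(X,\fa_i^q)=1$ for all $i$, so that $(b_j\in B_j,\fa_{iB_j}^q)$ is crepant to $(x\in X,\fa_i^q)$. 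Theorem~\ref{thm:success}, applied with $\fb_i=\fm$ to the fixed weighted blow-up $G_j\subset B_j\to x\in X$, then gives, after replacing $\cF$, that for every $i$ the minimal log discrepancy $\mld_x(X,\fa_i^q\fm^s)$ equals $\mld_{\hat x}(\hat X,\sfa^q\hat\fm^s)$ or $\mld_{b_j}(B_j,\fa_{iB_j}^q(\fm\sO_{B_j})^s)$. If the former holds for infinitely many $i$, those indices form a subsequence on which $\mld_x(X,\fa_i^q\fm^s)$ is the fixed value $\mld_{\hat x}(\hat X,\sfa^q\hat\fm^s)$, and Lemma~\ref{lem:limit} produces the uniform bound $l$, the first alternative. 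Otherwise, after a further passage to a subsequence $N^{(j)}$, the latter holds for all $i\in N^{(j)}$, and then $G_j\subset B_j\to x\in X$ is an admissible weighted blow-up for $(X,\fa_i^q\fm^s)$ for every $i\in N^{(j)}$ (note $1\le w_1/w_2\le n$ since $w_1/w_2\in\sfQ_n$), of log discrepancy $a_{G_j}(X)=j(w_1+w_2)+1$. Taking the $N^{(j)}$ to be nested and choosing $i_1<i_2<\cdots$ with $i_k\in N^{(k)}$, the ideal $\fa_{i_k}$ admits an admissible weighted blow-up of log discrepancy $k(w_1+w_2)+1$; hence by Remark~\ref{rmk:admissible} it has a maximal admissible weighted blow-up, and an admissible one with the largest $a_F(X)$ is maximal and has $a_F(X)\ge k(w_1+w_2)+1$. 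Writing $(w_{1k},w_{2k},1)$ for its weights, $1\le w_{1k}/w_{2k}\le n$ and $w_{1k}+w_{2k}+1\to\infty$, so $w_{2k}\to\infty$; passing to a subsequence on which $w_{1k}/w_{2k}$ converges to some $\mu\in[1,n]$ shows $\{\fa_{i_k}^q\fm^s\}_k$ is admissible of slope $\mu$, the second alternative.

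The step I expect to be the main obstacle is the straightening: replacing the a priori transcendental lc centre $\hat C$ of the generic limit by the base change of a fixed coordinate curve of $X$, which forces one to realign the sequence by a suitable family of automorphisms of the germ and to make this compatible with the already-chosen family of approximations, so that Theorem~\ref{thm:success} applies on the nose. A secondary chore is the bookkeeping of the successive replacements of $\cF$ and the diagonalisation converting "an admissible weighted blow-up of log discrepancy $\sim j$ at the $j$-th stage" into "a maximal admissible weighted blow-up of log discrepancy tending to infinity for the $i$-th ideal".
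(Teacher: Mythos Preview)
Your overall plan---extract an lc slope of the generic limit, produce crepant weighted blow-ups with weights $(jw_1,jw_2,1)$, feed them into Theorem~\ref{thm:success}, and read off either the bound or admissibility---matches the paper's. Organisationally the paper is leaner: it first assumes a uniform bound $w_{2i}<w$ on the second weight of \emph{every} admissible weighted blow-up, then applies Theorem~\ref{thm:success} once to the single blow-up with weights $(ww_1,ww_2,1)$, which by hypothesis cannot be admissible, forcing $\mld_x<\mld_b$ and hence $\mld_x=\mld_{\hat x}$; this replaces your diagonalisation over $j$.

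The substantive gap is exactly where you feared: your straightening does not deliver the hypothesis $\hat C=C\times_X\hat X$ of Theorem~\ref{thm:success}. Truncating $y_1,y_2,y_3$ modulo $\hat\fm^N$ does not preserve the curve $V(y_1,y_2)$, because a correction in $\hat\fm^N$ may involve $y_3^N$, which has weight zero for the two-variable grading $\wt(y_1,y_2)=(w_1,w_2)$; so Remark~\ref{rmk:wbu} does not apply to the two-variable blow-up, and $V(\tilde y_1,\tilde y_2)\neq V(y_1,y_2)$ in general. Concretely, if $y_1=x_1+x_3^{2N}$, $y_2=x_2$, $y_3=x_3$, your truncation gives $\tilde y_j=x_j$, hence $\tau_K=\mathrm{id}$, and the smallest lc centre of the new limit is still $V(x_1+x_3^{2N},x_2)\neq V(x_1,x_2)$. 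For a \emph{fixed} $j$ and $N>jw_1$ the three-variable blow-up with weights $(jw_1,jw_2,1)$ is unaffected, but Theorem~\ref{thm:success} needs the lc centre itself to be the base change of the algebraic curve $C$, and that fails.

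The paper closes this differently. It does not try to descend the formal curve $\hat C$. Instead it first applies Proposition~\ref{prp:down} to get, for each $i$, an $i$-dependent regular system $x_{1i},x_{2i},x_{3i}$ with crepant weighted blow-up of weights $(iw_1,iw_2,1)$; then uses the \'etale morphisms of Remark~\ref{rmk:etale} (not automorphisms of the germ) to push each $\fa_i$ to $\bA^3$ so that these systems become a single common $(x_1,x_2,x_3)$; and finally---this is the step you are missing---invokes Proposition~\ref{prp:up} on the \emph{new} sequence to re-derive that the new generic limit admits an lc slope with respect to $x_1,x_2,x_3$, whence the $x_3$-axis is the smallest lc centre. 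The point is that Proposition~\ref{prp:up} reconstructs the lc centre from the purely algebraic data of the crepant weighted blow-ups on $X$, so one never has to align a formal curve with an algebraic one directly.
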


\begin{proof}
Suppose that for infinitely many $i$, there exists a maximal admissible weighted blow-up for $(X,\fa_i^q\fm^s)$, with respect to weights $(w_{1i},w_{2i},1)$ for $1\le w_{1i}/w_{2i}\le n$. If the set of $w_{2i}$ is infinite, then passing to a subsequence, one attains an admissible sequence $\{\fa_i^q\fm^s\}_i$. From this observation together with Remark~\ref{rmk:admissible}, we may assume the existence of a constant $w$ independent of $i$ such that every admissible weighted blow-up for $(X,\fa_i^q\fm^s)$ is with respect to weights $(w_{1i},w_{2i},1)$ satisfying $w_{2i}<w$.

We construct a generic limit $\sfa$ of $\cS$ on $\hat x\in\hat X$. By Theorems~\ref{thm:special} and~\ref{thm:lcslope}, we may assume that $(\hat X,\sfa^q)$ is special and admits an lc slope $w_1/w_2$ for $(w_1,w_2)\in\sfP$. Then by Proposition~\ref{prp:down}, after replacement of $\cS$ by a subsequence, for all $i$ there exists a regular system $x_{1i},x_{2i},x_{3i}$ of parameters in $\sO_{X,x}$ such that $a_{F_i}(X,\fa_i^q)=1$ for the divisor $F_i$ obtained by the weighted blow-up of $X$ with $\wt(x_{1i},x_{2i},x_{3i})=(iw_1,iw_2,1)$. In order to derive the existence of the bound $l$, we may replace $\fa_i$ by an ideal in $\sO_{\bA^3}$, and hence $x\in X$ by $o\in\bA^3$, by the \'etale morphism $X\to\bA^3$ which pulls back the coordinates of $\bA^3$ to $x_{1i},x_{2i},x_{3i}$ as in Remark~\ref{rmk:etale}. This enables us to assume that $(x_{1i},x_{2i},x_{3i})$ is a common regular system $(x_1,x_2,x_3)$. Proposition~\ref{prp:up} confirms that $(\hat X,\sfa^q)$ remains special and that the $x_3$-axis is the smallest lc centre.

We shall apply Theorem~\ref{thm:success} for the weighted blow-up $b\in F\subset B\to x\in X$ with $\wt(x_1,x_2,x_3)=(ww_1,ww_2,1)$, where $b$ is the point above $x$ in the strict transform of the $x_3$-axis. As far as $i\ge w$, one has $\ord_F\fa_i^q\ge wi^{-1}\ord_{F_i}\fa_i^q=w(w_1+w_2)$ and hence $a_F(X,\fa_i^q)=1$ as $(X,\fa_i^q)$ is canonical. By the choice of $w$, the weighted blow-up $b\in F\subset B\to x\in X$ is never admissible for $(X,\fa_i^q\fm^s)$ and hence $\mld_x(X,\fa_i^q\fm^s)<\mld_b(B,\fa_{iB}^q(\fm\sO_B)^s)$. It thus follows from Theorem~\ref{thm:success} that $\mld_x(X,\fa_i^q\fm^s)=\mld_{\hat x}(\hat X,\sfa^q\hat\fm^s)$ for all $i$ after replacement of the family of approximations, where $\hat\fm$ is the maximal ideal in $\sO_{\hat X}$. Hence Lemma~\ref{lem:limit} yields a required bound $l$.
\end{proof}

From the above lemma, in order to prove Theorem~\ref{thm:limit}, we may assume that $\{\fa_i^q\fm^s\}_i$ is an admissible sequence of some slope $1\le\mu\le n$. First we treat the case when $\mu$ is not in the finite set $\sfQ_n$.

\begin{theorem}\label{thm:slopeR}
If $\{\fa_i^q\fm^s\}_i$ is an admissible sequence of slope not in $\sfQ_n$, then there exists a positive integer $l$ such that for infinitely many $i$, there exists a divisor $E_i$ over $X$ which computes $\mld_x(X,\fa_i^q\fm^s)$ and has $a_{E_i}(X)\le l$.
\end{theorem}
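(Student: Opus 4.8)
The plan is to prove that an admissible sequence $\{\fa_i^q\fm^s\}_i$ whose slope $\mu$ does not lie in $\sfQ_n$ forces the generic limit $(\hat X,\sfa^q)$ to admit two different lc slopes with respect to a single regular system of parameters, and then to invoke Proposition~\ref{prp:lcslopes} together with Lemma~\ref{lem:lct1}. Concretely, Proposition~\ref{prp:lcslopes} will yield that $(\hat X,\sfa^q\hat\fm)$ is lc, and Lemma~\ref{lem:lct1} then reduces the assertion to the case already settled in Remark~\ref{rmk:product}(\ref{itm:product-one}).

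First I would normalise the set-up. For each $i$ the maximal admissible weighted blow-up supplies a divisor $F_i$ over $X$, obtained from some regular system $x_{1i},x_{2i},x_{3i}$ in $\sO_{X,x}$ with weights $(w_{1i},w_{2i},1)$ and crepant to $(X,\fa_i^q)$, so that in particular $a_{F_i}(X,\fa_i^q)=1$; moreover $w_{2i}\to\infty$ and $\mu_i=w_{1i}/w_{2i}\to\mu$ with $1\le\mu\le n$ and $\mu\notin\sfQ_n$. As in the proof of Lemma~\ref{lem:admissible}, the \'etale morphism from $X$ to $\bA^3$ carrying the coordinate functions to $x_{1i},x_{2i},x_{3i}$ exhibits $\fa_i$ as the pull-back of an $\fn$-primary ideal $\fb_i$ on $\bA^3$, preserving the relevant minimal log discrepancies, the property of being canonical of semistable type, the positivity of $\mld_o(\bA^3,\fb_i^q\fn^s)$, and the crepancy $a_{\bar F_i}(\bA^3,\fb_i^q)=1$ of the weighted blow-up of $\bA^3$ along $\wt(x_1,x_2,x_3)=(w_{1i},w_{2i},1)$. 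Since a divisor over $\bA^3$ of bounded log discrepancy computing $\mld_o(\bA^3,\fb_i^q\fn^s)$ pulls back to one over $X$ with the same log discrepancy computing $\mld_x(X,\fa_i^q\fm^s)$, I may replace $x\in X$ by $o\in\bA^3$, so that the divisors $F_i$ are now obtained from one common regular system $x_1,x_2,x_3$. I then form a generic limit $\sfa$ of $\cS=\{\fa_i\}$ on $\hat x\in\hat X$; it is lc and $\mld_{\hat x}(\hat X,\sfa^q)\ge1$.

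Next I would produce the two lc slopes. Note that $\mu\notin\sfQ_n$ together with $1\le\mu\le n$ forces $\mu>1$. If $\mu$ is irrational, let $\varepsilon>0$ be as in Proposition~\ref{prp:up}(ii) (depending only on $q$ and $\mu$) and choose two distinct rationals $\mu',\mu''$ in $(\mu-\varepsilon,\mu+\varepsilon)$ whose reduced forms lie in $\sfP$; applying Proposition~\ref{prp:up}(ii) in turn with these two pairs — which only replaces the family of approximations by subfamilies and hence leaves $\sfa$ unchanged — yields that $(\hat X,\sfa^q)$ admits both lc slopes $\mu'$ and $\mu''$ with respect to $x_1,x_2,x_3$. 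If instead $\mu=w_1/w_2$ in lowest terms, then $w_1>n$, so $(w_1,w_2)\in\sfP\setminus\sfP_n$; Proposition~\ref{prp:up}(i) gives that $(\hat X,\sfa^q)$ admits the lc slope $\mu\notin\sfQ_n$ with respect to $x_1,x_2,x_3$, and Lemma~\ref{lem:lcslope} then furnishes a further lc slope lying in $\sfQ_n$ with respect to the same system, which is necessarily different from $\mu$.

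In either case $(\hat X,\sfa^q)$ admits two different lc slopes with respect to one regular system of parameters, so Proposition~\ref{prp:lcslopes} shows that $(\hat X,\sfa^q\hat\fm)$ is lc, and Lemma~\ref{lem:lct1} produces the integer $l$ for the current subsequence; transporting back through the \'etale morphisms $X\to\bA^3$ completes the proof for the original admissible sequence. The effort here is mostly bookkeeping: verifying that the passage to $\bA^3$ preserves "canonical of semistable type", crepancy, and the pertinent minimal log discrepancies, and making sure that the successive use of Proposition~\ref{prp:up} supplies two lc slopes for one and the same $\sfa$ — which is precisely why the subfamily replacements in that proposition must be read as not altering the generic limit. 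The only conceptual point is that a slope outside $\sfQ_n$ never occurs in isolation: an irrational $\mu$ is straddled by rationals all of which force lc slopes, while a rational $\mu\notin\sfQ_n$ automatically drags along a companion lc slope inside $\sfQ_n$ by Lemma~\ref{lem:lcslope}.
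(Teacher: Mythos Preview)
Your proof is correct and follows essentially the same route as the paper: reduce to a common coordinate system on $\bA^3$, use Proposition~\ref{prp:up} to produce an lc slope outside $\sfQ_n$, obtain a second lc slope with respect to the same coordinates, and conclude via Proposition~\ref{prp:lcslopes} and Lemma~\ref{lem:lct1}. The only difference is that the paper treats rational and irrational $\mu\notin\sfQ_n$ uniformly by invoking Lemma~\ref{lem:lcslope} once to manufacture the companion slope in $\sfQ_n$, whereas you split into cases and, for irrational $\mu$, apply Proposition~\ref{prp:up} twice; your justification that successive subfamily replacements ``leave $\sfa$ unchanged'' is not literally true, but the conclusion still holds because the proof of Proposition~\ref{prp:up} actually yields $\sfa\subset\sfI$ for an ideal depending only on $\mu$, hence all nearby slopes at once---or, more simply, one can just use Lemma~\ref{lem:lcslope} in this case too.
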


\begin{proof}
By the reduction to the case when $x\in X$ is the germ at origin of $\bA^3$ as in the proof of Lemma~\ref{lem:admissible}, we may assume the existence of a regular system $x_1,x_2,x_3$ of parameters in $\sO_{X,x}$ for which, for all $i$, there exists a maximal admissible weighted blow-up for $(X,\fa_i^q\fm^s)$ with $\wt(x_1,x_2,x_3)=(w_{1i},w_{2i},1)$ such that $\lim_{i\to\infty}w_{2i}=\infty$ and such that $\mu_i=w_{1i}/w_{2i}$ satisfies $1\le\mu_i\le n$ and has a limit $\mu=\lim_{i\to\infty}\mu_i$ in $\bR\setminus\sfQ_n$. Then Proposition~\ref{prp:up} yields, after replacement of $\cS$, an element $(w_1,w_2)$ in $\sfP\setminus\sfP_n$ such that $(\hat X,\sfa^q)$ admits an lc slope $w_1/w_2\not\in\sfQ_n$ with respect to $x_1,x_2,x_3$. On the other hand, it follows from Lemma~\ref{lem:lcslope} that $(\hat X,\sfa^q)$ also admits an lc slope in $\sfQ_n$ with respect to the same system $x_1,x_2,x_3$. Thus the theorem follows from Proposition~\ref{prp:lcslopes} and Lemma~\ref{lem:lct1}.
\end{proof}

In the next section, we shall treat the other case by induction on $\mu$.

\begin{theorem}\label{thm:slopeQ}
If $\{\fa_i^q\fm^s\}_i$ is an admissible sequence of slope $\mu\in\sfQ_n$, then there exists a positive integer $l$ such that for infinitely many $i$, there exists a divisor $E_i$ over $X$ which computes $\mld_x(X,\fa_i^q\fm^s)$ and has $a_{E_i}(X)\le l$.
\end{theorem}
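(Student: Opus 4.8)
The plan is to run an induction on the slope $\mu$ through the finite set $\sfQ_n$, ordered so that the smallest elements are treated first; equivalently one may induct on $w_1+w_2$ where $\mu=w_1/w_2$ with $(w_1,w_2)\in\sfP_n$. Fix an admissible sequence $\{\fa_i^q\fm^s\}_i$ with slope $\mu=w_1/w_2\in\sfQ_n$, so that for each $i$ a maximal admissible weighted blow-up $b_i\in F_i\subset B_i\to x\in X$ exists with $\wt(x_{1i},x_{2i},x_{3i})=(w_{1i},w_{2i},1)$, $w_{2i}\to\infty$, and $w_{1i}/w_{2i}\to\mu$. After the reduction to $o\in\bA^3$ as in the proof of Lemma~\ref{lem:admissible} and a replacement of $\cS$, I would arrange a common regular system $x_1,x_2,x_3$ and use Proposition~\ref{prp:up}(i) to conclude that a generic limit $(\hat X,\sfa^q)$ admits the lc slope $w_1/w_2$ with respect to $x_1,x_2,x_3$, that it is special, and that $\hat C$, the $x_3$-axis, is the smallest lc centre. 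By Proposition~\ref{prp:down} applied to $(\hat X,\sfa^q)$ one gets, for the weighted blow-up $F\subset B\to x\in X$ with $\wt(x_1,x_2,x_3)=(Nw_1,Nw_2,1)$ for a large fixed $N$, that $a_F(X,\fa_i^q)=1$, so $b\in F\subset B$ is a candidate crepant weighted blow-up satisfying the hypotheses of Theorem~\ref{thm:success}.

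Next I would invoke Theorem~\ref{thm:success} with this $F\subset B\to x\in X$: after replacing the family of approximations, $\mld_x(X,\fa_i^q\fm^s)$ equals either $\mld_{\hat x}(\hat X,\sfa^q\hat\fm^s)$ or $\mld_{b}(B,\fa_{iB}^q(\fm\sO_B)^s)$ for each $i$. In the first case Lemma~\ref{lem:limit} immediately yields the bound $l$, so I may pass to a subsequence in which the second alternative holds for all $i$. Now the maximal admissible weighted blow-up $b_i\in F_i\subset B_i$ factors (up to the usual identification of divisors over $X$) through $B$: since $B$ is not admissible by the maximal choice, $c_B(F_i)=b$, and the induced weighted blow-up of $B$ at $b$ recovering $F_i$ has weights $(v_{1i},v_{2i},1)$ with $v_{1i}/v_{2i}\to\mu'$ for some $\mu'\in\sfQ_n$ (after a further subsequence, using that $1\le v_{1i}/v_{2i}\le n$ and passing to a convergent subsequence; one also checks $\mu'\in\sfQ_n$ since the weights are bounded once $a_{E_i}(B_i)$ is controlled — this is the point where the estimate feeds back). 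The key comparison is supplied by Theorem~\ref{thm:composite}: because $B$ is chosen maximal among admissible weighted blow-ups for $(X,\fa_i^q\fm^s)$, the composite $F_i$ over $X$ is \emph{not} realised by a weighted blow-up of $X$ with the weights predicted by Theorem~\ref{thm:composite}; the numerical hypothesis $\rd{(v_{1i}-1)/v_{2i}}\le(Nw_1-v_{1i}^2)/(Nw_2)$ of that theorem must therefore fail for large $i$, and passing to the limit this forces $\mu<\mu'$.

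With the strict inequality $\mu<\mu'$ in $\sfQ_n$ in hand, I would set up the inductive step: the sequence $\{\fa_{iB}^q(\fm\sO_B)^s\}_i$ on $b\in B$ is, after the standard manipulations, an admissible sequence (in the appropriate sense adapted to the quotient-singularity germ $b\in B$) of slope $\mu'$, and $\mld_b(B,\fa_{iB}^q(\fm\sO_B)^s)=\mld_x(X,\fa_i^q\fm^s)$; by the induction hypothesis applied at slope $\mu'$ there is a bound $l'$ on $a_{E_i}(B)$ for the divisor $E_i$ computing this minimal log discrepancy, and since $a_{E_i}(X)\le a_{E_i}(B)+(a_F(X)-1)\ord_{E_i}F=a_{E_i}(B)+(a_F(X)-1)\ord_{E_i}F$ with $a_F(X)=Nw_1+Nw_2+1$ fixed and $\ord_{E_i}F$ bounded (it is controlled by $l'$ together with the geometry of $B$), we obtain the required bound $l$ on $a_{E_i}(X)$. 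The main obstacle will be the rigorous execution of the reduction to the surface: making precise that the maximal admissible $B_i$ genuinely factors through the fixed $B$ and that the weights $(v_{1i},v_{2i})$ are the ones to which Theorem~\ref{thm:composite} applies, and then extracting $\mu<\mu'$ cleanly from the failure of the numerical condition — together with verifying that the truncation of $\fa_i$ by the part of higher $F$-order, the flat degeneration to a weighted homogeneous $h_i$, and the precise inversion of adjunction in Lemma~\ref{lem:pia} reduce the bound for $(B,\fa_{iB}^q(\fm\sO_B)^s)$ to a statement on the surface $F$, to which the lower semicontinuity of minimal log discrepancies on smooth varieties and the surface case (Remark~\ref{rmk:equiv}(\ref{itm:equiv-sf})) can be applied.
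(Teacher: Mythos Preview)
Your induction is set up inconsistently: you say you treat the smallest slopes first, yet you derive $\mu<\mu'$ and then invoke the hypothesis at the larger $\mu'$. The paper's induction is descending on $\mu\in\sfQ_n$ (assume the theorem for all slopes strictly greater than $\mu$), and Theorem~\ref{thm:composite} is used in the opposite direction from what you write: when $\mu'\le\mu$ its numerical hypothesis $\rd{(v_1-1)/v_2}\le(w_{1i}-v_1^2)/w_{2i}$ \emph{holds} for large $i$, so the composite of $B_i\to X$ with a further crepant weighted blow-up of $B_i$ \emph{is} realised as an admissible weighted blow-up of $X$, contradicting the maximality of $B_i$. This forces $\mu'>\mu$, where the descending hypothesis applies. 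Your fixed auxiliary $B$ with weights $(Nw_1,Nw_2,1)$ does not appear in the paper's proof of this theorem; the argument is run on the varying germ $b_i\in B_i$ via an \'etale identification with $o\in\bA^3$, and your claim that $B_i$ ``factors through'' $B$ is neither needed nor justified.

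The more serious gap is that the induction together with Theorem~\ref{thm:composite} only bounds $a_{E_i}(B_i)$ (this is Lemma~\ref{lem:slopeQ}), not $a_{E_i}(X)$. The discrepancy formula $a_{E_i}(X)=a_{E_i}(B_i)+(a_{F_i}(X)-1)\ord_{E_i}F_i$ cannot close this, since $a_{F_i}(X)-1=w_{1i}+w_{2i}\to\infty$. The passage to a bound on $a_{E_i}(X)$ is the entire content of Section~\ref{sct:monomial}: the bound on $a_{E_i}(B_i)$ is what allows one to truncate the general member of $\fa_i$ by the ideal $\sI$ of $F$-order greater than $(w_1+w_2)n$ without changing $\ord_{E_i}$ (Lemma~\ref{lem:fg}); the truncation then admits a flat degeneration to a weighted-homogeneous $h_i$, and precise inversion of adjunction combined with the surface $\fm$-adic semi-continuity on the exceptional divisor $F$ (Lemma~\ref{lem:gh}) yields $\mld_{\hat x}(\hat X,\sfa^q\hat\fm^s)\le\mld_x(X,\fa_i^q\fm^s)$, after which Lemma~\ref{lem:limit} supplies the bound $l$. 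A separate combinatorial case (when $S_0^-=\{(n,n)\}$, Lemma~\ref{lem:unique}) must also be handled via Proposition~\ref{prp:lcslopes} and Lemma~\ref{lem:lct1}. What you list at the end as ``the main obstacle'' is in fact the core of the argument, not a verification step.
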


We express $\mu=w_1/w_2$ by an element $(w_1,w_2)$ in $\sfP_n$. By induction, we assume the theorem for all admissible sequences of slope $\mu'\in\sfQ_n$ greater than $\mu$. As in the proofs of Lemma~\ref{lem:admissible} and Theorem~\ref{thm:slopeR}, we may assume the existence of a regular system $x_1,x_2,x_3$ of parameters in $\sO_{X,x}$ for which, for all $i$, there exists a maximal admissible weighted blow-up
\[
\pi_i\colon b_i\in F_i\subset B_i\to x\in X\quad\textrm{with}\quad\wt(x_1,x_2,x_3)=(w_{1i},w_{2i},1)
\]
for $(X,\fa_i^q\fm^s)$ such that $\lim_{i\to\infty}w_{2i}=\infty$ and such that $\mu_i=w_{1i}/w_{2i}$ satisfies $1\le\mu_i\le n$ and converges to $\mu=w_1/w_2$. Recall that $\mld_x(X,\fa_i^q\fm^s)=\mld_{b_i}(B_i,\fa_{iB}^q(\fm\sO_{B_i})^s)$ for the weak transform $\fa_{iB}$ in $b_i\in B_i$ of $\fa_i$.

\begin{lemma}\label{lem:slopeQ}
Under the inductive hypothesis by the slope, there exists a positive integer $l'$ such that for infinitely many $i$, there exists a divisor $E_i$ over $B_i$ which computes $\mld_{b_i}(B_i,\fa_{iB}^q(\fm\sO_{B_i})^s)$ and has $a_{E_i}(B_i)\le l'$.
\end{lemma}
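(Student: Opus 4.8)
The plan is to run the analysis of maximal admissible weighted blow-ups a second time, now on the germs $b_i\in B_i$, and to use Theorem~\ref{thm:composite} together with the maximality of $B_i$ to force the slope at this second layer to be strictly larger than $\mu$, so that the bound follows by induction on the slope.

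I would first fix the local picture at $b_i$. Since $\ord_{F_i}\fm=\min(w_{1i},w_{2i},1)=1$, the pull-back $\fm\sO_{B_i}$ equals $x_3\sO_{B_i}=\sO_{B_i}(-F_i)$ near $b_i$ in the regular system $(x_1x_3^{-w_{1i}},x_2x_3^{-w_{2i}},x_3)$ of parameters in $\sO_{B_i,b_i}$; thus $b_i\in B_i$ is a smooth threefold germ on which $F_i$ is the smooth divisor $(x_3=0)$ locally, and $\fa_{iB}^q(\fm\sO_{B_i})^s=\fa_{iB}^q(x_3\sO_{B_i})^s$ there. The pair $(b_i\in B_i,\fa_{iB}^q)$ is canonical, being crepant to $(x\in X,\fa_i^q)$, and it is of semistable type whenever $\mld_{b_i}(B_i,\fa_{iB}^q)=1$: a divisor $E$ with $c_{B_i}(E)=b_i$ computing that minimal log discrepancy has $a_E(X,\fa_i^q)=1$ and $c_X(E)=x$, so $\ord_E\fm=1$ by the semistable type of $(X,\fa_i^q)$, whence $1\le\ord_E\fm_{b_i}\le\ord_Ex_3=1$. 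By Remark~\ref{rmk:etale} we may regard $b_i\in B_i$ as the germ $o\in\bA^3$. We must bound $a_{E_i}(B_i)$ for a divisor $E_i$ computing $\mld_{b_i}(B_i,\fa_{iB}^q(\fm\sO_{B_i})^s)=\mld_x(X,\fa_i^q\fm^s)>0$; the case $s=0$ being covered by Remark~\ref{rmk:product}, we assume $s>0$.

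Next I would transplant the framework of Section~\ref{sct:limit} and of the section on admissible sequences to the pairs $(B_i,\fa_{iB}^q(\fm\sO_{B_i})^s)$, with the pull-back $\fm\sO_{B_i}=\sO_{B_i}(-F_i)$ — a smooth divisor rather than an $\fm$-primary ideal — playing the role of the maximal-ideal factor, the substitute for Theorem~\ref{thm:success} being a generic-limit argument adapted to this boundary. The dichotomy of Lemma~\ref{lem:admissible} then gives, after passing to a subsequence, either a bound on $a_{E_i}(B_i)$, and we are done; or a maximal admissible weighted blow-up $b_i'\in G_i\subset C_i\to b_i\in B_i$ with weights $(v_{1i},v_{2i},1)$, crepant for $\fa_{iB}^q$, with $c_{C_i}(E_i)=b_i'$ and $\mld_{b_i'}(C_i,\fa_{iC}^q(\fm\sO_{C_i})^s)=\mld_{b_i}(B_i,\fa_{iB}^q(\fm\sO_{B_i})^s)$, where $\fa_{iC}$ is the weak transform in $b_i'\in C_i$ of $\fa_i$, such that $v_{1i}/v_{2i}$ converges to some $\mu'$ with $1\le\mu'\le n$. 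Here $\fm\sO_{C_i}=x_3\sO_{C_i}=\sO_{C_i}(-G_i)$ near $b_i'$, so the same local picture reproduces itself on $C_i$. The subcase $\mu'\notin\sfQ_n$ is disposed of as in Theorem~\ref{thm:slopeR}, by passing $\mu'$ into $\sfQ_n$ via Lemma~\ref{lem:lcslope} and invoking Propositions~\ref{prp:up},~\ref{prp:lcslopes} and Lemma~\ref{lem:lct1}. So assume $\mu'\in\sfQ_n$; after a further subsequence I would take $(v_{1i},v_{2i})$ to be a fixed pair $(v_1,v_2)\in\sfP_n$, in particular with $v_1,v_2\le n$.

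The conclusion is then forced. The composite $C_i\to B_i\to X$ is the composition, at the point $b_i$ lying over $x$ in the strict transform of $(x_1,x_2)\sO_X$, of two weighted blow-ups of the type in Theorem~\ref{thm:composite}, both crepant with respect to $\fa_i^q$. If $\mu'\le\mu$, then $\rd{(v_1-1)/v_2}<v_1/v_2=\mu'\le\mu$, while $(w_{1i}-v_1^2)/w_{2i}=\mu_i-v_1^2/w_{2i}\to\mu$ because $v_1$ is fixed and $w_{2i}\to\infty$; hence for all large $i$ the hypothesis $\rd{(v_1-1)/v_2}\le(w_{1i}-v_1^2)/w_{2i}$ of Theorem~\ref{thm:composite} holds, so $G_i$ over $X$ is obtained by the weighted blow-up $X_i'\to X$ with $\wt(x_{1i}',x_{2i}',x_3)=(w_{1i}+v_1-d_i,\,w_{2i}+v_2+d_i,\,1)$ for some $0\le d_i\le v_1-v_2$, which contracts $G_i$ to $x$ since $w_{2i}+v_2+d_i\le w_{1i}+v_1-d_i$. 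This $X_i'\to X$ is admissible for $(X,\fa_i^q\fm^s)$: it is crepant for $\fa_i^q$ by composing the two crepancies, and it preserves the minimal log discrepancy since $\mld_x(X,\fa_i^q\fm^s)=\mld_{b_i}(B_i,\cdots)=\mld_{b_i'}(C_i,\cdots)$ while $C_i\dashrightarrow X_i'$ over $X$ is an isomorphism near $b_i'$ by Remark~\ref{rmk:composite}; and its exceptional divisor $G_i$ satisfies $c_{B_i}(G_i)=b_i$, contradicting the maximality of $b_i\in F_i\subset B_i\to x\in X$. Therefore $\mu<\mu'$. Then by induction on the slope — the same composition argument at each stage forcing it to increase strictly while remaining in the finite set $\sfQ_n$ — one obtains a divisor $E_i$ over $C_i$ computing $\mld_{b_i'}(C_i,\fa_{iC}^q(\fm\sO_{C_i})^s)=\mld_{b_i}(B_i,\fa_{iB}^q(\fm\sO_{B_i})^s)$ with $a_{E_i}(C_i)$ bounded; since $(C_i,G_i)$ is canonical, $G_i$ being a smooth prime divisor near $b_i'$, one has $\ord_{E_i}G_i\le a_{E_i}(C_i)-1$, and therefore $a_{E_i}(B_i)=a_{E_i}(C_i)+(v_1+v_2)\ord_{E_i}G_i\le(1+2n)a_{E_i}(C_i)$ is bounded, which is the required $l'$. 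The main obstacle is the first step: establishing the dichotomy of Lemma~\ref{lem:admissible} and the analogue of Theorem~\ref{thm:success} with the smooth-divisor boundary $\sO_{B_i}(-F_i)$ in place of an $\fm$-primary one, and in particular producing the second-layer maximal admissible weighted blow-up dominating $E_i$ with weights controlled in $\sfP_n$, without which Theorem~\ref{thm:composite} cannot be brought to bear.
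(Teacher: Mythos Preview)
Your overall strategy—lift the analysis to the germ $b_i\in B_i$, force the second-layer slope to exceed $\mu$ via Theorem~\ref{thm:composite}, and conclude by induction—is exactly the paper's, and the contradiction in your final paragraph is the right one. But there is a genuine gap in how you produce the second weighted blow-up, and it is not the obstacle you name.

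The real problem is the sentence ``after a further subsequence I would take $(v_{1i},v_{2i})$ to be a fixed pair $(v_1,v_2)\in\sfP_n$.'' An admissible sequence by definition has $v_{2i}\to\infty$, so the weights cannot be fixed; yet your application of Theorem~\ref{thm:composite} needs $v_1$ bounded (you use $(w_{1i}-v_1^2)/w_{2i}\to\mu$). With diverging $v_{1i}$ the quantity $(w_{1i}-v_{1i}^2)/w_{2i}$ need not converge to $\mu$ at all, and the hypothesis of Theorem~\ref{thm:composite} can fail. So even granting the transplanted framework, the contradiction does not fire.

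The paper resolves both this and your admitted obstacle by a different manoeuvre. Instead of seeking a \emph{maximal admissible} blow-up on $B_i$ (which would have diverging weights), it first replaces the non-$\fn_i$-primary data by $\fn_i$-primary ideals $\fb_i=\fa_{iB}+\fn_i^{\ord_{\Gamma_i}\fa_{iB}}$ and $\fc_i=\fm\sO_{B_i}+\fn_i^b$ with the same minimal log discrepancy, the bound $b$ coming from Theorem~\ref{thm:lct}. Now the whole machinery of Section~\ref{sct:limit} applies verbatim; Theorem~\ref{thm:meta} reduces $\fc_i^s$ to a power $\fn^{sb'}$, and Lemma~\ref{lem:admissible} with Theorem~\ref{thm:slopeR} and the inductive hypothesis reduce to the case of an admissible sequence $\{\fb_i^q\fn^{sb'}\}_i$ of slope $\mu'\in\sfQ_n$ with $\mu'\le\mu$. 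The key step is then not to use the maximal blow-ups of this sequence, but to pass via Propositions~\ref{prp:up} and~\ref{prp:down} to a \emph{single} crepant weighted blow-up $G_i'\subset B_i'\to B_i$ with \emph{fixed} weights $(v_1,v_2,1)$, $(v_1,v_2)\in\sfP_n$. Theorem~\ref{thm:success} (not an analogue with divisor boundary, but the theorem as stated, applied to $\{(\fb_i,\fc_i)\}_i$) then forces $\mld_{b_i}(B_i,\fb_i^q\fc_i^s)=\mld_{b_i'}(B_i',\fb_{iB'}^q(\fc_i\sO_{B_i'})^s)$, and one checks this propagates back to $\fa_{iB}$ and $\fm\sO_{B_i}$. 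Now $v_1\le n$ is genuinely bounded and your argument in the last paragraph goes through verbatim to contradict the maximality of $B_i$.
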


\begin{proof}
\textit{Step}~1.
Take a divisor $\Gamma_i$ over $B_i$ which computes $\mld_{b_i}(B_i,\fa_{iB}^q(\fm\sO_{B_i})^s)$. It also computes $\mld_x(X,\fa_i^q\fm^s)$ and has $\ord_{\Gamma_i}\fm\le b$ for a positive integer $b$ depending only on $q$ and $s$, which is provided by Theorem~\ref{thm:lct} with the aid of Remark~\ref{rmk:etale}. Let $\fn_i$ denote the maximal ideal in $\sO_{B_i}$ defining $b_i$. For the $\fn_i$-primary ideals
\[
\fb_i=\fa_{iB}+\fn_i^{\ord_{\Gamma_i}\fa_{iB}},\qquad\fc_i=\fm\sO_{B_i}+\fn_i^b,
\]
one has the equality $a_{\Gamma_i}(B,\fa_{iB}^q(\fm\sO_B)^s)=a_{\Gamma_i}(B,\fb_i^q\fc_i^s)$. It follows that the inequality $\mld_{b_i}(B_i,\fa_{iB}^q(\fm\sO_{B_i})^s)\le\mld_{b_i}(B_i,\fb_i^q\fc_i^s)$ is actually an equality, that is,
\[
\mld_x(X,\fa_i^q\fm^s)=\mld_{b_i}(B_i,\fa_{iB}^q(\fm\sO_{B_i})^s)=\mld_{b_i}(B_i,\fb_i^q\fc_i^s).
\]

Fixing an \'etale morphism $\alpha_i\colon b_i\in B_i\to o\in A=\Spec k[a_1,a_2,a_3]$, we regard the ideals $\fb_i$ and $\fc_i$ as $\fn$-primary ideals in $\sO_A$, where $\fn$ is the maximal ideal in $\sO_A$ defining $o$, and construct a generic limit $(\sfb,\sfc)$ of $\{(\fb_i,\fc_i)\}_{i\in\bN_+}$ on $\hat o\in\hat A$. Then by Theorem~\ref{thm:special}, we may assume $(\hat A,\sfb^q)$ to be special. Further by Theorem~\ref{thm:meta} and Lemma~\ref{lem:admissible}, we may assume that for some $0\le b'\le b$, $\{\fb_i^q\fn^{sb'}\}_i$ is an admissible sequence on $o\in A$ of some slope $\mu'$. By Theorem~\ref{thm:slopeR} and the inductive hypothesis, we can assume that $\mu'\in\sfQ_n$ and $\mu'\le\mu$. Express $\mu'=v_1/v_2$ by an element $(v_1,v_2)\in\sfP_n$.

\textit{Step}~2.
We apply Proposition~\ref{prp:up} after a suitable choice of $\alpha_i$. It asserts that $(\hat A,\sfb^q)$ admits an lc slope $\mu'$ after replacement of $\cS$, in which we keep $(\hat A,\sfb^q)$ special. Then by Proposition~\ref{prp:down}, after replacement of $\cS$, for all $i$ there exists a regular system $y_{1i},y_{2i},y_{3i}$ of parameters in $\sO_{B_i,b_i}$ such that $a_{G_i}(B_i,\fb_i^q)=1$ for the divisor $G_i$ obtained by the weighted blow-up of $B_i$ with $\wt(y_{1i},y_{2i},y_{3i})=(iv_1,iv_2,1)$. We reset $\alpha_i$ as $\alpha_i^*(a_1,a_2,a_3)=(y_{1i},y_{2i},y_{3i})$, in which the $a_3$-axis is the smallest lc centre of $(\hat A,\sfb^q)$.

Let $b_i'\in G_i'\subset B_i'\to b_i\in B_i$ be the weighted blow-up with $\wt(y_{1i},y_{2i},y_{3i})=(v_1,v_2,1)$, where $b_i'\in B_i'$ is the point above $b_i$ in the strict transform of the curve in $B_i$ defined by $(y_{1i},y_{2i})\sO_{B_i}$. Then $\ord_{G_i'}\fb_i^q\ge i^{-1}\ord_{G_i}\fb_i^q=v_1+v_2$ for all $i$. Hence $a_{G_i'}(B_i,\fb_i^q)=1$ as $(B,\fb_i^q)$ is canonical. Further it follows from $\fa_{iB}\subset\fb_i$ that $a_{G_i'}(B_i,\fa_{iB}^q)=a_{G_i'}(B_i,\fb_i^q)=1$, as $(B_i,\fa_{iB}^q)$ is canonical. Moreover, this shows the equality $\ord_{G_i'}\fa_{iB}=\ord_{G_i'}\fb_i$ and hence the containment $\fa_{iB'}\subset\fb_{iB'}$ for the weak transforms $\fa_{iB'}$ and $\fb_{iB'}$ in $b_i'\in B_i'$ of $\fa_{iB}$ and $\fb_i$ respectively.

We apply Theorem~\ref{thm:success} to the sequence $\{(\fb_i,\fc_i)\}_i$ by identifying all the weighted blow-ups $b_i'\in G_i'\subset B_i'\to b_i\in B_i$ via $\alpha_i$. If $\mld_o(A,\fb_i^q\fc_i^s)=\mld_{\hat o}(\hat A,\sfb^q\sfc^s)$ for all $i$ after replacement of the family of approximations, then the required existence of $l'$ follows from Lemma~\ref{lem:limit}. Hence by Theorem~\ref{thm:success}, passing to a subsequence, we may and shall assume that for all $i$,
\[
\mld_x(X,\fa_i^q\fm^s)=\mld_{b_i}(B_i,\fb_i^q\fc_i^s)=\mld_{b_i'}(B_i',\fb_{iB'}^q(\fc_i\sO_{B_i'})^s).
\]
It follows that the inequality
\[
\mld_x(X,\fa_i^q\fm^s)\le\mld_{b_i'}(B_i',\fa_{iB'}^q(\fm\sO_{B_i'})^s)\le\mld_{b_i'}(B_i',\fb_{iB'}^q(\fc_i\sO_{B_i'})^s)
\]
is actually an equality. In particular, $\mld_x(X,\fa_i^q\fm^s)=\mld_{b_i'}(B_i',\fa_{iB'}^q(\fm\sO_{B_i'})^s)$.

\textit{Step}~3.
We shall derive a contradiction. We have the maximal admissible weighted blow-up $b_i\in F_i\subset B_i\to x\in X$ for $(X,\fa_i^q\fm^s)$ with $\wt(x_1,x_2,x_3)=(w_{1i},w_{2i},1)$ and the weighted blow-up $b_i'\in G_i'\subset B_i'\to b_i\in B_i$ with $\wt(y_{1i},y_{2i},y_{3i})=(v_1,v_2,1)$ such that $a_{G_i'}(B_i,\fa_{iB}^q)=1$. Recall that $\mu_i=w_{1i}/w_{2i}$ converges to $\mu=w_1/w_2$. Hence
\[
\Bigl\lfloor\frac{v_1-1}{v_2}\Bigr\rfloor\le\Bigl\lfloor\frac{w_1-1}{w_2}\Bigr\rfloor\le\frac{w_{1i}-n^2}{w_{2i}}\le\frac{w_{1i}-v_1^2}{w_{2i}}
\]
for large $i$, where the first inequality follows from $v_1/v_2=\mu'\le\mu=w_1/w_2$. Note that $\rd{(w_1-1)/w_2}=\rd{w_1/w_2}$ unless $w_2=1$. Thus one can apply Theorem~\ref{thm:composite}, which asserts that $G_i'$ is obtained by a certain weighted blow-up $Z_i\to X$ with weights $(w_{1i}+v_1-d_i,w_{2i}+v_2+d_i,1)$ for some integer $0\le d_i\le v_1-v_2$.

We describe $Z_i$ as the weighted blow-up of $X$ with $\wt(z_{1i},z_{2i},x_3)=(u_{1i},u_{2i},1)$ for the maximum $u_{1i}$ of $w_{1i}+v_1-d_i$ and $w_{2i}+v_2+d_i$ and for the minimum $u_{2i}$ of them. Since both $w_{1i}/w_{2i}$ and $(v_1-d_i)/(v_2+d_i)$ belong to the closed interval between $1/n$ and $n$, one has $1\le u_{1i}/u_{2i}\le n$. By Remark~\ref{rmk:composite}, the rational map $B_i'\dasharrow Z_i$ over $X$ is isomorphic about $b_i'$. In particular, the image $z_i\in Z_i$ of the point $b_i'$ is not contained in the strict transform of the divisor on $X$ defined by $x_3$. Hence by replacing $z_{ji}$ for $j=1,2$ by $z_{ji}+c_{ji}x_3^{u_{ji}}$ for some $c_{ji}\in k$, we may choose $z_{ji}$ so that $z_i$ lies on the strict transform of the curve in $X$ defined by $(z_{1i},z_{2i})\sO_X$.

The pair $(z_i\in Z_i,\fa_{iZ}^q)$ is crepant to $(x\in X,\fa^q)$ for the weak transform $\fa_{iZ}$ of $\fa_i$, and $\mld_x(X,\fa_i^q\fm^s)=\mld_{b_i'}(B_i',\fa_{iB'}^q(\fm\sO_{B_i'})^s)=\mld_{z_i}(Z_i,\fa_{iZ}^q(\fm\sO_{Z_i})^s)$. In other words, $Z_i\to X$ is an admissible weighted blow-up for $(X,\fa_i^q\fm^s)$. This contradicts the maximal choice of $B_i\to X$.
\end{proof}

\section{Weighted homogeneous degeneration}\label{sct:monomial}
This section is devoted to the proof of Theorem~\ref{thm:slopeQ}. By induction on the slope, we assume the theorem for sequences of slope greater than $\mu$. By Remark~\ref{rmk:etale}, we assume that $x\in X=\Spec k[x_1,x_2,x_3]$ and that there exists a maximal admissible weighted blow-up $\pi_i\colon b_i\in F_i\subset B_i\to x\in X$ with $\wt(x_1,x_2,x_3)=(w_{1i},w_{2i},1)$ for $1\le\mu_i=w_{1i}/w_{2i}\le n$ such that $\lim_{i\to\infty}w_{2i}=\infty$ and such that $\mu_i$ converges to $\mu\in\sfQ_n$, in which $\mld_x(X,\fa_i^q\fm^s)=\mld_{b_i}(B_i,\fa_{iB}^q(\fm\sO_{B_i})^s)$ for the weak transform $\fa_{iB}$ of $\fa_i$. By Lemma~\ref{lem:slopeQ}, we shall assume the existence of a divisor $E_i$ over $B_i$ which computes $\mld_x(X,\fa_i^q\fm^s)$ and has $c_{B_i}(E_i)=b_i$ and $a_{E_i}(B_i)\le l'$ for the constant $l'$ provided in the lemma. Replacing $\fa_i$ by $\fa_i^{nq}$, we shall assume that $q=1/n$. We may assume that $n\ge2$. We express $\mu=w_1/w_2$ by $(w_1,w_2)\in\sfP_n$.

By the existence of the limit $\mu$, the finite set
\[
S=\{(s_1,s_2)\in\bN^2\mid s_1+s_2<(\mu_i+1)n+\mu_i\}
\]
is independent of $i$ after replacement of $\cS$ by a subsequence. Since there are only finitely many ways of decomposition of $S$ into two subsets, the decomposition into $S^-=\{(s_1,s_2)\in S\mid\mu_is_1+s_2\le(\mu_i+1)n\}$ and $S^+=S\setminus S^-$ is also independent of $i$ after replacement of $\cS$. For each $(s_1,s_2)\in S^-$, we define the non-negative integer $s_{3i}$ depending on $i$ as
\[
s_{3i}=(w_{1i}+w_{2i})n-(w_{1i}s_1+w_{2i}s_2).
\]
To avoid intricacy, we do not add further subscript to the symbol $s_{3i}$ in spite of the dependence on $(s_1,s_2)$. Passing to a subsequence, we assume that for each $(s_1,s_2)\in S^-$, either $s_{3i}$ is constant, say $s_3$ (which depends on $(s_1,s_2)$), or $s_{3i}$ diverges to infinity. Accordingly we decompose $S^-$ as the disjoint union of
\[
S_0^-=\{(s_1,s_2)\in S^-\mid s_{3i}=s_3\},\qquad S_\infty^-=\{(s_1,s_2)\in S^-\mid\textstyle\lim_is_{3i}=\infty\}.
\]
Note that $(n,n)$ belongs to $S_0^-$. We assume the decomposition $S=S_0^-\sqcup S_\infty^-\sqcup S^+$ henceforth.

\begin{lemma}\label{lem:S}
\begin{enumerate}
\item\label{itm:S-plus}
There exists a rational number $\mu'\ge1$ other than $\mu$ such that $rs_1+s_2\ge(r+1)n$ for all $(s_1,s_2)\in S^+$ and all real numbers $r$ that belong to the closed interval between $\mu'$ and $\mu$.
\item\label{itm:S-ratl}
If $S_0^-\neq\{(n,n)\}$, then $(w_{1i},w_{2i})=(w_{11},w_{21})+c_i(w_1,w_2)$ for some integer $c_i$ for all $i$.
\end{enumerate}
\end{lemma}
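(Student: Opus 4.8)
The plan is to derive both parts from elementary linear and number-theoretic considerations on the weight vectors $(w_{1i},w_{2i})$ and the finite set of lattice points $S$, using only the limit $\mu=\lim_{i\to\infty}\mu_i$ together with the defining inequalities of $S^+$ and $S_0^-$. For part~(\ref{itm:S-plus}) I would write $f_{s_1,s_2}(r)=rs_1+s_2-(r+1)n=r(s_1-n)+(s_2-n)$, an affine function of a real variable $r$, so that by definition $(s_1,s_2)\in S^+$ exactly when $f_{s_1,s_2}(\mu_i)>0$ for all $i$; passing to the limit gives $f_{s_1,s_2}(\mu)\ge0$ for every $(s_1,s_2)\in S^+$. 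If all these inequalities at $\mu$ are strict (in particular if $S^+=\emptyset$), continuity of the finitely many $f_{s_1,s_2}$ lets me choose a rational $\mu'$ with $\mu<\mu'$ sufficiently close to $\mu$, and then $\mu'>\mu\ge1$ and $f_{s_1,s_2}(r)\ge0$ on $[\mu,\mu']$ for all of them, so we are done. Otherwise $f_{s_1,s_2}(\mu)=0$ for some $(s_1,s_2)\in S^+$; since $(n,n)\in S_0^-\subseteq S^-$ is not in $S^+$, such a point has $s_1\neq n$, and if $s_1>n$ then $f_{s_1,s_2}$ is strictly increasing, so $f_{s_1,s_2}(\mu_i)>0=f_{s_1,s_2}(\mu)$ forces $\mu_i>\mu$ for all $i$; the symmetric possibility (some $(s_1,s_2)\in S^+$ with $s_1<n$ and $f_{s_1,s_2}(\mu)=0$, forcing $\mu_i<\mu$ for all $i$) is mutually exclusive with it, so exactly one of the two occurs. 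In the case $\mu_i>\mu$ for all $i$ I would take $\mu'$ slightly above $\mu$: on $[\mu,\mu']$ each $f_{s_1,s_2}$ with $s_1\ge n$ is non-decreasing, hence $\ge f_{s_1,s_2}(\mu)\ge0$, while for $s_1<n$ one has $f_{s_1,s_2}(\mu)\ge f_{s_1,s_2}(\mu_i)>0$, so $f_{s_1,s_2}$ stays non-negative just above $\mu$; choosing $\mu'$ rational, with $\mu<\mu'$ and close enough to $\mu$, handles all the finitely many points of $S^+$ at once, and $\mu'>\mu\ge1$. The case $\mu_i<\mu$ for all $i$ is the mirror image, taking $\mu'$ slightly below $\mu$; here $\mu>1$ because $\mu_i\ge1$, which leaves room to keep $\mu'\ge1$.

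For part~(\ref{itm:S-ratl}) I would fix a point $(s_1,s_2)\in S_0^-$ other than $(n,n)$. By definition of $S_0^-$ the quantity $s_{3i}=w_{1i}(n-s_1)+w_{2i}(n-s_2)$ is a constant $s_3$; dividing by $w_{2i}$ and letting $i\to\infty$ (using $w_{2i}\to\infty$ and $\mu_i\to\mu$) gives $\mu(n-s_1)+(n-s_2)=0$, which, as $(s_1,s_2)\neq(n,n)$, forces $s_1\neq n$ and $\mu=(n-s_2)/(s_1-n)$. Since $\mu=w_1/w_2$ with $w_1,w_2$ coprime, comparing numerators and denominators (using coprimality) shows $(n-s_1,n-s_2)=k(-w_2,w_1)$ when $s_1>n$ and $(n-s_1,n-s_2)=k(w_2,-w_1)$ when $s_1<n$, for a positive integer $k$. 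Substituting back, $s_3=s_{3i}=\pm k(w_1w_{2i}-w_2w_{1i})$, so $w_1w_{2i}-w_2w_{1i}$ is independent of $i$; subtracting the relation at $i=1$ yields $w_1(w_{2i}-w_{21})=w_2(w_{1i}-w_{11})$, and coprimality of $w_1,w_2$ then forces $(w_{1i}-w_{11},w_{2i}-w_{21})=c_i(w_1,w_2)$ for an integer $c_i$, which is the assertion.

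I expect the only genuinely delicate point to be in part~(\ref{itm:S-plus}): ensuring that $\mu'$ can be placed on the correct side of $\mu$ while remaining $\ge1$. The two facts that make this go through without passing to a further subsequence are that $(n,n)$ never lies in $S^+$, and that a boundary point $(s_1,s_2)\in S^+$ with $f_{s_1,s_2}(\mu)=0$ rigidly determines whether $\mu_i$ approaches $\mu$ from above or from below, so the conflicting requirements on $\mu'$ cannot both arise. Part~(\ref{itm:S-ratl}) is then essentially bookkeeping: translating the constancy of $s_{3i}$, via coprimality of $(w_1,w_2)$, into the statement that $(w_{1i},w_{2i})$ moves along the lattice line of direction $(w_1,w_2)$.
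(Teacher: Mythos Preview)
Your proof is correct and, for part~(\ref{itm:S-ratl}), essentially identical to the paper's. For part~(\ref{itm:S-plus}) your argument is also correct but more elaborate than necessary: the paper bypasses your case analysis on boundary points of $S^+$ by simply taking $\mu'=\mu_1$ whenever $\mu_1\neq\mu$. Since each $f_{s_1,s_2}$ is affine and non-negative at both $\mu_1$ and $\mu$, it is non-negative on the whole interval between them, and $\mu_1=w_{11}/w_{21}$ is automatically rational and $\ge1$. If instead $\mu_1=\mu$, then $f_{s_1,s_2}(\mu)=f_{s_1,s_2}(\mu_1)>0$ is strict for every $(s_1,s_2)\in S^+$, which is exactly your easy case. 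Your finer analysis of which side of $\mu$ the $\mu_i$ lie on is therefore unnecessary, though it does make explicit why no conflict arises in choosing the side for $\mu'$.
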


\begin{proof}
An element $(s_1,s_2)$ in $S^+$ satisfies $\mu_is_1+s_2>(\mu_i+1)n$ for all $i$. Taking the limit, one obtains $\mu s_1+s_2\ge(\mu+1)n$. If $\mu_1=\mu$, then the strict inequality $\mu s_1+s_2>(\mu+1)n$ yields the existence of $\mu'$. If $\mu_1\neq\mu$, then the inequality $rs_1+s_2\ge(r+1)n$ holds for all $r$ between $\mu_1$ and $\mu$ because it does for both $r=\mu_1$ and $\mu$. This is the first assertion.

In order to see the second assertion, we shall assume the existence of an element $(s_1,s_2)$ in $S_0^-$ other than $(n,n)$. It satisfies the equality $w_{1i}(s_1-n)+w_{2i}(s_2-n)+s_3=0$. Using the greatest common divisor $g$ of $\abs{s_1-n}$ and $\abs{s_2-n}$, which also divides $s_3$, we write $s_j-n=gs_j'$ for $j=1,2$ and $s_3=gs'_3$. Then $w_{1i}s_1'+w_{2i}s_2'+s_3'=0$. Since $w_{1i}$ and $w_{2i}$ diverge to infinity, $s_1's_2'$ is negative and one can write $(w_{1i},w_{2i})= (w_{11},w_{21})+c_i(\abs{s_2'},\abs{s_1'})$ by some integer $c_i$. Then $\mu=\lim_iw_{1i}/w_{2i}=\abs{s_2'}/\abs{s_1'}$. Thus $(\abs{s_2'},\abs{s_1'})=(w_1,w_2)$.
\end{proof}

We define the ideal
\[
\fA_i=\pi_{i*}\sO_{B_i}(-(w_{1i}+w_{2i})nF_i)
\]
in $\sO_X$ generated by all monomials $x_1^{s_1}x_2^{s_2}x_3^{s_3}$ such that $w_{1i}s_1+w_{2i}s_2+s_3\ge(w_{1i}+w_{2i})n$. It follows from $a_{F_i}(X,\fa_i^q)=1$ that $\fa_i\subset\fA_i$. We construct a generic limit $(\sfa,\sfA)$ of the sequence $\{(\fa_i,\fA_i)\}_{i\in\bN_+}$ using the notation in Section~\ref{sct:limit}. The limit $(\sfa,\sfA)$ is with respect to a family $\cF=(Z_l,(\fa(l),\fA(l)),N_l,s_l,t_l)_{l\ge l_0}$ of approximations of $\{(\fa_i,\fA_i)\}_i$. The ideals $\sfa$ and $\sfA$ are defined on $\hat x\in\hat X=\Spec K[[x_1,x_2,x_3]]$ and satisfy $\sfa\subset\sfA$. We write $\hat\fm$ for the maximal ideal in $\sO_{\hat X}$. By Proposition~\ref{prp:up}, we shall assume that $(\hat X,\sfa^q)$ is special and admits an lc slope $\mu=w_1/w_2$ with respect to $x_1,x_2,x_3$.

We provide generators of $\fA_i$ and $\sfA$ in terms of the decomposition of $S$.

\begin{lemma}\label{lem:generators}
The ideal $\fA_i$ is generated by all $x_1^{s_1}x_2^{s_2}x_3^{s_{3i}}$ for $(s_1,s_2)\in S^-$ and all $x_1^{s_1}x_2^{s_2}$ for $(s_1,s_2)\in S^+$, whilst the limit $\sfA$ is generated by all $x_1^{s_1}x_2^{s_2}x_3^{s_3}$ for $(s_1,s_2)\in S_0^-$ and all $x_1^{s_1}x_2^{s_2}$ for $(s_1,s_2)\in S^+$.
\end{lemma}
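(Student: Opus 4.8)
The plan is to prove the two assertions in turn: the description of $\fA_i$ is a purely combinatorial statement about monomial ideals, and the description of $\sfA$ is then deduced from it through the generic-limit formalism together with the flatness of the family $\cF$.

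For $\fA_i$ I would argue as follows. Write $N=(w_{1i}+w_{2i})n$, so that $\fA_i$ is the monomial ideal of $\sO_X=k[x_1,x_2,x_3]$ generated by all $x_1^{s_1}x_2^{s_2}x_3^{s_3}$ with $w_{1i}s_1+w_{2i}s_2+s_3\ge N$, and note that for $(s_1,s_2)\in S$ the defining condition of $S^-$ reads $w_{1i}s_1+w_{2i}s_2\le N$. The listed monomials clearly lie in $\fA_i$: for $(s_1,s_2)\in S^-$ one has $s_{3i}\ge0$ and $w_{1i}s_1+w_{2i}s_2+s_{3i}=N$, and for $(s_1,s_2)\in S^+$ one has $w_{1i}s_1+w_{2i}s_2>N$. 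For the reverse inclusion I would take a monomial $x_1^{t_1}x_2^{t_2}x_3^{t_3}\in\fA_i$: if $(t_1,t_2)\in S^-$ then $t_3\ge N-(w_{1i}t_1+w_{2i}t_2)=s_{3i}$ and the monomial is a multiple of $x_1^{t_1}x_2^{t_2}x_3^{s_{3i}}$; otherwise $w_{1i}t_1+w_{2i}t_2>N$, and I would choose a componentwise-minimal $(s_1,s_2)\le(t_1,t_2)$ with $w_{1i}s_1+w_{2i}s_2\ge N$, so that $x_1^{s_1}x_2^{s_2}$ divides the monomial, and then check that $(s_1,s_2)$ belongs to the listed generators (to $S^-$ with $s_{3i}=0$ when the weighted sum equals $N$, and to $S^+$ otherwise). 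The one inequality that needs checking is $s_1+s_2<(\mu_i+1)n+\mu_i$, i.e.\ $(s_1,s_2)\in S$: minimality forces $w_{1i}s_1+w_{2i}s_2<N+w_{2i}$ when $s_2\ge1$ and $w_{1i}s_1<N+w_{1i}$ when $s_2=0$, and in either case the claim follows from $w_{2i}\le w_{1i}$ and $\mu_i\ge1$.

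For $\sfA$ I would write $\fA'$ for the ideal of $\sO_X$ generated by the monomials $x_1^{s_1}x_2^{s_2}x_3^{s_3}$ with $(s_1,s_2)\in S_0^-$ and $x_1^{s_1}x_2^{s_2}$ with $(s_1,s_2)\in S^+$, and aim to show $\sfA=\fA'\sO_{\hat X}$. By the first part $\fA'\subseteq\fA_i$ for every $i$, hence $\fA'\subseteq\fA(l)_i$ for all $i\in N_l$; since $s_l$ has dense image and the condition $\fA'\subseteq\fA(l)_z$ is closed on the irreducible variety $Z_l$, it holds everywhere on $Z_l$, in particular at the generic point, so $\fA'\otimes_kK+\fm^l\otimes_kK\subseteq\fA(l)_K$. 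On the other hand, $S_\infty^-$ is finite and $s_{3i}\to\infty$ for each of its members, so for $i\in N_l$ sufficiently large every generator $x_1^{s_1}x_2^{s_2}x_3^{s_{3i}}$ with $(s_1,s_2)\in S_\infty^-$ lies in $\fm^l$, and the first part gives $\fA(l)_i=\fA_i+\fm^l=\fA'+\fm^l$. Because $\sO_{X\times Z_l}/\fA(l)$ is finite and flat over $Z_l$, its fibrewise colength over $\sO_X$ is constant, so $\dim_K(\sO_X\otimes_kK)/\fA(l)_K$ equals $\dim_k\sO_X/(\fA'+\fm^l)=\dim_K(\sO_X\otimes_kK)/(\fA'\otimes_kK+\fm^l\otimes_kK)$; together with the inclusion above this forces $\fA(l)_K=\fA'\otimes_kK+\fm^l\otimes_kK$, whence $\sfA+\hat\fm^l=\fA(l)_K\sO_{\hat X}+\hat\fm^l=\fA'\sO_{\hat X}+\hat\fm^l$ for every $l$ by construction of the generic limit. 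As $\sfA$ and $\fA'\sO_{\hat X}$ are ideals of the Noetherian complete local ring $\sO_{\hat X}$, hence closed in the $\hat\fm$-adic topology, intersecting over all $l$ gives $\sfA=\fA'\sO_{\hat X}$.

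I expect the genuine difficulty to be in this last passage. The ideals $\fA_i$ themselves do not stabilise as $i\to\infty$, since the exponents $s_{3i}$ attached to $S_\infty^-$ keep growing; what one must exploit is the more robust pair of facts that $\fA'$ is contained in every $\fA_i$ (which pins down $\fA(l)_K$ from below by a density argument) and that $\fA_i+\fm^l$ does stabilise for $i$ large relative to $l$ (which, via flatness, pins down the colength of $\fA(l)_K$), and then read the limit ideal off the identification $\sfA+\hat\fm^l=\fA(l)_K\sO_{\hat X}+\hat\fm^l$. The monomial bookkeeping for $\fA_i$ itself is routine, and I would only spell out the short inequality certifying $(s_1,s_2)\in S$.
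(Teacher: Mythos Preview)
Your proof is correct and follows essentially the same approach as the paper. For $\fA_i$ both arguments reduce to the same minimal-monomial bookkeeping; the paper phrases it as ``if $w_{1i}s_1+w_{2i}s_2+s_3\ge N+w_{1i}$ then some $x^sx_j^{-1}$ already lies in $\fA_i$'', which yields the same bound $(s_1,s_2)\in S$ you derive. For $\sfA$ the paper is terser: writing $\fA$ for your $\fA'$, it simply records the two containments $\fA\subset\fA_i$ and $\fA_i\subset\fA+\fm^l$ (for $i$ large relative to $l$) and reads off the sandwich $\fA\sO_{\hat X}\subset\sfA\subset\varprojlim_l((\fA+\fm^l)\otimes_kK)/(\fm^l\otimes_kK)=\fA\sO_{\hat X}$ directly from the inverse-limit definition of the generic limit, without invoking flatness or colength. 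Your density-plus-colength argument is a legitimate and more explicit way to justify the passage from closed fibres to the generic fibre that the paper leaves implicit, but it is not needed: once you have $\fA'+\fm^l=\fA_i+\fm^l$ for infinitely many $i\in N_l$, the identity $\fA(l)_K=\fA'\otimes_kK+\fm^l\otimes_kK$ follows already, and the inverse limit does the rest.
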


\begin{proof}
We write $x^s$ for the monomial $x_1^{s_1}x_2^{s_2}x_3^{s_3}$ for brevity. If $x^s$ satisfies $w_{1i}s_1+w_{2i}s_2+s_3\ge(w_{1i}+w_{2i})n+w_{1i}$, then the monomial $x^sx_j^{-1}$ with $s_j\neq0$ already belongs to $\fA_i$. Hence $\fA_i$ is generated by all $x^s$ such that $(w_{1i}+w_{2i})n\le w_{1i}s_1+w_{2i}s_2+s_3<(w_{1i}+w_{2i})n+w_{1i}$. Any such monomial satisfies
\[
s_1+s_2\le\mu_is_1+s_2+w_{2i}^{-1}s_3<(\mu_i+1)n+\mu_i
\]
and thus $(s_1,s_2)$ belongs to the set $S$.

For each $(s_1,s_2)\in S$, the least non-negative integer $s_3$ such that $x^s\in\fA_i$ is $s_{3i}$ if $(s_1,s_2)\in S^-$ and is zero if $(s_1,s_2)\in S^+$. This is the assertion for $\fA_i$. The assertion for the generic limit $\sfA$ is derived at once from that for $\fA_i$. Indeed, let $\fA$ be the ideal in $\sO_X$ generated by all $x_1^{s_1}x_2^{s_2}x_3^{s_3}$ for $(s_1,s_2)\in S_0^-$ and all $x_1^{s_1}x_2^{s_2}$ for $(s_1,s_2)\in S^+$. Then $\fA\subset\fA_i$ for all $i$, and for each $l$, one has $\fA_i\subset\fA+\fm^l$ for all $i$ that satisfy $l\le s_{3i}$ for all $(s_1,s_2)\in S_\infty^-$. Thus $\fA\sO_{\hat X}\subset\sfA\subset\varprojlim_l((\fA+\fm^l)\otimes_kK)/(\fm^l\otimes_kK)=\fA\sO_{\hat X}$.
\end{proof}

\begin{lemma}\label{lem:unique}
If $S_0^-=\{(n,n)\}$, then $(\hat X,\sfa^q\hat\fm)$ is lc.
\end{lemma}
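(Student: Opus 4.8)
The plan is to produce a \emph{second} log canonical slope for $(\hat X,\sfa^q)$ with respect to the very same regular system $x_1,x_2,x_3$, and then to conclude at once by Proposition~\ref{prp:lcslopes}. Recall that Proposition~\ref{prp:up} already gives us that $(\hat X,\sfa^q)$ is special and admits the lc slope $\mu=w_1/w_2$ with respect to $x_1,x_2,x_3$, and that $\sfa\subseteq\sfA$. The first step is to record, using the hypothesis $S_0^-=\{(n,n)\}$, the shape of $\sfA$: for $(s_1,s_2)=(n,n)$ the integer $s_{3i}$ equals $(w_{1i}+w_{2i})n-(w_{1i}+w_{2i})n=0$, so by Lemma~\ref{lem:generators} the ideal $\sfA$ is generated by the single monomial $x_1^nx_2^n$ together with the monomials $x_1^{s_1}x_2^{s_2}$ for $(s_1,s_2)\in S^+$.

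Next I would invoke Lemma~\ref{lem:S}(\ref{itm:S-plus}) to fix a rational number $\mu'\ge1$ with $\mu'\neq\mu$ such that $\mu's_1+s_2\ge(\mu'+1)n$ for every $(s_1,s_2)\in S^+$ (using the inequality only at the endpoint $r=\mu'$ of the interval provided there), and write $\mu'=v_1/v_2$ with $(v_1,v_2)\in\sfP$. Let $\hat G$ be the divisor obtained by the weighted blow-up of $\hat X$ with $\wt(x_1,x_2)=(v_1,v_2)$, so that $\ord_{\hat G}x_1=v_1$, $\ord_{\hat G}x_2=v_2$ and $a_{\hat G}(\hat X)=v_1+v_2$. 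Then $\ord_{\hat G}(x_1^nx_2^n)=(v_1+v_2)n$, while for $(s_1,s_2)\in S^+$ one has $\ord_{\hat G}(x_1^{s_1}x_2^{s_2})=v_2(\mu's_1+s_2)\ge(v_1+v_2)n$; hence $\ord_{\hat G}\sfA=(v_1+v_2)n$, and $\sfa\subseteq\sfA$ forces $\ord_{\hat G}\sfa\ge(v_1+v_2)n$. Since $q=1/n$, this yields $a_{\hat G}(\hat X,\sfa^q)=(v_1+v_2)-q\ord_{\hat G}\sfa\le0$, and because $(\hat X,\sfa^q)$ is lc the log discrepancy must be exactly zero. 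Thus $(\hat X,\sfa^q)$ admits the lc slope $\mu'$ with respect to $x_1,x_2,x_3$.

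Finally, $\mu$ and $\mu'$ are two distinct lc slopes of $(\hat X,\sfa^q)$ with respect to the same regular system of parameters, so Proposition~\ref{prp:lcslopes} immediately gives that $(\hat X,\sfa^q\hat\fm)$ is lc.

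I do not expect a substantive obstacle here: the content has already been isolated in Lemma~\ref{lem:S}(\ref{itm:S-plus}) — which is precisely what guarantees a usable rational slope $\mu'\ge1$, distinct from $\mu$, over which $S^+$ still lies — and in Lemma~\ref{lem:generators}, which pins down the generators of $\sfA$ under $S_0^-=\{(n,n)\}$; the remainder is the elementary order computation above. The only points meriting a little care are that $\mu'$ is a genuine element of $\sfQ$, i.e. that writing it in lowest terms as $v_1/v_2$ gives $(v_1,v_2)\in\sfP$ so that the formula $a_{\hat G}(\hat X)=v_1+v_2$ is legitimate, and that the passage from $a_{\hat G}(\hat X,\sfa^q)\le0$ to $a_{\hat G}(\hat X,\sfa^q)=0$ really does use that $(\hat X,\sfa^q)$ is lc, which is part of being special.
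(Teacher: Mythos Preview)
Your proof is correct and follows essentially the same approach as the paper's own proof. The only cosmetic difference is that the paper phrases its choice as ``any $(v_1,v_2)\in\sfP$ with $v_1/v_2$ in the closed interval between $\mu'$ and $\mu$,'' whereas you take the specific endpoint $\mu'$ itself; both choices yield a second lc slope distinct from $\mu$, and the conclusion via Proposition~\ref{prp:lcslopes} is identical.
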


\begin{proof}
Take any $(v_1,v_2)\in\sfP$ such that $v_1/v_2$ belongs to the interval between $\mu'$ and $\mu$ in Lemma~\ref{lem:S}(\ref{itm:S-plus}). Then $v_1s_1+v_2s_2\ge(v_1+v_2)n$ for all $(s_1,s_2)\in S^+$. Let $\hat G$ be the divisor obtained by the weighted blow-up of $\hat X$ with $\wt(x_1,x_2)=(v_1,v_2)$. If $S_0^-=\{(n,n)\}$, then by Lemma~\ref{lem:generators}, $\sfA$ is generated by $x_1^nx_2^n$ and all $x_1^{s_1}x_2^{s_2}$ for $(s_1,s_2)\in S^+$. Hence the order $\ord_{\hat G}\sfA$, which is the minimum of $(v_1+v_2)n$ and all $v_1s_1+v_2s_2$ for $(s_1,s_2)\in S^+$, equals $(v_1+v_2)n$, that is, $a_{\hat G}(\hat X,\sfA^q)=0$. Since $(\hat X,\sfa^q)$ is lc, it follows from $\sfa\subset\sfA$ that $a_{\hat G}(\hat X,\sfa^q)=a_{\hat G}(\hat X,\sfA^q)=0$. In other words, $(\hat X,\sfa^q)$ admits an lc slope $v_1/v_2$ with respect to $x_1,x_2,x_3$. Now the lemma follows from Proposition~\ref{prp:lcslopes}.
\end{proof}

Henceforth we assume the existence of an element in $S_0^-$ other than $(n,n)$. Then by Lemma~\ref{lem:S}(\ref{itm:S-ratl}), we can write
\[
(w_{1i},w_{2i})=(w_{11},w_{21})+c_i(w_1,w_2)
\]
by some integer $c_i$, which diverges to infinity. Let $\varphi\colon Y\to X$ and $\hat\varphi\colon\hat Y\to\hat X$ be the weighted blow-ups with $\wt(x_1,x_2)=(w_1,w_2)$ and let $F$ and $\hat F$ be the exceptional divisors of $\varphi$ and $\hat\varphi$ respectively. Recall that $(\hat X,\sfa^q)$ admits an lc slope $w_1/w_2$ with respect to $x_1,x_2,x_3$, namely $\ord_{\hat F}\sfa=(w_1+w_2)n$. Define
\[
\sI=\varphi_*\sO_Y(-((w_1+w_2)n+1)F),\qquad\hat\sI=\hat\varphi_*\sO_{\hat Y}(-((w_1+w_2)n+1)\hat F).
\]

Let $f_i$ be the general member of $\fa_i$. One can assume that the generic limit $f_\infty$ of the sequence $\{f_i\}_i$ of functions in $\sO_X$ is defined in $\sO_{\hat X}$ \cite{Koaxv}. The function $f_\infty$ in $\sO_{\hat X}$ is the inverse limit $\varprojlim_l f(l)$ of the approximated functions $f(l)$ in $\sO_{X\times Z_l}$ which can be taken in the $\sO_{Z_l}$-module $\sum_s\sO_{Z_l}x^s$ generated by all monomials $x^s$ in $x_1,x_2,x_3$ of total degree less than $l$, and $f_i$ is congruent modulo $\fm^l$ to $f(l)_i$ given by $s_l(i)\in Z_l$. We use the same notation $\cF$ for the family of approximations of the sequence $\{(f_i,\fa_i,\fA_i)\}_i$.

By abuse of notation, we shall write $\theta^q$ for the $\bQ$-ideal $(\theta\sO_Z)^q$ for a function $\theta$ on a scheme $Z$, such as $f_i^q=(f_i\sO_X)^q$. Recall that $q=1/n<1$. Since a log resolution of $(X,\fa_i\fm)$ is also a log resolution of $(X,f_i\fm)$, one has $\mld_x(X,\fa_i^q\fm^t)=\mld_x(X,f_i^q\fm^t)$ for all $t\ge0$, and provided that $(X,\fa_i^q\fm^t)$ is lc, a divisor over $X$ computes $\mld_x(X,\fa_i^q\fm^t)$ if and only if it computes $\mld_x(X,f_i^q\fm^t)$. In particular, $\ord_{F_i}\fa_i=\ord_{F_i}f_i$ and $\ord_{E_i}\fa_i=\ord_{E_i}f_i$. From this observation, one can replace $\fa_i$ by $f_i\sO_X+\fm^{n_i}$ for an integer $n_i\ge i$ such that $\fm^{n_i}\subset\fa_i$, by which $\fa_i+\fm^l=f_i\sO_X+\fm^l$ for all $i\ge l$. Consequently one can take $\fa(l)$ to be $f(l)\sO_{X\times Z_l}+\fm^l\sO_{X\times Z_l}$ after shrinking $Z_l$ and then $\sfa=\varprojlim_l\fa(l)_K/(\fm^l\otimes_kK)=f_\infty\sO_{\hat X}$. Thus we may and shall assume that $\sfa$ is the principal ideal generated by $f_\infty$.

Define the finite set $T=\{(t_1,t_2)\in\bN^2\mid w_1t_1+w_2t_2\le(w_1+w_2)n\}$, for which $(t_1,t_2)\in T$ if and only if $x_1^{t_1}x_2^{t_2}\not\in\sI$. We adopt the simplified notation $t=(t_1,t_2)$ and $x^t=x_1^{t_1}x_2^{t_2}$. We truncate $f_i$ by $\sI$ and obtain the polynomial
\[
g_i=\sum_{t\in T}p_{it}x^t
\]
for $p_{it}\in k[x_3]$ such that $f_i$ is congruent to $g_i$ modulo $\sI$. We can assume that for each $t\in T$, the generic limit $p_{\infty t}$ of the sequence $\{p_{it}\}_i$ is defined in $K[[x_3]]$. Then
\[
g_\infty=\sum_{t\in T}p_{\infty t}(x_3)x^t
\]
is the generic limit of the sequence $\{g_i\}_i$ and $f_\infty$ is congruent to $g_\infty$ modulo $\hat\sI$.

\begin{lemma}\label{lem:fg}
If $S_0^-\neq\{(n,n)\}$, then
\begin{enumerate}
\item\label{itm:fg-ord}
$\ord_{\hat F}\sfa=\ord_{\hat F}g_\infty=(w_1+w_2)n$,
\item\label{itm:fg-infty}
$\mld_{\hat x}(\hat X,\sfa^q\hat\fm^s)=\mld_{\hat x}(\hat X,g_\infty^q\hat\fm^s)$, and
\item\label{itm:fg-seq}
$\mld_x(X,\fa_i^q\fm^s)\ge\mld_x(X,g_i^q\fm^s)$ for all $i$ but finitely many indices.
\end{enumerate}
\end{lemma}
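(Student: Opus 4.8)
The plan is to prove the three assertions successively, using that $\sfa=f_\infty\sO_{\hat X}$ is principal and that, by the standing hypothesis, $(\hat X,\sfa^q)$ admits the lc slope $\mu=w_1/w_2$ with respect to $x_1,x_2,x_3$. For (\ref{itm:fg-ord}): the lc slope condition reads $\ord_{\hat F}\sfa^q=w_1+w_2$, so $\ord_{\hat F}\sfa=\ord_{\hat F}f_\infty=(w_1+w_2)n$ since $q=1/n$. As $f_\infty-g_\infty\in\hat\sI$ one has $\ord_{\hat F}(f_\infty-g_\infty)\geq(w_1+w_2)n+1>\ord_{\hat F}f_\infty$, and since $\ord_{\hat F}$ is a valuation, $\ord_{\hat F}g_\infty=\ord_{\hat F}f_\infty=(w_1+w_2)n$; in particular $g_\infty\neq0$ and every monomial occurring in $g_\infty$ has $(w_1,w_2)$-weighted degree exactly $(w_1+w_2)n$.

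For (\ref{itm:fg-infty}): since $\ord_{\hat F}\hat\fm=\min(w_1,w_2,0)=0$, the lc slope gives $a_{\hat F}(\hat X,\sfa^q\hat\fm^s)=a_{\hat F}(\hat X,\sfa^q)=0$, and by (\ref{itm:fg-ord}) likewise $a_{\hat F}(\hat X,g_\infty^q\hat\fm^s)=0$. Applying Lemma~\ref{lem:pia}(\ref{itm:pia-3fold}) to the weighted blow-up $\hat\varphi\colon\hat Y\to\hat X$ with $\wt(x_1,x_2)=(w_1,w_2)$, and writing $\hat\Delta$ for the different on $\hat F$, reduces $\mld_{\hat x}(\hat X,\sfa^q\hat\fm^s)$ and $\mld_{\hat x}(\hat X,g_\infty^q\hat\fm^s)$ respectively to $\mld_{\hat f}(\hat F,\hat\Delta,-)$ of the restriction to $\hat F$ of the corresponding weak $\bQ$-transform. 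The $\hat\fm$-factor of that weak $\bQ$-transform is $\hat\fm\sO_{\hat Y}$ — no twisting, as $\ord_{\hat F}\hat\fm=0$ — so its restriction to $\hat F$ is independent of $f_\infty$ or $g_\infty$; the weak $\bQ$-transform of the principal ideal $\sfa=(f_\infty)$, restricted to $\hat F$, is generated by the initial form of $f_\infty$ along $\hat F$, which by (\ref{itm:fg-ord}) coincides with that of $g_\infty$. Hence the two mlds on $\hat F$ agree, giving (\ref{itm:fg-infty}).

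For (\ref{itm:fg-seq}): put $r_i=f_i-g_i\in\sI$. Each monomial $x_1^{s_1}x_2^{s_2}x_3^{s_3}$ of $r_i$ has $w_1s_1+w_2s_2\geq(w_1+w_2)n+1$, so with $(w_{1i},w_{2i})=(w_{11},w_{21})+c_i(w_1,w_2)$ and $w_{11}s_1+w_{21}s_2,\,s_3\geq0$ one gets $\ord_{F_i}r_i\geq c_i\bigl((w_1+w_2)n+1\bigr)=\ord_{F_i}f_i+\bigl(c_i-n(w_{11}+w_{21})\bigr)$, using $\ord_{F_i}f_i=(w_{1i}+w_{2i})n$. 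Pulling back along $\pi_i$: as $b_i$ is a smooth point of $B_i$ on the Cartier divisor $F_i$ one has $\ord_{E_i}F_i\geq1$, hence $\ord_{E_i}r_i\geq\ord_{F_i}r_i\cdot\ord_{E_i}F_i$; on the other hand the crepancy of $(b_i\in B_i,\fa_{iB}^q)$ over $(x\in X,\fa_i^q)$ together with $K_{B_i}=\pi_i^*K_X+(w_{1i}+w_{2i})F_i$ gives $\ord_{E_i}f_i=\ord_{E_i}\fa_i=\ord_{E_i}\fa_{iB}+n(w_{1i}+w_{2i})\ord_{E_i}F_i$, while $\ord_{E_i}\fa_{iB}\leq l'/q=l'n$ by positivity of $\mld_{b_i}(B_i,\fa_{iB}^q(\fm\sO_{B_i})^s)$ and $a_{E_i}(B_i)\leq l'$. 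Combining,
\[
\ord_{E_i}r_i-\ord_{E_i}f_i\geq\bigl(c_i-n(w_{11}+w_{21})\bigr)\ord_{E_i}F_i-l'n\geq c_i-n(w_{11}+w_{21})-l'n,
\]
which is positive for all but finitely many $i$ since $c_i\to\infty$. Therefore $\ord_{E_i}g_i=\ord_{E_i}f_i$, so $a_{E_i}(X,g_i^q\fm^s)=a_{E_i}(X,f_i^q\fm^s)=\mld_x(X,f_i^q\fm^s)=\mld_x(X,\fa_i^q\fm^s)$; since $c_X(E_i)=x$ this yields $\mld_x(X,g_i^q\fm^s)\leq\mld_x(X,\fa_i^q\fm^s)$, which is (\ref{itm:fg-seq}).

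The main obstacle is part (\ref{itm:fg-infty}): one must fix the weak $\bQ$-transform conventions on the orbifold $\hat Y$ (clearing denominators so that a suitable multiple of $\hat F$ is Cartier) and verify that the ``initial form along $\hat F$'' is insensitive both to that choice and to the $\hat\sI$-part; the order bookkeeping in (\ref{itm:fg-seq}) across $X$, $B_i$ and $E_i$ is more routine but still relies on $b_i$ being a smooth point of $B_i$, so that $F_i$ is Cartier there and $\ord_{E_i}F_i\geq1$.
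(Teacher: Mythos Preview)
Your proof is correct and follows essentially the same route as the paper's. For (\ref{itm:fg-ord}) and (\ref{itm:fg-infty}) the arguments are identical (lc slope gives the order, congruence modulo $\hat\sI$ gives equal initial forms on $\hat F$, then Lemma~\ref{lem:pia}(\ref{itm:pia-3fold})); for (\ref{itm:fg-seq}) the paper bounds $\ord_{E_i}\sI$ rather than $\ord_{E_i}r_i$ and invokes canonicity of $(b_i\in B_i,\fa_{iB}^q)$ to get $\ord_{E_i}\fa_{iB}\le(a_{E_i}(B_i)-1)n<l'n$ instead of positivity of the mld, but the chain of inequalities through $\ord_{F_i}$, $\ord_{E_i}F_i$ and the decomposition $(w_{1i},w_{2i})=(w_{11},w_{21})+c_i(w_1,w_2)$ is the same.
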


\begin{proof}
From $\sfa=f_\infty\sO_{\hat X}$, one has $\ord_{\hat F}f_\infty=\ord_{\hat F}\sfa=(w_1+w_2)n$. Then $\ord_{\hat F}f_\infty=\ord_{\hat F}g_\infty$ since $f_\infty$ is congruent to $g_\infty$ modulo $\hat\sI$. This is the first assertion. Moreover, the weak $\bQ$-transforms in $\hat Y$ of $f_\infty$ and $g_\infty$ have the same restriction to $\hat F$. Thus the second assertion is derived from Lemma~\ref{lem:pia}(\ref{itm:pia-3fold}).

We have a divisor $E_i$ over $B_i$ which computes $\mld_x(X,\fa_i^q\fm^s)$ and has $c_{B_i}(E_i)=b_i$ and $a_{E_i}(B_i)\le l'$. In order to see the third assertion, we shall prove the inequality $\ord_{E_i}\sI>\ord_{E_i}\fa_i$ for all but finitely many $i$. Since $f_i$ is congruent to $g_i$ modulo $\sI$, this yields the equality $\ord_{E_i}f_i=\ord_{E_i}g_i$ and
\[
\mld_x(X,\fa_i^q\fm^s)=a_{E_i}(X,f_i^q\fm^s)=a_{E_i}(X,g_i^q\fm^s)\ge\mld_x(X,g_i^q\fm^s).
\]

From $(w_{1i},w_{2i})=(w_{11},w_{21})+c_i(w_1,w_2)$, one has the inequality $\ord_{F_i}\ge c_i\ord_F$ of order functions and hence
\[
\ord_{F_i}\sI\ge c_i\ord_F\sI\ge c_i((w_1+w_2)n+1)=(w_{1i}+w_{2i})n+c_i-e
\]
for the constant $e=(w_{11}+w_{21})n$. Since $\ord_{F_i}\fa_i=(w_{1i}+w_{2i})n$, one has
\[
\ord_{E_i}\sI\ge\ord_{F_i}\sI\cdot\ord_{E_i}{F_i}\ge(\ord_{F_i}\fa_i+c_i-e)\ord_{E_i}{F_i}.
\]
Hence as far as $c_i\ge e$, one obtains
\[
\ord_{E_i}\sI\ge\ord_{F_i}\fa_i\cdot\ord_{E_i}{F_i}+c_i-e=\ord_{E_i}\fa_i-\ord_{E_i}\fa_{iB}+c_i-e.
\]
Because $(b_i\in B_i,\fa_{iB}^q)$ is canonical, one also has $\ord_{E_i}\fa_{iB}\le(a_{E_i}(B_i)-1)n<l'n$. Thus $\ord_{E_i}\sI>\ord_{E_i}\fa_i-l'n+c_i-e$, and the inequality $\ord_{E_i}\sI>\ord_{E_i}\fa_i$ holds once $c_i$ exceeds the constant $l'n+e$.
\end{proof}

It follows from Lemma~\ref{lem:fg}(\ref{itm:fg-ord}) that $g_\infty=\sum_{t\in T}p_{\infty t}x^t$ is weighted homogeneous of weighted degree $(w_1+w_2)n$ with respect to $\wt(x_1,x_2)=(w_1,w_2)$. In other words, $g_\infty=\sum_{t\in T_0}p_{\infty t}x^t$ for the subset $T_0=\{(t_1,t_2)\in\bN^2\mid w_1t_1+w_2t_2=(w_1+w_2)n\}$ of $T$. Thus the generic limit $g_\infty$ of $\{g_i\}_i$ is also the generic limit of the sequence $\{h_i\}_i$ of
\[
h_i=\sum_{t\in T_0}p_{it}x^t.
\]

\begin{lemma}\label{lem:gh}
If $S_0^-\neq\{(n,n)\}$, then after replacing $\cF$, one has $\mld_{\hat x}(\hat X,g_\infty^q\hat\fm^s)=\mld_x(X,h_i^q\fm^s)\le\mld_x(X,g_i^q\fm^s)$ for all $i\in N_{l_0}$.
\end{lemma}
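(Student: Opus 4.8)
The plan is to reduce the asserted equality to the generic limit equality on the fixed surface $F$ by precise inversion of adjunction, and to get the inequality from a weighted homogeneous degeneration of $g_i$ to $h_i$.

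First I would record that $h_i=\sum_{t\in T_0}p_{it}x^t$ and $g_\infty=\sum_{t\in T_0}p_{\infty t}x^t$ are weighted homogeneous of weighted degree $(w_1+w_2)n$ with respect to $\wt(x_1,x_2,x_3)=(w_1,w_2,0)$, so $\ord_Fh_i=\ord_{\hat F}g_\infty=(w_1+w_2)n$ and hence $a_F(X,h_i^q)=a_{\hat F}(\hat X,g_\infty^q)=0$. Since $\ord_F\fm=0$, the weak transform in $Y$ of $\fm$ is $\fm\sO_Y$ and its restriction $\fm\sO_F$ is the invertible ideal $\sO_F(-f)$ of the fibre $f$ of $F\to X$ at $x$, and likewise on $\hat F$. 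Writing $\Delta$ and $\hat\Delta$ for the differents given by $(K_Y+F)|_F=K_F+\Delta$ and $(K_{\hat Y}+\hat F)|_{\hat F}=K_{\hat F}+\hat\Delta$, Lemma~\ref{lem:pia}(\ref{itm:pia-3fold}) applied to the $\bR$-ideal $g_\infty^q\hat\fm^s$, together with the invariance of minimal log discrepancies under completion (Remark~\ref{rmk:etale}) applied to $h_i^q\fm^s$, should give
\[
\mld_x(X,h_i^q\fm^s)=\mld_f(F,\Delta+sf,(h_i^q)_Y\sO_F),\quad \mld_{\hat x}(\hat X,g_\infty^q\hat\fm^s)=\mld_{\hat f}(\hat F,\hat\Delta+s\hat f,(g_\infty^q)_Y\sO_{\hat F}),
\]
where $(h_i^q)_Y$ and $(g_\infty^q)_Y$ are weak $\bQ$-transforms in $Y$ and $\hat Y$.

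Next I would observe that the two right-hand sides match in the generic limit. As $\hat Y=Y\times_X\hat X$ and $\hat F=F\times_X\hat X$ are base changes of the fixed $Y$ and $F$, the different $\hat\Delta$ and the divisor $s\hat f$ are base changes of $\Delta$ and $sf$; and since $\ord_Fh_i=\ord_{\hat F}g_\infty=(w_1+w_2)n$ is a common value, forming the weak $\bQ$-transform and restricting to $F$ is compatible with the generic limit construction, so $(g_\infty^q)_Y\sO_{\hat F}$ is the generic limit of $\{(h_i^q)_Y\sO_F\}_i$ (recall $g_\infty$ is the generic limit of $\{h_i\}_i$). By Lemma~\ref{lem:fg}(\ref{itm:fg-infty}) the second right-hand side equals $\mld_{\hat x}(\hat X,\sfa^q\hat\fm^s)$, which is positive by the standing assumption on $\cS$ and the inequality $\mld_{\hat x}(\hat X,\sfa^q\hat\fm^s)\ge\mld_x(X,\fa_i^q\fm^s)$; hence the subtriple $(\hat F,\hat\Delta+s\hat f,(g_\infty^q)_Y\sO_{\hat F})$ is klt along $\hat f$. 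Since for a klt generic limit the equality of minimal log discrepancies with the approximating sequence holds after replacement of $\cF$ (\cite[theorem~5.1]{K15}, cf.\ Remark~\ref{rmk:limit}, and in any case the surface case of the statements of Theorem~\ref{thm:equiv} is settled by Remark~\ref{rmk:equiv}(\ref{itm:equiv-sf})), replacing $\cF$ yields $\mld_{\hat f}(\hat F,\hat\Delta+s\hat f,(g_\infty^q)_Y\sO_{\hat F})=\mld_f(F,\Delta+sf,(h_i^q)_Y\sO_F)$ for all $i\in N_{l_0}$, and combining with the displayed equalities gives $\mld_{\hat x}(\hat X,g_\infty^q\hat\fm^s)=\mld_x(X,h_i^q\fm^s)$. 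The main obstacle is to make this passage through inversion of adjunction genuinely compatible with the generic limit: one must verify that the different and the invertible contribution $\sO_F(-sf)$ of $\fm^s$ are independent of $i$ and stable under base change to $\hat X$, so that the problem reduces cleanly to the settled surface case.

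Finally, for the inequality $\mld_x(X,h_i^q\fm^s)\le\mld_x(X,g_i^q\fm^s)$ I would exhibit $h_i$ as a degeneration of $g_i$. In $g_i=\sum_{t\in T}p_{it}x^t$ the terms with $t\in T\setminus T_0$ have weighted degree $w_1t_1+w_2t_2$ strictly smaller than $(w_1+w_2)n$, so
\[
g_i^{(\lambda)}=\lambda^{(w_1+w_2)n}g_i(\lambda^{-w_1}x_1,\lambda^{-w_2}x_2,x_3)=\sum_{t\in T}p_{it}(x_3)\lambda^{(w_1+w_2)n-w_1t_1-w_2t_2}x^t
\]
defines a flat family over $\bA^1$ with $g_i^{(0)}=h_i$, while for $\lambda\neq0$ the automorphism $x_1\mapsto\lambda^{-w_1}x_1$, $x_2\mapsto\lambda^{-w_2}x_2$, $x_3\mapsto x_3$ of $X$, which fixes $x$ and preserves $\fm$, carries $(g_i^{(\lambda)})^q\fm^s$ to $g_i^q\fm^s$. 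The lower semi-continuity of minimal log discrepancies on the smooth variety $X$ \cite{EMY03} then gives $\mld_x(X,h_i^q\fm^s)\le\mld_x(X,g_i^q\fm^s)$, which completes the proof together with the equality established above.
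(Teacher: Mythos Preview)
Your proposal is correct and follows essentially the same route as the paper: the inequality comes from the weighted homogeneous degeneration $g_i\rightsquigarrow h_i$ together with lower semi-continuity \cite{EMY03}, and the equality comes from precise inversion of adjunction (Lemma~\ref{lem:pia}(\ref{itm:pia-3fold})) to the fixed surface $F$ combined with the settled surface case (Remark~\ref{rmk:equiv}(\ref{itm:equiv-sf})).

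The one packaging difference worth noting is in the equality step. Rather than invoking a generic limit directly on $F$ and \cite[theorem~5.1]{K15}, the paper works through the explicit approximations $h(l)=\sum_{t\in T_0}p_t(l)x^t$: Lemma~\ref{lem:resolution} gives $\mld_{\hat x}(\hat X,(h(l)\sO_{\hat X})^q\hat\fm^s)=\mld_x(X,h(l)_i^q\fm^s)$, and then the congruence $h(l)_i\equiv h_i\bmod x_3^l$ is transported to $F$ and the uniform $\fm$-adic semi-continuity on the surface finishes. This sidesteps the minor delicacy in your formulation that the generic limit machinery of Section~\ref{sct:limit} is built for a germ at a closed point, whereas here the relevant invariant is $\mld_f$ over the curve $f$; your fallback to Remark~\ref{rmk:equiv}(\ref{itm:equiv-sf}) is exactly what carries the weight in either version. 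Similarly for the inequality, the paper spells out the passage through the fourfold $\tilde X=X\times\bA^1$ and inversion of adjunction along the fibres $X_\lambda$, which is the precise form of the semi-continuity argument you sketch.
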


\begin{proof}
The action on $X$ of the multiplicative group $k^\times=\Spec k[\lambda,\lambda^{-1}]$ which sends $(x_1,x_2,x_3)$ to $(\lambda^{-w_1}x_1,\lambda^{-w_2}x_2,x_3)$ induces a flat degeneration $h_i$ of $g_i$ at $\lambda=0$. Hence the inequality $\mld_x(X,h_i^q\fm^s)\le\mld_x(X,g_i^q\fm^s)$ follows from the precise inversion of adjunction and lower semi-continuity of minimal log discrepancies on smooth varieties \cite{EMY03}. Precisely speaking, for the function $\tilde g_i=\lambda^{(w_1+w_2)n}g_i(\lambda^{-w_1}x_1,\lambda^{-w_2}x_2,x_3)$ on the family $\tilde X=X\times\bA^1\to\bA^1=\Spec k[\lambda]$, the lower semi-continuity
\[
\mld_{(x,0)}(\tilde X,\tilde g_i^q\tilde\fm^s)\le\mld_{(x,\lambda)}(\tilde X,\tilde g_i^q\tilde\fm^s)
\]
holds for general $\lambda\in\bA^1$, where $\tilde\fm$ stands for the ideal sheaf in $\sO_{\tilde X}$ defining $x\times\bA^1$. Let $X_\lambda=X\times\lambda$ denote the fibre of $\tilde X$ at $\lambda$. From the precise inversion of adjunction, one deduces
\[
\mld_x(X,h_i^q\fm^s)=\mld_{(x,0)}(\tilde X,X_0,\tilde g_i^q\tilde\fm^s)\le\mld_{(x,0)}(\tilde X,\tilde g_i^q\tilde\fm^s)-1.
\]
On the other hand, one sees on a log resolution of $(\tilde X,\tilde g_i\tilde\fm)$ that
\[
\mld_x(X,g_i^q\fm^s)=\mld_x(X_\lambda,(\tilde g_i\sO_{X_\lambda})^q(\tilde\fm\sO_{X_\lambda})^s)=\mld_{(x,\lambda)}(\tilde X,\tilde g_i^q\tilde\fm^s)-1
\]
for general $\lambda$. These are combined into the desired inequality.

We need to prove the equality $\mld_{\hat x}(\hat X,g_\infty^q\hat\fm^s)=\mld_x(X,h_i^q\fm^s)$. Recall that for each $t\in T_0$, the formal power series $p_{\infty t}\in K[[x_3]]$ is the generic limit of $\{p_{it}\}_i$, that is, $p_{\infty t}=\varprojlim_l p_t(l)$ for the approximated functions $p_t(l)$ taken in $\sum_{j<l}\sO_{Z_l}x_3^j$. Let
\[
h(l)=\sum_{t\in T_0}p_t(l)x^t.
\]
Similarly to Lemma~\ref{lem:resolution}, one has $\mld_{\hat x}(\hat X,(h(l)\sO_{\hat X})^q\hat\fm^s)=\mld_x(X,h(l)_i^q\fm^s)$ for all $i\in N_l$ and $l\ge l_0$ after replacement of $\cF$, where $h(l)_i$ denotes the fibre of $h(l)$ at $s_l(i)$. By the description $g_\infty=\varprojlim_l h(l)$, replacing $l_0$ by a greater integer, one can assume that $\mld_{\hat x}(\hat X,g_\infty^q\hat\fm^s)=\mld_{\hat x}(\hat X,(h(l)\sO_{\hat X})^q\hat\fm^s)$ for all $l\ge l_0$. Thus it suffices to show the existence of $l$ such that $\mld_x(X,h(l)_i^q\fm^s)=\mld_x(X,h_i^q\fm^s)$ for all $i\in N_l$.

Let $\fb(l)_i$ and $\fb_i$ be the weak transforms in $Y$ of $h(l)_i^{w_1w_2}\sO_X$ and $h_i^{w_1w_2}\sO_X$. Let $q'=q/w_1w_2$. Then $(Y,F,\fb(l)_i^{q'}(\fm\sO_Y)^s)$ is crepant to $(X,h(l)_i^q\fm^s)$ whilst $(Y,F,\fb_i^{q'}(\fm\sO_Y)^s)$ is crepant to $(X,h_i^q\fm^s)$. Let $\Delta=(1-w_2^{-1})D_1+(1-w_1^{-1})D_2$ be the different on $F$ defined by $(K_Y+F)|_F=K_F+\Delta$ as in the proof of Proposition~\ref{prp:lcslopes}, where $D_j$ is the support of the restriction to $F$ of the strict transform in $Y$ of the $x_{3-j}x_3$-plane. Let $f$ be the fibre of $\varphi\colon Y\to X$ at $x$. By Lemma~\ref{lem:pia}(\ref{itm:pia-3fold}), which also holds for the affine space $X$, one has
\begin{align*}
\mld_x(X,h(l)_i^q\fm^s)&=\mld_f(F,\Delta+sf,(\fb(l)_i\sO_F)^{q'}),\\
\mld_x(X,h_i^q\fm^s)&=\mld_f(F,\Delta+sf,(\fb_i\sO_F)^{q'}).
\end{align*}

On the other hand, the congruence of $h(l)_i$ and $h_i$ modulo $x_3^l$ implies that of $h(l)_i^{w_1w_2}$ and $h_i^{w_1w_2}$ modulo $x_3^l$, because $h(l)_i-h_i$ is a factor of $h(l)_i^{w_1w_2}-h_i^{w_1w_2}$. Hence $\fb(l)_i\sO_F+\sO_F(-lf)=\fb_i\sO_F+\sO_F(-lf)$. Thus by the uniform $\fm$-adic semi-continuity of minimal log discrepancies on $F$, which follows from Remark~\ref{rmk:equiv}(\ref{itm:equiv-sf}), there exists a positive integer $l_1$ depending only on $q$, $s$ (and $w_1,w_2$) such that
\[
\mld_f(F,\Delta+sf,(\fb(l)_i\sO_F)^{q'})=\mld_f(F,\Delta+sf,(\fb_i\sO_F)^{q'})
\]
for all $i\in N_l$ and $l\ge l_1$. Namely, $\mld_x(X,h(l)_i^q\fm^s)=\mld_x(X,h_i^q\fm^s)$.
\end{proof}

\begin{proof}[Proof of Theorem~\textup{\ref{thm:slopeQ}}]
If $(n,n)$ is a unique element in $S_0^-$, then the assertion follows from Lemmata~\ref{lem:lct1} and~\ref{lem:unique}. If $S_0^-$ has an element other than $(n,n)$, then by Lemmata~\ref{lem:fg}(\ref{itm:fg-infty})(\ref{itm:fg-seq}) and~\ref{lem:gh}, one attains $\mld_{\hat x}(\hat X,\sfa^q\hat\fm^s)\le\mld_x(X,\fa_i^q\fm^s)$ for all $i\in N_{l_0}$ after replacement of $\cF$, and the assertion follows from Lemma~\ref{lem:limit} and the discussion prior to that lemma. By induction on the slope $\mu\in\sfQ_n$, the theorem is completed.
\end{proof}

\begin{proof}[Proof of Theorem~\textup{\ref{thm:limit}}]
By Lemma~\ref{lem:admissible}, we may assume that $\{\fa_i^q\fm^s\}_i$ is an admissible sequence. Thus the theorem follows from Theorems~\ref{thm:slopeR} and~\ref{thm:slopeQ}.
\end{proof}

\section{Proof of theorems}
We shall complete the proof of the main theorems and corollary.

\begin{proof}[Proof of Theorems~\textup{\ref{thm:main}},~\textup{\ref{thm:alc}},~\textup{\ref{thm:adic}},~\textup{\ref{thm:nakamura}} and~\textup{\ref{thm:product}}]
Theorem~\ref{thm:limit} implies Theorem~\ref{thm:product}. By \cite[theorem~1.1]{K21} and Proposition~\ref{prp:HLL} with Remark~\ref{rmk:etale}, Theorem~\ref{thm:product} implies Theorems~\ref{thm:main} to~\ref{thm:nakamura}.
\end{proof}

\begin{proposition}[Han--Liu--Luo \cite{HLLaxv}]\label{prp:HLL}
The four statements in Theorem~\textup{\ref{thm:equiv}} imply the following for a subset $I$ of the positive real numbers which satisfies the DCC\@.
\begin{enumerate}
\item
There exists a positive integer $l$ depending only on $X$ and $I$ such that for $\bR$-ideals $\fa=\prod_{j=1}^e\fa_j^{r_j}$ and $\fb=\prod_{j=1}^e\fb_j^{r_j}$ on $X$, if $r_j\in I$ and $\fa_j+\fm^l=\fb_j+\fm^l$ for all $j$, where $\fm$ is the maximal ideal in $\sO_X$ defining $x$, then $\mld_x(X,\fa)=\mld_x(X,\fb)$.
\item
There exists a positive integer $l$ depending only on $X$ and $I$ such that for an $\bR$-ideal $\fa$ on $X$, if $\fa\in I$, then there exists a divisor $E$ over $X$ which computes $\mld_x(X,\fa)$ and has $a_E(X)\le l$.
\end{enumerate}
\end{proposition}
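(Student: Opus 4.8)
The plan is to argue by contradiction, using the four assumed statements of Theorem~\ref{thm:equiv} to collapse the DCC situation onto a finite one. I would prove the boundedness assertion first; the uniform $\fm$-adic semi-continuity then follows from it by the standard argument together with Remark~\ref{rmk:equiv}(\ref{itm:equiv-nonpos}).

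Suppose the boundedness assertion fails, so that for every $l\in\bN_+$ there is an $\bR$-ideal $\fa^{(l)}=\prod_{j=1}^{e_l}(\fa^{(l)}_j)^{r^{(l)}_j}$ with all $r^{(l)}_j\in I$ such that every divisor computing $\mld_x(X,\fa^{(l)})$ has log discrepancy greater than $l$. First I discard the factors with $\fa^{(l)}_j=\sO_X$, so that each $\fa^{(l)}_j\subset\fm$, and I may assume $\mld_x(X,\fa^{(l)})>0$, the case $\mld_x(X,\fa^{(l)})\le0$ being covered by Remark~\ref{rmk:equiv}(\ref{itm:equiv-nonpos}) (whose proof, passing through Theorem~\ref{thm:dFEM}, does not need $I$ finite). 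Fixing once and for all a divisor $E_0$ over $X$ with $c_X(E_0)=x$ and setting $c_0=a_{E_0}(X)$, $\varepsilon=\min I>0$, the bound $0<\mld_x(X,\fa^{(l)})\le a_{E_0}(X,\fa^{(l)})=c_0-\sum_j r^{(l)}_j\ord_{E_0}\fa^{(l)}_j$ combined with $\ord_{E_0}\fa^{(l)}_j\ge\ord_{E_0}\fm\ge1$ forces $e_l\le c_0/\varepsilon$ and $r^{(l)}_j\le c_0$ for all $j$ and $l$. Passing to a subsequence I arrange $e_l\equiv e$ and, by the ACC for minimal log discrepancies (the first statement of Theorem~\ref{thm:equiv}), $\mld_x(X,\fa^{(l)})\equiv m$; then, using that $I$ satisfies the DCC and refining one index $j$ at a time, I make every sequence $(r^{(l)}_j)_l$ non-decreasing, hence convergent, say to $r_j\in[\varepsilon,c_0]$, and after one more refinement I obtain a partition $\{1,\dots,e\}=J_0\sqcup J_1$ where $r^{(l)}_j=r_j\in I$ for all $l$ when $j\in J_0$, while $r^{(l)}_j<r_j$ for all $l$ and $(r^{(l)}_j)_l$ takes infinitely many distinct values when $j\in J_1$.

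If $J_1=\emptyset$, then all the exponents of every $\fa^{(l)}$ lie in the fixed finite set $\{r_j\}_j$, so the boundedness for finite exponent sets (the fourth statement of Theorem~\ref{thm:equiv}) supplies a divisor computing $\mld_x(X,\fa^{(l)})$ with log discrepancy bounded by a constant independent of $l$, contradicting the choice of $\fa^{(l)}$ once $l$ exceeds this constant. If $J_1\neq\emptyset$, I fix $j_0\in J_1$ and write $\fa^{(l)}=\fa^{\dagger(l)}(\fb^{(l)})^{t_l}$ with $\fa^{\dagger(l)}=\prod_{j\neq j_0}(\fa^{(l)}_j)^{r^{(l)}_j}$, whose exponents all lie in $I$, with $\fb^{(l)}=(\fa^{(l)}_{j_0})^{\varepsilon}$, which lies in $I$, and with $t_l=r^{(l)}_{j_0}/\varepsilon$. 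Since $\mld_x(X,\fa^{\dagger(l)}(\fb^{(l)})^{t_l})=m$ for every $l$ and $\fa^{\dagger(l)},\fb^{(l)}\in I$, the ACC for $a$-lc thresholds with $a=m$ (the second statement of Theorem~\ref{thm:equiv}) forces the set $\{t_l\}_l$ to satisfy the ACC; but $(t_l)_l$ is non-decreasing and takes infinitely many values, so it contains an infinite strictly ascending chain — a contradiction. This proves the boundedness assertion.

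For the semi-continuity assertion, take $l$ as in the boundedness assertion just proved and suppose $\fa_j+\fm^{l'}=\fb_j+\fm^{l'}$ for all $j$ with $l'>l/\varepsilon$; a divisor $E$ computing $\mld_x(X,\fa)$ with $a_E(X)\le l$ has, when $\mld_x(X,\fa)\ge0$, $\ord_E\fa_j\le\ord_E\fa/\varepsilon=(a_E(X)-\mld_x(X,\fa))/\varepsilon\le l/\varepsilon$ and $\ord_E\fm\ge1$, whence $\ord_E\fb_j=\ord_E\fa_j$ for all $j$ and $\mld_x(X,\fb)\le a_E(X,\fb)=a_E(X,\fa)=\mld_x(X,\fa)$; the reverse inequality follows symmetrically from a bounded computing divisor for $\fb$, and the case $\mld_x(X,\fa)\le0$ is once more Remark~\ref{rmk:equiv}(\ref{itm:equiv-nonpos}). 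I expect the delicate part to be the passage — via the ACC for minimal log discrepancies — to a subsequence along which $\mld_x(X,\fa^{(l)})$ is a single constant, together with the routine but fiddly bookkeeping that bounds $e_l$ and the exponents and the handling of the non-positive cases; the conceptual core, playing the finite-set boundedness off against the ACC for $a$-lc thresholds through the splitting $J_0\sqcup J_1$, is short.
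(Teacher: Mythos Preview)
Your deduction of the first assertion from the second is essentially the paper's Step~1, and your preliminary reductions (bounding $e$ and the exponents, passing to non-decreasing convergent exponent sequences, splitting $J_0\sqcup J_1$) match the paper's Step~2. The case $J_1=\emptyset$ is fine.

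The gap is the sentence ``by the ACC for minimal log discrepancies, $\mld_x(X,\fa^{(l)})\equiv m$''. The ACC only forbids infinite strictly \emph{ascending} chains; after passing to a subsequence you can arrange the $m_l=\mld_x(X,\fa^{(l)})$ to be non-increasing, but nothing rules out a strictly decreasing sequence with infinitely many distinct values. Without a single common value $a=m$, the ACC for $a$-lc thresholds cannot be invoked in your $J_1\neq\emptyset$ case: you only know $t_l\in T_{m_l}$ for varying $m_l$, and statement~(ii) says nothing about $\bigcup_a T_a$. The exponents $r^{(l)}_j$ live in the DCC set $I$ but the minimal log discrepancies $m_l$ need not lie in any discrete or finite set when $I$ is merely DCC, so this step genuinely fails as written.

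The paper circumvents this by never trying to freeze $\mld_x(X,\fa_i)$. Instead it passes to the \emph{limit} exponents $r_j$, forms $\fb_i=\prod_j\fa_{ij}^{r_j}$ with exponents in the finite set $\{r_1,\ldots,r_e\}$, and uses the discreteness result \cite{K14} to make $\mld_x(X,\fb_i)=b$ constant. A gap argument (Step~3) then shows every divisor computing $\mld_x(X,\fa_i)$ also computes $\mld_x(X,\fb_i)$, and the proof finishes by induction on $e_1=|J_1|$ via a convex-combination trick that produces an auxiliary $\fc_i$ with one fewer varying exponent. Your idea of playing boundedness off against the ACC for $a$-lc thresholds is appealing, but to make it work you would first need to prove that $\mld_x(X,\fa^{(l)})$ stabilises---which is essentially the content of the paper's Step~3 and requires the comparison with $\fb_i$.
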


\begin{proof}
We write $r$ for the minimum of the elements in $I$.

\textit{Step}~1.
First of all, we shall deduce the first assertion from the second one. For an $\bR$-ideal $\fa=\prod_{j=1}^e\fa_j^{r_j}$ on $X$ with $r_j\in I$, the second assertion provides a divisor $E$ over $X$ which computes $\mld_x(X,\fa)$ and has $a_E(X)\le l$. Set $\fc=\prod_{j=1}^e(\fa_j+\fm^{l_1})^{r_j}$ for $l_1=\rd{r^{-1}l}+1$. It suffices to show that $\mld_x(X,\fc)=\mld_x(X,\fa)$.

If $(X,\fa)$ is lc, then $\ord_E\fa\le a_E(X)\le l$ and $\ord_E\fa_j\le r_j^{-1}l<l_1\le\ord_E\fm^{l_1}$. Hence $a_E(X,\fc)=a_E(X,\fa)$ and $E$ computes $\mld_x(X,\fc)=\mld_x(X,\fa)$. If $(X,\fa)$ is not lc, then either $a_E(X,\fc)=a_E(X,\fa)<0$ or $\ord_E\fm^{l_1}<\ord_E\fa_j$ for some $j$. In the latter case, $a_E(X,\fc)\le a_E(X)-r_j\ord_E\fm^{l_1}\le l-rl_1<0$. Eventually, $\mld_x(X,\fc)=-\infty=\mld_x(X,\fa)$.

\textit{Step}~2.
We proceed to the proof of the second assertion. Consider an $\bR$-ideal $\fa=\prod_{j=1}^e\fa_j^{r_j}$ on $X$ such that $r_j\in I$ and $\fa_j$ is non-trivial for all $j$. Then $\mld_x(X,\fa)\le\mld_xX-\sum_{j=1}^er_j\le\mld_xX-er$. Take $e_0=\rd{r^{-1}\mld_xX}+1$. If $(X,\fa)$ is lc, then $e<e_0$ and $r_j<e_0r$. If $(X,\fa)$ is not lc, then one can replace $\fa$ by $\fa'=\prod_{j=1}^{e'}(\fa_j)^{r_j'}$ for $e'=\min\{e,e_0\}$ and $r_j'=\min\{r_j,e_0r\}$, and hence add $e_0r$ to $I$, because every divisor computing $\mld_x(X,\fa')=-\infty$ also computes $\mld_x(X,\fa)$. From this observation, one has only to deal with $\bR$-ideals $\fa=\prod_{j=1}^e\fa_j^{r_j}$ for a fixed integer $e$ and for bounded exponents $r_j\le e_0r$ in $I$.

Let $\{\fa_i\}_{i\in\bN_+}$ be an arbitrary sequence of $\bR$-ideals $\fa_i=\prod_{j=1}^e\fa_{ij}^{r_{ij}}$ on $X$ such that $r_{ij}\in I$ and $r_{ij}\le e_0r$ for all $i$ and $j$. It suffices to find a positive integer $l$ such that for infinitely many indices $i$, there exists a divisor $E_i$ over $X$ which computes $\mld_x(X,\fa_i)$ and has $a_{E_i}(X)\le l$. Passing to a subsequence, we may assume that $\{r_{ij}\}_i$ is a non-decreasing sequence and has a limit $r_j=\lim_{i\to\infty}r_{ij}$ in $\bR$ for all $j$. Set $\fb_i=\prod_{j=1}^e\fa_{ij}^{r_j}$. By the assumption for the finite set $\{r_1,\ldots,r_e\}$, there exists a positive integer $a$ such that for all $i$, there exists a divisor $F_i$ over $X$ which computes $\mld_x(X,\fb_i)$ and has $a_{F_i}(X)\le a$.

Because the log discrepancy $a_{F_i}(X,\fb_i)=a_{F_i}(X)-\ord_{F_i}\fb_i$ belongs to the discrete set $a-n^{-1}\bN-\sum_jr_j\bN$ for the index $n$ of $x\in X$, there exists a positive real number $\delta_1$ such that if $(X,\fb_i)$ is not lc, then $a_{F_i}(X,\fb_i)$ is less than $-\delta_1$. In this case, for $\varepsilon_1=\delta_1/(\delta_1+a)$, one has
\[
a_{F_i}(X,\fb_i^{1-\varepsilon_1})=(1-\varepsilon_1)a_{F_i}(X,\fb_i)+\varepsilon_1a_{F_i}(X)<-(1-\varepsilon_1)\delta_1+\varepsilon_1a=0.
\]
Thus once $(1-\varepsilon_1)r_j\le r_{ij}\le r_j$ for all $j$, one has $a_{F_i}(X,\fa_i)\le a_{F_i}(X,\fb_i^{1-\varepsilon_1})<0$ and $F_i$ computes $\mld_x(X,\fa_i)=-\infty$.

\textit{Step}~3.
Henceforth we assume the log canonicity of $(X,\fb_i)$ for all $i$, in which $(X,\fa_i)$ is lc. Since $\mld_x(X,\fb_i)$ is bounded from above by $\mld_xX$, passing to a subsequence, the discreteness of log discrepancies \cite{K14} yields a non-negative real number $b$ and a positive real number $\delta$ such that $\mld_x(X,\fb_i)=a_{F_i}(X,\fb_i)=b$ for all $i$ and such that if a divisor $\Gamma$ over $X$ has $a_\Gamma(X,\fb_i)>b$, then $a_\Gamma(X,\fb_i)>b+\delta$. We may assume that $\delta\le1$. For $\varepsilon=\delta/a$, one has
\[
\mld_x(X,\fb_i^{1-\varepsilon})\le a_{F_i}(X,\fb_i^{1-\varepsilon})=(1-\varepsilon)a_{F_i}(X,\fb_i)+\varepsilon a_{F_i}(X)\le b+\varepsilon a=b+\delta.
\]
Thus once $(1-\varepsilon)r_j\le r_{ij}\le r_j$ for all $j$, one has $\mld_x(X,\fa_i)\le\mld_x(X,\fb_i^{1-\varepsilon})\le b+\delta$. Consequently if a divisor $E_i$ over $X$ computes $\mld_x(X,\fa_i)$, then $a_{E_i}(X,\fb_i)\le a_{E_i}(X,\fa_i)\le b+\delta$ and thus $a_{E_i}(X,\fb_i)=b$ by the choice of $\delta$. In other words, every divisor $E_i$ computing $\mld_x(X,\fa_i)$ also computes $\mld_x(X,\fb_i)$.

The rest of the proof is exactly the same as the proof of \cite[lemma~6.11]{HLLaxv}. We shall explain it briefly. Reordering indices and passing to a subsequence, we may assume the existence of $0\le e_1\le e$ such that $\{r_{ij}\}_i$ is strictly increasing for all $1\le j\le e_1$ and such that $r_{ij}=r_j$ for all $i$ and $e_1<j\le e$. We shall prove the existence of $l$ by induction on $e_1$. The claim for $e_1=0$ is assumed.

Assume that $e_1\ge1$ and set $\fb_{ij}=\fa_{ij}^{r_j}\prod_{k\neq j}\fa_{ik}^{r_{ik}}$ for $1\le j\le e_1$. By the inductive hypothesis, there exists a positive integer $l'$ such that for all $i$ and $1\le j\le e_1$, there exists a divisor $F_{ij}$ over $X$ which computes $\mld_x(X,\fb_{ij})$ and has $a_{F_{ij}}(X)\le l'$. Using the log canonicity of $(X,\fb_i)$, one obtains the estimate
\[
\ord_{E_i}\fa_{ij}\le\ord_{F_{ij}}\fa_{ij}\le r^{-1}l'
\]
for all $1\le j\le e_1$ along step~2 of the proof of \cite[lemma~6.11]{HLLaxv}. Set $d_{ij}=r_j-r_{ij}>0$ for $1\le j\le e_1$. We may assume that $\{d_{i1}^{-1}d_{ij}\}_i$ is non-increasing for all $j$. We set $t=r\delta/e_1l'm$ for the maximum $m$ of all $d_{11}^{-1}d_{1j}$. We may assume that $d_{11}\le t$.

Set $\fc_i=(\prod_{j\le e_1}\fa_{ij}^{s_{ij}})(\prod_{j>e_1}\fa_{ij}^{r_j})$ for $s_{ij}=r_j-d_{i1}^{-1}d_{ij}t\ge0$. Since $s_{i1}=r_1-t$ is constant, the inductive hypothesis provides a positive integer $l$ such that for all $i$, there exists a divisor $G_i$ over $X$ which computes $\mld_x(X,\fc_i)$ and has $a_{G_i}(X)\le l$. Using $\ord_{E_i}\fa_{ij}\le r^{-1}l'$, one obtains the estimate
\[
\mld_x(X,\fc_i)\le a_{E_i}(X,\fc_i)\le b+\delta
\]
along step~3 of the proof of \cite[lemma~6.11]{HLLaxv}. Then $a_{G_i}(X,\fb_i)\le a_{G_i}(X,\fc_i)\le b+\delta$ and thus $a_{G_i}(X,\fb_i)=b$. Namely, $G_i$ computes $\mld_x(X,\fb_i)$ as well as $\mld_x(X,\fc_i)$. By \cite[lemma~4.5(ii)]{K21}, $G_i$ also computes $\mld_x(X,\fa_i)$ for $\fa_i=\fb_i^{1-d_{i1}/t}\fc_i^{d_{i1}/t}$.
\end{proof}

\begin{corollary}\label{crl:quot}
The four statements in Theorem~\textup{\ref{thm:equiv}} hold for a terminal quotient threefold singularity $x\in X$.
\end{corollary}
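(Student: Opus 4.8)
The plan is to reduce Corollary~\ref{crl:quot} to a single one of the four equivalent statements of Theorem~\ref{thm:equiv} --- most conveniently Nakamura's boundedness --- and to prove that one by induction on the index $r$ of the terminal quotient threefold germ $x\in X$, which may be taken of type $\frac1r(w,-w,1)$. When $r=1$ the germ is smooth, so all four statements hold by Theorems~\ref{thm:main},~\ref{thm:adic},~\ref{thm:nakamura} and~\ref{thm:equiv} (after a standard reduction to a formal model). For the induction to feed back into itself one should prove the slightly stronger assertion that the four statements hold for subtriples $(X',\Delta',\fa)$ on a terminal quotient threefold germ of index at most $r$, where $\Delta'$ ranges over a fixed finite family of $\bR$-divisors determined by $r$ and $\fa\in I$; the equivalence Theorem~\ref{thm:equiv} and this base case survive the passage to such subtriples by the formal arguments behind them, the negative coefficients that will occur only raising log discrepancies. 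By Remark~\ref{rmk:equiv}(\ref{itm:equiv-nonpos}) we may assume $\mld_x(X,\Delta,\fa)>0$.

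The inductive step rests on the unique divisorial contraction of Theorem~\ref{thm:divcont}(\ref{itm:divcont-quot}), namely the weighted blow-up $\pi\colon E_0\subset Y\to x\in X$ with $\wt(x_1,x_2,x_3)=\frac1r(w,r-w,1)$, for which $a_{E_0}(X)=1+\frac1r$ and along whose exceptional divisor $E_0$ the variety $Y$ has exactly two terminal cyclic quotient singular points $p_1,p_2$, of indices $w<r$ and $r-w<r$. Let $\Delta_Y$ be the crepant transform of $\Delta$ on $Y$, so that $(Y,\Delta_Y,\fa\sO_Y)$ is crepant to $(X,\Delta,\fa)$. Since $\pi^{-1}(x)=E_0$, a divisor over $X$ is centred at $x$ exactly when, viewed over $Y$, it is centred inside $E_0$; hence
\[
\mld_x(X,\Delta,\fa)=\mld_{E_0}(Y,\Delta_Y,\fa\sO_Y)=\inf_{\eta\in E_0}\mld_\eta(Y,\Delta_Y,\fa\sO_Y),
\]
the infimum over scheme points $\eta$ of $E_0$. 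Fixing a divisor $E$ that computes $\mld_x(X,\Delta,\fa)$ and letting $\eta$ be the generic point of $c_Y(E)$, a comparison of these infima shows $\mld_\eta(Y,\Delta_Y,\fa\sO_Y)=\mld_x(X,\Delta,\fa)$; so it is enough to exhibit, for this $\eta$, a computing divisor of $\mld_\eta(Y,\Delta_Y,\fa\sO_Y)$ of bounded log discrepancy over $Y$, as then $a_\bullet(X)=a_\bullet(Y,\Delta_Y)$ is bounded as well.

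There are three cases according to $\dim\overline{\{\eta\}}$. If $\eta$ is the generic point of $E_0$, the divisor $E_0$ itself computes the minimal log discrepancy and has $a_{E_0}(X)=1+\frac1r$. If $\overline{\{\eta\}}$ is a curve $C\subset E_0$, then $C$ meets the smooth locus of $Y$, so $\Spec\sO_{Y,\eta}$ is the germ of a smooth surface; on it $\Delta_Y$ restricts to a divisor with coefficients in the fixed family above and $\fa\sO_Y\in I$, and Nakamura's boundedness on smooth surface germs, contained in Remark~\ref{rmk:equiv}(\ref{itm:equiv-sf}), provides a computing divisor of log discrepancy bounded in terms of $I$ and $r$ only. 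If $\eta\in\{p_1,p_2\}$, then near $p_i$ the subtriple $(Y,\Delta_Y,\fa\sO_Y)$ is of the type allowed by the inductive hypothesis on a quotient germ of smaller index, which again yields a bounded computing divisor. Combining the cases proves Nakamura's boundedness for $(X,\Delta,\fa)$, and with $\Delta=0$ this is Corollary~\ref{crl:quot}.

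The step I expect to be the main obstacle is not any single case but the bookkeeping of the crepant boundary $\Delta_Y$: it is not effective (its $E_0$-coefficient is $\le-\frac1r<0$), so it cannot simply be absorbed into the $\bR$-ideal $\fa$, and one must check that the finitely many boundaries arising from $\Delta=0$ through at most $r$ successive weighted blow-ups have coefficients in a single finite set depending only on $r$, and that Theorem~\ref{thm:equiv}, Remark~\ref{rmk:equiv}(\ref{itm:equiv-nonpos}) and the smooth base case genuinely extend to subtriples with such boundaries. Once this is set up, the remaining work --- the displayed identity for $\mld_x$, the comparison of infima, and the three-case stratification of $E_0$ --- is routine.
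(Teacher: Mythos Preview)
Your overall architecture---reduce to Nakamura's boundedness, induct on the index~$r$, use the unique Kawamata weighted blow-up $E_0\subset Y\to X$, and split according to the dimension of the centre on~$Y$---is exactly the paper's. The divergence is in how the crepant model on~$Y$ is packaged, and that divergence is where your gap lies.

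The paper does \emph{not} carry a sub-boundary. Instead it first separates the case when $(X,\fa^q)$ is terminal, which it dispatches by a short discreteness argument bounding $a_E(X)$ directly. In the remaining case $a_{E_0}(X,\fa^q)=1-c/rn\le1$ with $c\ge0$, so the discrepancy can be absorbed into an honest ideal: setting $\fc=\fb\,\sO_Y(-(r-1)!\,cE_0)$ for the weak transform $\fb$ of $\fa^{r!}$ makes $(Y,\fc^{q_1})$ a \emph{pair} crepant to $(X,\fa^q)$, and the induction feeds back into the no-boundary statement already proved. The conversion from $a_E(Y)$ to $a_E(X)$ then uses the bound on $\ord_E\fm$ from Theorem~\ref{thm:lct}.

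Your proposal instead keeps $(Y,\Delta_Y,\fa\sO_Y)$ with $\Delta_Y=-\tfrac1r E_0$ and asks the induction to swallow sub-boundaries. You correctly flag this as the obstacle, but then assert that the smooth base case ``survives the passage to such subtriples by the formal arguments behind them.'' It does not, or at least not visibly: Theorems~\ref{thm:main}--\ref{thm:nakamura} are stated and proved only for pairs $(X,\fa)$, and the entire machinery of Sections~3--7 (semistable type, generic limits, admissible sequences, weighted-homogeneous degeneration) is tuned to that setting. A negative log-smooth boundary $\Delta=-\sum c_jH_j$ cannot be absorbed into an $\bR$-ideal, and the observation that negative coefficients only \emph{raise} log discrepancies does not transfer Nakamura's boundedness from $(X,\fa)$ to $(X,\Delta,\fa)$: the divisor computing the latter need not compute the former. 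So as written, your base case is not available, and the induction does not close. The paper's twist $\fc=\fb\,\sO_Y(-(r-1)!\,cE_0)$, enabled by the terminal/non-terminal dichotomy, is precisely the device that avoids this.

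Two smaller points. First, the closed-point case on $E_0$ is not only $\{p_1,p_2\}$: the centre $c_Y(E)$ can be any closed point of $E_0$, and at a smooth one you still need the (sub-)boundary base case. Second, in the curve case the paper does not localise at the generic point of $C$ but takes a general hyperplane section $H$ of $Y$ and invokes the surface result on the $k$-variety $H$, then spreads the computing divisor on $H$ to one over $Y$ with centre $C$; this sidesteps the issue that $\Spec\sO_{Y,\eta}$ is not a germ over~$k$.
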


\begin{proof}
By the reduction \cite[lemma~4.11]{K21} to $\bQ$-ideals, it suffices to find, for a fixed positive integer $n$, a positive integer $l$ depending only on $X$ and $n$ such that for any ideal $\fa$ in $\sO_X$, there exists a divisor $E$ over $X$ which computes $\mld_x(X,\fa^q)$ for $q=1/n$ and has $a_E(X)\le l$. We shall prove this by induction on the index $r$ of $x\in X$. The case $r=1$ is Theorem~\ref{thm:nakamura}. We assume that $r\ge2$. We write $\fm$ for the maximal ideal in $\sO_X$ defining $x$.

We may assume $\fa$ to be non-trivial and $\fm$-primary. Then there exists a positive rational number $q'$ such that $\mld_x(X,\fa^{q'})=1$. By \cite[lemma~5.8]{K21}, there exists a divisorial contraction $F\subset Y\to x\in X$ which contracts the divisor $F$ to the point $x$ such that $a_F(X,\fa^{q'})=1$. It is uniquely determined as in Theorem~\ref{thm:divcont}(\ref{itm:divcont-quot}). It has $a_F(X)=1+1/r$, and $Y$ has only quotient singularities of index less than $r$ as remarked after that theorem.

Suppose that $(X,\fa^q)$ is terminal or equivalently $q<q'$. We shall provide an argument independent of \cite{HLLaxv}. Take a positive rational number $\delta$ such that no integer $a$ satisfies $q<1/ar<q+\delta$. Since $q'\ord_F\fa=a_F(X)-a_F(X,\fa^{q'})=1/r$, it follows that $q+\delta\le q'$, that is, $(X,\fa^{q+\delta})$ is canonical. Let $E$ be a divisor over $X$ which computes $\mld_x(X,\fa^q)$. Then $a_E(X,\fa^q)\le\mld_xX<2$ whilst $a_E(X,\fa^{q+\delta})\ge1$. Thus
\[
a_E(X)=\Bigl(1+\frac{q}{\delta}\Bigr)a_E(X,\fa^q)-\frac{q}{\delta}a_E(X,\fa^{q+\delta})<2+\frac{q}{\delta}.
\]

Suppose that $(X,\fa^q)$ is not terminal or equivalently $q'\le q$. The log discrepancy $a_F(X,\fa^q)$ is expressed as $1-c/rn$ by a non-negative integer $c$. Let $q_1=q/r!=1/r!n$ and let $\fc=\fb\sO_Y(-(r-1)!cF)$ for the weak transform $\fb$ in $Y$ of $\fa^{r!}$. Then $(Y,\fc^{q_1})$ is crepant to $(X,\fa^q)$.

By Remark~\ref{rmk:equiv}(\ref{itm:equiv-nonpos}), we may assume that $\mld_x(X,\fa^q)$ is positive. Then by Theorem~\ref{thm:lct}, there exists a positive integer $b$ depending only on $X$ and $n$ such that $\ord_E\fm\le b$ for every divisor $E$ over $X$ that computes $\mld_x(X,\fa^q)$. Take a scheme-theoretic point $\eta$ in $F$ such that $\mld_\eta(Y,\fc^{q_1})=\mld_x(X,\fa^q)$. We shall find a positive integer $l'$ depending only on $X$, $n$ (and $Y$, $q_1$) such that there exists a divisor $E$ over $Y$ which computes $\mld_\eta(Y,\fc^{q_1})=\mld_x(X,\fa^q)$ and has $a_E(Y)\le l'$. Then
\[
a_E(X)=a_E(Y)+(a_F(X)-1)\ord_EF\le l'+\frac{1}{r}\ord_E\fm\le l'+\frac{b}{r}.
\]

Write $Z$ for the closure of $\{\eta\}$ in $Y$. If $Z$ is a divisor, then $F$ computes $\mld_\eta(Y,\fc^{q_1})$ and $a_F(Y)=1$. If $Z$ is a curve $C$, then we take the general hyperplane section $H$ of $Y$ and a point $z$ in $H\cap C$ just as in step~2 of the proof of \cite[lemma~5.17]{K21}. Considering a log resolution, one has $\mld_z(H,(\fc\sO_H)^{q_1})=\mld_\eta(Y,\fc^{q_1})$ and there exists a divisor $E$ over $Y$ with $c_Y(E)=C$ such that a component $e$ of $E\times_YH$ mapped to $z$ computes $\mld_z(H,(\fc\sO_H)^{q_1})$. By Remark~\ref{rmk:equiv}(\ref{itm:equiv-sf}), there exists a positive integer $l_1$ depending only on $q_1$ for which one may take $E$ such that $a_e(H)\le l_1$. This $E$ computes $\mld_\eta(Y,\fc^{q_1})$ and has $a_E(Y)=a_e(H)\le l_1$. Finally if $Z$ is a closed point $y$, then the inductive hypothesis provides a positive integer $l_2$ depending only on $X$, $n$ (and $Y$, $q_1$) such that there exists a divisor $E$ over $Y$ which computes $\mld_y(Y,\fc^{q_1})$ and has $a_E(Y)\le l_2$. Take $l'$ to be the maximum of $l_1$ and $l_2$.
\end{proof}

\begin{proof}[Proof of Corollary~\textup{\ref{crl:main}}]
Since the number of types of terminal quotient threefold singularities of bounded index is finite, it suffices to show the ACC for minimal log discrepancies on a fixed singularity. Thus the corollary follows from Corollary~\ref{crl:quot}.
\end{proof}


\begin{thebibliography}{99}
\bibitem{Al93}
V. Alexeev.
Two two-dimensional terminations.
Duke Math.\ J. \textbf{69} (1993), 527--545.
\bibitem{dFEM10}
T. de Fernex, L. Ein and M. Musta\c{t}\u{a}.
Shokurov's ACC conjecture for log canonical thresholds on smooth varieties.
Duke Math.\ J. \textbf{152} (2010), 93--114.
\bibitem{dFEM11}
T. de Fernex, L. Ein and M. Musta\c{t}\u{a}.
Log canonical thresholds on varieties with bounded singularities.
\textit{Classification of algebraic varieties}, 221--257.
EMS Ser.\ Congr.\ Rep., Eur.\ Math.\ Soc., 2011.
\bibitem{dFM09}
T. de Fernex and M. Musta\c{t}\u{a}.
Limits of log canonical thresholds.
Ann.\ Sci.\ \'Ec.\ Norm.\ Sup\'er.\ (4) \textbf{42} (2009), 491--515.
\bibitem{EMY03}
L. Ein, M. Musta\c{t}\u{a} and T. Yasuda.
Jet schemes, log discrepancies and inversion of adjunction.
Invent.\ Math.\ \textbf{153} (2003), 519--535.
\bibitem{HLLaxv}
J. Han, J. Liu and Y. Luo.
ACC for minimal log discrepancies of terminal threefolds.
arXiv:2202.05287.
\bibitem{Ji21}
C. Jiang.
A gap theorem for minimal log discrepancies of noncanonical singularities in dimension three.
J. Algebraic Geom.\ \textbf{30} (2021), 759--800.
\bibitem{K01}
M. Kawakita.
Divisorial contractions in dimension three which contract divisors to smooth points.
Invent.\ Math.\ \textbf{145} (2001), 105--119.
\bibitem{K13}
M. Kawakita.
Ideal-adic semi-continuity of minimal log discrepancies on surfaces.
Michigan Math.\ J. \textbf{62} (2013), 443--447.
\bibitem{K14}
M. Kawakita.
Discreteness of log discrepancies over log canonical triples on a fixed pair.
J. Algebraic Geom.\ \textbf{23} (2014), 765--774.
\bibitem{K15}
M. Kawakita.
A connectedness theorem over the spectrum of a formal power series ring.
Internat.\ J. Math.\ \textbf{26} (2015), 1550088, 27pp.
\bibitem{K17}
M. Kawakita.
Divisors computing the minimal log discrepancy on a smooth surface.
Math.\ Proc.\ Cambridge Philos.\ Soc.\ \textbf{163} (2017), 187--192.
\bibitem{K21}
M. Kawakita.
On equivalent conjectures for minimal log discrepancies on smooth threefolds.
J. Algebraic Geom.\ \textbf{30} (2021), 97--149.
\bibitem{K24}
M. Kawakita.
\textit{Complex algebraic threefolds}.
Cambridge Studies in Advanced Mathematics \textbf{209}, Cambridge University Press, 2024.
\bibitem{Ka96}
Y. Kawamata.
Divisorial contractions to $3$-dimensional terminal quotient singularities.
\textit{Higher dimensional complex varieties}, 241--246.
De Gruyter, 1996.
\bibitem{Koaxv}
J. Koll\'ar.
Which powers of holomorphic functions are integrable?
arXiv:0805.0756.
\bibitem{KM92}
J. Koll\'ar and S. Mori.
Classification of three-dimensional flips.
J. Amer.\ Math.\ Soc.\ \textbf{5} (1992), 533--703;
errata by S. Mori ibid.\ \textbf{20} (2007), 269--271.
\bibitem{KSC04}
J. Koll\'ar, K. E. Smith and A. Corti.
\textit{Rational and nearly rational varieties}.
Cambridge Studies in Advanced Mathematics \textbf{92}, Cambridge University Press, 2004.
\bibitem{LLaxv}
J. Liu and Y. Luo.
Second largest accumulation point of minimal log discrepancies of threefolds.
arXiv:2207.04610.
\bibitem{LX21}
J. Liu and L. Xiao
An optimal gap of minimal log discrepancies of threefold non-canonical singularities.
J. Pure Appl.\ Algebra \textbf{225} (2021), 106674, 23pp.
\bibitem{MN18}
M. Musta\c{t}\u{a} and Y. Nakamura.
A boundedness conjecture for minimal log discrepancies on a fixed germ.
\textit{Local and global methods in algebraic geometry}, 287--306.
Contemp.\ Math.\ \textbf{712}, Amer.\ Math.\ Soc., 2018.
\bibitem{Sh04}
V. V. Shokurov.
Letters of a bi-rationalist V: Mld's and termination of log flips.
Tr.\ Mat.\ Inst.\ Steklova \textbf{246} (2004), 328--351;
translation in Proc.\ Steklov Inst.\ Math.\ \textbf{246} (2004), 315--336.
\bibitem{St11}
D. A. Stepanov.
Smooth three-dimensional canonical thresholds.
Mat.\ Zametki \textbf{90} (2011), 285--299;
translation in Math.\ Notes \textbf{90} (2011), 265--278.
\end{thebibliography}
\end{document}